\addspace\texttt{\mkbibbrackets{\thefield{eprintclass}}}}}}
\addspace\texttt{\mkbibbrackets{\thefield{eprintclass}}}}}}
\let\originalleft\left
\let\originalright\right
\renewcommand{\left}{\mathopen{}\mathclose\bgroup\originalleft}
\renewcommand{\right}{\aftergroup\egroup\originalright}
\newcommand{\multilineset}[1]{{\left\{\vcenter{\hbox{\shortstack{#1}}}\right\}}}
\numberwithin{equation}{section}
\newcommand{\nocontentsline}[3]{}
\newenvironment{notoc}{\bgroup}{\egroup}
\newcounter{alphasubsec}[section]
\newenvironment{alphasubsection}{\stepcounter{alphasubsec}\addtocounter{subsection}{-1}\bgroup}{\egroup}
\newcommand*{\defeq}{\coloneqq}
\DeclarePairedDelimiter{\set}{\{}{\}}
\DeclarePairedDelimiter{\card}{|}{|}
\newcommand{\inv}[1]{{#1^{-1}}}
\newcommand{\id}[1][]{\operatorname{id}_{#1}}
\newcommand{\inc}[1][]{\operatorname{inc}_{#1}}
\newcommand{\pr}[1][]{\operatorname{pr}_{#1}}
\newcommand{\after}{\circ}
\newcommand{\im}{\operatorname{im}}
\newcommand{\chooselimits}[1]{
  \makeatletter
  \displaylimits
  \ifthenelse{\equal{\@currenvir}{tikzcd}}
    {\ifthenelse{\equal{#1}{s}}{}{\limits}}
    {\ifthenelse{\equal{#1}{s}}{\nolimits}{}}
  \makeatother
}
\renewcommand{\lim}[2][]{\mathinner{\operatorname{lim}\chooselimits{#1}_{#2}}}
\newcommand{\colim}[2][]{\mathinner{\operatorname{colim}\chooselimits{#1}_{#2}}}
\newcommand{\catname}[1]{{\bm{\mathbf{#1}}}}
\newcommand{\cat}{\mathcal}
\newcommand{\operad}{\mathscr}
\newcommand{\twist}{\mathfrak}
\newcommand{\opcat}[1]{{{#1}^{\mathrm{op}}}}
\newcommand{\Cat}{{\catname{Cat}}}
\newcommand{\Set}{{\catname{Set}}}
\newcommand{\Properad}[1][]{{\catname{Proper}_{#1}}}
\newcommand{\Coproperad}[1][]{{\catname{Coproper}_{#1}}}
\newcommand{\aProperad}[1][]{{\catname{aProper}_{#1}}}
\newcommand{\aCoproperad}[1][]{{\catname{caCoproper}_{#1}}}
\newcommand{\Twcat}{{\catname{Tw}}}
\newcommand{\ModOp}[1][]{{\catname{ModOp}_{#1}}}
\newcommand{\ModCoop}[1][]{{\catname{ModCoop}_{#1}}}
\newcommand{\UModOp}[1][]{{\catname{UModOp}_{#1}}}
\newcommand{\UModCoop}[1][]{{\catname{UModCoop}_{#1}}}
\newcommand{\LMod}[1]{{\catname{LMod}_{#1}}}
\newcommand{\LComod}[1]{{\catname{LComod}_{#1}}}
\newcommand{\LaxAlg}[1]{{\catname{LaxAlg}_{#1}}}
\newcommand{\LaxCoalg}[1]{{\catname{LaxCoalg}_{#1}}}
\newcommand{\grVect}{{\catname{grVect}}}
\newcommand{\dgVect}{{\catname{dgVect}}}
\newcommand{\Graph}[1][]{{\catname{Graph}_{#1}}}
\newcommand{\cGraph}[1][]{{\catname{Graph}^{\catname{c}}_{#1}}}
\newcommand{\slice}{\mathbin{\downarrow}}
\newcommand{\gr}[1][]{\operatorname{\catname{gr}}_{#1}}
\newcommand{\Hom}[1][]{\operatorname{Hom}_{#1}}
\newcommand{\Aut}[1][]{\operatorname{Aut}_{#1}}
\newcommand{\Fun}{\operatorname{Fun}}
\newcommand{\tensor}[1][]{\otimes_{#1}}
\newcommand{\Tensor}{\bigotimes}
\newcommand{\dirsum}{\oplus}
\newcommand{\Dirsum}{\bigoplus}
\newcommand{\Ho}[2][]{{\operatorname{H}_{#2}^{#1}}}
\newcommand{\Coho}[2][]{{\operatorname{H}^{#2}_{#1}}}
\DeclareMathOperator{\cone}{C}
\DeclareMathOperator{\coker}{coker}
\renewcommand{\S}{{\catname \Sigma}}
\newcommand{\SBimod}[1][]{{\catname{BiMod}_\S^{#1}}}
\newcommand{\ccprod}{\boxtimes}
\newcommand{\Endprop}{\operatorname{\operad{E}nd}}
\DeclareMathOperator{\T}{T}
\newcommand{\free}{\operatorname{\operad{F}}}
\newcommand{\cofree}{\operatorname{\operad{F}^{\mathrm c}}}
\newcommand{\only}[2]{{({#1}; {#2})}}
\newcommand{\onlyone}[1]{{\only{\unit}{#1}}}
\newcommand{\nattwist}{\kappa}
\newcommand{\univtwistBar}[1][]{\pi_{#1}}
\newcommand{\univtwistCobar}[1][]{\iota_{#1}}
\newcommand{\twccprod}[1]{\mathbin{\ccprod_{#1}}}
\newcommand{\Kcompl}[3]{{{#2} \twccprod{#1} {#3}}}
\newcommand{\conv}{\mathbin{\star}}
\newcommand{\twistop}[1]{\mathop{\star_{#1}}}
\newcommand{\Tw}[1][]{\operatorname{Tw}_{#1}}
\newcommand{\aTw}{\operatorname{\widetilde{Tw}}}
\newcommand{\KD}[1]{{#1}^{\antishriek}}
\renewcommand{\Bar}[1][]{{\mathrm{B}_{#1}}}
\newcommand{\syzBar}[2][]{{\mathrm{B}_{#1}^{#2}}}
\newcommand{\twBar}[1]{{d^{\Bar}_{#1}}}
\newcommand{\Cobar}[1][]{{\mathrm{\Omega}_{#1}}}
\newcommand{\syzCobar}[2][]{{\mathrm{\Omega}_{#1}^{#2}}}
\newcommand{\twCobar}[1]{{d^{\Cobar}_{#1}}}
\newcommand{\weight}[2]{{#2}^{(#1)}}
\newcommand{\w}{\operatorname{w}}
\newcommand{\Grade}[1][]{\operatorname{G}_{#1}}
\newcommand{\Brauer}[1][]{{\operad B_{#1}}}
\newcommand{\coBrauer}[1][]{{\operad B^{\mathrm c}_{#1}}}
\newcommand{\twists}{{\twist S}}
\newcommand{\twistk}{{\twist R}}
\newcommand{\hyperop}{\operatorname{h}}
\newcommand{\cycar}{\llrrparen}
\renewcommand{\Vert}[1][]{\operatorname{Vert}_{#1}}
\newcommand{\Edge}{\operatorname{Edge}}
\DeclareMathOperator{\inedges}{in}
\DeclareMathOperator{\outedges}{out}
\DeclareMathOperator{\biarity}{ba}
\DeclareMathOperator{\euler}{\chi}
\DeclareMathOperator{\genus}{g}
\newcommand{\NN}{{\mathbb{N}_0}}
\newcommand{\ZZ}{{\mathbb{Z}}}
\newcommand{\QQ}{{\mathbb{Q}}}
\newcommand{\CC}{{\mathbb{C}}}
\newcommand{\RR}{{\mathbb{R}}}
\newcommand{\RRpos}{{\RR_{>0}}}
\newcommand{\Symm}[1]{{\Sigma_{#1}}}
\let\deg\undefined
\DeclarePairedDelimiter{\deg}{|}{|}
\newcommand{\shift}[1][]{{\mathrm{s}^{#1}}}
\newcommand{\dual}[1]{#1^\vee}
\newcommand{\coinv}[2]{{#1}_{#2}}
\DeclarePairedDelimiter{\eqcl}{[}{]}
\let\widetilde\undefined
\DeclareMathAccent{\widetilde}{\mathord}{largesymbols}{"65}
\newcommand{\blank}{-}
\newcommand{\iso}{\cong}
\newcommand{\eq}{\simeq}
\newcommand{\intersect}{\cap}
\newcommand{\vertdashv}{\begin{turn}{-90}$\dashv$\end{turn}}
\let\temp\epsilon
\let\epsilon\varepsilon
\let\varepsilon\temp
\let\temp\undefined
\DeclareMathSymbol{\antishriek}{\mathord}{operators}{"3C}
\newcommand{\unit}[1][]{{\mathbbm{1}_{#1}}}
\newcommand{\llrrparen}[1]{\left(\mkern-4mu\left(#1\right)\mkern-4mu\right)}
\newcommand{\longto}{\longrightarrow}
\newcommand{\xlongto}{\xlongrightarrow}
\patchcmd\blx@bblinput{\blx@blxinit}
                      {\blx@blxinit
                       %new jobname
                      }{}{\fail}
\title{Modular operads as modules over the Brauer~properad}
\author{Robin Stoll}
\date{\today}
\address{Department of Mathematics, Stockholm University, 106 91 Stockholm, Sweden}
\keywords{Modular operads, properads, Koszul duality}
\begin{document}

\setcounter{page}{1538}

\maketitle

\begin{abstract}
  We show that modular operads are equivalent to modules over a certain simple properad which we call the Brauer properad.
  Furthermore, we show that, in this setting, the Feynman transform corresponds to the cobar construction for modules of this kind.
  To make this precise, we extend the machinery of the bar and cobar constructions relative to a twisting morphism to modules over a general properad.
  This generalizes the classical case of algebras over an operad and might be of independent interest.
  As an application, we sketch a Koszul duality theory for modular operads.
\end{abstract}

\setcounter{tocdepth}{2}
\tableofcontents

\section{Introduction}

\begin{notoc}

Modular operads, originally introduced by Getzler--Kapranov \cite{GK}, are a variant of operads that allow composition along arbitrary (unrooted) connected graphs instead of rooted trees.
They have turned out to be a useful tool in a wide variety of fields.
One such application are the graph complexes introduced by Kontsevich \cite{Kon93,Kon94} (see also Conant--Vogtmann \cite{CV}), see for instance \cite{GK} or work of Conant--Hatcher--Kassabov--Vogtmann \cite{CKV14,CHKV}.
Another example are various types of field theories, see for instance \cite{GK}, Markl \cite{Mar}, or Dotsenko--Shadrin--Vaintrob--Vallette \cite{DSVV}.

The main purpose of this paper is to exhibit modular operads and their twisted variants as left modules over what we call Brauer properads.
Moreover, we identify the Feynman transform of \cite{GK} in this framework: it is given by first dualizing and then applying the cobar construction of those modules.
These natural identifications yield a simple way to think about (twisted) modular operads and the Feynman transform.
Furthermore, we also define modular \emph{cooperads} as well as both a bar and a cobar construction that together take the place of the Feynman transform (see also \cite{DSVV} and Kaufmann--Ward \cite{KW}).
This has the advantage of not requiring any dualization and hence no finiteness conditions, and could thus be seen as a more fundamental notion.

We now state the first main theorem and afterwards proceed to explain it in more detail.
We work in the category of differential graded vector spaces over some field of characteristic $0$.

\begin{alphasubsection}
\begin{theorem}[see Theorem~\ref{thm:main}] \label{thm:A}
  Let $\twist t$ be a right $\Symm 2$-module.
  Then there is an equivalence of categories
  \[ \Psi_{\twist t} \colon \multilineset{ \text{stable weight-graded purely outgoing\,} \\ \text{left modules over the Brauer properad $\Brauer[\twist t]$} }  \xlongto{\eq}  \set{ \text{modular $\hyperop(\twist t)$-operads} } \]
  for a certain hyperoperad (in the sense of \cite{GK}) $\hyperop(\twist t)$.
\end{theorem}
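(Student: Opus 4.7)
The plan is to analyze the Brauer properad explicitly and then realize both functors in the equivalence by describing how their structure morphisms encode, respectively, composition along graphs with a single edge and composition along arbitrary connected graphs.

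First I would unpack the Brauer properad $\Brauer[\twist t]$. It should be generated by a single biarity-\((2,2)\)-type operation twisted by $\twist t$ that encodes the gluing of two half-edges, subject to the minimal associativity and equivariance relations forced by the fact that two consecutive gluings do not depend on their order. The underlying graphs parametrising the free properad on this generator are then exactly the connected graphs appearing as iterated edge contractions. The key preliminary task is therefore to produce a combinatorial description of $\Brauer[\twist t]$, identifying its weight grading with the number of edges and verifying that the structure maps of a left module can be assembled from compositions along single edges.

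Given such a description, the functor $\Psi_{\twist t}$ is natural: for a module $M$, I would define $\Psi_{\twist t}(M)$ on a corolla to be the value of $M$ on the corresponding purely outgoing profile, and define its composition along a single-edge merger or self-gluing by the corresponding left action of the Brauer generator. Stability of $M$ translates into the standard stability condition for modular operads, while the purely outgoing condition ensures the underlying \(\S\)-bimodule structure collapses to the single \(\S\)-module of ``legs'' expected for a modular operad. The hyperoperad twist $\hyperop(\twist t)$ would then emerge tautologically from how $\twist t$ decorates each contracted edge, with the functoriality of $\hyperop$ in $\twist t$ confirming that composition along a connected graph $G$ gets decorated by the tensor product of $\twist t$ over the edge set of $G$.

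To produce the inverse functor, I would take a modular $\hyperop(\twist t)$-operad $\cat P$ and form the $\Brauer[\twist t]$-module whose underlying object is given by $\cat P$ on corollas, and whose left action of a word in the Brauer generator is defined by iterated edge contraction in $\cat P$. The associativity axiom for modular operads with respect to graph composition directly supplies the relations needed for this to define a well-defined left module structure.

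The final step is checking that the two assignments are mutually inverse. One direction is formal: iterating single-edge compositions reproduces composition along an arbitrary connected graph, by induction on the number of edges using the graph-associativity axiom of modular operads; the other direction follows from the fact that the Brauer properad is freely generated by a single relation up to the above-mentioned associativity. I expect the main obstacle to be bookkeeping rather than conceptual: carefully tracking the $\S$-equivariance and the hyperoperadic decoration $\hyperop(\twist t)$ through the identification, and showing that the relations in $\Brauer[\twist t]$ match precisely the equalities satisfied by iterated edge contractions in a modular operad, with no extra relations in either direction. Once the combinatorics of the Brauer properad is fully pinned down, everything else should reduce to comparing the two presentations on the level of generators and relations.
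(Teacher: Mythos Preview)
Your overall strategy is reasonable, but there are two concrete errors in your description of $\Brauer[\twist t]$ that would derail the execution.

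First, the generator of $\Brauer[\twist t]$ sits in biarity $(0,2)$, not $(2,2)$: it has two inputs and no outputs. This is not a minor slip; it is the whole reason the properad is manageable. Because a $(0,2)$ operation has no outputs, two copies of it cannot be connected to one another in a connected $2$-level graph, and hence the free properad on this generator is tiny: it consists only of the identity in biarity $(1,1)$ and $\twist t$ in biarity $(0,2)$. In particular there are \emph{no} relations --- the phrase ``subject to the minimal associativity and equivariance relations forced by the fact that two consecutive gluings do not depend on their order'' describes something that simply is not there. The associativity you have in mind is not a relation in the properad; it is encoded automatically by the associativity of the composition product $\ccprod$ (equivalently, by the monad axioms for $\Brauer[\twist t] \ccprod -$).

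Second, and relatedly, the paper does not argue via generators and relations. Instead it identifies the monad $\Brauer[\twist t] \ccprod -$ on purely outgoing $\S$-bimodules directly with the Getzler--Kapranov monad $\mathbb M_{\hyperop(\twist t)}$. The point is that an element of $\Brauer[\twist t] \ccprod A$ is represented by a connected $2$-level graph whose level-$2$ vertices are labeled either by the identity (biarity $(1,1)$) or by $\twist t$ (biarity $(0,2)$); reading an identity-labeled vertex as a hair and a $\twist t$-labeled vertex as an internal edge turns this into a connected multigraph with hairs and $A$-labeled vertices, which is exactly a generator of $\mathbb M_{\hyperop(\twist t)}(A)$. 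The monad multiplication on both sides is graph substitution, so the monads agree, and the equivalence of algebra categories follows formally. Your approach of building the equivalence from single-edge contractions and then checking compatibility by induction on the number of edges could be made to work, but it is more laborious and obscures the one-line reason the theorem is true.
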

\end{alphasubsection}

\begin{remark}[see Lemma~\ref{lemma:hyperops_from_twists}]
  Each of the hyperoperads $\mathbbm 1$, $\mathfrak K$, $\mathfrak T$, $\mathfrak D_{\mathfrak s}$, and $\mathfrak D_{\mathfrak p}$ of \cite{GK} can be written as $\hyperop(\twist t)$ with $\twist t$ either the trivial or the sign representation in degree $0$, $-1$, or $-2$.
  In particular every hyperoperad named in \cite{GK} can be expressed in our framework up to tensoring with $\mathfrak D_\Sigma$.
\end{remark}

We now explain this theorem in more detail.
First we recall that \emph{properads}, originally introduced by Vallette \cite{Val}, are a variant of operads that model operations with $n$ inputs and $m$ outputs which can be composed along connected directed graphs without directed cycles.
They can be defined as the monoids with respect to a certain ``composition product'' on the category of $\S$-bimodules, i.e.\ sequences of objects $(A(m, n))_{m, n \in \NN}$ equipped with a left action of $\Symm m$ (on the ``outputs'') and a right action of $\Symm n$ (on the ``inputs'') that commute.
By considering left modules over this monoid one obtains the notion of a \emph{left module} over a properad.
When we are working in the category of $\ZZ$-graded objects in some base category we call our properads and left modules \emph{weight graded}.
All of this can be dualized to obtain a notion of (weight-graded) \emph{coproperads} and \emph{left comodules} over them.
Particularly important will be left (co)modules whose underlying $\S$-bimodule is \emph{purely outgoing}, i.e.\ concentrated in pairs $(m, 0)$.

The (co)properads most relevant for us are very simple.
Given a right $\Symm 2$-module $\twist t$ we construct a properad $\Brauer[\twist t]$ and a coproperad $\coBrauer[\twist t]$ that each have, except for the identity operation with one input and one output, only operations with two inputs and no output, which are given by $\twist t$.
We call this the \emph{$\twist t$-twisted Brauer (co)properad}.
It is illustrated by the following picture.
\begin{center}
  \begin{tikzpicture}[scale = 0.5]
  \tikzstyle{every node} = [font = \large]
  
  \node[rectangle, draw, minimum width = 1cm, minimum height = 1cm] (1) at (0,0) {$\unit$};
  \node[rectangle, draw, minimum width = 2cm, minimum height = 1cm] (t) at (6,0) {$\twist t$};
  
  \begin{scope}[decoration = {markings, mark = at position 0.6 with {\arrow{>}}}]
    \draw[postaction={decorate}] ($(1.north) + (0,1)$) -- (1.north);
    \draw[postaction={decorate}] (1.south) -- ($(1.south) - (0,1)$);
    
    \draw[postaction={decorate}] ($(t.135) + (0,1)$) node (left) {} -- (t.135);
    \draw[postaction={decorate}] ($(t.45) + (0,1)$) node (right) {} -- (t.45);
  \end{scope}
  
  \draw [to-to, dashed] (left) .. controls ($(left) + (0,1.5)$) and ($(right) + (0,1.5)$) .. (right) node[midway,above] {\small $\Symm 2$};
  \end{tikzpicture}
\end{center}
The name ``Brauer properad'' is motivated by its relation to the ``downward Brauer category'' of Sam--Snowden \cite{SS} (see Remark~\ref{rem:Brauer_name}).

Both $\Brauer[\twist t]$ and $\coBrauer[\twist t]$ admit a weight grading with the identity in weight $0$ and $\twist t$ in weight $1$.
Our Theorem~\ref{thm:A} now states that weight-graded purely outgoing left modules over $\Brauer[\twist t]$ are closely related to modular operads.
Under this identification the weight grading corresponds, up to a shift, to the genus grading of a classical modular operad.
The extra condition we need to impose is the ``stability'' condition of \cite{GK}: we call a weight-graded purely outgoing $\S$-bimodule $M$ \emph{stable} if it is concentrated in weights $\ge -1$ and fulfills that the weight $w$ part of $M(m, 0)$ is trivial if $2w + m \le 0$.
Theorem~\ref{thm:A} now suggests the following definition.

\begin{definition}[see Definition~\ref{def:modop}]
  A \emph{(reduced) $\twist t$-twisted modular operad} is a stable weight-graded purely outgoing left module over $\Brauer[\twist t]$.
  Dually, a \emph{(reduced) $\twist t$-twisted modular cooperad} is a stable weight-graded purely outgoing left comodule over $\coBrauer[\twist t]$.
\end{definition}

Most of this paper is devoted to also expressing the Feynman transform of Getzler--Kapranov \cite{GK} in our framework.
It is the analogue of the (co)bar construction for modular operads.
The Feynman transform of a modular operad $M$ is defined as the free $\mathfrak K$-modular operad on the dual of $M$, with differential twisted by the dual of the sum of all edge contractions (here $\mathfrak K$ is the so-called ``dualizing cocycle'').
It turns out that in our framework the Feynman transform corresponds to the (co)bar construction of modules over $\Brauer[\twist t]$, exactly as one would hope.

To make this precise, we extend the general theory of the (co)bar construction relative to a twisting morphism to modules over a properad.
We think this might be of independent interest and expand on the details now.
Twisting morphisms $\alpha \colon \operad C \to \operad P$ between (co)properads were defined by Merkulov--Vallette \cite{MV}, generalizing the classical case of (co)operads due to Getzler--Jones \cite{GJ}.
To such an $\alpha$ we associate a pair of functors $\Bar[\alpha]$ and $\Cobar[\alpha]$ from modules over $\operad P$ to comodules over $\operad C$ and vice versa, respectively.
They are defined as the free (co)module on the $\S$-bimodule underlying the input, with differential twisted by an ``infinitesimal'' (co)composition map.
This generalizes the classical case of algebras over an operad from \cite{GJ} as well as various constructions of Vallette \cite{Val}.
We prove that they have the basic properties we desire of such constructions:

\begin{alphasubsection}
\begin{theorem}[see Sections~\ref{sec:quasi-isos} and \ref{sec:adjunction}]
  Let $\alpha \colon \operad C \to \operad P$ be a twisting morphism of (co)properads.
  Then the cobar construction $\Cobar[\alpha]$ is left adjoint to the bar construction $\Bar[\alpha]$.
  Moreover, under some mild conditions, both $\Bar[\alpha]$ and $\Cobar[\alpha]$ preserve quasi-isomorphisms.
\end{theorem}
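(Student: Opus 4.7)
The adjunction will follow the usual blueprint via an intermediate notion of \emph{relative twisting morphism}. For a $\operad C$-comodule $N$ and a $\operad P$-module $M$ I define $\operatorname{Tw}_\alpha(N, M)$ as the set of $\S$-bimodule maps $\varphi \colon N \to M$ of appropriate degree satisfying a Maurer--Cartan-type identity
\[
  \del(\varphi) + \mu_M \after (\alpha \ccprod \varphi) \after \Delta_N = 0,
\]
where $\mu_M$ denotes the $\operad P$-action and $\Delta_N$ the $\operad C$-coaction. Using the free/cofree universal properties of the underlying $\S$-bimodules $\operad P \ccprod N$ of $\Cobar[\alpha] N$ and $\operad C \ccprod M$ of $\Bar[\alpha] M$, morphisms $\Cobar[\alpha] N \to M$ of $\operad P$-modules and morphisms $N \to \Bar[\alpha] M$ of $\operad C$-comodules both restrict to $\S$-bimodule maps $N \to M$; a direct check shows that compatibility with the twisted differential on each side is precisely the above equation. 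Both hom-sets are thus naturally in bijection with $\operatorname{Tw}_\alpha(N, M)$, giving the adjunction, with naturality immediate.

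For the preservation of quasi-isomorphisms, I set up a spectral sequence from a filtration by weight. On $\Bar[\alpha] M = \operad C \ccprod M$ the twisted differential splits as $d_0 + d_\alpha$, with $d_0$ using only the internal differentials on $\operad C$ and $M$, and $d_\alpha$ strictly lowering the weight in $\operad C$. The $E^0$-page is therefore the untwisted relative composition product; working over a field of characteristic zero, the Künneth theorem applied in each fixed weight identifies the $E^1$-page with $\operad C \ccprod \mathrm{H}(M)$. A quasi-isomorphism $f \colon M \xto{\eq} M'$ therefore induces an isomorphism on $E^1$, so provided the spectral sequence converges strongly, $\Bar[\alpha] f$ is a quasi-isomorphism. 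The cobar case is handled dually by a filtration on $\Cobar[\alpha] N = \operad P \ccprod N$ coming from the weight in $\operad P$.

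The main obstacle will be pinning down the ``mild conditions'' that make these spectral sequences converge. Because the connected composition product $\ccprod$ for properads allows graphs with arbitrarily many vertices, each fixed total internal degree of $\operad C \ccprod M$ may a priori receive contributions from arbitrarily large weights, so the weight filtration is not automatically bounded below in each internal degree. The standard remedy is a connectedness or conilpotence hypothesis on $\operad C$ for the bar side (and dually on $N$, or an augmentation assumption on $\operad P$, for the cobar side); under such a hypothesis the filtration becomes bounded below in each internal degree and the classical convergence theorem applies. Beyond this the argument is routine, though keeping track of signs and verifying that the twisted differentials square to zero still requires care.
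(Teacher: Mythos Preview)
Your proposal is correct and follows essentially the same route as the paper. For the adjunction, the paper's Proposition~\ref{prop:adjunction} does precisely what you sketch: it introduces twisting morphisms $\Tw[\alpha](K,M)$ relative to $\alpha$ and identifies both hom-sets with this set via the (co)free universal properties, checking that compatibility with the twisted differentials is exactly the Maurer--Cartan equation (the paper records one extra lemma, Lemma~\ref{lemma:dg_maps_from_quasi_free}, to justify that checking on generators suffices).

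For preservation of quasi-isomorphisms, the paper also argues via a filtration whose associated graded kills the twisting term, applies K\"unneth (Lemma~\ref{lemma:Kuenneth}) on the zeroth page, and concludes by convergence. The only difference is in how the ``mild conditions'' are organized: rather than committing to the weight filtration you describe, the paper abstracts the needed properties into a notion of \emph{detachable} morphism (Definition~\ref{def:detachable}) and then supplies several concrete criteria (Lemmas~\ref{lemma:detachable_comodules} and~\ref{lemma:detachable_modules}) that instantiate it --- some based on weight as you suggest, others on biarity or homological degree. This buys applicability in situations where no compatible weight grading is present, but the underlying spectral-sequence argument is the same as yours.
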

\end{alphasubsection}

Moreover, we prove one more fundamental property: for a certain class of so-called \emph{Koszul} twisting morphisms the (co)unit of the bar--cobar adjunction will actually yield a resolution of the input (under some mild conditions).
We characterize Koszul twisting morphisms in terms of acyclicity of the so-called \emph{Koszul complexes} $\Bar[\alpha] \operad P$ and $\Cobar[\alpha] \operad C$:

\begin{alphasubsection}
\begin{theorem}[see Sections~\ref{sec:resolutions} and \ref{sec:Koszul_criterion}]
  Let $\alpha \colon \operad C \to \operad P$ be a twisting morphism of weight-graded (co)properads.
  Then $\Bar[\alpha] \operad P$ is acyclic if and only if $\Cobar[\alpha] \operad C$ is acyclic.
  In this situation, under some mild conditions, both the unit and the counit of the bar--cobar adjunction,
  \[ \eta \colon K \longto \Bar[\alpha] \Cobar[\alpha] K  \qquad \text{and} \qquad  \epsilon \colon \Cobar[\alpha] \Bar[\alpha] M  \longto M \text{,} \]
  are quasi-isomorphisms for all $\operad C$-comodules $K$ and $\operad P$-modules $M$.
\end{theorem}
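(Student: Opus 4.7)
The plan is to reduce both parts of the theorem to a weight-filtered analysis of the Koszul complexes, exploiting the fact that in each total weight the complexes in question are bounded, so that the relevant spectral sequences converge without extra hypotheses.

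For the equivalence of acyclicity of $\Bar[\alpha]\operad P$ and $\Cobar[\alpha]\operad C$, I would start from the observation that the weight gradings on $\operad C$ and $\operad P$ induce weight gradings on both complexes with respect to which the bar/cobar differentials are homogeneous. In each fixed weight $N$ both complexes are bounded, so acyclicity is detected weight by weight. Both admit a common combinatorial skeleton, namely the twisted composite $\operad C \twccprod{\alpha} \operad P$: one recovers $\Bar[\alpha]\operad P$ as the "free $\operad P$-module completion" of one side, and $\Cobar[\alpha]\operad C$ as the "cofree $\operad C$-comodule completion" of the other. A direct comparison of weight-$N$ subquotients, or equivalently the spectral sequence of a bicomplex whose two filtrations are weight and homological degree, then identifies the homologies of the two Koszul complexes in each weight up to a shift, which yields the equivalence.

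For the second part, assuming that the Koszul complexes are acyclic, I would filter $\Cobar[\alpha]\Bar[\alpha]M$ by the total weight coming from the $\operad C \ccprod \operad P$ factor, arranged so that the associated graded picks out only the "Koszul piece" of the twisted differential and kills the internal structure differentials. The mild conditions should amount to weight boundedness of $M$ and connectedness of $\operad C$, which together make this filtration exhaustive and bounded in each total weight; the resulting spectral sequence then converges. The $E^0$-page is, weight by weight, the Koszul complex applied to $M$ through the composition product, and acyclicity of the Koszul complex therefore collapses it onto $M$, proving that $\epsilon \colon \Cobar[\alpha]\Bar[\alpha]M \longto M$ is a quasi-isomorphism. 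The dual argument, filtering $\Bar[\alpha]\Cobar[\alpha]K$ by weight, handles the unit.

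The main obstacle is the careful bookkeeping of the filtrations and their convergence. Two features make this noticeably trickier than the classical operadic case: first, the properadic composition product is much more delicate than the operadic circle product since it parameterizes \emph{connected}, rather than disjoint, combinations of operations, so the identification of the $E^0$-page with the Koszul complex applied to $M$ requires a genuine combinatorial argument respecting the two-sided $\S$-bimodule structure; and second, left modules carry only a one-sided structure, which breaks some of the symmetry used in the classical argument and forces one to be attentive to which side one is resolving on. Pinning down the optimal form of the "mild conditions" so that both filtrations converge for the desired classes of modules and comodules is likely to be the most delicate point.
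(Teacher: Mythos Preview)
Your outline for the second part is essentially the paper's argument: filter $\Cobar[\alpha]\Bar[\alpha]M = \operad P \ccprod \operad C \ccprod M$ so that the associated graded becomes $(\Kcompl{\alpha}{\operad P}{\operad C}) \ccprod M$, invoke the K\"unneth isomorphism to identify the $E^1$-page with $\Ho{*}(M)$, and conclude by convergence. Two small corrections: it is the \emph{left} Koszul complex $\Kcompl{\alpha}{\operad P}{\operad C}$ that governs the counit (you wrote the factors in the wrong order), and the paper packages the ``mild conditions'' not simply as weight-boundedness but as an abstract \emph{detachability} condition on the module, for which several concrete sufficient criteria (including the weight-graded one you describe) are then established.

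The first part, however, has a real gap. There is no ``common combinatorial skeleton'' from which both Koszul complexes arise by a completion procedure, and no bicomplex whose two filtrations recover the two Koszul complexes: $\operad C \ccprod \operad P$ and $\operad P \ccprod \operad C$ are built from entirely different families of $2$-level graphs, and there is no reason their homologies should be related by a shift. The paper's route is indirect. It proves a four-way equivalence: acyclicity of the right Koszul complex, acyclicity of the left Koszul complex, the induced map $f_\alpha \colon \operad C \to \Bar\operad P$ being a quasi-isomorphism, and the induced map $g_\alpha \colon \Cobar\operad C \to \operad P$ being a quasi-isomorphism. Each equivalence is obtained by applying a \emph{comparison lemma} (any two of ``$f$ is a quasi-isomorphism'', ``$g$ is a quasi-isomorphism'', ``the induced map on Koszul complexes is a quasi-isomorphism'' imply the third) to the morphism of twisting morphisms from $\alpha$ to the relevant universal twisting morphism $\univtwistBar[\operad P]$ or $\univtwistCobar[\operad C]$, together with the separately established fact that the Koszul complexes of the universal twisting morphisms are always acyclic. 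The comparison lemma itself is proved by a weight-by-weight induction using the spectral sequence of the filtration by weight-in-$\operad C$, and this is where the real work sits; your sketch bypasses it entirely.
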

\end{alphasubsection}

The preceding two theorems are generalizations of results of \cite{GJ} for operads (following the treatment in the textbook of Loday--Vallette \cite{LV}).
Similarly to this classical case, the ``mild conditions'' mentioned mostly take the form of either some ``connectivity'' assumptions or the existence of certain ``weight gradings''.

Now back to the Feynman transform.
There is a Koszul twisting morphism $\coBrauer[\twistk \tensor \twist t] \to \Brauer[\twist t]$, where $\twistk$ denotes the one-dimensional vector space concentrated in degree $1$ with trivial $\Symm 2$-action.
We denote the cobar construction associated to this twisting morphism by $\Cobar[\twistk \tensor \twist t]$.
In the following theorem we write $\dual {(\blank)}$ for linear dualization and call a modular (co)operad \emph{of finite type} if it is finite-dimensional in each pair of arity and weight/genus.

\begin{alphasubsection}
\begin{theorem}[see Theorem~\ref{thm:main}]
  Let $\twist t$ be a one-dimensional right $\Symm 2$-module.
  Then the following diagram of functors commutes up to natural isomorphism
  \[
  \begin{tikzcd}
    \multilineset{ \text{reduced $\twist t$-twisted modular} \\ \text{operads of finite type} } \rar{\eq}[swap]{\Psi_{\twist t}} \dar[swap]{\dual{(\blank)}} &[15] \multilineset{ \text{modular $\hyperop(\twist t)$-operads} \\ \text{of finite type} } \ar{dd}{\mathsf{F}_{\hyperop(\twist t)}} \\
    \multilineset{ \text{reduced $\dual{\twist t}$-twisted modular} \\ \text{cooperads of finite type} } \dar[swap]{\Cobar[\dual{\twist t}]} & \\
    \multilineset{ \text{reduced $(\dual \twistk \tensor \dual {\twist t})$-twisted} \\ \text{modular operads of finite type} } \rar{\eq}[swap]{\Psi_{\dual \twistk \tensor \dual {\twist t}}} & \multilineset{ \text{modular $\hyperop(\dual \twistk \tensor \dual {\twist t})$-operads} \\ \text{of finite type} }
  \end{tikzcd}
  \]
  where $\mathsf{F}$ denotes the Feynman transform of \cite{GK} and $\Psi$ is the equivalence of Theorem~\ref{thm:A}.
\end{theorem}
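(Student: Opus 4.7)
The plan is to verify the stated commutativity by first identifying the underlying graded $\S$-modules on both sides of the square and then matching their differentials; both composites yield modular operads whose underlying data is ``free on $\dual M$.''

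\textbf{Step 1: Underlying graded objects.} On the right-and-down path, $\mathsf F_{\hyperop(\twist t)}$ is, by the definition of \cite{GK}, the free modular $(\mathfrak K \tensor \hyperop(\dual{\twist t}))$-operad generated by the arity-wise linear dual of $\Psi_{\twist t}(M)$ (the finite-type hypothesis is what makes dualization well-behaved). On the down-down-right path, $\Cobar[\dual{\twist t}] \dual M$ is by construction the free left $\Brauer[\dual\twistk \tensor \dual{\twist t}]$-module on the underlying $\S$-bimodule of $\dual M$, with the weight-shift coming from the $\dual\twistk$ factor; applying $\Psi_{\dual\twistk \tensor \dual{\twist t}}$ then translates this into the free modular $\hyperop(\dual\twistk \tensor \dual{\twist t})$-operad on the same $\S$-module. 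Invoking the natural identification $\hyperop(\dual\twistk \tensor \dual{\twist t}) \cong \mathfrak K \tensor \hyperop(\dual{\twist t})$, which I would establish by unwinding the definition of $\hyperop$ (cf.\ the lemma cited in the remark after Theorem~A), the two underlying graded modular operads are canonically isomorphic.

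\textbf{Step 2: Differentials.} Both differentials are sums indexed by the edges of the graphs indexing the free modular operad, so it suffices to compare their local action at a single edge. The Feynman differential contracts an edge of the graph and applies the dual of the corresponding modular composition of $M$. The cobar differential is twisted by the infinitesimal decomposition of $\coBrauer[\dual{\twist t}]$; since the only non-identity generator of this coproperad is the binary output-pair labelled by $\dual{\twist t}$, its infinitesimal decomposition applied to a purely outgoing comodule sends an element indexed by a finite set $S$ of outputs to a sum, over pairs of distinct elements of $S$, of the same element with the selected pair replaced by a ``cut'' carrying a label in $\dual{\twist t}$. Under $\Psi$, each such cut corresponds precisely to an edge of the graph representing the resulting element of the free modular operad, and the dualization functor turns this cutting operation into edge contraction. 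Hence the two differentials agree term by term.

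\textbf{Main obstacle.} The principal difficulty will be the bookkeeping of signs and degree shifts. The $\dual\twistk$ factor carries a degree shift whose Koszul signs must match those encoded by the dualizing cocycle $\mathfrak K$, and the sign rule for the infinitesimal cocomposition of $\coBrauer[\dual{\twist t}]$ has to be consistent with the sign rule for edge contraction in \cite{GK}. Checking that the identification $\hyperop(\dual\twistk \tensor \dual{\twist t}) \cong \mathfrak K \tensor \hyperop(\dual{\twist t})$ intertwines these sign conventions is likely the most delicate part of the argument. A secondary compatibility that needs to be verified is that $\Psi$ commutes with the formation of free modules on a given $\S$-bimodule; this should follow formally from the explicit construction of $\Psi$ in Theorem~\ref{thm:A}, for instance by noting that it is realized as a restriction/induction along an inclusion of properads and therefore preserves freeness.
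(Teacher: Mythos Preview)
Your approach matches the paper's, which is itself terse on this second half of Theorem~\ref{thm:main}: it identifies both sides as the free modular $\hyperop(\dual{\twistk} \tensor \dual{\twist t})$-operad on $\dual M$ with differential coming from the dual of the modular compositions of $M$, and then asserts that agreement ``follows from chasing through the respective definitions.'' The hyperoperad identification you need is recorded there via $\hyperop(\twistk) \cong \mathfrak K^{-1}$.

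There is, however, a direction error in your Step~2 that you should straighten out before attempting the sign bookkeeping. The coaction of $\coBrauer[\dual{\twist t}]$ on $\dual M$ is the linear dual of the action of $\Brauer[\twist t]$ on $M$; since the biarity-$(0,2)$ generator of $\Brauer[\twist t]$ caps off two outputs (\emph{decreasing} arity by two), its dual \emph{increases} arity. Concretely, the infinitesimal part of $\rho(m^*)$ for $m^* \in \dual M \cycar S$ is not a sum over pairs in $S$; it is a sum over one-edge graphs with hair set still equal to $S$ --- either a single vertex of arity $|S|+2$ carrying a loop, or two vertices joined by an edge --- labelled by the duals $\xi^*(m^*)$ and $\circ^*(m^*)$ of the self-gluing and composition maps of $M$. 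After this edge is absorbed into the $\Brauer[\dual{\twistk} \tensor \dual{\twist t}]$ factor, the cobar differential \emph{adds} one edge to the ambient graph, which is precisely what the Feynman differential does. So your conclusion survives, but the picture you sketch (``select a pair in $S$ and replace it by a cut'') is the action, not the coaction, and there is no further dualization step turning cuts into contractions --- the only dual taken is $M \mapsto \dual M$. Finally, your ``secondary compatibility'' is not a consequence of $\Psi$ being restriction along an inclusion of properads (it is not of that form); rather, it \emph{is} the content of the first half of the theorem, namely the identification of the monad $\Brauer[\twist t] \ccprod \blank$ with Getzler--Kapranov's $\mathbb M_{\hyperop(\twist t)}$, which automatically carries free algebras to free algebras.
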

\end{alphasubsection}

This theorem shows that in our framework the Feynman transform of a modular operad is obtained by first dualizing it to a modular cooperad and then applying the cobar construction.
Note that our separation into the bar and the cobar construction has the advantage of not requiring any finiteness conditions and being more analogous to various classical treatments of similar situations.
Moreover, in our framework, the signs involved in the definitions of the Feynman transform and twisted modular operads become easier to handle and, at least to the author, it became clearer why the dualizing cocycle $\mathfrak K$ (corresponding to our $\twistk$) appears.
Lastly, the properties desired of the Feynman transform become direct consequences of the general theory we set up for the bar and cobar constructions of modules over a properad (see Corollary~\ref{cor:Feynman_properties}).

As a further application, let us mention that our framework allows us to easily handle some more unusual modular operad--like structures.
Examples of this are ``ungraded modular operads'' which do not have a genus grading (see Definition~\ref{def:umodop}) and ``directed modular operads'' which allow composition along (connected) directed graphs (see Example~\ref{ex:end_modop_dirsum}).

As a final application, we provide a sketch of a Koszul duality theory for modular operads, which had been missing from the literature before (despite the existence of frameworks suitable for setting it up); this lack has been noted, for example, by Dotsenko--Shadrin--Vaintrob--Vallette \cite{DSVV}.
Our approach is to generalize the Koszul duality theory for algebras over operads due to Ginzburg--Kapranov \cite{GK94} and Millès \cite{Mil} (see also Berglund \cite{Ber}) to modules over properads, which might be of independent interest.
This is then specialized to modular operads via Theorem~\ref{thm:A}.
We moreover provide a certain class of ``monomial'' modular operads to which the theory can be applied.
On the other hand, we explain that for many well-studied modular operads this is unfortunately not the case.

\paragraph{Related work.}
As explained above, this paper builds heavily on work of Getzler--Kapranov \cite{GK}, Vallette \cite{Val}, Merkulov--Vallette \cite{MV}, Getzler--Jones \cite{GJ}, Loday--Vallette \cite{LV}, Millès \cite{Mil}, as well as Berglund \cite{Ber}.

Various other treatments of modular operads and the Feynman transform exist in the literature.
For example, they appear as a special case of the theory of Feynman categories due to Kaufmann--Ward \cite{KW}, of the theory of groupoid-colored operads by work of Ward \cite{War} (see also Dotsenko--Shadrin--Vaintrob--Vallette \cite{DSVV}), and of the theory of operadic categories due to Batanin--Markl \cite{BM15,BM18,BM21}.
The latter two are similar to our approach in the sense that, there too, modular operads appear as algebras over an operad-like object, which is shown to be Koszul (though in a more complicated fashion than for us).
Furthermore, note that \cite{KW} and \cite{DSVV} also introduce modular cooperads as well as both a bar and a cobar construction.
A definition of modular operads (though not the Feynman transform) as presheaves on a category of graphs that fulfill a strict Segal condition has been given by Hackney--Robertson--Yau \cite{HRYa, HRYb}, and as certain strong symmetric monoidal functors by Costello \cite{Cos}.
Similarly to the latter, work of Raynor \cite{Ray} implicitly contains an identification of a kind of ``ungraded non-connected modular operads'' (which first appeared in work of Schwarz \cite{Sch}) in terms of lax symmetric monoidal functors out of a category of ``downward Brauer diagrams''.
This is, more or less, a version of our Theorem~\ref{thm:A} for these ungraded non-connected modular operads.

Also note that, although the duality between our bar and cobar constructions in a sense yields a dual of the Feynman transform, this is different from the ``dual Feynman transform'' of Chuang--Lazarev \cite{CL}.

\paragraph{Acknowledgments.}
First and foremost, I would like to thank my PhD advisor, Alexander Berglund, for sharing some thoughts with me that lead to the ideas conveyed in this paper, as well as for guiding me through the process of writing it.
I am also grateful to Greg Arone, Dan Petersen, and Bruno Vallette for various helpful discussions.
Lastly, I would like to thank everyone named previously as well as the anonymous referee for many useful comments on earlier versions of this article.

\end{notoc}

%\renewcommand*{\thetheorem}{\arabic{theorem}}
%\numberwithin{theorem}{section}

\thirdleveltheorems

\newpage

\section{Properads} \label{sec:recollections}

In this section, we will recall the notion of a (co)properad as well as a number of related concepts and constructions, mostly stemming from work of Vallette \cite[§§1--4]{Val} and Merkulov--Vallette \cite[§§1--3]{MV}.
The main purpose of the section is to fix our notations and conventions (which in places deviate from the sources cited above).

Throughout this section, we fix a cocomplete symmetric monoidal category $(\cat S, \tensor, \unit[\cat S])$ with a zero object.
Moreover, we assume that its tensor product functor $\tensor$ preserves colimits separately in each variable.

\subsection{The composition product and properads} \label{sec:ccprod}

We recall the definition of a $\S$-bimodule, of the (connected) composition product, and of a (co)properad from \cite[§1]{Val}.
Some of them will be given in a reformulated form, to make them easier to work with.
We also try to be a bit more precise than existing sources in the process.

\begin{notation}
  We denote by $\S$ the groupoid with objects given by finite sets equipped with a linear order and morphisms given by the bijections (not necessarily respecting the linear order).
  Sometimes, when there is no risk of confusion, we will not distinguish in our notation between an element of $\S$ and its cardinality.
\end{notation}

\begin{remark}
  Note that there is a canonical equivalence of categories from $\S$ to the groupoid of finite sets and bijections, given by forgetting the linear orders.
  However, it is often technically more convenient to work with $\S$.
  For example, there is, for any object $S \in \cat S$, a canonical functor $\S \to \cat S$ given by $I \mapsto S^{\tensor I}$.
  Without the linear orders we would have to either make a choice or use a more complicated construction.
\end{remark}

\begin{definition}
  A \emph{$\S$-bimodule} in $\cat S$ is a functor $\S \times \opcat{\S} \to \cat S$.
  We denote the category of $\S$-bimodules in $\cat S$ (with morphisms given by natural transformations) by $\SBimod[\cat S]$.
  A \emph{biarity} is an object of $\S \times \opcat{\S}$.
\end{definition}

The motivating example for the rest of this subsection is the $\S$-bimodule $\operad E$ with $\operad E(m, n) = \Hom(X^{\tensor n}, X^{\tensor m})$ where $X$ is some object of an $\cat S$-enriched symmetric mo\-noi\-dal category.
In this case, we note that the $\S$-bimodule has more structure: we can compose two elements by piping one or more outputs of one into the inputs of another.
When we have more than two elements to compose, it becomes more complicated to describe what is piped where: we need a directed graph (that does not have directed cycles).
A $\S$-bimodule (such as $\operad E$) that admits a composition of this kind modeled on \emph{connected} directed graphs is called a ``properad''.
(Without the restrictions to connected graphs we would obtain the notion of a ``PROP''; these are harder to work with though, so in this paper we will focus solely on properads.)
We will now make these ideas precise.

\begin{definition}
  An \emph{$n$-level graph} is a directed multigraph $G$ equipped with:
  \begin{itemize}
    \item a partition of its vertices into $n + 2$ sets: ``source'', ``level $k$'' for $1 \le k \le n$, and ``sink'',
    \item a linear order on each of the $n + 2$ vertex sets,
    \item for each vertex, a linear order on the incoming (half-)edges and a linear order on the outgoing (half-)edges.
  \end{itemize}
  Moreover, edges are only allowed from source to level 1, level $k$ to level $k+1$, or level $n$ to sink.
  Lastly, we require each source or sink vertex to be incident to precisely one edge (which must be outgoing or incoming, respectively).
  We denote the linearly ordered set of vertices of level $k$ by $\Vert[k](G)$.
  Each vertex of any level $k$ will be called \emph{internal}.
  
  An \emph{isomorphism} of $n$-level graphs is an isomorphism of the underlying directed multigraph which respects the partition of the vertices (in particular it is not required to preserve any of the linear orders).
  We denote the groupoid of $n$-level graphs and isomorphisms by $\Graph[n]$.
  
  For a vertex $v$, we denote by $\outedges(v)$ the linearly ordered set of outgoing (half-)edges at $v$, by $\inedges(v)$ the linearly ordered set of incoming (half-)edges at $v$, and call $\biarity(v) \defeq (\outedges(v), \inedges(v))$ the \emph{biarity} of $v$.
  
  Similarly we write $\outedges(G)$ for the linearly ordered set of sink vertices of $G$ and $\inedges(G)$ for the linearly ordered set of source vertices, and call $\biarity(G) \defeq (\outedges(G), \inedges(G))$ the \emph{biarity} of $G$.
  This assembles into a functor $\biarity \colon \Graph[n] \to \S \times \opcat{\S}$ by sending an (iso)morphism $f$ to $(\outedges(f), \inv{\inedges(f)})$, where $\outedges(f)$ and $\inedges(f)$ are the bijections induced on sinks and sources, respectively.
\end{definition}

\begin{definition}
  An $n$-level graph is \emph{connected} if its underlying undirected multigraph is connected.
  (For us, the empty graph is not connected.)
  We denote by $\cGraph[n]$ the full subgroupoid of $\Graph[n]$ spanned by the connected $n$-level graphs.
\end{definition}

\begin{example}
  The following is an example of a connected 2-level graph
  \[
  \begin{tikzcd}
    \text{source:} &[1em] 1 \ar{dr}[near end]{1} & 2 \ar{drr}[near end]{3} & 3 \ar{dl}[near end, swap]{2} & 4 \ar{d}[near end]{1} & 5 \ar{dl}[near end]{2} \\
    \text{level 1:} &  & \fbox{1.1} \ar[bend left]{dl}[near start, swap]{2}[near end, swap]{1} \ar[bend right]{dl}[near start, swap]{3}[near end, swap]{2} \ar{dr}[near start, swap]{1}[near end]{1} & & \fbox{1.2} \ar{dl}[near start]{2}[near end, swap]{2} \ar{dr}[near start, swap]{1}[near end]{1} & \\
    \text{level 2:} & \fbox{2.1} \ar{d}[near start, swap]{1} \ar{dr}[near start]{2} & & \fbox{2.2} \ar{d}[near start, swap]{2} \ar{dr}[near start]{1} & & \fbox{2.3} \\
    \text{sink:} & 1 & 2 & 3 & 4 &
  \end{tikzcd}
  \]
  with five source vertices, two level-1 vertices, three level-2 vertices, and four sink vertices.
  The little numbers at the start and/or end of the edges specify the linear orders on the sets of incoming and outgoing half edges.
\end{example}

\begin{definition} \label{def:ccprod}
  We define a functor $\Graph[2] \times \SBimod[\cat S] \times \SBimod[\cat S] \to \cat S$ by setting its value at a tuple $(G, A, B)$ to be
  \[ A \ccprod_G B  \defeq  \Tensor_{v_2 \in \Vert[2](G)} A(\outedges(v_2), \inedges(v_2))  \tensor  \Tensor_{v_1 \in \Vert[1](G)} B(\outedges(v_1), \inedges(v_1)) \]
  and letting an isomorphism of $\Graph[2]$ act in the canonical way.
  Explicitly it permutes the $A$ and $B$ factors according to the bijections of the respective sets of vertices, and it acts on $\outedges(v)$ by the bijection induced on the outgoing (half-)edges and on $\inedges(v)$ by the inverse of the bijection induced on the incoming (half-)edges.
  
  By currying this yields a functor on $\Graph[2]$ given by $G \mapsto \blank \ccprod_G \blank$.
  Restricting it to $\cGraph[2]$ yields the upper composite in the diagram
  \[
  \begin{tikzcd}[row sep = 0]
      &  \Graph[2] \rar  &  \Fun(\SBimod[\cat S] \times \SBimod[\cat S], \cat S) \\
    \cGraph[2] \urar[hook]{\inc} \drar[hook][swap]{\inc}  &  &  \\
      &  \Graph[2] \rar{\biarity}  &  \S \times \opcat{\S} \ar[dashed]{uu}
  \end{tikzcd}
  \]
  which we left Kan extend along the lower composite to obtain the dashed functor.
  Currying this dashed functor again, we obtain a functor
  \[ \SBimod[\cat S] \times \SBimod[\cat S]  \longto  \Fun(\S \times \opcat{\S}, \cat S) = \SBimod[\cat S] \]
  which we denote by $\blank \ccprod \blank$.
  It is called the \emph{composition product}.\footnote{Note that in \cite{Val} this is called the \emph{connected} composition product and denoted by $\boxtimes_c$.}
\end{definition}

\begin{remark}
  We think of $(A \ccprod B)(M, N)$ as connected $2$-level graphs with identifications of the set of source vertices with $N$ and the set of sink vertices with $M$, as well as a labeling of each internal vertex $v$ by an element of $B(\biarity(v))$ when $v$ has level $1$ and by $A(\biarity(v))$ when it has level $2$.
\end{remark}

\begin{remark} \label{rem:comp_product_restriction}
  The subcategory of $\SBimod[\cat S]$ spanned by those $\S$-bimodules that are concentrated in biarities $(1, n)$ with $n \in \NN$ is equivalent to the category of (right) $\S$-modules in $\cat S$.
  The composition product restricts to a bifunctor on that subcategory.
  This restriction is isomorphic to the composition product of (right) $\S$-modules, see e.g.\ Getzler--Jones \cite[§2.1]{GJ} or Loday--Vallette \cite[§5.1.4]{LV}.
\end{remark}

The following lemma from \cite{Val} will enable us to use the machinery of monoidal categories, which is very convenient.

\begin{restatable}[{\cite[Proposition 1.6]{Val}}]{lemma}{lemmaMonoidal} \label{lemma:ccprod_monoidal}
  The category $\SBimod[\cat S]$ has a canonical monoidal structure with tensor product $\ccprod$ and unit object $\unit[\ccprod]$, where $\unit[\ccprod](m, n)$ is $\unit[\cat S]$ if $m = n = 1$ and the zero object of $\cat S$ otherwise.
\end{restatable}

In Appendix~\ref{app:ccprod_monoidal} we give, for completeness' sake, a proof of Lemma~\ref{lemma:ccprod_monoidal} in our framework and with a bit more details than in existing sources.

We can now introduce one of the main players of this paper:

\begin{definition}
  A \emph{properad} in $\cat S$ is a monoid object in the monoidal category $\SBimod[\cat S]$.
  Dually, a \emph{coproperad} in $\cat S$ is a comonoid object in the same category.
  Taking maps of (co)monoid objects as morphisms, one obtains the category $\Properad[\cat S]$ of properads in $\cat S$ and the category $\Coproperad[\cat S]$ of coproperads in $\cat S$.
\end{definition}

\begin{remark}  \label{rem:operads_are_special_case}
  It follows from Remark~\ref{rem:comp_product_restriction} that a (co)properad concentrated in biarities $(1, n)$ with $n \in \NN$ is equivalently a (co)operad (see e.g.\ \cite[§5.2.1 and §5.8.1]{LV}).
  In particular all constructions and statements in this paper about (co)properads apply by specialization also to (co)operads.
\end{remark}

\begin{remark}
  The notion of a coproperad is not actually the precise dual of the notion of a properad (which would be a properad in $\opcat{\cat S}$), but is more restrictive.
  We use it, instead of the more general one, since it is more amenable to the methods employed throughout this paper.
  
  If we work in the category of vector spaces over a field of characteristic $0$, the difference between the two notions is that in our definition the image of an element under the comultiplication must be supported on finitely many $2$-level graphs.
  In particular the linear dual of a properad is only a coproperad when the preimage of any element under the multiplication is supported on finitely many $2$-level graphs.
  
  In the case of cooperads, the difference between the two notions is relatively small; they actually agree if one assumes the cooperads to be trivial in arity $0$ (cf.\ \cite[§5.8.1]{LV}) which is often fulfilled.
  For general coproperads, there is no similarly simple condition one could impose; however many cases of interest still are examples of the more restricted notion.
\end{remark}

\subsection{Infinitesimal composition products}

We will need a version of the (co)mul\-ti\-pli\-ca\-tion of a (co)properad that only composes with (respectively splits off) a single element.
The following construction will be useful in such situations.
It is a generalization of the ``infinitesimal composite product'' of Loday--Vallette \cite[§6.1.1]{LV} (which also inspired our notation) and of the ``partial composition products'' of Vallette \cite[§4.1.2 f.]{Val} and Merkulov--Vallette \cite[§1.2 f.]{MV}.

\begin{definition} \label{def:inf_ccprod}
  Let $A_1, \dots, A_n$ and $B_1, \dots, B_n$ be $\S$-bimodules.
  We write
  \[ \only{A_1}{B_1} \ccprod \only{A_2}{B_2} \ccprod \dots \ccprod \only{A_n}{B_n} \]
  for the $\S$-bimodule generated by connected $n$-level graphs with all vertices of level $k$ labeled by elements of $A_k$ (of the correct biarity), except for one which is labeled by an element of $B_k$.
  (More precisely we perform a construction similar to the one in Definition~\ref{def:ccprod}; in particular it is functorial in each of the $A_k$ and $B_k$.)
  
  Sometimes one or more of the expressions $\only{A_k}{B_k}$ will be replaced by just $A_k$.
  In that case all vertices of the corresponding level are labeled by elements of $A_k$.
\end{definition}

\begin{remark}
  Note that for a term of the form $A \ccprod B$ the above definition recovers the composition product.
  Also note that in the proof of Lemma~\ref{lemma:ccprod_monoidal} (see Appendix~\ref{app:ccprod_monoidal}) we construct canonical isomorphisms $A \ccprod (B \ccprod C) \iso A \ccprod B \ccprod C \iso (A \ccprod B) \ccprod C$.
  Similar associativity isomorphisms exist for such expressions of arbitrary length.
  
  However, expressions of the general form introduced above are \emph{not} associative.
  For example $(\only{A_1}{A_2} \ccprod B) \ccprod C$ and $\only{A_1}{A_2} \ccprod (B \ccprod C)$ are not isomorphic in general.
  Namely, in the former, each connected component of the union of levels $2$ and $3$ contains exactly one vertex of level $3$ labeled by an element of $A_2$ whereas, in the latter, the whole level $3$ contains only a single such vertex.
\end{remark}

\begin{remark}
  Note that even if $A_1$ and $A_2$ are isomorphic, the $\Sigma$-bimodules $\only{A_1}{A_2} \ccprod B$ and $A_1 \ccprod B$ are \emph{not} isomorphic in general.
  Namely, in addition to the graph and labels, each generator of the former also implicitly contains the choice of a vertex of level $2$ (the one which is labeled by an element of $A_2$).
  
  However, there is a canonical natural map $\only{A}{A} \ccprod B  \to  A \ccprod B$ given by forgetting this additional datum.
  Moreover, if $\cat S$ is semiadditive, there is also a canonical natural map $A \ccprod B  \to  \only{A}{A} \ccprod B$ given by sending a generator to the sum of all choices of extending it with a distinguished level-2 vertex.
  Similar maps exist for all expressions involving $\only{A}{A}$.
  Note that the composite $A \ccprod B  \to  \only{A}{A} \ccprod B  \to  A \ccprod B$ is given, on a generator with underlying $2$-level graph $G$, by multiplication with $\card{\Vert[2](G)}$.
  
  Something that will occur frequently in Section~\ref{sec:(co)bar} are composites of the form
  \[ \operad C \ccprod B  \longto  \only{\operad C}{\operad C} \ccprod B  \xlongto{\epsilon}  \onlyone{\operad C} \ccprod B \]
  where $\operad C$ is a coproperad and $\epsilon$ its counit, as well as the dual situation
  \[ \onlyone{\operad P} \ccprod B  \xlongto{\eta}  \only{\operad P}{\operad P} \ccprod B  \longto  \operad P \ccprod B \]
  with a properad $\operad P$ and its unit $\eta$.
\end{remark}

\subsection{Modules}

We recall, from \cite[§2.5]{Val}, the definition of a (co)module over a (co)pro\-per\-ad and what it means for one such to be (co)free.

\begin{definition}
  A \emph{left module} over a properad $\operad P$ in $\cat S$ is a left module over $\operad P$ considered as a monoid object of $(\SBimod[\cat S], \ccprod)$.
  Dually, a \emph{left comodule} over a coproperad $\operad C$ in $\cat S$ is a left comodule over the comonoid $\operad C$.
  Taking maps of left (co)modules as morphisms, one obtains the category $\LMod{\operad P}$ of left modules over $\operad P$ and the category $\LComod{\operad C}$ of left comodules over $\operad C$.
  Analogously one defines \emph{right (co)modules}.
  
  Sometimes we will call a left module over $\operad P$ a \emph{left $\operad P$-module}, a left comodule over $\operad C$ a \emph{left $\operad C$-comodule}, and analogously for right (co)modules.
\end{definition}

\begin{remark} \label{rem:modules_and_algebras}
  Following Remark~\ref{rem:operads_are_special_case}, we note that the preceding definition generalizes the notion of a left/right (co)module over a (co)operad.
  In particular a left (co)module concentrated in biarity $(1, 0)$ over a (co)operad is what is usually called a (co)algebra over that (co)operad.
  
  However, this is \emph{not} true for general properads.
  The structure of what is usually called an ``algebra'' over a properad (a morphism to the endomorphism properad of Definition~\ref{def:endprop}) is not equivalent to a left module over that properad concentrated in biarity $(1, 0)$.
  The problem is that the latter does not capture information about maps into higher tensor powers.
  However, this can be fixed by a slight variation of the definition, see \cite[Proposition 2.2]{Val}.
  This will not be needed in this paper, though.
\end{remark}

\begin{definition}
  Let $\operad P$ be a properad, $\operad C$ a coproperad, and $A$ a $\S$-bimodule in $\cat S$.
  The \emph{free left module} over $\operad P$ generated by $A$ is the left module with underlying $\S$-bimodule $\operad P \ccprod A$ and structure map given by
  \[ \operad P \ccprod (\operad P \ccprod A)  \iso  (\operad P \ccprod \operad P) \ccprod A  \xlongto{\mu_{\operad P} \ccprod \id[A]}  \operad P \ccprod A \]
  where $\mu_{\operad P}$ is the multiplication of $\operad P$.
  
  Dually, the \emph{cofree left comodule} over $\operad C$ cogenerated by $A$ is the left comodule with underlying $\S$-bimodule $\operad C \ccprod A$ and structure map given by
  \[ \operad C \ccprod A  \xlongto{\Delta_{\operad C} \ccprod \id[A]}  (\operad C \ccprod \operad C) \ccprod A  \iso  \operad C \ccprod (\operad C \ccprod A) \]
  where $\Delta_{\operad P}$ is the comultiplication of $\operad C$.
\end{definition}

\begin{remark}
  It is well-known (cf.\ \cite[§VII.4]{Mac}) that the free left module over $\operad P$ generated by $A$ defined as above is indeed the value at $A$ of the left adjoint to the forgetful functor $\LMod{\operad P} \to \SBimod[\cat S]$, and dually that the cofree left comodule is a value of the right adjoint to $\LComod{\operad C} \to \SBimod[\cat S]$.
\end{remark}

\subsection{Endomorphism properads}

We recall, from \cite[§2.4 and §2.6]{Val}, the definition of the endomorphism properad as well as the canonical (left) module over it.
They are fundamental and motivating examples of properads and modules over them.

In this subsection, we assume that $\cat S$ is a \emph{closed} symmetric monoidal category.
We denote its internal hom by $\cat S(\blank, \blank)$.

\begin{definition} \label{def:endprop}
  Let $X \in \cat S$.
  Then the \emph{endomorphism properad} of $X$, denoted $\Endprop(X)$, is the properad in $\cat S$ given by $\Endprop(X)(m, n)  \defeq  \cat S(X^{\tensor n}, X^{\tensor m})$.
  The $(\S \times \opcat{\S})$-action is given by permuting the tensor factors.
  The unit map $\unit[\ccprod] \to \Endprop(X)$ is given by the map $\unit[\cat S] \to \cat S(X, X)$ representing the identity of $X$.
  The multiplication $\Endprop(X) \ccprod \Endprop(X) \to \Endprop(X)$ is given by tensor products and composition of morphisms (and permuting tensor factors according to the three permutations given by the edges in the representing $2$-level graph).
\end{definition}

\begin{definition} \label{def:T}
  Let $X \in \cat S$.
  We denote by $\T(X)$ the $\S$-bimodule
  \[ \T(X)(m, n)  \defeq  \begin{cases} X^{\tensor m}, & \text{if } n = 0 \\ 0, & \text{otherwise} \end{cases} \]
  with $\S$-action given by permuting the tensor factors.
  It admits a canonical structure of a left module over $\Endprop(X)$ by letting the structure map $\Endprop(X) \ccprod \T(X) \to \T(X)$ be given by tensor products and evaluation (and permuting tensor factors according to the two permutations given by the edges in the representing $2$-level graph).
\end{definition}

\subsection{Augmentations and weight gradings}

We recall, from \cite[§2.1]{Val}, the notions of a (co)augmentation of a (co)properad and of a weight grading (though we use a more general version of the latter, allowing for a grading by integers instead of just natural numbers) as well as what it means for a weight grading to be connected.
We also make precise what we will mean by a trivial (co)module.

\begin{definition}
  An \emph{augmentation} of a properad $\operad P$ is a map of properads $\epsilon \colon \operad P \to \unit$ to the trivial properad.
  Dually, a \emph{coaugmentation} of a coproperad $\operad C$ is a map of coproperads $\eta \colon \unit \to \operad C$ from the trivial coproperad.
  
  A morphism of augmented properads is a map in the slice category of properads over $\unit$.
  Dually, a morphism of coaugmented coproperads is a map in the slice category of coproperads under $\unit$.
  This yields categories $\aProperad[\cat S]$ and $\aCoproperad[\cat S]$.
\end{definition}

\begin{definition}
  The \emph{trivial} left module over a properad $\operad P$ equipped with an augmentation $\epsilon$ is the unit $\S$-bimodule $\unit[\ccprod]$ equipped with the structure map
  \[ \operad P \ccprod \unit[\ccprod]  \xlongto{\epsilon \ccprod \id}  \unit[\ccprod] \ccprod \unit[\ccprod]  \iso  \unit[\ccprod] \]
  obtained from the augmentation.
  Dually, we define the \emph{trivial} left comodule over a coaugmented coproperad.
\end{definition}

\begin{notation}
  Let $G$ be a set.
  We denote by $\gr[G](\cat S) \defeq \Fun(G, \cat S)$ the category of $G$-graded objects in $\cat S$.
  It comes equipped with a functor $\gr[G](\cat S) \to \cat S$ given by taking the coproduct, which we call \emph{forgetting the grading}.
  
  If $G$ comes equipped with a monoid structure, the category $\gr[G](\cat S)$ inherits a symmetric monoidal structure via Day convolution.
  In this case, the functor that forgets the grading admits a canonical structure of a strong symmetric monoidal functor.
\end{notation}
 
\begin{definition}
  A \emph{weight-graded} $\S$-bimodule, (co)properad, or (co)module in $\cat S$ is a $\S$-bimodule, (co)properad, or (co)module, respectively, in the symmetric monoidal category of $\ZZ$-graded\footnote{Note that here we deviate from the usual convention that a weight grading is indexed by natural numbers (see e.g.\ \cite[§2.1]{Val}).} objects in $\cat S$.
  When $A$ is a weight-graded $\S$-bimodule, we denote the part in weight grading $w \in \ZZ$ by $\weight w A$.
  
  We will sometimes, somewhat abusively, say that a $\S$-bimodule in $\cat S$ \emph{is} weight graded, by which we mean that it comes equipped with the extra structure of a choice of preimage (up to isomorphism) under the forgetful functor $\SBimod[{\gr[\ZZ](\cat S)}] \to \SBimod[\cat S]$.
  The analogue is true for (co)properads and (co)modules.
\end{definition}

\begin{definition}
  A \emph{non-negatively} weight-graded $\S$-bimodule is a weight-graded $\S$-bimodule that is concentrated in non-negative weights.
  
  A non-negatively weight-graded properad $\operad P$ is \emph{connected} if its unit map induces an isomorphism $\unit \iso \weight 0 {\operad P}$.
  Dually, a non-negatively weight-graded coproperad $\operad C$ is \emph{connected} if its counit map induces an isomorphism $\weight 0 {\operad C} \iso \unit$.
\end{definition}

\begin{remark}\label{rem:cwg_is_aug}
  A connected weight-graded properad is canonically augmented by the composite $\operad P \to \weight 0 {\operad P} \iso \unit$ of the projection to the weight $0$ part and the inverse of the unit.
  Dually, a connected weight-graded coproperad is canonically coaugmented by $\unit \iso \weight 0 {\operad C} \to \operad C$.
\end{remark}

\subsection{Differential graded properads}

We recall, from \cite[§3]{Val}, differential graded versions of the notions introduced in the preceding subsections.
In our general setup of Section~\ref{sec:ccprod} this is easy to do: we can simply work in the symmetric monoidal category of differential graded vector spaces.
Furthermore, we recall what it means for a map of differential graded $\S$-bimodules to be a quasi-isomorphism and for a (co)module to be quasi-(co)free, as well as a Künneth-type theorem for the composition product.
We will work over some fixed field $k$.

\begin{notation}
  We denote by $\grVect$ the category of $\ZZ$-graded vector spaces and by $\dgVect$ the category of differential $\ZZ$-graded vector spaces (over the base field $k$).
  Both are equipped with their usual symmetric monoidal structures (i.e.\ those involving Koszul signs).
  We use homological grading conventions; in particular our differentials have degree $-1$.
  We will sometimes refer to an object of $\grVect$ as a \emph{homologically graded} vector space, to distinguish this grading from a potential weight grading.
\end{notation}

\begin{notation}
  For an integer $n \in \ZZ$, we write $\shift[n]$ for the $n$-fold shift functor of (differential) graded vector spaces.
  We often abbreviate $\shift[1]$ to just $\shift$.
\end{notation}

\begin{definition}
  A map of $\S$-bimodules in $\dgVect$ is a \emph{quasi-isomorphism} if it is a quasi-isomorphism at each object of $\S \times \opcat{\S}$.
  A map of (co)properads or (co)modules is a quasi-isomorphism if the underlying map of $\S$-bimodules is.
  A $\S$-bimodule in $\dgVect$ is \emph{acyclic} if its homology (taken objectwise) is isomorphic to $\unit[\ccprod]$.
\end{definition}

\begin{definition}
  A left module $M$ over a properad $\operad P$ in $\dgVect$ is \emph{quasi-free} if the graded left module underlying $M$ is free over the graded properad underlying $\operad P$ (i.e.\ if it is free after forgetting along $\dgVect \to \grVect$).\footnote{Note that this is the definition of Merkulov--Vallette \cite[§3.2]{MV} which is more general than the one in \cite[§3.4]{Val}.}
  
  Dually, a left comodule $K$ over a coproperad $\operad C$ in $\dgVect$ is \emph{quasi-cofree} if the graded left comodule underlying $K$ is cofree over the graded coproperad underlying $\operad C$.
\end{definition}

We will need the following result from \cite{Val}.
Since it is easy to do, we give a reformulation of its proof using our definition of the composition product.

\begin{lemma}[{\cite[Proposition 3.5]{Val}}] \label{lemma:Kuenneth}
  Assume that the base field $k$ has characteristic $0$ and let $A$ and $B$ be two $\S$-bimodules in $\dgVect$.
  Then there is a canonical isomorphism $\Ho{*}(A \ccprod B) \iso \Ho{*}(A) \ccprod \Ho{*}(B)$ which is natural in both $A$ and $B$.
\end{lemma}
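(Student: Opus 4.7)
The plan is to unwind the definition of $A \ccprod B$ from Definition~\ref{def:ccprod} and reduce the claim to two classical facts: the Künneth theorem for finite tensor products of complexes of vector spaces, and the exactness of coinvariants by a finite group action in characteristic $0$ (Maschke's theorem).

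First I would evaluate everything at a fixed biarity $(M, N) \in \S \times \opcat{\S}$, since both functors are computed objectwise. By the pointwise formula for the left Kan extension in Definition~\ref{def:ccprod}, one can write
\[ (A \ccprod B)(M, N) \iso \Dirsum_{[G]} \coinv{(A \ccprod_G B)}{\Aut(G)} \text{,} \]
where the sum runs over isomorphism classes of connected $2$-level graphs $G$ with $\biarity(G) = (M, N)$; this is the standard rewriting of a colimit indexed by a groupoid over a discrete category as a sum of coinvariants. The same formula holds for $H_*(A) \ccprod H_*(B)$, with the same indexing set of isomorphism classes.

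Next, for each fixed $G$ the $\S$-bimodule $A \ccprod_G B$ is by definition a \emph{finite} tensor product (indexed by the internal vertices of $G$) of the values $A(\biarity(v))$ and $B(\biarity(v))$. Since we are working over a field, the ordinary Künneth theorem for differential graded vector spaces gives a canonical isomorphism
\[ \Ho{*}(A \ccprod_G B) \iso \Ho{*}(A) \ccprod_G \Ho{*}(B) \text{,} \]
natural in $A$ and $B$ and compatible with the $\Aut(G)$-action (because the action just permutes tensor factors and acts on the biarities, and the Künneth isomorphism is natural).

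Finally, passing to $\Aut(G)$-coinvariants is an exact functor on $\grpring{\Aut(G)}$-modules because $\Aut(G)$ is a finite group and the base field has characteristic $0$ (Maschke). Hence coinvariants commute with homology, which gives $\Ho{*}\paren{\coinv{(A \ccprod_G B)}{\Aut(G)}} \iso \coinv{\Ho{*}(A \ccprod_G B)}{\Aut(G)}$. Taking direct sums over $[G]$ and combining with the previous isomorphism yields the desired natural isomorphism $\Ho{*}(A \ccprod B) \iso \Ho{*}(A) \ccprod \Ho{*}(B)$.

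The only real subtlety, and the step I would be most careful about, is ensuring that the Künneth isomorphism for $A \ccprod_G B$ is genuinely $\Aut(G)$-equivariant: the action of an automorphism of $G$ involves permuting tensor factors and acting on the indexing biarities, and one needs to check that the Koszul signs arising in the Künneth map agree with the Koszul signs arising in the automorphism action on the tensor product. This is a bookkeeping check, not a deep issue, and would follow from the fact that both the Künneth isomorphism and the $\Aut(G)$-action are built out of the same symmetric monoidal structure on $\dgVect$.
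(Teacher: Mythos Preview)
Your proof is correct and follows essentially the same approach as the paper: the paper's proof is a one-sentence summary stating that $\ccprod$ is built from tensor products, countable coproducts, and finite quotients, all of which commute with homology over a field of characteristic~$0$. You have simply unpacked these three ingredients explicitly (K\"unneth, direct sums, and Maschke for finite-group coinvariants) and added the equivariance check.
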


\begin{proof}
  The composition product $\ccprod$ is defined in terms of tensor products, (countable) coproducts, and finite quotients (since the automorphism group of a 2-level graph is finite).
  Over a field of characteristic $0$ all of these commute with taking homology.
\end{proof}

\subsection{Free properads} \label{sec:free_properads}

We fix our notation for the free properad and the cofree (connected) coproperad of \cite[§2.7f.]{Val}.
Moreover, we recall explicit descriptions of these objects from \cite{Val} and \cite{MV}.
We work in the category of differential graded vector spaces, i.e.\ $\cat S = \dgVect$, over some fixed field of characteristic $0$.

\begin{definition}
  We denote by $\free \colon \SBimod \to \Properad$ the left adjoint to the forgetful functor.
  The properad $\free(A)$ is called the \emph{free} properad on $A$.
\end{definition}

In \cite[Theorem 2.3]{Val} and \cite[§1.4]{MV} a more explicit construction is given (which in particular shows existence).
We will recall this now:

A \emph{graph with global flow} is a connected $1$-level graph where we also allow edges from level 1 to level 1 and from source to sink, and which we require to have no directed cycles.
Then, for some $\S$-bimodule $A$, the free properad $\free(A)$ is generated by graphs with global flow that have each internal vertex labeled by an element of $A$ of the corresponding biarity.
This is quotiented out by an equivalence relation induced from isomorphisms of such graphs which fix the linear orders on the source and sink vertices.
The $\S$-bimodule structure on the result comes from permuting the source and sink vertices.
(This could be phrased more formally by taking a left Kan extension, similar to Definition~\ref{def:ccprod}.)
The properad (i.e.\ monoid) structure comes from the observation that replacing each internal vertex of a connected $2$-level graph by a graph with global flow (and then forgetting the two levels, i.e.\ putting all internal vertices into level $1$) is again a graph with global flow; this is called \emph{grafting}.

Following \cite[Proposition 2.7]{Val} and \cite[§1.4]{MV}, we can also endow the $\S$-bimodule $\free(A)$ with the structure of a coproperad.
The comultiplication of a generator is defined as a sum over all possible ways to obtain its underlying graph with global flow as a grafting.
The resulting coproperad is cofree (i.e.\ the value of a right adjoint to the forgetful functor) among all so-called \emph{connected} coproperads (see \cite[§1.3]{MV} for the definition).

\begin{notation}
  We denote by $\cofree(A)$ the \emph{cofree connected coproperad} on $A$ as described above.
\end{notation}

\begin{remark}
  Both $\free(A)$ and $\cofree(A)$ are canonically connected weight graded.
  The weight grading of a generator is given by the number of internal vertices of the underlying graph with global flow.
\end{remark}

\subsection{Twisting morphisms and the bar construction of a properad}

We recall the (co)bar construction of a (co)augmented (co)properad from \cite{Val} as well as the notion of a twisting morphism and a proposition relating these to each other from \cite{MV}.
We again work in the category of differential graded vector spaces, i.e.\ $\cat S = \dgVect$, over some fixed field of characteristic $0$.

First recall the (co)bar construction of a (co)augmented (co)properad, originally introduced in \cite[§4.2]{Val} and concisely summarized in \cite[§3.5f.]{MV}.

\begin{notation}
  We denote by $\Bar$ the bar construction as a functor from augmented properads to coaugmented coproperads.
  Dually, we denote by $\Cobar$ the cobar construction as a functor from coaugmented coproperads to augmented properads.\footnote{
    Note that in \cite{Val}, this is called the \emph{reduced} (co)bar construction and denoted by $\bar{\mathcal{B}}$ and $\bar{\mathcal{B}}^c$, respectively.
    We are following the terminology and notation of \cite{MV}, which is analogous to the classical one for operads.
  }
\end{notation}

By definition $\Bar \operad P \defeq \big( {\cofree(\shift \bar{\operad P})}, d_{\bar{\operad P}} + d^{\Bar}_\theta \big)$, where $\bar{\operad P}$ denotes the kernel of the augmentation $\epsilon \colon \operad P \to \unit[\ccprod]$, the differential $d_{\bar{\operad P}}$ is induced by the one of $\bar{\operad P}$, and $d^{\Bar}_\theta$ is induced by the product of $\operad P$.
Similarly $\Cobar \operad C \defeq \big( {\free(\shift[-1] \bar{\operad C})}, d_{\bar{\operad C}} + d^{\Cobar}_\theta \big)$, where $\bar{\operad C}$ is the cokernel of the coaugmentation $\eta \colon \unit[\ccprod] \to \operad C$, the differential $d_{\bar{\operad C}}$ is induced by the one of $\bar{\operad C}$, and $d^{\Cobar}_\theta$ is induced by the coproduct of $\operad C$.

\begin{remark} \label{rem:bar_weight_graded}
  Note that, when $\operad P$ is a connected weight-graded properad, then $\Bar \operad P$ is canonically connected weight graded by the total weight in $\bar{\operad P}$.
  Analogously, when $\operad C$ is a connected weight-graded coproperad, then $\Cobar \operad C$ is canonically connected weight graded by the total weight in $\bar{\operad C}$.
\end{remark}

\begin{remark} \label{rem:bar_extra_grading}
  Both $\Bar \operad P$ and $\Cobar \operad C$ admit an additional grading by the number of internal vertices of the underlying graph with global flow.
  We denote the corresponding graded pieces by $(\Bar \operad P)_{[s]}$ and $(\Cobar \operad C)_{[s]}$.
  With respect to these gradings, the differentials $d_{\bar{\operad P}}$ and $d_{\bar{\operad C}}$ are homogeneous of degree $0$, whereas $d^{\Bar}_\theta$ has degree $-1$ and $d^{\Cobar}_\theta$ has degree $1$.
\end{remark}

The following definitions from \cite[§2.4 and §3.4]{MV} will play a central role in Section~\ref{sec:(co)bar}.

\begin{definition}
  Let $\operad P$ be a properad, $\operad C$ a coproperad, and $f$ and $g$ two homogeneous (not necessarily of degree $0$) maps $\operad C \to \operad P$ of the underlying homologically graded $\S$-bimodules.
  The \emph{convolution product} of $f$ and $g$ is the composite
  \[ f \conv g  \colon  \operad C  \xlongto{\Delta}  \operad C \ccprod \operad C  \xlongto{\epsilon \ccprod \epsilon}  \onlyone{\operad C} \ccprod \onlyone{\operad C}  \xlongto{f \ccprod g}  \onlyone{\operad P} \ccprod \onlyone{\operad P}  \xlongto{\eta \ccprod \eta}  \operad P \ccprod \operad P  \xlongto{\mu}  \operad P \]
  where $\epsilon$, $\Delta$, $\eta$, and $\mu$ are the structure maps of $\operad C$ and $\operad P$.
  Note that $\deg{f \conv g} = \deg f + \deg g$.
  
  A \emph{twisting morphism} is a degree $-1$ map $\alpha \colon \operad C \to \operad P$ of the underlying homologically graded $\S$-bimodules such that
  \[ d_{\operad P} \after \alpha + \alpha \after d_{\operad C} + \alpha \conv \alpha = 0 \]
  holds.
  We denote by $\Tw(\operad C, \operad P)$ the set of twisting morphisms from $\operad C$ to $\operad P$.
  This assembles into a functor $\Tw \colon \opcat{\Coproperad} \times \Properad \to \Set$ by letting a map of differential graded (co)properads act by composition.
\end{definition}

One of the main properties of the (co)bar construction of (co)properads, as well as of twisting morphisms, is the following:

\begin{proposition}[{\cite[Proposition 17]{MV}}] \label{prop:bar-cobar_properads}
  Let $\operad P$ be an augmented properad and $\operad C$ a coaugmented coproperad.
  Then there are bijections
  \[ \Hom[\aProperad](\Cobar \operad C, \operad P)  \iso  \aTw(\operad C, \operad P)  \iso  \Hom[\aCoproperad](\operad C, \Bar \operad P) \]
  natural in both $\operad C$ and $\operad P$.
  In particular $\Cobar$ is left adjoint to $\Bar$.
  Here $\aTw(\operad C, \operad P)$ denotes the set of those twisting morphisms $\alpha$ such that the composite $\epsilon_{\operad P} \after \alpha \after \eta_{\operad C}$ with the (co)augmentations is zero.
\end{proposition}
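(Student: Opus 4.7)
The plan is to derive both bijections from the universal properties of the free properad $\free(\shift[-1]\bar{\operad C})$ underlying $\Cobar \operad C$ and of the cofree connected coproperad $\cofree(\shift \bar{\operad P})$ underlying $\Bar \operad P$; the content of the proposition is then that under these adjunctions the condition of intertwining the twisted differentials translates into precisely the Maurer--Cartan equation $d_{\operad P} \after \alpha + \alpha \after d_{\operad C} + \alpha \conv \alpha = 0$.

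For the first bijection I would forget differentials and apply the free--forget adjunction in the augmented setting. Combined with the identification of the augmentation ideal, this exhibits augmented graded properad maps $\free(\shift[-1]\bar{\operad C}) \to \operad P$ as degree $-1$ maps $\bar{\operad C} \to \bar{\operad P}$, equivalently as degree $-1$ maps $\alpha \colon \operad C \to \operad P$ satisfying $\epsilon_{\operad P} \after \alpha \after \eta_{\operad C} = 0$, i.e.\ as the elements of $\aTw(\operad C, \operad P)$ with the Maurer--Cartan condition relaxed. It then remains to reinstate the differentials: since both the differential of $\Cobar \operad C$ and the one induced on its image are derivations on the free properad, compatibility can be tested on the weight-$1$ generators $\shift[-1]\bar{\operad C}$.

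The key computation is then the following. On $x \in \bar{\operad C}$, the internal piece $d_{\bar{\operad C}}$ contributes $\alpha \after d_{\operad C}(x)$, the differential of $\operad P$ contributes $d_{\operad P} \after \alpha(x)$, and the twist $d^{\Cobar}_\theta$---which is induced by the coproduct $\Delta_{\operad C}$, taking $x$ to the weight-$2$ part of the free properad, namely to $\bar{\operad C} \ccprod \bar{\operad C}$---contributes exactly $(\alpha \conv \alpha)(x)$, once the definition of $\conv$ is unfolded through $\onlyone{\operad C} \ccprod \onlyone{\operad C}$. The sum of the three terms vanishes precisely on the twisting morphisms, proving the first bijection. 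I expect this step to be the main technical obstacle, since it requires carefully tracking Koszul signs from the shift $\shift[-1]$ and matching the explicit comultiplication of $\operad C$ against the infinitesimal composition products appearing in the definition of $\conv$.

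The second bijection is obtained by a dual argument: I would use the cofree--forget adjunction for coaugmented coproperads to identify coaugmented graded coproperad maps $\operad C \to \cofree(\shift\bar{\operad P})$ with degree $-1$ maps $\operad C \to \operad P$ annihilated by $\epsilon_{\operad P} \after \blank \after \eta_{\operad C}$, and test differential compatibility by postcomposing with the projection $\Bar \operad P \to \shift\bar{\operad P}$ onto the weight-$1$ cogenerators. The twist $d^{\Bar}_\theta$, now induced by the product $\mu_{\operad P}$, produces the same convolution term $\alpha \conv \alpha$ and hence the same Maurer--Cartan equation. Naturality in both arguments is automatic from the naturality of the adjunctions, and the adjunction statement follows by composing the two bijections.
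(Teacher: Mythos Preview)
The paper does not supply its own proof of this proposition: it is stated as a citation to \cite[Proposition~17]{MV} and used as a black box. So there is nothing to compare your argument against in this paper; what you have written is essentially the standard argument one finds in Merkulov--Vallette, and the outline is correct.

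One point worth flagging in your dual half: the paper records that $\cofree(\shift\bar{\operad P})$ is cofree only among \emph{connected} coproperads (see Section~\ref{sec:free_properads}), not among all coaugmented coproperads. Your argument for the second bijection invokes the cofree--forget adjunction without restriction on $\operad C$, so as written it only directly yields the bijection when $\operad C$ is connected (equivalently, conilpotent). In practice this hypothesis is usually in force when the bar--cobar formalism is applied, and \cite{MV} builds it into their setup; but since the statement in the paper does not impose it, you should either note the implicit assumption or argue separately that the map $\operad C \to \Bar\operad P$ determined by $\alpha$ is automatically a coproperad map even without connectedness. Apart from this, the identification of the three contributions $d_{\operad P}\after\alpha$, $\alpha\after d_{\operad C}$, and $\alpha\conv\alpha$ on generators is exactly right, and the sign-tracking you anticipate is routine once the shift conventions are fixed.
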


This suggests the following definition, given in \cite[§3.7]{MV}.

\begin{definition}
  For $\operad P$ an augmented properad, we denote by $\univtwistBar[\operad P] \colon \Bar \operad P \to \operad P$ the twisting morphism associated to the identity of $\Bar \operad P$ under the bijection of Proposition~\ref{prop:bar-cobar_properads}.
  Similarly, for $\operad C$ an coaugmented coproperad, we denote by $\univtwistCobar[\operad C] \colon \operad C \to \Cobar \operad C$ the twisting morphism associated to the identity of $\Cobar \operad C$.
  They are called \emph{universal} twisting morphisms.
\end{definition}

\begin{remark} \label{rem:univ_pres_weight}
  If $\operad P$ is a connected weight-graded properad, then the universal twisting morphism $\univtwistBar[\operad P] \colon \Bar \operad P \to \operad P$ preserves the weight grading.
  Similarly, if $\operad C$ is a connected weight-graded coproperad, then the universal twisting morphism $\univtwistCobar[\operad C] \colon \operad C \to \Cobar \operad C$ preserves the weight grading.
\end{remark}

\subsection{The Koszul dual of a properad}

We again work in the category of differential graded vector spaces, i.e.\ $\cat S = \dgVect$, over some fixed field of characteristic $0$.
We first recall the Koszul dual of a (co)properad introduced in \cite[§7.1]{Val}.

\begin{notation}
  Let $\operad P$ be a connected weight-graded properad.
  We denote by $\KD{\operad P}$ the \emph{Koszul dual coproperad} of $\operad P$.
  It is canonically connected weight graded and comes equipped with a map $\KD{\operad P} \to \Bar \operad P$.
  
  Similarly, when $\operad C$ is a connected weight-graded coproperad, we denote by $\KD{\operad C}$ the \emph{Koszul dual properad} of $\operad C$.
  It is also canonically connected weight graded and comes equipped with a map $\Cobar \operad C \to \KD{\operad C}$.
\end{notation}

By definition $\weight w {(\KD{\operad P})} \defeq \Ho{[w]} \big( {\weight w {\Bar \operad P}}, d^{\Bar}_\theta \big)$, i.e.\ the weight $w$ part of $\KD{\operad P}$ is given by the degree $w$ homology (in the grading of Remark~\ref{rem:bar_extra_grading}) of the weight $w$ part of $\Bar \operad P$ equipped with the differential $d^{\Bar}_\theta$.
Dually $\weight w {(\KD{\operad C})} \defeq \Coho{[w]} \big( {\weight w {\Cobar \operad C}}, d^{\Cobar}_\theta \big)$.
The differentials $d_{\bar{\operad P}}$ and $d_{\bar{\operad C}}$ induce differentials on $\KD{\operad P}$ and $\KD{\operad C}$, respectively.
The canonical map $\KD{\operad P} \to \Bar \operad P$ comes from the fact that $\weight w {\Bar \operad P}_{[s]} \iso 0$ when $s > w$, since $\operad P$ is connected.
The canonical map $\Cobar \operad C \to \KD{\operad C}$ is obtained dually.

The following definition suggests itself in analogy with the classical case of (co)operads (see e.g.\ Getzler--Jones \cite[§2.4]{GJ} or Loday--Vallette \cite[§7.4.1]{LV}).

\begin{definition}
  Let $\operad P$ be a connected weight-graded properad.
  We denote by $\nattwist \colon \KD{\operad P} \to \operad P$ the twisting morphism associated to the canonical map $\KD{\operad P} \to \Bar \operad P$ under the bijection of Proposition~\ref{prop:bar-cobar_properads}.
  We call it the \emph{natural} twisting morphism.
\end{definition}

\begin{remark}
  The map $\nattwist \colon \KD{\operad P} \to \operad P$ is given by the composite $\KD{\operad P} \to \Bar \operad P \to \bar{\operad P} \subseteq \operad P$, where the second map is the degree $-1$ projection onto the part spanned by graphs with a single internal vertex.
  This implies that $\nattwist$ is a degree $-1$ isomorphism from $\weight 1 {\KD{\operad P}}$ to $\weight 1 {\operad P}$ and vanishes on other weights.
  In particular it respects the weight gradings of $\KD{\operad P}$ and $\operad P$.
\end{remark}

The following lemma, which describes the Koszul dual of a free properad, will be needed later.

\begin{lemma} \label{lemma:free_Koszul_dual}
  Let $\operad P = \free(A)$ be a free properad (which is canonically connected weight graded).
  Then $\KD{\operad P}$ is isomorphic to $\unit[\ccprod] \dirsum \shift A$ with the trivial coproperad structure.
  Moreover, the natural twisting morphism $\KD{\operad P} \to \operad P$ is in weight $1$ given by the canonical degree $-1$ isomorphism $\shift A \to A$ and is trivial otherwise.
\end{lemma}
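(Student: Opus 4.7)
The plan is to compute $\KD{\operad P}$ directly from its definition as the $[w]$-cohomology of $(\weight{w}{\Bar \operad P}, d^{\Bar}_\theta)$ and then to read off the induced coproperad structure and natural twisting morphism. First I would unpack the description $\Bar \operad P = \cofree(\shift \bar{\operad P})$: an element of $\weight{w}{\Bar \operad P}_{[s]}$ is an ``outer'' graph with global flow having $s$ internal vertices, each labeled by a non-zero element of $\shift \bar{\operad P} \subseteq \shift \free(A)$ whose weights (the numbers of ``inner'' vertices carried by the label) sum to $w$. Since each outer vertex has weight at least $1$, the complex is concentrated in degrees $1 \le s \le w$ (for $w \ge 1$), and $\weight{w}{\Bar \operad P}_{[w]}$ consists of those outer graphs in which every vertex is labeled by a single element of $\shift A$. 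For $w = 0$, only the empty outer graph occurs, so $\weight{0}{\KD{\operad P}} \iso \unit[\ccprod]$; for $w = 1$, the complex is concentrated in $[1]$ as $\shift A$ with no non-zero outgoing differential, yielding $\weight{1}{\KD{\operad P}} \iso \shift A$.

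The core step will be to show $\weight{w}{\KD{\operad P}} = 0$ for $w \ge 2$, for which I would argue that $d^{\Bar}_\theta \colon \weight{w}{\Bar \operad P}_{[w]} \to \weight{w}{\Bar \operad P}_{[w-1]}$ is injective. On a generator $O$ (an outer graph with $w$ vertices labeled by $\shift A$), this differential acts as a signed sum over adjacent pairs $(v_1, v_2)$ of outer vertices of $O$, each term being obtained by merging $v_1$ and $v_2$ into a single weight-$2$ outer vertex $u$ whose label records how the two original single-vertex labels are joined by the connecting edges of $O$. Conversely, given any target generator $(O', u)$ with distinguished weight-$2$ vertex $u$, one recovers the source $O$ uniquely by splitting $u$ back into its two inner vertices and transferring the edges joining them back into the outer graph. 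Thus the assignment (source generator, adjacent pair) $\mapsto$ target generator is injective on generators, and connectedness of $O$ together with $w \ge 2$ ensures that every source generator has at least one adjacent pair to merge; no cancellation can therefore occur, and $d^{\Bar}_\theta$ is injective.

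For the remaining structures, I would observe that the canonical map $\unit[\ccprod] \dirsum \shift A \hookrightarrow \Bar \operad P$, assembled from the unit of the cofree coproperad and the inclusion $\shift A \hookrightarrow \shift \bar{\operad P} \hookrightarrow \cofree(\shift \bar{\operad P})$ of generators, lands in the cycles identified above and, by the preceding computations, restricts to a bijection onto $\KD{\operad P}$. Since single-vertex elements of a cofree coproperad are primitives, the inherited coproperad structure on $\KD{\operad P}$ agrees with the trivial coproperad structure on $\unit[\ccprod] \dirsum \shift A$. The natural twisting morphism $\nattwist$ equals the composite $\KD{\operad P} \hookrightarrow \Bar \operad P \xto{\univtwistBar[\operad P]} \operad P$; since $\univtwistBar[\operad P]$ is the degree $-1$ projection onto single-vertex generators followed by the canonical desuspension, this annihilates $\unit[\ccprod]$ and in weight $1$ is the canonical degree $-1$ isomorphism $\shift A \to A$, as claimed.

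The hardest part will be the injectivity step: it requires careful bookkeeping of isomorphism classes of outer graphs, parallel edges between two outer vertices, and Koszul signs accompanying the merging operation. The argument ultimately hinges on the fact that the label of the merged outer vertex \emph{remembers} which edges of the original outer graph were absorbed, so distinct choices of adjacent pair produce distinguishable target generators and the signed sum cannot vanish for a non-zero source.
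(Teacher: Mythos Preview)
Your argument is correct, but the paper takes a much shorter route: it simply records that the statement ``is a direct consequence of the dual of \cite[Lemma~7.3]{Val}'', i.e.\ it cites Vallette's identification of the Koszul dual of a (co)free (co)properad rather than reproving it.

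Your direct computation is a legitimate alternative. The unfolded picture you use --- viewing an element of $\weight{w}{\Bar\free(A)}_{[s]}$ as an $A$-labelled graph with global flow on $w$ vertices together with a partition into $s$ connected blobs --- makes the key mechanism transparent: because the multiplication of $\free(A)$ is grafting and loses no information, the weight-$2$ label produced by merging a pair records exactly which edges were absorbed, so distinct adjacent pairs yield linearly independent targets and $d^{\Bar}_\theta$ is injective at the top degree. One point worth making explicit when you carry out the bookkeeping: the decomposition by the underlying $A$-labelled graph $\mathcal G$ is preserved by $d^{\Bar}_\theta$ (only the partition changes), so you may work one $\mathcal G$ at a time; there $P_w(\mathcal G)$ is one-dimensional and its image is a sum of \emph{distinct} partitions, so no sign cancellation is possible. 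Passing to $\Aut(\mathcal G)$-coinvariants afterwards (harmless in characteristic $0$) then gives the result. This is more work than the paper's citation, but it is self-contained and explains \emph{why} freeness forces the Koszul dual to collapse.
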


\begin{proof}
  This is a direct consequence of the dual of \cite[Lemma 7.3]{Val}.
\end{proof}

\newpage

\section{The bar construction of a module over a properad} \label{sec:(co)bar}

In this section, we work in the symmetric monoidal category of differential graded vector spaces $\dgVect$ over some fixed field of characteristic $0$.

We will introduce, relative to a twisting morphism $\alpha \colon \operad C \to \operad P$, the bar construction from modules over $\operad P$ to comodules over $\operad C$ as well as the cobar construction from comodules over $\operad C$ to modules over $\operad P$.
This subsumes the theory of the (co)bar constructions of (co)algebras over (co)operads relative to an operadic twisting morphism (which was originally introduced by Getzler--Jones \cite[§2.3]{GJ}).
It also yields as special cases the (co)bar construction of a (co)properad with (one-sided) coefficients of Vallette \cite[§4.2]{Val} as well as the Koszul complex with (one-sided) coefficients of \cite[§7.3]{Val}.

First we give the constructions and show that they are well defined.
Afterwards we prove some basic (but important) properties: that they are functorial with respect to maps of (co)modules as well as (suitably defined) maps of twisting morphisms, and that they preserve quasi-isomorphisms under some mild conditions.
Next we will show that the cobar construction relative to $\alpha$ is left adjoint to the bar construction relative to $\alpha$.
Moreover, (under some mild conditions) if a suitably defined ``Koszul complex'' is acyclic, then both the unit and the counit of the bar--cobar adjunction are quasi-isomorphisms.
(This yields, in particular, quasi-(co)free resolutions of (co)modules.)
Lastly, we will prove a ``Koszul criterion'' for twisting morphisms of (co)properads.

We will mostly adapt and generalize the treatment of the (co)bar construction of (co)algebras over (co)operads of Loday--Vallette \cite[§11]{LV}.
Note however that we will follow the sign conventions of Hirsh--Millès \cite[§5.2]{HM} instead of the ones of \cite{LV}.
We will also be a bit more detailed and more general than most existing sources; for example we will not generally assume that our chain complexes are non-negatively graded.

We will work with left (co)modules, but for everything in this section there is a completely analogous variant for right (co)modules (since there is an involution on the category of $\S$-bimodules that flips the order of the composition product).

\subsection{The bar and cobar constructions}

We will now define the (co)bar construction of a (co)module relative to a twisting morphism of (co)properads.
In this subsection, we fix some properad $\operad P$ and some coproperad $\operad C$.

\begin{definition} \label{def:bar}
  The \emph{bar construction} relative to a twisting morphism $\alpha \in \Tw(\operad C, \operad P)$ of a left $\operad P$-module $M$ is the quasi-free left $\operad C$-comodule
  \[ \Bar[\alpha](M)  \defeq  (\operad C \ccprod M, d_{\operad C \ccprod M} + \twBar{\alpha}) \]
  i.e.\ it is the cofree comodule $\operad C \ccprod M$ with differential twisted by a map $\twBar{\alpha}$.
  This map is defined as the composite
  \[ \operad C \ccprod M  \xlongto{\Delta}  (\operad C \ccprod \operad C) \ccprod M  \xlongto{\epsilon}  \operad C \ccprod \onlyone{\operad C} \ccprod M  \xlongto{\alpha}  \operad C \ccprod \onlyone{\operad P} \ccprod M  \xlongto{\eta}  \operad C \ccprod (\operad P \ccprod M)  \xlongto{\lambda}  \operad C \ccprod M \]
  where $\epsilon$ is the counit and $\Delta$ the comultiplication of $\operad C$, the map $\eta$ is the unit of $\operad P$, and $\lambda$ is the structure map of $M$.
\end{definition}

\begin{definition} \label{def:cobar}
  The \emph{cobar construction} relative to a twisting morphism $\alpha \in \Tw(\operad C, \operad P)$ of a left $\operad C$-comodule $K$ is the quasi-free left $\operad P$-module
  \[ \Cobar[\alpha](K)  \defeq  (\operad P \ccprod K, d_{\operad P \ccprod K} - \twCobar{\alpha}) \]
  i.e.\ it is the free module $\operad P \ccprod K$ with differential twisted by the negative of a map $\twCobar{\alpha}$.
  This map is defined as the composite
  \[ \operad P \ccprod K  \xlongto{\rho}  \operad P \ccprod (\operad C \ccprod K)  \xlongto{\epsilon}  \operad P \ccprod \onlyone{\operad C} \ccprod K  \xlongto{\alpha}  \operad P \ccprod \onlyone{\operad P} \ccprod K  \xlongto{\eta}  (\operad P \ccprod \operad P) \ccprod K  \xlongto{\mu}  \operad P \ccprod K \]
  where $\epsilon$ is the counit of $\operad C$, the map $\eta$ is the unit and $\mu$ the multiplication of $\operad P$, and $\rho$ is the structure map of $K$.
\end{definition}

\begin{remark}
  We obtain various known constructions as special cases: the bar construction relative to the universal twisting morphism $\univtwistBar[\operad P] \colon \Bar \operad P \to \operad P$ has appeared in \cite[§4.2.2]{Val}, the cobar construction relative to the universal twisting morphism $\univtwistCobar[\operad C] \colon \operad C \to \Cobar \operad C$ in \cite[§4.2.3]{Val}, and the bar construction relative to the natural twisting morphism $\nattwist \colon \KD{\operad P} \to \operad P$ in \cite[§7.3.1]{Val}.
  
  Moreover, when $\alpha$ is a twisting morphism of (co)operads, then our constructions can be restricted to yield (co)bar constructions of operadic (co)modules (i.e.\ those concentrated in biarities $(1, m)$).
  Relative to the universal twisting morphisms these have appeared in work of Ching \cite[§7]{Chi}, though in a very different framework.
  Restricting further to operadic (co)algebras (i.e.\ left (co)modules concentrated in biarity $(1, 0)$), our constructions specialize to the (co)bar constructions relative to $\alpha$ of a (co)algebra over a (co)operad (see e.g.\ Getzler--Jones \cite[§2.3]{GJ} or Loday--Vallette \cite[§11.2]{LV}).
\end{remark}

We will now prove that the (co)bar construction is well defined, i.e.\ that it actually yields a differential graded (co)module.
The following lemma is the most important input and is needed for proving that the differential squares to zero.
It generalizes \cite[Lemma 6.4.7]{LV}\footnote{Some steps of the proof, which are relevant when considering general (pr)operads but do not appear in the case of associative algebras, are omitted there.}; see also Markl--Shnider--Stasheff \cite[Lemma II.3.45]{MSS} and (the proof of) \cite[Lemma 5.1.3]{HM}.

\begin{lemma} \label{lemma:tw_square}
  Let $\alpha \colon \operad C \to \operad P$ be a twisting morphism.
  Then we have $(\twBar{\alpha})^2 = \twBar{\alpha \conv \alpha}$ and $(\twCobar{\alpha})^2 = - \twCobar{\alpha \conv \alpha}$.
\end{lemma}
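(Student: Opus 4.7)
I would establish both identities by a direct computation from the definitions of $\twBar{\alpha}$ and $\twCobar{\alpha}$, using the coassociativity of the (co)multiplications, the (co)module axioms, and the associativity of the properad multiplication.

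For the bar identity, I would expand the composite $\twBar{\alpha} \after \twBar{\alpha} \colon \operad{C} \ccprod M \to \operad{C} \ccprod M$. Each of its two halves contributes a comultiplication $\Delta$, an application of $\alpha$ to a single marked vertex, and an application of the module structure $\lambda$. By coassociativity of $\Delta$, the two nested comultiplications can be combined into a single threefold comultiplication $(\Delta \ccprod \id) \after \Delta = (\id \ccprod \Delta) \after \Delta$, now carrying two distinguished vertices. By the module axiom $\lambda \after (\mu \ccprod \id) = \lambda \after (\id \ccprod \lambda)$, the two nested applications of $\lambda$ collapse into a single $\lambda$ preceded by a multiplication $\mu$ combining the two $\operad{P}$-outputs of the $\alpha$'s. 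Since $\mu \after (\alpha \ccprod \alpha) \after \Delta$ (with the appropriate counit-restrictions) is by definition the convolution $\alpha \conv \alpha$, the resulting composite is precisely $\twBar{\alpha \conv \alpha}$.

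The cobar identity is formally dual, using coassociativity of the comodule structure $\rho \colon K \to \operad{C} \ccprod K$ and associativity of $\mu$ on $\operad{P}$. The combinatorial identity goes through in the same way, but now the minus sign appears as a Koszul sign: whenever the degree $-1$ map $\alpha$ is applied to the middle factor of an expression of the form $\operad{P} \ccprod \onlyone{\operad{C}} \ccprod K$, it must be moved past the element of $\operad{P}$ on the left, producing a sign $(-1)^{|p|}$. Two iterations of $\twCobar{\alpha}$ yield two such signs whose parities differ by one, because the element of $\operad{P}$ encountered the second time already contains a factor of $\alpha$ of degree $-1$; the net discrepancy relative to the single Koszul sign appearing in the definition of $\alpha \conv \alpha$ on $\operad{C}$ is exactly $-1$. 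The analogous issue does not arise in the bar case, because there $\alpha$ is moved past an element of $\operad{C}$ whose degree already matches the one appearing inside $\alpha \conv \alpha$, and the two contributions cancel. The main obstacle is therefore the careful bookkeeping of these Koszul signs in the cobar case; once they are tracked systematically, both identities reduce to the same coassociativity-and-(co)module-axiom argument.
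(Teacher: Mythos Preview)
Your outline has a genuine gap that is specific to the properadic (as opposed to operadic) setting. After unfolding $(\twCobar{\alpha})^2$ using coassociativity and associativity as you describe, what you obtain factors through $\operad P \ccprod \onlyone{\operad C} \ccprod \onlyone{\operad C} \ccprod K$, i.e.\ connected $4$-level graphs with two distinguished middle vertices. But these two distinguished vertices need \emph{not} be joined by an edge: the graph is only required to be globally connected, possibly via the unit-labeled vertices and the outer levels. The convolution $\alpha \conv \alpha$ by definition only sees the ``joined'' part $\operad P \ccprod (\operad C \mid \operad C) \ccprod K$, so your argument correctly identifies that piece as $\pm\,\twCobar{\alpha \conv \alpha}$. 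What is missing is any treatment of the ``non-joined'' part $f_\nmid$ factoring through $\operad P \ccprod (\operad C \nmid \operad C) \ccprod K$; this term is genuinely present for properads (it vanishes for operads, which is why the classical algebra/operad proofs do not mention it), and nothing in your plan forces it to be zero.

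The paper handles $f_\nmid$ by a symmetry argument: there is an involution $\tau$ of $\operad P \ccprod (\operad C \nmid \operad C) \ccprod K$ that swaps the two middle levels (well defined precisely because the two distinguished vertices are not adjacent). The (co)multiplications are invariant under $\tau$ by (co)unitality, whereas $(\alpha \nmid \alpha) \after \tau_{\operad C} = - \tau_{\operad P} \after (\alpha \nmid \alpha)$ by the Koszul sign rule for swapping two odd maps. Hence $f_\nmid = - f_\nmid = 0$. Your sign discussion is also somewhat off: the sign in the cobar case does not come from ``moving $\alpha$ past an element of $\operad P$'' but from the order in which the two copies of $\alpha$ are applied, via $(\id \tensor \alpha) \after (\alpha \tensor \id) = -\,\alpha \tensor \alpha$; in the bar case the order is reversed and no sign appears.
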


\begin{proof}
  We will prove the second equation; the first can be deduced analogously, except for a difference in sign indicated below.
  
  We begin by noting that $(\twCobar{\alpha})^2$ is equal to any of the composites in the following commutative diagram
  \[
  \begin{tikzcd}[row sep = 10]
     &[-10] \operad P \ccprod \operad C \ccprod K \drar[bend left = 15]{{\id} \ccprod {\id} \ccprod \rho} &[-10] \\
    \operad P \ccprod K \urar[bend left = 15]{{\id} \ccprod \rho} \drar[bend right = 15][swap]{{\id} \ccprod \rho} & & \operad P \ccprod \operad C \ccprod \operad C \ccprod K \rar{\epsilon \ccprod \epsilon} & \operad P \ccprod \onlyone{\operad C} \ccprod \onlyone{\operad C} \ccprod K \ar{dd}{\alpha \ccprod \id} \ar[bend left = 70, start anchor = south east, end anchor = north east]{dddd}{- \alpha \ccprod \alpha} \\
     & \operad P \ccprod \operad C \ccprod K \urar[bend right = 15][swap]{{\id} \ccprod \Delta \ccprod \id} \\[-8]
     & & & \operad P \ccprod \onlyone{\operad P} \ccprod \onlyone{\operad C} \ccprod K \ar{dd}{{\id} \ccprod \alpha} \\[-8]
     & \operad P \ccprod \operad P \ccprod K \dlar[bend right = 15][swap]{\mu \ccprod \id} \\
    \operad P \ccprod K & & \operad P \ccprod \operad P \ccprod \operad P \ccprod K \ular[bend right = 15][swap]{\mu \ccprod {\id} \ccprod \id} \dlar[bend left = 15]{{\id} \ccprod \mu \ccprod \id} & \operad P \ccprod \onlyone{\operad P} \ccprod \onlyone{\operad P} \ccprod K \lar[swap]{\eta \ccprod \eta} \\
    & \operad P \ccprod \operad P \ccprod K \ular[bend left = 15]{\mu \ccprod \id}
  \end{tikzcd}
  \]
  where the sign on the far right is a consequence of the equation $({\id} \tensor f) \after (g \tensor {\id}) = (-1)^{\deg f \deg g} g \tensor f$ for homogeneous maps of graded vector spaces.
  (This does not occur for $\twBar{\alpha}$, since there the two instances of $\alpha$ are applied in the other order; this is the origin of the difference in signs between the two cases.)
  
  Now we note that $\operad P \ccprod \onlyone{\operad C} \ccprod \onlyone{\operad C} \ccprod K$ splits as a direct sum of two parts: the one where the two vertices labeled by an element of $\operad C$ are connected by an edge, and the one where they are not.
  We denote the former by $\operad P \ccprod (\operad C \mid \operad C) \ccprod K$ and the latter by $\operad P \ccprod (\operad C \nmid \operad C) \ccprod K$.
  This splitting also induces a decomposition $(\twCobar{\alpha})^2 = f_\mid + f_\nmid$ into the parts that factor through the respective direct summands.
  
  We will first study $f_\mid$.
  For this, we note there is an isomorphism $\operad P \ccprod (\operad C \mid \operad C) \ccprod K \iso \operad P \ccprod \onlyone{\onlyone{\operad C} \ccprod \onlyone{\operad C}} \ccprod K$.
  Using the commutativity of the diagram
  \[
  \begin{tikzcd}
    \operad P \ccprod \operad C \ccprod K \rar{\Delta} \dar[swap]{\epsilon} & \operad P \ccprod \operad C \ccprod \operad C \ccprod K \dar{\epsilon \ccprod \epsilon} & \\
    \operad P \ccprod \onlyone{\operad C} \ccprod K \rar{\Delta} & \operad P \ccprod \onlyone{\operad C \ccprod \operad C} \ccprod K \rar{\epsilon \ccprod \epsilon} & \operad P \ccprod \onlyone{\onlyone{\operad C} \ccprod \onlyone{\operad C}} \ccprod K
  \end{tikzcd}
  \]
  and a similar one in the dual situation (with $\operad P$ in the middle instead of $\operad C$), we deduce that $f_\mid = - \twCobar{\alpha \conv \alpha}$.
  
  Hence it is enough to show that $f_\nmid = 0$.
  To this end, we consider the involution $\tau_{\operad C}$ of $\operad P \ccprod (\operad C \nmid \operad C) \ccprod K$ which ``swaps the second and third levels''.
  Pictorially it does the following
  \[
  \begin{tikzcd}[column sep = -10, row sep = 10]
     & & \boxed{k_1} \dlar \drar & & & \boxed{k_2} \dar & & \\
     & \boxed{c_1} \dlar \drar & & \boxed{\unit} \drar & & \boxed{\unit} \dlar & & \\
    \boxed{\unit} \drar & & \boxed{\unit} \dlar & & \boxed{c_2} \dar & & & \\
     & \boxed{p_1} & & & \boxed{p_2} & & &
  \end{tikzcd}
  \qquad \longleftrightarrow \qquad
  \begin{tikzcd}[column sep = -10, row sep = 10]
    & \boxed{k_1} \dlar \drar & & \boxed{k_2} \dlar \\
    \boxed{\unit} \dar & & \boxed{c_2} \dar & \\
    \boxed{c_1} \dar[bend left] \dar[bend right] & & \boxed{\unit} \dar & \\
    \boxed{p_1} & & \boxed{p_2} &
  \end{tikzcd}
  \]
  i.e.\ in each component of the union of the second and third levels which contains a vertex labeled by an element of $\operad C$, that vertex is moved to the other level (and its original level is filled up with units); any other component is untouched.
  This is to be understood with the usual Koszul sign conventions in mind; in particular the example above would involve a sign of $(-1)^{\deg {c_1} \deg{c_2}}$.
  In the same way we can define an involution $\tau_{\operad P}$ of $\operad P \ccprod (\operad P \nmid \operad P) \ccprod K$.
  
  Now we note that precomposing the composite
  \[ \mu_\nmid \colon  \operad P \ccprod (\operad P \nmid \operad P) \ccprod K  \xlongto{\eta \ccprod \eta}  \operad P \ccprod \operad P \ccprod \operad P \ccprod K  \iso  \operad P \ccprod (\operad P \ccprod \operad P) \ccprod K  \xlongto{{\id} \ccprod \mu \ccprod \id}  \operad P \ccprod \operad P \ccprod K \]
  with $\tau_{\operad P}$ is again equal to $\mu_\nmid$, by unitality of the multiplication of $\operad P$ (and by keeping track of the Koszul signs involved in $\tau_{\operad P}$ and the associativity isomorphism).
  Dually, we obtain that postcomposing the composite
  \[ \Delta_\nmid \colon  \operad P \ccprod \operad C \ccprod K  \xlongto{{\id} \ccprod \Delta \ccprod \id}  \operad P \ccprod (\operad C \ccprod \operad C) \ccprod K  \iso  \operad P \ccprod \operad C \ccprod \operad C \ccprod K  \xlongto{\epsilon \ccprod \epsilon}  \operad P \ccprod (\operad C \nmid \operad C) \ccprod K \]
  with $\tau_{\operad C}$ is equal to $\Delta_\nmid$.
  
  We need one more ingredient.
  To this end, we write
  \[ (\alpha \nmid \alpha) \colon  \operad P \ccprod (\operad C \nmid \operad C) \ccprod K  \longto  \operad P \ccprod (\operad P \nmid \operad P) \ccprod K \]
  for the restriction of the map ${\id} \ccprod \alpha \ccprod \alpha \ccprod \id$.
  Then we have
  \[ (\alpha \nmid \alpha) \after \tau_{\operad C}  =  - \tau_{\operad P} \after (\alpha \nmid \alpha) \]
  as a consequence of the identity $(f \tensor g) \after \sigma = (-1)^{\deg f \deg g} \sigma' \after (g \tensor f)$ for homogeneous maps $f \colon V \to V'$ and $g \colon W \to W'$ of graded vector spaces and the symmetry isomorphisms $\sigma \colon W \tensor V \to V \tensor W$ and $\sigma' \colon W' \tensor V' \to V' \tensor W'$ of their tensor products.
  
  Assembling all of the above, we have
  \begin{align*}
    f_\nmid &= - (\mu \ccprod \id) \after \mu_\nmid \after (\alpha \nmid \alpha) \after \Delta_\nmid \after ({\id} \ccprod \rho) \\
     &= - (\mu \ccprod \id) \after \mu_\nmid \after (\alpha \nmid \alpha) \after \tau_{\operad C} \after \Delta_\nmid \after ({\id} \ccprod \rho) \\
     &= (\mu \ccprod \id) \after \mu_\nmid \after \tau_{\operad P} \after (\alpha \nmid \alpha) \after \Delta_\nmid \after ({\id} \ccprod \rho) \\
     &= (\mu \ccprod \id) \after \mu_\nmid \after (\alpha \nmid \alpha) \after \Delta_\nmid \after ({\id} \ccprod \rho) \\
     &= - f_\nmid
  \end{align*}
  which implies $f_\nmid = 0$, as we wanted to show.
\end{proof}

\begin{lemma}
  Let $\alpha \colon \operad C \to \operad P$ be a twisting morphism.
  Then the bar construction $\Bar[\alpha]$ and the cobar construction $\Cobar[\alpha]$ are well defined.
\end{lemma}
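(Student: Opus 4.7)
The plan is to verify three properties of the total differential in each case: (i) it has degree $-1$; (ii) it squares to zero; (iii) it is compatible with the (co)module structure.

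Property (i) is immediate: $\alpha$ has degree $-1$ and all other maps appearing in the definitions of $\twBar{\alpha}$ and $\twCobar{\alpha}$ (namely $\Delta$, $\epsilon$, $\eta$, $\mu$, $\lambda$, $\rho$) are degree zero.

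For property (ii), I would expand the square of the total differential, use that the underlying $d_{\operad C \ccprod M}$ and $d_{\operad P \ccprod K}$ square to zero, and obtain
\[ (d_{\operad C \ccprod M} + \twBar{\alpha})^2 \;=\; [d_{\operad C \ccprod M}, \twBar{\alpha}] + (\twBar{\alpha})^2\text{.} \]
Lemma~\ref{lemma:tw_square} identifies $(\twBar{\alpha})^2 = \twBar{\alpha \conv \alpha}$. A straightforward diagram chase, using that $\Delta$, $\epsilon$, $\eta$, $\mu$, and $\lambda$ are all chain maps (by assumption that $\operad P$, $\operad C$, and $M$ live in $\dgVect$), yields $[d_{\operad C \ccprod M}, \twBar{\alpha}] = \twBar{d_{\operad P} \after \alpha + \alpha \after d_{\operad C}}$. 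The twisting equation $d_{\operad P} \after \alpha + \alpha \after d_{\operad C} + \alpha \conv \alpha = 0$ then forces the total square to vanish. The cobar case is strictly analogous; the sign in $(\twCobar{\alpha})^2 = -\twCobar{\alpha \conv \alpha}$ from Lemma~\ref{lemma:tw_square} matches the sign $-\twCobar{\alpha}$ in Definition~\ref{def:cobar} so that everything again collapses via the twisting equation.

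For property (iii), $\Bar[\alpha](M)$ is built on the cofree left $\operad C$-comodule $\operad C \ccprod M$, whose structure map is $\Delta \ccprod \id$. The untwisted differential $d_{\operad C \ccprod M}$ is a coderivation because $\Delta$ is a chain map. For $\twBar{\alpha}$, the cleanest approach is to invoke the universal property of cofree comodules: a coderivation out of $\operad C \ccprod M$ is uniquely determined by its corestriction along $\epsilon \ccprod \id \colon \operad C \ccprod M \to \onlyone{\operad C} \ccprod M$, and by construction $\twBar{\alpha}$ factors through exactly such a projection (the first $\Delta$ splits off one level of $\operad C$ on which $\alpha$ acts, leaving the rest of $\operad C$ free for the coaction). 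Dually, $\twCobar{\alpha}$ is a derivation of the free $\operad P$-module $\operad P \ccprod K$ since it factors through the inclusion $\onlyone{\operad P} \ccprod K \to \operad P \ccprod K$ induced by $\eta$, and derivations of free modules are freely determined by such maps.

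The main technical obstacle is keeping track of signs and of the precise meaning of ``(co)derivation'' in the presence of the composition product $\ccprod$, which is considerably more intricate than an ordinary tensor product. However, once one adopts the universal-property viewpoint above, the verification reduces to inspecting the definitions of $\twBar{\alpha}$ and $\twCobar{\alpha}$, which by construction have precisely the shape required.
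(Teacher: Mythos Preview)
Your proposal is correct and follows essentially the same approach as the paper. For part (iii) the paper opts for the direct check rather than the universal-property phrasing---it writes down the square expressing compatibility of $\twCobar{\alpha}$ with the structure map $\mu \ccprod \id_K$ and observes that it commutes by associativity of $\mu$---but this is exactly what your derivation argument unwinds to.
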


\begin{proof}
  We need to show that the differentials actually square to zero (that they are homogeneous of degree $-1$ is clear) and that the left (co)module structures are compatible with the differentials.
  We will do so for the cobar construction; the case of the bar construction is analogous, except for a difference in sign coming from Lemma~\ref{lemma:tw_square}.
  
  First we prove that the differential squares to zero.
  We have
  \[ (d_{\operad P \ccprod K} - \twCobar{\alpha})^2  =  - d_{\operad P \ccprod K} \after \twCobar{\alpha} - \twCobar{\alpha} \after d_{\operad P \ccprod K} + (\twCobar{\alpha})^2 \]
  since $d_{\operad P \ccprod K}$ is a differential.
  By compatibility of the unit $\eta$ and the multiplication $\mu$ of $\operad P$ with the differential $d_{\operad P}$, we have $d_{\operad P \ccprod K} \after \twCobar{\alpha}  =  \twCobar{d_{\operad P} \after \alpha}$ and similarly that $\twCobar{\alpha} \after d_{\operad P \ccprod K}  =  \twCobar{\alpha \after d_{\operad C}}$.
  Hence, by Lemma~\ref{lemma:tw_square}, we have
  \[ (d_{\operad P \ccprod K} - \twCobar{\alpha})^2  =  - \twCobar{d_{\operad P} \after \alpha} - \twCobar{\alpha \after d_{\operad C}} - \twCobar{\alpha \conv \alpha}  =  - \twCobar{d_{\operad P} \after \alpha + \alpha \after d_{\operad C} + \alpha \conv \alpha} \]
  which vanishes since $\alpha$ is a twisting morphism.
  This proves the first part.
  
  Now note that the following diagram commutes
  \[
  \begin{tikzcd}[column sep = 40]
    \operad P \ccprod (\operad P \ccprod K) \rar{\mu \ccprod \id[K]} \dar[swap]{{\id[\operad P]} \ccprod \twCobar{\alpha}} & \operad P \ccprod K \dar{\twCobar{\alpha}} \\
    \operad P \ccprod (\operad P \ccprod K) \rar{\mu \ccprod \id[K]} & \operad P \ccprod K
  \end{tikzcd}
  \]
  by associativity of the multiplication $\mu$ of $\operad P$.
  Together with $(\operad P \ccprod K, d_{\operad P \ccprod K})$ being a left $\operad P$-module, this implies that $\Cobar[\alpha] K$ is a left $\operad P$-module as well (via the structure map of the underlying free module).
\end{proof}

\subsection{Functoriality}

In this section, we give a precise formulation of the functoriality of the (co)bar construction.
It is not only functorial for maps of (co)modules but also for the following notion of maps of twisting morphisms:

\begin{definition}
  We denote by $\Twcat$ the category whose objects are tuples $(\operad C, \operad P, \alpha)$ with $\operad C$ a coproperad, $\operad P$ a properad, and $\alpha \colon \operad C \to \operad P$ a twisting morphism.
  The set of morphisms from $(\operad C, \operad P, \alpha)$ to $(\operad C', \operad P', \alpha')$ is given by commutative squares
  \[
  \begin{tikzcd}
    \operad C \rar{\alpha} \dar[swap]{f} & \operad P \dar{g} \\
    \operad C' \rar{\alpha'} & \operad P'
  \end{tikzcd}
  \]
  where $f$ is a map of coproperads and $g$ is a map of properads.
  Note that there is a canonical projection functor $\Twcat \to \Coproperad \times \Properad$ given by forgetting $\alpha$.
  
  We will sometimes write just $\alpha$ for an object of $\Twcat$ and leave $\operad C$ and $\operad P$ implicit.
\end{definition}

We want to formulate the whole functoriality of the (co)bar construction with a single domain category.
Since maps of twisting morphisms can involve different (co)properads we thus need to introduce a notion of a map of (co)modules over potentially different (co)properads.

\begin{definition}
  We denote by $\LMod{}$ the category whose objects are pairs $(\operad P, M)$ with $\operad P$ a properad and $M$ a left $\operad P$-module.
  The set of morphisms from $(\operad P, M)$ to $(\operad P', M')$ is given by pairs $(f, g)$ of a map of properads $f \colon \operad P \to \operad P'$ and a map of $\S$-bimodules $g \colon M \to M'$ such that the following diagram commutes
  \[
  \begin{tikzcd}
    \operad P \ccprod M \rar{f \ccprod g} \dar[swap]{\lambda_M} & \operad P' \ccprod M' \dar{\lambda_{M'}} \\
    M \rar{g} & M'
  \end{tikzcd}
  \]
  where $\lambda_M$ and $\lambda_{M'}$ are the structure maps of $M$ and $M'$, respectively.
  (This condition is equivalent to requiring $g$ to be a map of left $\operad P$-modules, where the left $\operad P$-module structure on $M'$ is obtained by pulling back along $f$.)
  Note that there is a canonical projection functor $\LMod{} \to \Properad$ given by $(\operad P, M) \mapsto \operad P$.
  
  Analogously we define a category $\LComod{}$ which comes equipped with a canonical projection functor $\LComod{} \to \Coproperad$.
\end{definition}

\begin{remark}
  The fiber over $\operad P$ of the functor $\LMod{} \to \Properad$ is isomorphic to $\LMod{\operad P}$.
  Dually, the fiber over $\operad C$ of the functor $\LComod{} \to \Coproperad$ is isomorphic to $\LComod{\operad C}$.
\end{remark}

\begin{lemma} \label{lemma:functoriality}
  The bar constructions canonically assemble into a functor
  \[ \Bar[\blank](\blank)  \colon  \Twcat \times_{\Properad} \LMod{}  \longto  \LComod{} \]
  over $\Coproperad$ (i.e.\ it is compatible with the projections to $\Coproperad$).
  Similarly the cobar constructions canonically assemble into a functor
  \[ \Cobar[\blank](\blank)  \colon  \Twcat \times_{\Coproperad} \LComod{}  \longto  \LMod{} \]
  over $\Properad$.
  
  In particular, for $\alpha \colon \operad C \to \operad P$ a twisting morphism, we obtain functors
  \[ \Bar[\alpha] \colon \LMod{\operad P} \to \LComod{\operad C}  \qquad \text{and} \qquad  \Cobar[\alpha] \colon \LComod{\operad C} \to \LMod{\operad P} \]
  by restricting to $\alpha \in \Twcat$.
\end{lemma}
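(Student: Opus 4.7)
The plan is to construct the functor explicitly and verify that it respects the various structures by routine diagram chases. On objects the functor sends $((\operad C, \operad P, \alpha), (\operad P, M))$ to $(\operad C, \Bar[\alpha](M))$, which is well defined by the preceding subsection. For a morphism in $\Twcat \times_{\Properad} \LMod{}$, consisting of a map of twisting morphisms $(f, g)\colon (\operad C, \operad P, \alpha) \to (\operad C', \operad P', \alpha')$ together with a $\S$-bimodule map $h\colon M \to M'$ such that $(g, h)$ is a morphism in $\LMod{}$, I would assign the map $f \ccprod h \colon \operad C \ccprod M \to \operad C' \ccprod M'$. Functoriality (preservation of identities and composition) on these underlying maps is then immediate from the functoriality of the composition product $\ccprod$.

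The content of the proof is to verify three compatibilities of $f \ccprod h$. First, that it is a morphism in $\LComod{}$ covering $f$: this follows from $f$ being a map of coproperads (so it intertwines $\Delta_{\operad C}$ with $\Delta_{\operad C'}$) together with the fact that the comodule structures on the source and target are the cofree ones defined via $\Delta_{\operad C}$ and $\Delta_{\operad C'}$. Second, that $f \ccprod h$ is a chain map with respect to the untwisted differential $d_{\operad C \ccprod M}$: this is immediate from $f$ and $h$ being chain maps. Third, and most importantly, that $f \ccprod h$ commutes with the twisting terms, i.e.\ that
\[
  (f \ccprod h) \after \twBar{\alpha}  =  \twBar{\alpha'} \after (f \ccprod h) \text{.}
\]
This is the only calculation with any substance. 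I would verify it by pasting the defining five-step composite of $\twBar{\alpha}$ (Definition~\ref{def:bar}) next to the analogous one for $\twBar{\alpha'}$ and inserting $f \ccprod h$ between them; the resulting diagram commutes square by square, using in turn the compatibility of $f$ with $\Delta$ and $\epsilon$, the compatibility of $g$ with $\eta$, the relation $g \after \alpha = \alpha' \after f$ (which is exactly the morphism condition in $\Twcat$), and finally the condition that $(g, h)$ is a morphism in $\LMod{}$, which gives the commutativity of the bottom square involving $\lambda_M$ and $\lambda_{M'}$.

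The cobar case is entirely analogous, with $\operad C$ replaced by $\operad C'$ in the fibered product, and the roles of comultiplication/counit swapped with those of multiplication/unit; the sign in front of $\twCobar{\alpha}$ is inert for this purpose since it cancels on both sides. The compatibility over $\Coproperad$ (resp.\ $\Properad$) is tautological from the construction, since on objects the underlying coproperad (resp.\ properad) is just read off from the input and on morphisms it is $f$ (resp.\ $g$). I do not expect any serious obstacle: the only nontrivial step is the diagram chase verifying compatibility with the twisting term, and that decomposes cleanly into the five elementary compatibilities just listed.
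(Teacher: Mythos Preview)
Your proposal is correct and follows essentially the same approach as the paper: define the functor on objects by $(\alpha, M) \mapsto (\operad C, \Bar[\alpha] M)$ and on morphisms by $f \ccprod h$, then verify compatibility with the differentials and comodule structures. The paper's proof simply asserts that these compatibilities ``follow directly from the definitions'', whereas you have spelled out the diagram chase (comultiplication, counit, the relation $g \after \alpha = \alpha' \after f$, unit, and the module-map condition on $(g,h)$); this is exactly the verification the paper leaves to the reader.
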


\begin{proof}
  The functor $\Bar[\blank](\blank)$ sends an object $(\alpha, M)$ of $\Twcat \times_{\Properad} \LMod{}$, consisting of a twisting morphism $\alpha \colon \operad C \to \operad P$ and a left $\operad P$-module $M$, to the object $(\operad C, \Bar[\alpha] M)$ of $\LComod{}$.
  For a morphism
  \[ (f \colon \operad C \to \operad C', g \colon \operad P \to \operad P', h \colon M \to M')  \colon  (\alpha \colon \operad C \to \operad P, M)  \longto  (\alpha' \colon \operad C' \to \operad P', M') \]
  of $\Twcat \times_{\Properad} \LMod{}$, we define its induced map by
  \[ \Bar[\alpha] M  =  \operad C \ccprod M  \xlongto{f \ccprod h}  \operad C' \ccprod M' = \Bar[\alpha'] M' \]
  and note that it follows directly from the definitions that this map is compatible with the differentials and the left comodule structures.
  The case of the cobar construction is analogous.
\end{proof}

\begin{remark} \label{rem:wg_bar}
  These functors canonically lift to the weight-graded setting.
  In particular, if $\alpha \colon \operad C \to \operad P$ is a twisting morphism between weight-graded (co)properads that preserves the weight grading, then both $\Bar[\alpha]$ and $\Cobar[\alpha]$ canonically lift to functors between the categories of weight-graded (co)modules.
\end{remark}

\subsection{Preservation of quasi-isomorphisms} \label{sec:quasi-isos}

For the rest of this section, we fix some properad $\operad P$ and some coproperad $\operad C$.

In this subsection, we give conditions under which the bar and the cobar construction preserve quasi-isomorphisms.
Most of them will be relatively mild, often being about different variants of homological connectivity.
One of them will occur frequently, so we give it a name in the following definition.

\begin{definition}
  A $\S$-bimodule in $\grVect$ is \emph{left connective} if there exists a positive constant $\epsilon \in \RRpos$ such that the part in biarity $(m, n)$ is concentrated in homological degrees $\ge \epsilon m$ for all $m, n \in \NN$.
\end{definition}

\begin{remark}
  A $\S$-bimodule that is concentrated in biarities $(1, n)$ (e.g.\ a (co)operad or a (co)module over one) is left connective if and only if it is concentrated in positive homological degrees.
  This is called being ``connected'' by Loday--Vallette \cite[§11.2.7]{LV}.
\end{remark}

We begin by formalizing what is needed for the (co)bar construction to preserve a given quasi-isomorphism.
At the end of this subsection, we will prove easy-to-check criteria implying these conditions.

\begin{definition} \label{def:detachable}
  Let $\alpha \colon \operad C \to \operad P$ be a twisting morphism.
  A morphism $f \colon K \to K'$ of left $\operad C$-comodules is \emph{detachable with respect to $\alpha$} if there exists, for $L \in \set{K, K'}$, a $\ZZ$-indexed grading $G^L_\bullet \Cobar[\alpha] L$ of the homologically graded $\S$-bimodule underlying $\Cobar[\alpha] L = \operad P \ccprod L$ such that:
  \begin{itemize}
      \item The part of the differential of $\Cobar[\alpha] L$ coming from $d_L$ is homogeneous of degree $0$ with respect to $G^L$.
      The part coming from $d_{\operad P}$ is homogeneous of some non-positive degree that is the same for $K$ and $K'$.
      \item The twisting term fulfills $\twCobar{\alpha}(G^L_p \Cobar[\alpha] L) \subseteq \Dirsum_{p' < p} G^L_{p'} \Cobar[\alpha] L$ for all $p \in \ZZ$.
      \item The induced map $\Cobar[\alpha] f$ is homogeneous of degree $0$ with respect to the gradings $G^K$ and $G^{K'}$.
      \item The grading $G^L$ is bounded below separately in each homological degree, biarity, and weight (when $\Cobar[\alpha] L$ has a weight grading compatible with $G^L$).
  \end{itemize}
  
  Similarly, a morphism $g \colon M \to M'$ of left $\operad P$-comodules is \emph{detachable with respect to $\alpha$} if there exists, for $N \in \set{M, M'}$, a $\ZZ$-indexed grading $G^N_\bullet \Cobar[\alpha] N$ of the homologically graded $\S$-bimodule underlying $\Bar[\alpha] N = \operad C \ccprod N$ such that the conditions analogous to the ones above are fulfilled.
\end{definition}

\begin{proposition} \label{prop:cobar_quasi-iso}
  Let $\alpha \colon \operad C \to \operad P$ be a twisting morphism and $f \colon K \to K'$ a quasi-isomorphism of left $\operad C$-comodules that is detachable with respect to $\alpha$.
  Then $\Cobar[\alpha] f$ is a quasi-isomorphism of left $\operad P$-modules.
\end{proposition}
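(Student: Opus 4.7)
The plan is to use the grading $G^L_\bullet$ (for $L \in \set{K, K'}$) promised by detachability as an increasing filtration
\[ F_p \Cobar[\alpha] L \defeq \Dirsum_{p' \le p} G^L_{p'} \Cobar[\alpha] L \]
and then compare the induced spectral sequences under $\Cobar[\alpha] f$. First I would verify that $F_\bullet$ is a filtration of chain complexes: the piece $d_L$ of the total differential preserves $G^L$-degree, the piece $d_{\operad P}$ is of some fixed non-positive $G^L$-degree, and $\twCobar{\alpha}$ strictly decreases $G^L$-degree by the second bullet of Definition~\ref{def:detachable}. All three therefore preserve $F_\bullet$. Moreover, by the fourth bullet, $F_\bullet$ is bounded below separately in each homological degree and each biarity, so the associated spectral sequence converges to $\Ho{*}(\Cobar[\alpha] L)$ biarity-wise.

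Next I would analyze the associated graded. The twisting part disappears on $E^0$ because it strictly decreases the filtration. If $d_{\operad P}$ happens to have $G^L$-degree $0$, then $E^0$ carries the full untwisted differential of the free module $\operad P \ccprod L$, and by the Künneth-type Lemma~\ref{lemma:Kuenneth} (valid in characteristic $0$) we have $E^1 \iso \Ho{*}(\operad P) \ccprod \Ho{*}(L)$. If instead $d_{\operad P}$ has strictly negative $G^L$-degree, then only $d_L$ survives on $E^0$, so $E^1 \iso \operad P \ccprod \Ho{*}(L)$ by the same Künneth result applied with $\operad P$ regarded as a chain complex with zero differential; a further page then incorporates $d_{\operad P}$ to give an analogous description. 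In either case, the third bullet of Definition~\ref{def:detachable} ensures that $\Cobar[\alpha] f$ respects the filtrations, and the induced map on $E^1$ (or the first page on which both sides stabilize in the description above) is, by naturality of Künneth, essentially $\id[{\operad P}] \ccprod \Ho{*}(f)$, which is an isomorphism because $\Ho{*}(f)$ is.

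Finally, I would invoke the classical comparison theorem for convergent filtered spectral sequences (see e.g.\ Weibel \cite[Theorem~5.5.11]{Wei}): an isomorphism on some finite page combined with convergence forces an isomorphism on the abutment. Applying this biarity by biarity, $\Cobar[\alpha] f$ is a quasi-isomorphism at each object of $\S \times \opcat{\S}$, which by definition means it is a quasi-isomorphism of left $\operad P$-modules.

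The main obstacle I anticipate is being careful with the convergence argument and with the case in which $d_{\operad P}$ has strictly negative $G^L$-degree, since then one must go beyond $E^1$; the requirement in Definition~\ref{def:detachable} that this degree be the same for $K$ and $K'$ is precisely what ensures the comparison takes place on the same page for both filtered complexes. An entirely analogous argument, with $F_\bullet$ replaced by the decreasing filtration $F^p \Bar[\alpha] N = \bigoplus_{p' \ge p} G^N_{p'} \Bar[\alpha] N$ and the roles of units/counits dualized, will handle the bar construction in Proposition~\ref{prop:cobar_quasi-iso}'s dual.
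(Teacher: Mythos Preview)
Your approach is essentially the same as the paper's: define the filtration from the $G^L$-grading, note the twisting term drops out on $E^0$, apply K\"unneth (Lemma~\ref{lemma:Kuenneth}), and conclude via the comparison theorem for spectral sequences.

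One simplification the paper makes that you should adopt: in the case where $d_{\operad P}$ has strictly negative $G^L$-degree, you do not need to ``go to a further page'' at all. The $E^0$ differential is then $d_{\operad P^0 \ccprod L}$ where $\operad P^0$ is $\operad P$ with zero differential; K\"unneth still applies and gives $E^1 \iso \Ho{*}(\operad P^0) \ccprod \Ho{*}(L)$, on which the induced map is $\id \ccprod \Ho{*}(f)$, an isomorphism. So in both cases the map is an isomorphism already on $E^1$, and Weibel's comparison theorem finishes the argument uniformly. The requirement that the $G^L$-degree of $d_{\operad P}$ be the same for $K$ and $K'$ is only there so that the two $E^0$-pages carry the same differential (either both $d_{\operad P \ccprod \blank}$ or both $d_{\operad P^0 \ccprod \blank}$), not because you need to match later pages.
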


\begin{proof}
  By assumption we have, for $L \in \set{K, K'}$, a grading $G^L_\bullet \Cobar[\alpha] L$ as in Definition~\ref{def:detachable}.
  We obtain an increasing $\ZZ$-indexed filtration $F^L$ on $\Cobar[\alpha] L$
  \[ F^L_p \Cobar[\alpha] L  \defeq  \Dirsum_{p' \le p} G^L_{p'} \Cobar[\alpha] L \]
  associated to $G^L$.
  Note that the differential of $\Cobar[\alpha] L$ does not increase the grading, so that $F^L$ is a filtration of differential graded $\S$-bimodules.
  
  In particular the filtration $F^L$ induces a spectral sequence.
  Its zeroth page is isomorphic to $\operad P \ccprod L$ with differential $d^0$ given by either $d_{\operad P \ccprod L}$ or $d_{\operad P^0 \ccprod L}$ where $\operad P^0$ is $\operad P$ equipped with the zero differential (depending on whether the part of the differential of $\Cobar[\alpha] L$ coming from $d_{\operad P}$ is homogeneous of degree $0$ or smaller than $0$).
  
  Moreover, the map $\Cobar[\alpha] f$ is compatible with the filtrations and hence induces a morphism of spectral sequences.
  By our description of the zeroth differential and Lemma~\ref{lemma:Kuenneth}, it induces a quasi-isomorphism on the zeroth page and thus an isomorphism on the first page.
  This implies that $\Cobar[\alpha] f$ induces an isomorphism on homology (see e.g.\ Weibel \cite[Theorem 5.5.11]{Wei}), since both $F^K$ and $F^{K'}$ are exhaustive and bounded below separately in each homological degree, biarity, and weight (when $\Cobar[\alpha] L$ is weight graded).
\end{proof}

\begin{proposition} \label{prop:bar_quasi-iso}
  Let $\alpha \colon \operad C \to \operad P$ be a twisting morphism and $f \colon M \to M'$ a quasi-isomorphism of left $\operad P$-modules that is detachable with respect to $\alpha$.
  Then $\Bar[\alpha] f$ is a quasi-isomorphism of left $\operad C$-comodules.
\end{proposition}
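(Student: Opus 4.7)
The plan is to mirror the proof of Proposition~\ref{prop:cobar_quasi-iso}, dualizing it step by step to the bar side. The detachability hypothesis provides, for each $N \in \set{M, M'}$, a $\ZZ$-indexed grading $G^N_\bullet \Bar[\alpha] N$ on the underlying homologically graded $\S$-bimodule $\operad C \ccprod N$ with the properties listed in Definition~\ref{def:detachable}. From this I would build an increasing exhaustive filtration
\[ F^N_p \Bar[\alpha] N  \defeq  \Dirsum_{p' \le p} G^N_{p'} \Bar[\alpha] N \text{.} \]

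The first thing to check is that $F^N$ is a filtration of differential graded $\S$-bimodules. This follows directly from the three detachability conditions on the differential: the contribution from $d_N$ is homogeneous of degree $0$, the contribution from $d_{\operad C}$ is homogeneous of some non-positive degree, and the twisting term $\twBar{\alpha}$ strictly decreases the grading. Thus the total differential of $\Bar[\alpha] N$ preserves each $F^N_p$.

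Next, I would consider the spectral sequence associated to $F^N$. By the same case analysis as in the proof of Proposition~\ref{prop:cobar_quasi-iso}, its zeroth page is isomorphic to $\operad C \ccprod N$ equipped with differential $d^0$ given by either $d_{\operad C \ccprod N}$ or $d_{\operad C^0 \ccprod N}$, where $\operad C^0$ denotes $\operad C$ with zero differential. The map $\Bar[\alpha] f$ is compatible with the filtrations (since, by assumption, it is homogeneous of degree $0$ with respect to $G^M$ and $G^{M'}$), and so induces a morphism of spectral sequences. Since $f \colon M \to M'$ is a quasi-isomorphism of $\S$-bimodules, Lemma~\ref{lemma:Kuenneth} (using that the base field has characteristic $0$) shows that the map induced by $\Bar[\alpha] f$ on the zeroth page is a quasi-isomorphism, hence an isomorphism on the first page.

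The final step is convergence: because the gradings $G^M$ and $G^{M'}$ are bounded below separately in each homological degree, biarity, and weight, both filtrations $F^M$ and $F^{M'}$ are exhaustive and bounded below in each such fixed slot, so the classical comparison theorem for filtered complexes (e.g.\ \cite[Theorem 5.5.11]{Wei}) applies slotwise and forces $\Bar[\alpha] f$ to be a quasi-isomorphism. The only subtlety, as in the cobar case, is ensuring that the spectral sequence converges to the honest homology; this is precisely what the last clause of detachability guarantees, and it is also what justifies performing the Künneth computation objectwise.
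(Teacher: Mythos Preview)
Your proposal is correct and is exactly the argument the paper has in mind: the paper's own proof consists of the single sentence ``The proof is completely analogous to the one of Proposition~\ref{prop:cobar_quasi-iso},'' and you have faithfully spelled out that analogy, including the filtration, the identification of the zeroth page, the use of Lemma~\ref{lemma:Kuenneth}, and the convergence via \cite[Theorem 5.5.11]{Wei}.
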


\begin{proof}
  The proof is completely analogous to the one of Proposition~\ref{prop:cobar_quasi-iso}.
\end{proof}

We now give conditions that imply that a morphism of left $\operad C$-comodules is detachable while being more straightforward to check.
The last one, which is rather technical, is required for an application in Section~\ref{sec:modular} which is not covered by the other cases.

\begin{lemma} \label{lemma:detachable_comodules}
  Let $\alpha \colon \operad C \to \operad P$ be a twisting morphism and $f \colon K \to K'$ a morphism of left $\operad C$-comodules.
  If any of the following conditions holds, then the morphism $f$ is detachable with respect to $\alpha$.
  \begin{enumerate}[label=(\alph*)]
    \item The image of $\alpha$ is concentrated in negative homological degrees, and both $K$ and $K'$ are concentrated in non-positive homological degrees.
    \item The twisting morphism $\alpha$ vanishes on all biarities $(m, n)$ such that $m \ge n$, the properad $\operad P$ is concentrated in non-negative homological degrees, and both $K$ and $K'$ are left connective.
    \item The twisting morphism $\alpha$ vanishes on all biarities $(m, n)$ such that $m \le n$.
    \item The properad $\operad P$ is weight graded, the image of $\alpha$ is concentrated in positive weights, there exists a positive constant $\epsilon \in \RRpos$ such that $\weight w {\operad P}$ is concentrated in homological degrees $\ge \epsilon w$ for all $w$, and both $K$ and $K'$ are concentrated in non-negative homological degrees.
    \item Both $\operad P$ and $\operad C$ are weight graded, $\alpha$ preserves the weight grading and vanishes on non-positive weights, and $f \colon K \to K'$ is a map of weight-graded left comodules over $\operad C$.
    Moreover, there exist two constants $\beta, \gamma \in \RR$ with $\beta + \gamma < 0$ such that $\weight w {K(m, n)}$ and $\weight w {K'(m, n)}$ are only non-trivial if $\beta w \le m$, and $\weight w {\operad P(m, n)}$ is only non-trivial if $n - m \le \gamma w$.
  \end{enumerate}
\end{lemma}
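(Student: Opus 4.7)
The strategy in each of the five cases is to exhibit a $\ZZ$-indexed grading $G^L_\bullet$ on the $\S$-bimodule underlying $\Cobar[\alpha] L = \operad P \ccprod L$ (for $L \in \set{K, K'}$) satisfying the four conditions of Definition~\ref{def:detachable}. Since $d_{\operad P}$ and $d_L$ act vertex-wise on a generator of $\operad P \ccprod L$ without altering biarities, weights, or the identity of individual vertices, their compatibility with $G^L$ will be routine in every case; likewise, $\Cobar[\alpha] f = \id[\operad P] \ccprod f$ automatically respects $G^L$ whenever the grading is defined in terms of invariants of the $\operad P$-level. The substantive work is thus to choose an invariant of the $\operad P$-level that strictly moves under the twisting term $\twCobar{\alpha}$, and to verify boundedness.

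The guiding observation is that $\twCobar{\alpha}$ inserts into a generator of $\operad P \ccprod L$ exactly one new $\operad P$-vertex, namely $\alpha(c)$ for some Sweedler-type factor $c$ of the coaction of $L$; this new vertex has the same biarity and (when applicable) weight as $c$, and homological degree $\deg{c} - 1$. Each hypothesis (a)--(e) constrains one of these invariants in precisely the way needed.

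For (a), I would grade by the total homological degree of the $\operad P$-level. Non-positivity of the degrees of $L$ forces $\deg{c} \le 0$ whenever $\alpha(c) \ne 0$, so the inserted vertex has degree $\le -1$ and $\twCobar{\alpha}$ strictly decreases $G^L$; boundedness in fixed total degree $d$ is immediate since the $L$-level contributes non-positively to the total. For (b) and (c), I would grade by $\mp E$, where $E$ denotes the number of edges between the two levels (equivalently, the total input arity of the $\operad P$-level). A direct computation shows that inserting a vertex of biarity $(a', b')$ changes $E$ by $b' - a'$, and the hypotheses on the biarity support of $\alpha$ then give the required strict monotonicity. Boundedness is trivial in (c) since $E \ge 0$; in (b), left connectivity of $L$ bounds the total $L$-degree below by $\epsilon E$, so $E \le d/\epsilon$ in fixed total degree $d$. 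For (d) and (e), I would grade by $-W_{\operad P}$, the negative of the total weight of the $\operad P$-level; since $\alpha$ lands in positive weights in both cases, the inserted vertex strictly increases $W_{\operad P}$. In (d), the twisting term does not preserve any weight grading on $\Cobar[\alpha] L$, so it suffices to bound $W_{\operad P}$ in fixed homological degree and biarity; this follows from $\weight{w}{\operad P}$ being in degrees $\ge \epsilon w$ combined with non-negativity of $L$-degrees.

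The main obstacle will be the boundedness verification in case (e), where the twist preserves a genuine weight grading on $\Cobar[\alpha] L$ and no hypothesis ties weight to homological degree. To handle this, I plan to bound $W_{\operad P}$ above in fixed biarity $(m, n)$ and total weight $W = W_{\operad P} + W_L$ by summing the hypothesized biarity-to-weight inequalities vertex-wise: the estimate on $\operad P$-vertices yields $E - m \le \gamma W_{\operad P}$ and the one on $L$-vertices yields $\beta (W - W_{\operad P}) \le E$. Combining these gives $\beta W - m \le (\beta + \gamma) W_{\operad P}$; since $\beta + \gamma < 0$, dividing flips the inequality and produces the desired upper bound on $W_{\operad P}$, completing the verification.
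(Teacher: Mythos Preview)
Your proposal is correct and follows essentially the same approach as the paper: you choose the same gradings as the paper in cases (a)--(d), and in (e) you grade by $-W_{\operad P}$ whereas the paper grades by the total weight $W_L$ in $L$; since $\alpha$ preserves the weight grading these differ by the constant $W$ in each fixed total weight and are equivalent, and indeed your boundedness computation is a rearrangement of the paper's.

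One small correction in case (a): the sentence ``non-positivity of the degrees of $L$ forces $\deg{c} \le 0$ whenever $\alpha(c) \ne 0$'' is not right as stated---it is the hypothesis that the \emph{image of $\alpha$} lies in negative degrees that gives $\deg{\alpha(c)} \le -1$ directly (and hence the strict decrease of $G^L$); the non-positivity of $L$ is what you use, correctly, for the boundedness step. This is only an expository slip and does not affect the argument.
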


\begin{proof}
  We define gradings $G^L_\bullet \Cobar[\alpha] L$ as in Definition~\ref{def:detachable}, depending on which case we are in.
  \begin{enumerate}[label=(\alph*)]
    \item We set $G^L_p \Cobar[\alpha] L$ to be those elements of $\Cobar[\alpha] L = \operad P \ccprod L$ whose total homological degree in $\operad P$ is equal to $p$.
    \item We set $G^L_p \Cobar[\alpha] L$ to be those elements of $\Cobar[\alpha] L = \operad P \ccprod L$ such that the sum of the outgoing arities of the vertices labeled by $L$ is equal to $-p$.
    \item We set $G^L_p \Cobar[\alpha] L$ to be those elements of $\Cobar[\alpha] L = \operad P \ccprod L$ such that the sum of the outgoing arities of the vertices labeled by $L$ is equal to $p$.
    \item We set $G^L_p \Cobar[\alpha] L$ to be those elements of $\Cobar[\alpha] L = \operad P \ccprod L$ with total weight in $\operad P$ equal to $-p$.
    \item We set $G^L_p \Cobar[\alpha] L$ to be those elements of $\Cobar[\alpha] L = \operad P \ccprod L$ with total weight in $L$ equal to $p$.
  \end{enumerate}

  In each case except (e), it is clear by inspection that the gradings fulfill the necessary conditions.
  In case (e), the only thing that is a bit tricky to check is that the gradings are bounded below separately in each weight.
  We will prove this now.
  
  Let $\omega$ be an element of $\operad P \ccprod K$ represented by a labeled $2$-level graph $H$, and write $\w(v)$ for the weight (in $\operad P$ or $K$) of the label of a vertex $v$ of $H$.
  Then we have
  \[ \sum_{v_1 \in \Vert[1](H)} \outedges(v_1)  =  \sum_{v_2 \in \Vert[2](H)} \inedges(v_2)  =  \outedges(H) + \sum_{v_2 \in \Vert[2](H)} (\inedges(v_2) - \outedges(v_2)) \]
  which combines with our assumptions into the inequality
  \[ \sum_{v_1 \in \Vert[1](H)} \beta \w(v_1)  \le  \outedges(H) + \sum_{v_2 \in \Vert[2](H)} \gamma \w(v_2) \]
  which is equivalent to
  \[ (\beta + \gamma) \sum_{v_1 \in \Vert[1](H)} \w(v_1)  \le  \outedges(H) + \gamma \w(\omega) \]
  where $\w(\omega)$ is the weight of $\omega$.
  Since $\beta + \gamma$ is negative by assumption, this gives a lower bound on the total weight in $K$.
  The exact same argument also works for $F^{K'}$.
\end{proof}

\begin{remark}
  In the classical case, when $\operad P$ and $\operad C$ are (co)operads, the first part of condition (b) of Lemma~\ref{lemma:detachable_comodules} above is equivalent to requiring $\alpha$ to vanish on arities $0$ and $1$.
  In particular, when we restrict to coalgebras over $\operad C$ (i.e.\ those left comodules that are concentrated in arity $0$), Proposition~\ref{prop:cobar_quasi-iso} recovers \cite[Proposition 11.2.6]{LV}.
\end{remark}

We now give conditions that imply that a morphism of left $\operad P$-modules is detachable.
They are mirrors of the ones we gave for comodules, except for the connectivity conditions having moved around.

\begin{lemma} \label{lemma:detachable_modules}
  Let $\alpha \colon \operad C \to \operad P$ be a twisting morphism and $f \colon M \to M'$ a morphism of left $\operad P$-modules.
  If any of the following conditions holds, then the morphism $f$ is detachable with respect to $\alpha$.
  \begin{enumerate}[label=(\alph*)]
    \item The coproperad $\operad C$ is concentrated in non-negative homological degrees, and $\alpha$ vanishes on homological degree $0$ (this is the case if $\operad P$ is trivial in homological degree $-1$ for example).
    \item The twisting morphism $\alpha$ vanishes on all biarities $(m, n)$ such that $m \ge n$.
    \item The twisting morphism $\alpha$ vanishes on all biarities $(m, n)$ such that $m \le n$, the coproperad $\operad C$ is concentrated in non-negative homological degrees, and both $M$ and $M'$ are left connective.
    \item The coproperad $\operad C$ is non-negatively weight graded and $\alpha$ vanishes on weight $0$.
    \item Both $\operad P$ and $\operad C$ are weight graded, $\alpha$ preserves the weight grading and has image concentrated in positive weights, and $f \colon M \to M'$ is a map of weight-graded left modules over $\operad P$.
    Moreover, there exist two constants $\beta, \gamma \in \RR$ with $\beta + \gamma > 0$ such that $\weight w {M(m, n)}$ and $\weight w {M'(m, n)}$ are only non-trivial if $\beta w \le m$, and $\weight w {\operad C(m, n)}$ is only non-trivial if $n - m \le \gamma w$.
  \end{enumerate}
\end{lemma}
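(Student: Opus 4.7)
My plan is to mirror the proof of Lemma~\ref{lemma:detachable_comodules}: in each case (a)--(e) I would construct a $\ZZ$-indexed grading $G^N_\bullet \Bar[\alpha] N$ on the homologically graded $\S$-bimodule $\operad C \ccprod N$ (for $N \in \set{M, M'}$) and verify the four axioms of Definition~\ref{def:detachable}. Unpacking $\twBar{\alpha}$, it comultiplies a single $\operad C$-vertex $v$ of biarity $(m_v, n_v)$ into a top piece of biarity $(m_v, k)$ and an ``infinitesimal middle'' piece of biarity $(k, n_v)$, applies $\alpha$ to the middle to obtain a $\operad P$-element of biarity $(k, n_v)$, and finally contracts that element together with the $N$-layer below via the module structure; computing $\sum_u m_u = \sum_v n_v$ for the level-$1$ ($N$) and level-$2$ ($\operad C$) vertices before and after shows that this sum changes precisely by $k - n_v$. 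Since $d_N$, $d_{\operad C}$, and $\Bar[\alpha] f = \id[\operad C] \ccprod f$ all fix the underlying $2$-level graph and act only on labels, their homogeneity axioms are automatic in each case, so I would focus on the strict-decrease and boundedness-below conditions.

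For the gradings I would take: in (a), the total homological degree carried by $\operad C$-vertices---the infinitesimal piece has degree $\ge 1$ by the assumption that $\alpha$ vanishes on degree $0$ and $\operad C$ is non-negatively graded, so $\twBar{\alpha}$ strictly decreases; bounded below by $0$; in (b), the sum $\sum_u m_u$ of outgoing arities of $N$-vertices, which changes by $k - n_v < 0$ since $\alpha$ is supported in biarities with $m < n$, and is bounded below by $0$; in (c), its negative $-\sum_u m_u$---strictly decreasing by the reverse inequality $k > n_v$, and bounded below because the left connectivity of $N$ combined with $\operad C$ being non-negatively graded gives the inequality $d \ge \epsilon \sum_u m_u$ on the total homological degree $d$; in (d), the total weight carried by $\operad C$-vertices (strict decrease by $\alpha$ vanishing on weight $0$, bounded below by $0$ because $\operad C$ is non-negatively weight-graded); and in (e), the negative $-W_N$ of the total weight carried by $N$-vertices, which strictly decreases since $\alpha$ preserves weight and has image in positive weights, so the new $N$-vertex absorbs a positive amount of weight originally sitting in the $\operad C$-layer.

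The main obstacle is the boundedness below in case (e). Fixing a biarity $(M_{\text{out}}, N_{\text{in}})$ and a total weight $W$, I would write $E = \sum_u m_u = \sum_v n_v$ for the number of internal edges and $W_N = \sum_u w_u$, $W_{\operad C} = \sum_v w_v$ for the respective total weights in each layer, so that $W = W_N + W_{\operad C}$. Summing the hypothesis $\beta w_u \le m_u$ over $N$-vertices yields $\beta W_N \le E$, and summing $n_v - m_v \le \gamma w_v$ over $\operad C$-vertices, together with $\sum_v m_v = M_{\text{out}}$, yields $E - M_{\text{out}} \le \gamma W_{\operad C}$. Combining these gives $(\beta + \gamma) W_N \le M_{\text{out}} + \gamma W$, and the hypothesis $\beta + \gamma > 0$ then produces an upper bound on $W_N$ depending only on the biarity and on $W$, hence the required lower bound on $-W_N$ in each triple of homological degree, biarity, and weight. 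The same bilinear bound is the only nontrivial ingredient; in particular no analogue of the comodule-case connectivity juggling is needed for cases (a), (b), and (d), which work essentially by inspection.
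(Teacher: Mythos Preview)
Your proposal is correct and follows exactly the paper's approach: it names the same gradings the paper has in mind (total $\operad C$-degree in (a); $\pm\sum_u m_u$ in (b)/(c); total $\operad C$-weight in (d); $-W_N$ in (e)), and your case-(e) inequality $(\beta+\gamma)W_N \le M_{\mathrm{out}} + \gamma W$ is precisely the computation the paper carries out (in the dual comodule lemma) and alludes to here. The only place to be slightly careful is case (a), where $d_{\operad C}$ is homogeneous of degree $-1$ rather than $0$; this is still allowed by Definition~\ref{def:detachable}, so nothing breaks.
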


\begin{proof}
  The proof is analogous to the one of Lemma~\ref{lemma:detachable_comodules}.
  However, in cases (b) to (e), we have to invert the gradings used there, which changes what additional connectivity assumptions we need to guarantee that the gradings are bounded below.
  Similarly, in case (a), we need to use different assumptions to guarantee that the twisting term lowers the grading and that the grading is bounded below.
\end{proof}

\begin{remark}
  When restricting to (co)operads and algebras over them, case (a) recovers \cite[Proposition 11.2.3]{LV}.
  When $\alpha$ is the universal twisting morphism $\Bar \operad P \to \operad P$, case (d) recovers part of \cite[Proposition 4.9]{Val}.
\end{remark}

\subsection{The bar--cobar adjunction} \label{sec:adjunction}

We will now exhibit the cobar construction as being left adjoint to the bar construction.
This will be done, similarly to Proposition~\ref{prop:bar-cobar_properads}, via a notion of ``twisting morphism'', this time for (co)modules instead of (co)properads.
We begin by introducing this notion.
It generalizes the twisting morphisms of (co)algebras over (co)operads of Getzler--Jones \cite[§2.3]{GJ} (see also Loday--Vallette \cite[§11.1.1]{LV}).

\begin{definition}
  Let $\alpha \colon \operad C \to \operad P$ be a twisting morphism, $M$ a left $\operad P$-module, $K$ a left $\operad C$-comodule, and $\varphi \colon K \to M$ a homogeneous map of the underlying graded $\S$-bimodules.
  We write $\twistop{\alpha}(\varphi)$ for the composite
  \[ K  \xlongto{\rho}  \operad C \ccprod K  \xlongto{\epsilon}  \onlyone{\operad C} \ccprod K  \xlongto{\only{\id}{\alpha} \ccprod \varphi}  \onlyone{\operad P} \ccprod M  \xlongto{\eta}  \operad P \ccprod M  \xlongto{\lambda}  M \]
  where $\epsilon$ is counit of $\operad C$, $\eta$ the unit of $\operad P$, and $\rho$ and $\lambda$ are the structure maps of $K$ and $M$, respectively.
  Note that $\twistop{\alpha}(\varphi)$ is of degree $\deg \varphi - 1$.
  
  A \emph{twisting morphism relative to $\alpha$} is a degree $0$ map $\varphi \colon K \to M$ of the underlying graded $\S$-bimodules such that
  \[ d_M \after \varphi - \varphi \after d_K + \twistop{\alpha}(\varphi) = 0 \]
  holds.
  We denote the set of twisting morphisms from $K$ to $M$ by $\Tw[\alpha](K, M)$.
  This assembles into a functor $\Tw[\alpha] \colon \opcat{(\LComod{\operad C})} \times \LMod{\operad P} \to \Set$ by letting maps of (co)modules act by composition.
\end{definition}

The following proposition generalizes the bar--cobar adjunction for (co)algebras over (co)operads of \cite[Proposition 2.18]{GJ} (see also \cite[Proposition 11.3.1]{LV}) to left (co)mod\-ules over (co)properads.

\begin{proposition} \label{prop:adjunction}
  Let $\alpha \colon \operad C \to \operad P$ be a twisting morphism, $K$ a left $\operad C$-comodule, and $M$ a left $\operad P$-module.
  Then there are bijections
  \[ \Hom[\LMod{\operad P}](\Cobar[\alpha] K, M)  \iso  \Tw[\alpha](K, M)  \iso  \Hom[\LComod{\operad C}](K, \Bar[\alpha] M) \]
  natural in both $K$ and $M$.
  In particular $\Cobar[\alpha]$ is left adjoint to $\Bar[\alpha]$.
\end{proposition}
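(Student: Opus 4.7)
The plan is to establish both bijections at the level of the underlying graded $\S$-bimodules via the universal properties of the free module $\operad P \ccprod K$ and the cofree comodule $\operad C \ccprod M$, and then to translate differential compatibility into the twisting morphism condition.

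For the right bijection, cofreeness of $\operad C \ccprod M$ as a left $\operad C$-comodule gives, at the graded level, a natural bijection between $\operad C$-comodule maps $f \colon K \to \operad C \ccprod M$ and $\S$-bimodule maps $\varphi \colon K \to M$ by
\[ \varphi \mapsto \bar\varphi \defeq (\id_{\operad C} \ccprod \varphi) \after \rho_K \qquad \text{and} \qquad f \mapsto (\epsilon \ccprod \id_M) \after f. \]
The fact that these are mutually inverse comes from the counit axiom $(\epsilon \ccprod \id_K) \after \rho_K = \id_K$ for the comodule $K$, while the fact that $\bar\varphi$ is a comodule map follows from coassociativity (the comodule axiom $(\Delta \ccprod \id) \after \rho_K = (\id \ccprod \rho_K) \after \rho_K$).

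The main technical step is to verify that $\bar\varphi$ commutes with the bar differential $d_{\operad C \ccprod M} + \twBar{\alpha}$ if and only if $\varphi \in \Tw[\alpha](K, M)$. For the ``only if'' direction, I would compose the compatibility equation with $\epsilon \ccprod \id_M$. Using that $\epsilon$ is a chain map and $d_{\unit[\ccprod]} = 0$, the composite $(\epsilon \ccprod \id_M) \after d_{\operad C \ccprod M}$ reduces to $d_M \after (\epsilon \ccprod \id_M)$; hence the ``free'' part of the equation yields $d_M \after \varphi - \varphi \after d_K$. For the twisting term, unfolding $\twBar{\alpha}$ and $\bar\varphi$ and then collapsing the two occurrences of $\epsilon$ along the two outputs of $\Delta$ (via the counit axioms, together with the counit axiom for $K$) transforms $(\epsilon \ccprod \id_M) \after \twBar{\alpha} \after \bar\varphi$ precisely into $\twistop{\alpha}(\varphi)$. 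The ``if'' direction is then deduced by observing that both $\bar\varphi \after d_K$ and $(d_{\operad C \ccprod M} + \twBar{\alpha}) \after \bar\varphi$ are degree $-1$ ``infinitesimal coderivations'' relative to the comodule structure, so that by the same cofreeness principle applied to degree $-1$ maps they are determined by their composites with $\epsilon \ccprod \id_M$.

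The left bijection is handled symmetrically using freeness of $\operad P \ccprod K$ as a left $\operad P$-module, with the mutually inverse maps
\[ \varphi \mapsto \bar\varphi \defeq \lambda_M \after (\id_{\operad P} \ccprod \varphi) \qquad \text{and} \qquad g \mapsto g \after (\eta \ccprod \id_K) \]
and an analogous computation using the unit and associativity axioms of $\operad P$ together with the module axioms for $M$. Naturality of both bijections in $K$ and $M$ is immediate from the explicit formulas, which yields the adjunction statement. The main obstacle is the careful sign and structure-map bookkeeping in unfolding $(\epsilon \ccprod \id_M) \after \twBar{\alpha} \after \bar\varphi$ (and the dual expression on the cobar side) into the definition of $\twistop{\alpha}(\varphi)$; this requires invoking the (co)unit, (co)associativity, and (co)module axioms in the correct order, and tracking the Koszul signs coming from the symmetry isomorphisms involved in the composition product.
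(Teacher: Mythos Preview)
Your proposal is correct and follows essentially the same approach as the paper: use (co)freeness to set up the graded-level bijection, then check that differential compatibility restricted to (co)generators is exactly the twisting-morphism equation. The paper carries this out on the cobar side (via $g \mapsto g \after (\eta \ccprod \id_K)$) and isolates the ``if'' direction into a separate lemma stating that a graded module map out of a quasi-free module which commutes with the differentials on generators does so everywhere; your coderivation argument on the bar side is precisely the dual of that lemma. One small point of phrasing: neither $\bar\varphi \after d_K$ nor $(d_{\operad C \ccprod M} + \twBar{\alpha}) \after \bar\varphi$ is individually a comodule map---each picks up the same extra term $d_{\operad C} \ccprod \bar\varphi$ under $\rho$---so it is their \emph{difference} that is a graded comodule map and hence determined by its corestriction; this is what your ``same cofreeness principle'' is really invoking.
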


\begin{proof}
  We construct the bijection on the left; the other one is obtained dually (a slight difference in signs is taken care of by the signs in the definitions of $\Bar[\alpha]$ and $\Cobar[\alpha]$).
  By construction $\Cobar[\alpha] K$ is a quasi-free left $\operad P$-module generated by $K$.
  In particular there is a natural bijection $\Psi$ between maps $K \to M$ of graded $\S$-bimodules and maps $\Cobar[\alpha] K \to M$ of left modules over the graded properad underlying $\operad P$.
  We want to show that a map $\varphi \colon K \to M$ is a twisting morphism if and only if $\Psi(\varphi) \colon \Cobar[\alpha] K \to M$ is compatible with the differentials.
  
  We first prove the ``if'' direction.
  To this end, we will evaluate $\Psi(\varphi) \after d_{\Cobar[\alpha] K}$ on the generators of $\Cobar[\alpha] K$, i.e.\ the image of the map $\iota \colon K \iso \unit \ccprod K \to \operad P \ccprod K = \Cobar[\alpha] K$.
  By definition of the cobar construction and unitality of $\operad P$, we have that $d_{\Cobar[\alpha] K} \after \iota$ is equal to $\iota \after d_K$ minus the composite
  \[ K  \xlongto{\rho}  \operad C \ccprod K  \xlongto{\epsilon}  \onlyone{\operad C} \ccprod K  \xlongto{\alpha}  \onlyone{\operad P} \ccprod K  \xlongto{\eta}  \operad P \ccprod K \]
  (using the same notation for the various structure maps as before).
  By construction of $\Psi$ we have $\Psi(\varphi) = \lambda_M \after ({\id} \ccprod \varphi)$ and $\Psi(\varphi) \after \iota = \varphi$.
  Hence the composite $\Psi(\varphi) \after d_{\Cobar[\alpha] K} \after \iota$ is equal to $\varphi \after d_K - \twistop{\alpha}(\varphi)$.
  Thus restricting the equation $\Psi(\varphi) \after d_{\Cobar[\alpha] K} = d_M \after \Psi(\varphi)$ to the generators yields
  \[ \varphi \after d_K - \twistop{\alpha}(\varphi)  =  d_M \after \varphi \]
  which is equivalent to $\varphi$ being a twisting morphism.
  
  The argument above actually shows that $\varphi$ being a twisting morphism is equivalent to $\Psi(\varphi)$ being compatible with the differentials when restricted to the generators.
  In particular, for the ``only if'' direction, it is enough to show that $\Psi(\varphi)$ is compatible with the differentials if this is true after restricting to the generators.
  This is the content of Lemma~\ref{lemma:dg_maps_from_quasi_free} below.
\end{proof}

\begin{remark} \label{rem:wg_adjunction}
  In the situation of Remark~\ref{rem:wg_bar}, the adjunction of Proposition~\ref{prop:adjunction} lifts to an adjunction between the categories of weight-graded (co)modules.
  In this situation $\Tw[\alpha](K, M)$ denotes those twisting morphisms relative to $\alpha$ that preserve the weight grading.
\end{remark}

\begin{lemma} \label{lemma:dg_maps_from_quasi_free}
  Let $M$ and $N$ be left $\operad P$-modules such that $M$ is quasi-free on some graded $\S$-bimodule $A$ (i.e.\ there is some chosen isomorphism $M \iso \operad P \ccprod A$ of graded left $\operad P$-modules), and let $\psi \colon M \to N$ be a map of the underlying graded left $\operad P$-modules.
  Furthermore, denote by $\iota \colon A \to M$ the map of graded $\S$-bimodules
  \[ A  \iso  \unit \ccprod A  \xlongto{\eta}  \operad P \ccprod A  \iso  M \]
  where $\eta$ is the unit of $\operad P$.
  
  Assume that $\psi \after d_M \after \iota = d_N \after \psi \after \iota$.
  Then $\psi$ is a map of left $\operad P$-modules, i.e.\ we have $\psi \after d_M = d_N \after \psi$.
\end{lemma}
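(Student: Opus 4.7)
The plan is to introduce the ``defect'' map $f \defeq \psi \after d_M - d_N \after \psi$ of underlying graded $\S$-bimodules (of degree $-1$), and to show $f = 0$ in two steps. First, I would verify that $f$ is a morphism of graded left $\operad P$-modules. Second, I would use the freeness of $M$ on $A$ to conclude from the hypothesis that $f = 0$.

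For the first step, the key inputs are that $\psi$ is $\operad P$-linear and that $d_M$ and $d_N$ are compatible with the structure maps $\lambda_M$ and $\lambda_N$; the latter means that each of $d_M, d_N$ intertwines the corresponding $\lambda$ with the induced differential $d_{\operad P} \ccprod \id + \id \ccprod d$ on $\operad P \ccprod M$ or $\operad P \ccprod N$ (with Koszul signs). A direct calculation—first exchanging $\psi$ and $\lambda$ using $\operad P$-linearity, then opening up $d_M \after \lambda_M$ and $\lambda_N \after \psi$ using compatibility—shows that the summands involving $d_{\operad P}$ appear identically in both $\psi \after d_M \after \lambda_M$ and $d_N \after \psi \after \lambda_M$ and therefore cancel in the difference. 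What remains is exactly $\lambda_N \after (\id \ccprod f)$, which is the required $\operad P$-linearity of $f$.

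For the second step, since $M \iso \operad P \ccprod A$ as graded left $\operad P$-modules, morphisms $M \to N$ of graded left $\operad P$-modules are in natural bijection with morphisms $A \to N$ of graded $\S$-bimodules, via precomposition with $\iota$ (this is just the free--forgetful adjunction for left $\operad P$-modules in the graded setting). The hypothesis $\psi \after d_M \after \iota = d_N \after \psi \after \iota$ is precisely $f \after \iota = 0$, which under this bijection forces $f = 0$.

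The only obstacle is careful handling of Koszul signs in the first step: these arise when commuting $d_{\operad P}$ or $d_M$ past operators of non-zero degree, but since they appear symmetrically in both $\psi \after d_M \after \lambda_M$ and $d_N \after \psi \after \lambda_M$, they produce matching contributions that cancel in the difference. I do not expect any deeper difficulties.
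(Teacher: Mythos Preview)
Your proposal is correct and follows essentially the same approach as the paper: both arguments rest on the fact that $\lambda_M \after (\id \ccprod \iota)$ is an isomorphism together with the compatibility of $\psi$, $d_M$, $d_N$ with the module structures (so that the $d_{\operad P}$-contributions cancel). The only difference is organizational --- you package the argument via the defect map $f = \psi \after d_M - d_N \after \psi$ and the free--forgetful adjunction, whereas the paper sets up a cube diagram and chases around its commuting faces after first lifting the hypothesis to $(\id \ccprod \psi) \after d_{\operad P \ccprod M} \after (\id \ccprod \iota) = d_{\operad P \ccprod N} \after (\id \ccprod \psi) \after (\id \ccprod \iota)$.
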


\begin{proof}
  We first note that our assumption implies that
  \begin{equation} \label{eq:compat_gen_tensor}
    ({\id[\operad P]} \ccprod \psi) \after d_{\operad P \ccprod M} \after ({\id[\operad P]} \ccprod \iota) = d_{\operad P \ccprod N} \after ({\id[\operad P]} \ccprod \psi) \after ({\id[\operad P]} \ccprod \iota)
  \end{equation}
  as maps $\operad P \ccprod A \to \operad P \ccprod N$.
  The claim then follows by considering the diagram
  \[
  \begin{tikzcd}
    \operad P \ccprod A \drar[swap]{{\id} \ccprod \iota} \ar[bend left = 15]{drrr}{\iso} & & & & \\
    & \operad P \ccprod M \ar[swap]{dd}{d_{\operad P \ccprod M}} \ar{rr}{\lambda_M} \drar[swap]{{\id} \ccprod \psi} & & M \ar[swap, near end]{dd}{d_M} \drar{\psi} & \\
    & & \operad P \ccprod N \ar[crossing over, near start]{rr}{\lambda_N}  & & N \ar{dd}{d_N} \\
    & \operad P \ccprod M \ar[near end]{rr}{\lambda_M} \ar[swap]{dr}{{\id} \ccprod \psi} \ar[phantom]{ur}{?} & & M \ar{dr}{\psi} \ar[phantom]{ur}{?} & \\
    & & \operad P \ccprod N \ar[crossing over, from=uu, near end]{}{d_{\operad P \ccprod N}} \ar{rr}{\lambda_N} & & N
  \end{tikzcd}
  \]
  where everything but (a priori) the left and right side of the cube (marked with ``?'') commutes.
  Since $\lambda_M \after ({\id[\operad P]} \ccprod \iota)$ is an isomorphism it is enough to prove the desired equation $\psi \after d_M = d_N \after \psi$ after precomposing with that isomorphism.
  This can be obtained from chasing through the above diagram, despite not knowing the two marked squares to commute, by using \eqref{eq:compat_gen_tensor}.
\end{proof}

\subsection{Bar--cobar resolutions} \label{sec:resolutions}

We will now study when the unit $\Cobar[\alpha] \Bar[\alpha] M \to M$ and the counit $C \to \Bar[\alpha] \Cobar[\alpha] C$ of the bar--cobar adjunction relative to a twisting morphism $\alpha \colon \operad C \to \operad P$ are quasi-isomorphisms.
It turns out that (under some mild conditions similar to those appearing in Section~\ref{sec:quasi-isos}), for this to hold, it is enough for the (co)unit to be a quasi-isomorphism in the case of the trivial (co)module.
Said differently this reduces the general case to checking acyclicity of $\Cobar[\alpha] \Bar[\alpha] \unit[\operad P] \iso \Cobar[\alpha] \operad C$ and $\Bar[\alpha] \Cobar[\alpha] \unit[\operad C] \iso \Bar[\alpha] \operad P$, respectively.
Following classical nomenclature we call these (co)modules ``Koszul complexes'' (cf.\ Vallette \cite[§7.3.2]{Val} and Loday--Vallette \cite[§7.4.2]{LV}):

\begin{definition}
  Let $\alpha \colon \operad C \to \operad P$ be a twisting morphism.
  The \emph{right Koszul complex} $\Kcompl \alpha {\operad C} {\operad P}$ is the left $\operad C$-comodule $\Bar[\alpha] \operad P$ where we consider $\operad P$ as a left module over itself.
  Analogously the \emph{left Koszul complex} $\Kcompl \alpha {\operad P} {\operad C}$ is the left $\operad P$-module $\Cobar[\alpha] \operad C$ where we consider $\operad C$ as a left comodule over itself.
  Both the left and the right Koszul complex are functorial in $\Twcat$ (by Lemma~\ref{lemma:functoriality}).
\end{definition}

\begin{remark}
  In the case where $\alpha$ is a twisting morphism of (co)operads this specializes to the ``twisted composite products'' of \cite[§6.4.5]{LV} (this is also where our notation for these objects is adapted from).
  Our construction also generalizes the Koszul complexes of \cite[§7.3.2]{Val}: they are obtained by taking $\alpha$ to be the natural twisting morphism $\nattwist \colon \KD{\operad P} \to \operad P$.
\end{remark}

We now give conditions under which acyclicity of the left Koszul complex is enough for the counit of the adjunction to be a quasi-isomorphism.
At the end of this subsection, we will prove some easy-to-check criteria that imply these conditions.

\begin{definition} \label{def:detachable_module}
  Let $\alpha \colon \operad C \to \operad P$ be a twisting morphism and $q \colon \unit[\ccprod] \to \operad P \ccprod \operad C$ a map of homologically graded $\S$-bimodules.
  A left $\operad P$-module $M$ is \emph{detachable with respect to $\alpha$ and $q$} if there exist two $\ZZ$-indexed gradings $G_\bullet \Cobar[\alpha] \Bar[\alpha] M$ and $G'_\bullet M$ of the underlying homologically graded $\S$-bimodules of $\Cobar[\alpha] \Bar[\alpha] M$ and $M$ such that:
  \begin{itemize}
    \item Both gradings are bounded below separately in each biarity, homological degree, and weight (when $\Cobar[\alpha] \Bar[\alpha] M$ and $M$ have weight gradings compatible with $G$ and $G'$, respectively).
    \item The differential $d_M$ of $M$ is homogeneous of degree $0$ with respect to $G'$.
    \item The parts $d_{\operad P}$, $d_{\operad C}$, $d_M$, and $\twCobar{\alpha}$ of the differential on $\Cobar[\alpha] \Bar[\alpha] M$ are all homogeneous of degree $0$ with respect to $G$.
    \item We have $\twBar{\alpha}(G_p \Cobar[\alpha] \Bar[\alpha] M) \subseteq \Dirsum_{p' < p} G_{p'} \Cobar[\alpha] \Bar[\alpha] M$ for all $p \in \ZZ$.
    \item We have $\epsilon(G_p \Cobar[\alpha] \Bar[\alpha] M) \subseteq \Dirsum_{p' \le p} G'_{p'} M$ for all $p \in \ZZ$, where $\epsilon \colon \Cobar[\alpha] \Bar[\alpha] M \to M$ is the counit.
    \item The map $M \iso \unit[\ccprod] \ccprod M \to (\operad P \ccprod \operad C) \ccprod M = \Cobar[\alpha] \Bar[\alpha] M$ given by $q \ccprod \id[M]$ is homogeneous of degree $0$ with respect to $G'$ and $G$.
  \end{itemize}
\end{definition}

\begin{theorem} \label{thm:cobar-bar}
  Let $\alpha \colon \operad C \to \operad P$ be a twisting morphism.
  \begin{enumerate}
    \item Assume that there is an augmentation $\epsilon$ of $\operad P$ such that $\epsilon \after \alpha = 0$ and that the counit $\Cobar[\alpha] \Bar[\alpha] \unit[\operad P] \to \unit[\operad P]$ is a quasi-isomorphism for the trivial left $\operad P$-module $\unit[\operad P]$.
    Then the left Koszul complex $\Kcompl \alpha {\operad P} {\operad C}$ is acyclic.
    
    \item Assume that there exists a quasi-isomorphism $q \colon \unit[\ccprod] \to \Kcompl \alpha {\operad P} {\operad C}$ of $\S$-bimodules such that the composition with the counit $\epsilon$ of $\operad C$
    \[ \unit[\ccprod]  \xlongto{q}  \operad P \ccprod \operad C  \xlongto{\epsilon}  \operad P \ccprod \unit[\ccprod]  \iso  \operad P \]
    is equal to the unit of $\operad P$.
    (Such a $q$ automatically exists when $\Kcompl \alpha {\operad P} {\operad C}$ is acyclic, $\operad P$ and $\operad C$ are connected weight graded, and $\alpha$ preserves the weight grading; in that case $q$ also preserves the weight grading.)
    Then, if $M$ is a left $\operad P$-module that is detachable with respect to $\alpha$ and $q$, the counit $\Cobar[\alpha] \Bar[\alpha] M \to M$ is a quasi-isomorphism.
  \end{enumerate}
\end{theorem}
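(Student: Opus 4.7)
For part (1), observe that the structure map of the trivial module $\unit[\operad P]$ is (up to $\unit[\ccprod] \ccprod \unit[\ccprod] \iso \unit[\ccprod]$) the augmentation $\epsilon$ of $\operad P$. Inspection of Definition~\ref{def:bar} shows that the twisting term $\twBar{\alpha}$ on $\Bar[\alpha] \unit[\operad P]$ ends by applying this structure map after $\alpha$, hence factors through the composite $\epsilon \after \alpha$; this composite vanishes by hypothesis, so $\Bar[\alpha] \unit[\operad P] \iso \operad C$ as differential graded left $\operad C$-comodules. Consequently $\Cobar[\alpha] \Bar[\alpha] \unit[\operad P] \iso \Cobar[\alpha] \operad C = \Kcompl \alpha {\operad P} {\operad C}$, and the assumed counit quasi-isomorphism $\Cobar[\alpha] \Bar[\alpha] \unit[\operad P] \to \unit[\operad P] = \unit[\ccprod]$ exhibits exactly the acyclicity of the left Koszul complex.

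For part (2), I first treat the parenthetical existence claim for $q$. When $\operad P$ and $\operad C$ are connected weight graded and $\alpha$ preserves the weight grading, take $q \colon \unit[\ccprod] \to \operad P \ccprod \operad C$ to be the inclusion induced by the unit of $\operad P$ and the coaugmentation of $\operad C$ (Remark~\ref{rem:cwg_is_aug}); by connectedness it is an isomorphism onto the weight-$0$ summand, the compatibility $(\id[\operad P] \ccprod \epsilon_{\operad C}) \after q = \eta_{\operad P}$ holds on the nose, and the acyclicity of $\Kcompl \alpha {\operad P} {\operad C}$ in positive weights makes $q$ a quasi-isomorphism of weight-graded $\S$-bimodules. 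Next I unwind Proposition~\ref{prop:adjunction} to identify the counit at $M$ as $\epsilon_M = \lambda_M \after (\id[\operad P] \ccprod \epsilon_{\operad C} \ccprod \id[M]) \colon \operad P \ccprod \operad C \ccprod M \to M$, since $\id[\Bar[\alpha] M]$ corresponds under the bijection $\Hom(\Bar[\alpha] M, \Bar[\alpha] M) \iso \Tw[\alpha](\Bar[\alpha] M, M)$ to the cogenerating projection $\epsilon_{\operad C} \ccprod \id[M] \colon \Bar[\alpha] M \to M$. The assumption on $q$ and unitality of the module structure then yield the chain-level identity
\[ \epsilon_M \after (q \ccprod \id[M])  =  \lambda_M \after (\eta_{\operad P} \ccprod \id[M])  =  \id[M] \text{.} \]

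The proof concludes via a spectral sequence comparison. The detachability data gives exhaustive, bounded-below filtrations $F_p = \Dirsum_{p' \le p} G_{p'}$ on $\Cobar[\alpha] \Bar[\alpha] M$ and $F'_p = \Dirsum_{p' \le p} G'_{p'}$ on $M$, preserved by the respective differentials as well as by $\epsilon_M$ and $q \ccprod \id[M]$. On the target $E^0 M = M$ with $d_M$, so $E^1 M = \Ho{*}(M)$. On the source, the only piece of the differential strictly lowering the filtration is $\twBar{\alpha}$, so $E^0 \Cobar[\alpha] \Bar[\alpha] M$ identifies canonically with $\Kcompl \alpha {\operad P} {\operad C} \ccprod M$. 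Verifying this identification — in particular that the cofree $\operad C$-comodule structure $\Delta_{\operad C} \ccprod \id[M]$ on $\Bar[\alpha] M$ couples with $\twCobar{\alpha}$ only through its $\operad C$ factor — is the main piece of bookkeeping and the place where the argument requires the most care. The Künneth isomorphism of Lemma~\ref{lemma:Kuenneth} combined with the quasi-isomorphism $q$ then shows that the source's $E^1$ page is $\Ho{*}(\Kcompl \alpha {\operad P} {\operad C}) \ccprod \Ho{*}(M) \iso \Ho{*}(M)$, and the chain-level identity above forces the map induced on $E^1$ by $\epsilon_M$ to be the identity of $\Ho{*}(M)$. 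The comparison theorem for filtered complexes (applicable thanks to bounded-belowness and exhaustiveness of the filtrations) then yields that $\epsilon_M$ is itself a quasi-isomorphism.
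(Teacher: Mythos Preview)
Your proof is correct and follows essentially the same approach as the paper's: part (1) via the identification $\Bar[\alpha]\unit[\operad P]\iso\operad C$ from $\epsilon\after\alpha=0$, and part (2) via the filtrations coming from the detachability gradings, the identification of the $E^0$ page with $(\Kcompl{\alpha}{\operad P}{\operad C})\ccprod M$, Künneth, and the chain-level identity $\epsilon_M\after(q\ccprod\id[M])=\id[M]$ to deduce that $\epsilon_M$ is an isomorphism on $E^1$. You supply more detail than the paper does (explicitly unwinding the counit, justifying the parenthetical existence of $q$, and flagging why $\twCobar{\alpha}$ on $\Cobar[\alpha]\Bar[\alpha]M$ only sees the $\operad C$ factor), but the argument is the same.
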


\begin{proof}
  For the first claim, we note that $\epsilon \after \alpha = 0$ guarantees that $\Bar[\alpha] \unit[\operad P] \iso \operad C$ and hence that $\Cobar[\alpha] \Bar[\alpha] \unit[\operad P]  \iso  \Kcompl \alpha {\operad P} {\operad C}$.
  For the second claim, consider the two increasing filtrations of homologically graded $\S$-bimodules
  \[ F_p \Cobar[\alpha] \Bar[\alpha] M  \defeq  \Dirsum_{p' \le p} G_{p'} \Cobar[\alpha] \Bar[\alpha] M  \qquad \text{and} \qquad  F'_p M  \defeq  \Dirsum_{p' \le p} G'_{p'} M \]
  obtained from the gradings of Definition~\ref{def:detachable_module}.
  Note that in both cases we obtain a filtration of differential graded $\S$-bimodules and that the counit $\Cobar[\alpha] \Bar[\alpha] M \to M$ is filtration preserving.
  
  Hence each filtration yields a spectral sequence, and the counit $\Cobar[\alpha] \Bar[\alpha] M \to M$ induces a map between them.
  By our assumptions on $F$, the zeroth page of the associated spectral sequence is isomorphic to $(\Kcompl{\alpha}{\operad P}{\operad C}) \ccprod M$.
  Hence, by Lemma~\ref{lemma:Kuenneth}, its first page is isomorphic to $\Ho * (\Kcompl{\alpha}{\operad P}{\operad C}) \ccprod \Ho * (M)$ which is isomorphic to $\unit[\ccprod] \ccprod \Ho * (M) \iso \Ho * (M)$ by assumption.
  On the other hand the zeroth page of the spectral sequence associated to $F'$ is isomorphic to $M$, and hence its first page is also isomorphic to $\Ho * (M)$.
  Now we note that the counit $\epsilon \colon \Cobar[\alpha] \Bar[\alpha] M \to M$ does indeed induce an isomorphism on the first page of the spectral sequences since both $\epsilon \after (q \ccprod \id[M]) = \id[M]$ and $q \ccprod \id[M]$ induce an isomorphism on the first page.
  Since both filtrations are bounded below and exhaustive (though potentially only separately in each weight), this implies that the counit induces an isomorphism on homology (see e.g.\ Weibel \cite[Theorem 5.5.11]{Wei}).
\end{proof}

We now dualize what we did above from the counit to the unit.

\begin{definition} \label{def:detachable_comodule}
  Let $\alpha \colon \operad C \to \operad P$ be a twisting morphism and $q \colon \operad C \ccprod \operad P \to \unit[\ccprod]$ a map of homologically graded $\S$-bimodules.
  A left $\operad C$-comodule $K$ is \emph{detachable with respect to $\alpha$ and $q$} if there exist two $\ZZ$-indexed gradings $G_\bullet \Bar[\alpha] \Cobar[\alpha] K$ and $G'_\bullet K$ of the underlying homologically graded $\S$-bimodules of $\Bar[\alpha] \Cobar[\alpha] K$ and $K$ such that:
  \begin{itemize}
    \item Both gradings are bounded below separately in each biarity, homological degree, and weight (when $\Bar[\alpha] \Cobar[\alpha] K$ and $K$ have weight gradings compatible with $G$ and $G'$, respectively).
    \item The differential $d_K$ of $K$ is homogeneous of degree $0$ with respect to $G'$.
    \item The parts $d_{\operad C}$, $d_{\operad P}$, $d_K$, and $\twBar{\alpha}$ of the differential on $\Bar[\alpha] \Cobar[\alpha] K$ are all homogeneous of degree $0$ with respect to $G$.
    \item We have $\twCobar{\alpha}(G_p \Bar[\alpha] \Cobar[\alpha] K) \subseteq \Dirsum_{p' < p} G_{p'} \Bar[\alpha] \Cobar[\alpha] K$ for all $p \in \ZZ$.
    \item We have $\eta(G'_p K) \subseteq \Dirsum_{p' \le p} G_{p'} \Bar[\alpha] \Cobar[\alpha] K$ for all $p \in \ZZ$, where $\eta \colon K \to \Bar[\alpha] \Cobar[\alpha] K$ is the unit.
    \item The map $\Bar[\alpha] \Cobar[\alpha] K = (\operad C \ccprod \operad P) \ccprod K \to \unit[\ccprod] \ccprod K \iso K$ given by $q \ccprod \id[K]$ is homogeneous of degree $0$ with respect to $G$ and $G'$.
  \end{itemize}
\end{definition}

\begin{theorem} \label{thm:bar-cobar}
  Let $\alpha \colon \operad C \to \operad P$ be a twisting morphism.
  \begin{enumerate}
    \item Assume that there is a coaugmentation $\eta$ of $\operad C$ such that $\alpha \after \eta = 0$ and that the unit $\unit[\operad C] \to \Bar[\alpha] \Cobar[\alpha] \unit[\operad C]$ is a quasi-isomorphism for the trivial left $\operad C$-comodule $\unit[\operad C]$.
    Then the right Koszul complex $\Kcompl \alpha {\operad C} {\operad P}$ is acyclic.
    \item Assume that there exists a quasi-isomorphism $q \colon \Kcompl \alpha {\operad C} {\operad P} \to \unit[\ccprod]$ of $\S$-bimodules such that the composition with the unit $\eta$ of $\operad P$
    \[ \operad C  \iso  \operad C \ccprod \unit[\ccprod]  \xlongto{\eta}  \operad C \ccprod \operad P  \xlongto{q}  \unit[\ccprod] \]
    is equal to the counit of $\operad C$.
    (Such a $q$ automatically exists when $\Kcompl \alpha {\operad C} {\operad P}$ is acyclic, $\operad P$ and $\operad C$ are connected weight graded, and $\alpha$ preserves the weight grading; in that case $q$ also preserves the weight grading.)
    Then, if $K$ is a left $\operad C$-comodule that is detachable with respect to $\alpha$ and $q$, the unit $K \to \Bar[\alpha] \Cobar[\alpha] K$ is a quasi-isomorphism.
  \end{enumerate}
\end{theorem}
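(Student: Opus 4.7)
The proof will follow in complete duality to that of Theorem \ref{thm:cobar-bar}, swapping the roles of $\operad C$ and $\operad P$ and reversing the direction of arrows throughout.

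For the first claim, the vanishing $\alpha \after \eta = 0$ ensures that in the cobar complex $\Cobar[\alpha] \unit[\operad C] = (\operad P \ccprod \unit[\ccprod], d - \twCobar{\alpha})$ the twisting term $\twCobar{\alpha}$ is zero, since it factors through $\alpha$ applied to the image of the coaugmentation. Hence $\Cobar[\alpha] \unit[\operad C] \iso \operad P$ as differential graded left $\operad P$-modules, and $\Bar[\alpha] \Cobar[\alpha] \unit[\operad C] \iso \Bar[\alpha] \operad P = \Kcompl \alpha {\operad C} {\operad P}$. Under this identification, the unit of the adjunction becomes a map $\unit[\ccprod] \to \Kcompl \alpha {\operad C} {\operad P}$ which is the desired quasi-isomorphism by hypothesis. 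The parenthetical existence of $q$ in the connected weight-graded setting I would handle by induction on weight: in weight zero, connectivity of $\operad C$ and $\operad P$ provides a canonical isomorphism $\weight 0 {\Kcompl \alpha {\operad C} {\operad P}} \iso \unit[\ccprod]$ whose inverse gives $q$ in weight zero and fulfils the compatibility with $\eta_{\operad P}$ (in positive weights this compatibility is vacuous since $\eta_{\operad P}$ lands in weight zero); one then extends $q$ to higher weight strata using acyclicity and the existence of chain-level lifts over a field.

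For the second claim, form the increasing filtrations $F_p \defeq \Dirsum_{p' \le p} G_{p'}$ on $\Bar[\alpha] \Cobar[\alpha] K$ and $F'_p \defeq \Dirsum_{p' \le p} G'_{p'}$ on $K$ associated to the gradings of Definition \ref{def:detachable_comodule}. Both are bounded below and exhaustive separately in each biarity, homological degree, and weight, and both the unit $\eta$ and the map $q \ccprod \id[K]$ are filtration preserving. By the assumptions of Definition \ref{def:detachable_comodule}, the zeroth page of the spectral sequence associated to $F$ is isomorphic to $(\Kcompl \alpha {\operad C} {\operad P}) \ccprod K$, since $\twCobar \alpha$ strictly lowers the grading while $d_{\operad C}$, $d_{\operad P}$, $d_K$, and $\twBar \alpha$ preserve it. Applying Lemma \ref{lemma:Kuenneth} together with the acyclicity of $\Kcompl \alpha {\operad C} {\operad P}$ identifies the first page with $\unit[\ccprod] \ccprod \Ho *(K) \iso \Ho *(K)$, while the spectral sequence for $F'$ has zeroth page $K$ and first page $\Ho *(K)$ as well. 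The defining identity of $q$ implies that $(q \ccprod \id[K]) \after \eta$ restricts on associated gradeds to the identity of $K$, so both this composite and $q \ccprod \id[K]$ induce isomorphisms on first pages, whence so does $\eta$. An application of Weibel \cite[Theorem 5.5.11]{Wei} then yields that $\eta$ is a quasi-isomorphism.

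The principal obstacle will be the bookkeeping: one must verify rigorously that the unit of the bar--cobar adjunction is filtration preserving and that, after the Künneth identification of Lemma \ref{lemma:Kuenneth}, its associated graded coincides with $q \ccprod \id[K]$ composed with the canonical inclusion of $K \iso \unit[\ccprod] \ccprod K$. This essentially mirrors the analogous verification for the counit in the proof of Theorem \ref{thm:cobar-bar}, with the difference in signs coming from the sign difference between the definitions of $\Bar[\alpha]$ and $\Cobar[\alpha]$ already accounted for in the setup.
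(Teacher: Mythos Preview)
Your proposal is correct and follows precisely the approach the paper takes: the paper's proof literally reads ``completely analogous to the one of Theorem~\ref{thm:cobar-bar},'' and you have faithfully dualized that argument (identifying $\Cobar[\alpha]\unit[\operad C]\iso\operad P$ via $\alpha\after\eta=0$ for part~1, and running the filtration/spectral-sequence comparison using $(q\ccprod\id[K])\after\eta=\id[K]$ for part~2). One small sharpening: the identity $(q\ccprod\id[K])\after\eta=\id[K]$ holds on the nose (not merely on associated gradeds), by counitality of the $\operad C$-comodule structure combined with the defining property of $q$.
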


\begin{proof}
  The proof is completely analogous to the one of Theorem~\ref{thm:cobar-bar}.
\end{proof}

We now give criteria for a (co)module to be detachable, while being more straightforward to check.
They are similar to those in Section~\ref{sec:quasi-isos}; in particular the last, quite technical, condition we give is again intended for use in Section~\ref{sec:modular}.

\begin{lemma} \label{lemma:detachable_module}
  Let $\alpha \colon \operad C \to \operad P$ be a twisting morphism, $q \colon \unit[\ccprod] \to \operad P \ccprod \operad C$ a map of homologically graded $\S$-bimodules, and $M$ a left $\operad P$-module.
  If any of the following conditions holds, then $M$ is detachable with respect to $\alpha$ and $q$.
  \begin{enumerate}[label=(\alph*)]
    \item The properad $\operad P$ is concentrated in biarities $(m, n)$ such that $m \le n$, and the twisting morphism $\alpha$ vanishes on biarities $(m, n)$ such that $m = n$.
    \item The properad $\operad P$ is concentrated in biarities $(m, n)$ such that $m \ge n$, the twisting morphism $\alpha$ vanishes on biarities $(m, n)$ such that $m = n$, both $\operad P$ and $\operad C$ are concentrated in non-negative homological degree, and the module $M$ is left connective
    \item Both $\operad P$ and $\operad C$ are non-negatively weight graded, both $q$ and $\alpha$ preserve the weight grading, and $\alpha$ vanishes on weight $0$.
    \item The properad $\operad P$ is non-negatively weight graded, the coproperad $\operad C$ is weight graded, both $q$ and $\alpha$ preserve the weight grading, $\alpha$ has image concentrated in positive weights, and $M$ is a weight-graded left module over $\operad P$.
    Moreover, there exist two constants $\beta, \gamma \in \RR$ with $\beta + \gamma > 0$ such that $\weight w {M(m, n)}$ is only non-trivial if $\beta w \le m$, and $\weight w {(\operad P \ccprod \operad C)(m, n)}$ is only non-trivial if $n - m \le \gamma w$.
  \end{enumerate}
\end{lemma}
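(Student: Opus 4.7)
The plan is to verify each of the four cases by explicitly exhibiting a pair of $\ZZ$-indexed gradings $(G, G')$ on $\Cobar[\alpha] \Bar[\alpha] M = \operad P \ccprod \operad C \ccprod M$ and on $M$ respectively, and then checking the six conditions of Definition~\ref{def:detachable_module}. In each case, these gradings are direct analogues of the ones used in the proofs of Lemmas~\ref{lemma:detachable_comodules} and~\ref{lemma:detachable_modules}, now adapted to the three-level expression. Throughout, we regard elements of $\operad P \ccprod \operad C \ccprod M$ as spanned by $3$-level graphs with $\operad P$-vertices at level~$3$, $\operad C$-vertices at level~$2$, and $M$-vertices at level~$1$.

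For case~(a) I take $G(x) = \sum_{v \in \Vert[2](x)} \inedges(v)$ and $G'(m) = \outedges(m)$. The twist $\twCobar{\alpha}$ partially comultiplies a $\operad C$-vertex $c$ of biarity $(m_c, n_c)$ and sends the lower half of the split via $\alpha$ up into the $\operad P$-level; the upper half stays in the $\operad C$-level still with $n_c$ inputs, so $G$ is preserved. The twist $\twBar{\alpha}$ instead sends the upper half of the comultiplication via $\alpha$ down into the left $\operad P$-action on $M$, leaving in the $\operad C$-level only the lower half with some number $k < n_c$ of inputs (by the assumption $m < n$ on the image of $\alpha$), so $G$ strictly drops by $n_c - k > 0$. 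Boundedness below by zero is immediate, and counit compatibility follows because $\epsilon$ vanishes unless all $\operad C$-vertices are identities, in which case $G$ equals the number of $\operad C$--$\operad P$ edges, which dominates $\outedges(\epsilon(x)) = \sum_{v \in \Vert[3]} \outedges(v)$ by the hypothesis $m \le n$ on $\operad P$. Case~(b) is handled by the mirror grading $G(x) = -\sum_{v \in \Vert[2](x)} \inedges(v)$ and $G'(m) = -\outedges(m)$, with the roles of $\twBar{\alpha}$ and $\twCobar{\alpha}$ exchanged; boundedness below in each homological degree uses the left connectivity of $M$ together with the non-negative homological concentration of $\operad P$ and $\operad C$ to bound $\sum_{v \in \Vert[1]} \outedges(v)$ above.

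For cases~(c) and~(d) I grade $\Cobar[\alpha] \Bar[\alpha] M$ by the total weight in $\operad P \ccprod \operad C$ and take $G'$ to be concentrated in degree~$0$. Since $\alpha$ preserves the weight grading, $\twCobar{\alpha}$ merely transfers weight from the $\operad C$-level up into the $\operad P$-level and is degree~$0$, while $\twBar{\alpha}$ transfers weight from the $\operad C$-level down into $M$ via the left action; since $\alpha$ vanishes on weight~$0$, this transfer is strictly positive, so $\twBar{\alpha}$ strictly lowers $G$. The map $q \ccprod \id_M$ has image in weight~$0$ of $\operad P \ccprod \operad C$ (as $q$ preserves weight and the unit has weight~$0$), matching $G' \equiv 0$; the counit vanishes unless all $\operad C$-vertices are identities (which sit in weight~$0$), so on its nontrivial support $G$ equals the $\operad P$-weight, which is nonnegative.

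The main obstacle is case~(d), where $\operad C$ may carry negative weights and $G$ is no longer automatically bounded below, so one must use the biarity constraints to derive a lower bound. Following the strategy of case~(e) of Lemma~\ref{lemma:detachable_comodules}, for a $3$-level graph representing an element of total weight $W$ and outgoing arity $k'$, summing the inequality $\inedges(v) - \outedges(v) \le \gamma \w(v)$ over the vertices at levels~$2$ and~$3$ and telescoping the intermediate edge counts via $\sum_{v \in \Vert[3]} \inedges(v) = \sum_{v \in \Vert[2]} \outedges(v)$ and $\sum_{v \in \Vert[2]} \inedges(v) = \sum_{v \in \Vert[1]} \outedges(v)$ yields $\sum_{v \in \Vert[1]} \outedges(v) - k' \le \gamma G$; combining this with $\beta(W - G) \le \sum_{v \in \Vert[1]} \outedges(v)$ from the hypothesis on $M$ and using $\beta + \gamma > 0$ produces the required lower bound $G \ge (\beta W - k')/(\beta + \gamma)$ for each fixed outgoing arity and total weight.
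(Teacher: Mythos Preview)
Your proposal is correct and follows the same strategy as the paper: exhibit an explicit grading on $\operad P \ccprod \operad C \ccprod M$ (and on $M$) in each case and verify the conditions of Definition~\ref{def:detachable_module}. Your gradings in (a)--(c) coincide with the paper's (note $\sum_{v \in \Vert[2]} \inedges(v) = \sum_{v \in \Vert[1]} \outedges(v)$, which is how the paper phrases it), and in~(d) the paper instead grades by the \emph{negative} total weight in $M$ with $G'_p M = \weight{-p} M$; within each fixed total weight this differs from your choice only by a constant shift, and your bounded-below computation is exactly the one the paper defers to (``analogously to \dots\ Lemma~\ref{lemma:detachable_comodules}'').

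One small slip: in case~(b) the roles of $\twBar{\alpha}$ and $\twCobar{\alpha}$ are \emph{not} exchanged. Since $\twCobar{\alpha}$ only touches the $\operad C$--$\operad P$ interface it is still homogeneous of degree~$0$ for $G$, and $\twBar{\alpha}$ still strictly lowers $G$; what changes is the sign of the grading together with the direction of the inequality $m_p > n_p$ on the image of $\alpha$, which makes $\twBar{\alpha}$ \emph{increase} the sum $\sum_{v \in \Vert[2]}\inedges(v)$ and hence decrease its negative. This does not affect the validity of your argument.
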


\begin{proof}
  We define gradings $G_\bullet \Cobar[\alpha] \Bar[\alpha] M$ and $G'_\bullet M$ as in Definition~\ref{def:detachable_module} depending on which of the cases we are in:
  \begin{enumerate}[label=(\alph*)]
    \item We set $G_p \Cobar[\alpha] \Bar[\alpha] M$ to be those elements of $\Cobar[\alpha] \Bar[\alpha] M = \operad P \ccprod \operad C \ccprod M$ such that the sum of the outgoing arities of the vertices labeled by $M$ is equal to $p$.
    We set $G'_p M$ to be the elements of $M$ with outgoing arity $p$.
    \item We set $G_p \Cobar[\alpha] \Bar[\alpha] M$ to be those elements of $\Cobar[\alpha] \Bar[\alpha] M = \operad P \ccprod \operad C \ccprod M$ such that the sum of the outgoing arities of the vertices labeled by $M$ is equal to $-p$.
    We set $G'_p M$ to be the elements of $M$ with outgoing arity $-p$.
    \item We set $G_p \Cobar[\alpha] \Bar[\alpha] M$ to be the part of $\Cobar[\alpha] \Bar[\alpha] M$ with total weight together in $\operad P$ and $\operad C$ equal to $p$, and we set $G'_p M \defeq M$ if $p = 0$ and $G'_p M \defeq 0$ otherwise.
    \item We set $G_p \Cobar[\alpha] \Bar[\alpha] M$ to be the part of $\Cobar[\alpha] \Bar[\alpha] M$ with total weight in $M$ equal to $-p$, and we set $G'_p M \defeq \weight {-p} M$.
  \end{enumerate}
  
  It is clear by inspection that these gradings fulfill the necessary conditions, except that in case (d) the grading $G$ is bounded below separately in each weight.
  This follows analogously to the proof of the similar case of Lemma~\ref{lemma:detachable_comodules}.
\end{proof}

\begin{remark}
  When we restrict to (co)operads and (co)algebras over them, Theorem~\ref{thm:cobar-bar} and Lemma~\ref{lemma:detachable_module} part (c) together recover \cite[Theorem 11.3.3]{LV}.
\end{remark}

\begin{lemma} \label{lemma:detachable_comodule}
  Let $\alpha \colon \operad C \to \operad P$ be a twisting morphism, $q \colon \unit[\ccprod] \to \operad P \ccprod \operad C$ a map of homologically graded $\S$-bimodules, and $M$ a left $\operad P$-module.
  If any of the following conditions holds, then $M$ is detachable with respect to $\alpha$ and $q$.
  \begin{enumerate}[label=(\alph*)]
    \item The coproperad $\operad C$ is concentrated in biarities $(m, n)$ such that $m \le n$, the twisting morphism $\alpha$ vanishes on biarities $(m, n)$ such that $m = n$, both $\operad P$ and $\operad C$ are concentrated in non-negative homological degrees, and the comodule $K$ is left connective.
    \item The coproperad $\operad C$ is concentrated in biarities $(m, n)$ such that $m \ge n$, and $\alpha$ vanishes on biarities $(m, n)$ such that $m = n$.
    \item Both $\operad P$ and $\operad C$ are weight graded, both $q$ and $\alpha$ preserve the weight grading, $\alpha$ has image concentrated in positive weights, there exists a positive constant $\epsilon \in \RRpos$ such that $\weight w {(\operad C \ccprod \operad P)}$ is concentrated in homological degrees $\ge \epsilon w$ for all $w$, and $K$ is concentrated in non-negative homological degrees.
    \item The properad $\operad P$ is weight graded, the coproperad $\operad C$ is non-negatively weight graded, both $q$ and $\alpha$ preserve the weight grading, $\alpha$ vanishes on weight $0$, and $K$ is a weight-graded comodule over $\operad C$.
    Moreover, there exist two constants $\beta, \gamma \in \RR$ with $\beta + \gamma < 0$ such that $\weight w {K(m, n)}$ is only non-trivial if $\beta w \le m$, and $\weight w {(\operad C \ccprod \operad P)(m, n)}$ is only non-trivial if $n - m \le \gamma w$.
  \end{enumerate} 
\end{lemma}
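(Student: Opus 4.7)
The plan is to mirror the proof of Lemma~\ref{lemma:detachable_module}, dualizing throughout: in each of the four cases I will exhibit gradings $G_\bullet \Bar[\alpha] \Cobar[\alpha] K$ and $G'_\bullet K$ on the underlying homologically graded $\S$-bimodules and verify the bulleted conditions of Definition~\ref{def:detachable_comodule}. In each case, $G'_\bullet K$ will be the ``obvious'' grading on $K$ compatible with $G_\bullet$ via the unit $\eta \colon K \to \Bar[\alpha] \Cobar[\alpha] K$, and the compatibility of $G$ with $d_{\operad C}$, $d_{\operad P}$, $d_K$, and $\twBar{\alpha}$ will be immediate by construction; what remains is to check that $\twCobar{\alpha}$ strictly decreases $G$, that the map $q \ccprod \id[K]$ weakly lowers it, and that $G$ is bounded below separately in each biarity, homological degree, and weight.

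For cases (a) and (b), I would grade by (plus or minus, chosen to match the biarity constraint on $\operad C$) the total outgoing arity of the vertices labelled by $K$ in a representative $3$-level graph for $\operad C \ccprod \operad P \ccprod K$. Strict decrease under $\twCobar{\alpha}$ is then a direct consequence of the biarity restrictions on $\operad C$ together with the vanishing of $\alpha$ on the diagonal biarities $m = n$; the remaining connectivity hypotheses (non-negative homological concentration of $\operad P$ and $\operad C$, and left-connectivity of $K$ in case~(a)) are what guarantee that $G$ is bounded below. In case~(c), I would take $G_\bullet$ to be the total weight in $\operad C \ccprod \operad P$ and $G'_\bullet K$ to be concentrated in $G'_0$; the positivity of the weight-image of $\alpha$ makes $\twCobar{\alpha}$ strictly decreasing, while the assumption that $\weight{w}{(\operad C \ccprod \operad P)}$ sits in homological degrees $\ge \epsilon w$ converts lower-boundedness in each homological degree into lower-boundedness separately in each weight.

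The most delicate case is (d), where I would let $G_\bullet$ be $-1$ times the total weight in $K$ and set $G'_p K \defeq \weight{-p}{K}$. Strict decrease of $G$ under $\twCobar{\alpha}$ then holds because $\alpha$ vanishes in weight $0$, and the $q$-compatibility is immediate from the fact that $q$ preserves the weight grading. The hard part will be to verify that $G$ is bounded below separately in each weight, and this I expect to handle by a counting argument entirely analogous to case~(e) of Lemma~\ref{lemma:detachable_comodules}: for a generator $\omega$ represented by a labelled $3$-level graph $H$ with $K$ on level~$1$, $\operad P$ on level~$2$, and $\operad C$ on level~$3$, the identity $\sum_{v_1 \in \Vert[1](H)} \outedges(v_1) = \outedges(H) + \sum_{v \in \Vert[2](H) \cup \Vert[3](H)} (\inedges(v) - \outedges(v))$, combined with the hypotheses $\beta \w(v_1) \le \outedges(v_1)$ and $\inedges(v) - \outedges(v) \le \gamma \w(v)$, yields $(\beta + \gamma) \sum_{v_1} \w(v_1) \le \outedges(H) + \gamma \w(\omega)$, and since $\beta + \gamma < 0$ by assumption, this bounds the total weight in $K$ from above in terms of the fixed outgoing arity of $H$ and its fixed total weight $\w(\omega)$, supplying the required lower bound on $G$.
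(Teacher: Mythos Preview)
Your plan mirrors the paper's own proof, which simply says the argument is analogous to Lemma~\ref{lemma:detachable_module} with the gradings inverted. Cases (a) and (b) are fine. In cases (c) and (d), however, you have the signs backwards, and as written the verification fails.

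In case (c), with $G_p$ equal to the elements of total weight $p$ in $\operad C \ccprod \operad P$, the map $\twCobar{\alpha}$ strictly \emph{increases} $G$: it pulls a piece off $K$ via the coaction, applies $\alpha$, and merges the result (of positive weight by hypothesis) into $\operad P$, thereby raising the total $\operad C \ccprod \operad P$ weight. You need $G_p$ to consist of the elements of total weight $-p$ in $\operad C \ccprod \operad P$; then $\twCobar{\alpha}$ is strictly decreasing, $\twBar{\alpha}$ (which only shifts weight from $\operad C$ to $\operad P$) is degree $0$, and the hypothesis that $\weight{w}{(\operad C \ccprod \operad P)}$ sits in homological degree $\ge \epsilon w$ bounds the weight from \emph{above} in each homological degree, hence $G$ from below.

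In case (d) the same issue appears. Since $\twCobar{\alpha}$ removes a positive-weight piece from $K$, it strictly \emph{decreases} the total $K$-weight, so with your choice $G_p = \{\text{$K$-weight} = -p\}$ it strictly increases $G$. You want $G_p = \{\text{$K$-weight} = p\}$ and $G'_p K = \weight{p}{K}$. Your counting inequality $(\beta+\gamma)\sum_{v_1}\w(v_1) \le \outedges(H) + \gamma\,\w(\omega)$ is correct, but with $\beta+\gamma<0$ it bounds $\sum_{v_1}\w(v_1)$ from \emph{below}, not above; this is exactly what you need for $G = +(\text{$K$-weight})$ to be bounded below in each fixed weight. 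One further nitpick: the hypothesis $n-m \le \gamma w$ is stated for $\operad C \ccprod \operad P$, not per vertex, so you should sum over the connected components of levels $2$ and $3$ rather than over individual vertices; the outcome is the same.
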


\begin{proof}
  The proof is analogous to the one of Lemma~\ref{lemma:detachable_module}, except that the gradings are inverted, which changes where we need additional connectivity assumptions to guarantee that they are bounded below.
\end{proof}

\begin{remark}
  When we restrict to (co)operads and (co)algebras over them, Theorem~\ref{thm:bar-cobar} and Lemma~\ref{lemma:detachable_comodule} part (a) together recover \cite[Theorem 11.3.4]{LV}\footnote{Note that the filtration occurring in the proof there should be inverted. To guarantee that this is bounded below, one then needs to add a connectedness condition which is missing from the statement.}.
\end{remark}

\subsection{The Koszul criterion} \label{sec:Koszul_criterion}

The goal of this subsection is to prove the following theorem which relates the acyclicity of the Koszul complexes (i.e.\ the situations in which the results of Section~\ref{sec:resolutions} apply) to the (co)bar construction yielding a resolution of a (co)properad.
In particular it shows that (in the connected weight-graded case) the left Koszul complex is acyclic if and only if the right Koszul complex is.
This is useful for applying the results of Section~\ref{sec:resolutions}.
We do not need the theorem in the rest of the paper, but include it here for completeness.

\begin{theorem}[Koszul criterion] \label{prop:Koszul_criterion}
  Let $\alpha \colon \operad C \to \operad P$ be a twisting morphism between connected weight-graded (co)properads that preserves the weight grading.
  Then the following statements are equivalent:
  \begin{enumerate}[label=(\alph*)]
    \item The right Koszul complex $\Kcompl{\alpha}{\operad C}{\operad P}$ is acyclic.
    \item The left Koszul complex $\Kcompl{\alpha}{\operad P}{\operad C}$ is acyclic.
    \item The map $f_\alpha \colon \operad C \to \Bar \operad P$ associated to $\alpha$ under the bijection of Proposition~\ref{prop:bar-cobar_properads} is a quasi-isomorphism.
    \item The map $g_\alpha \colon \Cobar \operad C \to \operad P$ associated to $\alpha$ under the bijection of Proposition~\ref{prop:bar-cobar_properads} is a quasi-isomorphism.
  \end{enumerate}
\end{theorem}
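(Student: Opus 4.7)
The plan is to prove the equivalences by establishing (a)~$\Leftrightarrow$~(c), (b)~$\Leftrightarrow$~(d), and (c)~$\Leftrightarrow$~(d), following the classical pattern of the Koszul criterion for operads \cite{LV} and its existing properadic analogue \cite[Proposition~7.2]{Val}, adapted to our $\ZZ$-indexed weight-graded framework.
The main technical input is an acyclicity lemma for the universal Koszul complexes: for $\operad P$ any connected weight-graded properad, $\Kcompl{\univtwistBar[\operad P]}{\Bar \operad P}{\operad P}$ is acyclic, and dually $\Kcompl{\univtwistCobar[\operad C]}{\operad C}{\Cobar \operad C}$ is acyclic for connected weight-graded $\operad C$.
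I would prove this by exhibiting an explicit contracting homotopy that ``grafts the lowest corolla of a bar element onto the $\operad P$-factor''; the connectedness assumption ensures that each fixed total-weight piece is bounded below in bar-length, so convergence is automatic.
Combining this lemma with Theorems~\ref{thm:cobar-bar} and~\ref{thm:bar-cobar} applied to the universal twisting morphisms then gives that the bar--cobar (co)units $\epsilon \colon \Cobar \Bar \operad P \to \operad P$ and $\eta \colon \operad C \to \Bar \Cobar \operad C$ at the (co)properad level are quasi-isomorphisms.

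For (c)~$\Rightarrow$~(a), the pair $(f_\alpha, \id[\operad P])$ defines a morphism in $\Twcat$ from $\alpha$ to $\univtwistBar[\operad P]$, so by Lemma~\ref{lemma:functoriality} and Remark~\ref{rem:wg_bar} it induces a weight-preserving map of right Koszul complexes
\[ \phi \colon \Kcompl{\alpha}{\operad C}{\operad P} \longto \Kcompl{\univtwistBar[\operad P]}{\Bar \operad P}{\operad P} \text{.} \]
Filter each fixed total-weight piece of both sides by weight in the coproperad factor; this is strictly decreased by the twisting differential since $\weight{0}{\operad C} \iso \unit \iso \weight{0}{\operad P}$ admits no non-trivial degree $-1$ map, so $\alpha$ vanishes on weight $0$.
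The filtration is bounded on each total weight, and the $E^0$ page is the untwisted composition product, on which $\phi$ restricts to $f_\alpha \ccprod \id[\operad P]$.
By the Künneth formula (Lemma~\ref{lemma:Kuenneth}), if $f_\alpha$ is a quasi-isomorphism then so is $\phi$ on $E^1$, and convergence yields that $\phi$ is a quasi-isomorphism overall; combined with the acyclicity of the target this gives (a).

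The converse (a)~$\Rightarrow$~(c) is the main obstacle and requires running the same spectral sequence argument in reverse: on both sides, $E^1$ takes the form $\Ho{*}(\operad C') \ccprod \Ho{*}(\operad P)$ (with $\operad C'$ either $\operad C$ or $\Bar \operad P$) with $d^1$ induced by the weight-$1$ part of the respective twisting morphism, and a weight-by-weight induction exploiting both the assumed acyclicity of the source and the known acyclicity of the target forces $\Ho{*}(f_\alpha) \ccprod \id$, and hence $\Ho{*}(f_\alpha)$ itself, to be an isomorphism in each weight.
The case (b)~$\Leftrightarrow$~(d) is completely dual.
Finally, (c)~$\Leftrightarrow$~(d) follows from the triangle identities $f_\alpha = \Bar(g_\alpha) \after \eta_{\operad C}$ and $g_\alpha = \epsilon_{\operad P} \after \Cobar(f_\alpha)$ of Proposition~\ref{prop:bar-cobar_properads}, combined with $\eta_{\operad C}$ and $\epsilon_{\operad P}$ being quasi-isomorphisms as established above and the fact that $\Bar$ and $\Cobar$ preserve and reflect quasi-isomorphisms between connected weight-graded (co)properads --- itself a further spectral sequence argument in the spirit of Section~\ref{sec:quasi-isos}.
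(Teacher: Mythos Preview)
Your overall strategy is sound and close to the paper's, but there is one genuine confusion and one structural difference worth noting.

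\textbf{The confusion.} You invoke Theorems~\ref{thm:cobar-bar} and~\ref{thm:bar-cobar} to conclude that the \emph{(co)properad-level} (co)units $\Cobar\Bar\operad P \to \operad P$ and $\operad C \to \Bar\Cobar\operad C$ are quasi-isomorphisms. Those theorems, however, concern the \emph{module-level} bar--cobar adjunction $\Cobar[\alpha] \dashv \Bar[\alpha]$; the objects $\Cobar[\alpha]\Bar[\alpha] M = \operad P \ccprod \operad C \ccprod M$ live in an entirely different world from the (co)properad-level $\Cobar\Bar\operad P = \free(\shift[-1]\overline{\Bar\operad P})$. The statement you want is true (it is \cite[Theorems~4.15 and~4.18]{Val} and their analogues), but it does not follow from the theorems you cite. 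The same issue recurs when you appeal to Section~\ref{sec:quasi-isos} for the claim that the (co)properad-level $\Bar$ and $\Cobar$ preserve quasi-isomorphisms: that section is again about the module-level constructions.

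\textbf{The structural difference.} The paper does not pass through (c)~$\Leftrightarrow$~(d) at all. Instead it isolates a single Comparison Lemma (Lemma~\ref{lemma:comparison}): given a morphism $(f,g) \colon \alpha \to \alpha'$ of weight-graded twisting morphisms, any two of ``$f$ is a quasi-isomorphism'', ``$g$ is a quasi-isomorphism'', ``the induced map on Koszul complexes is a quasi-isomorphism'' imply the third. Your (c)~$\Rightarrow$~(a) argument is exactly the easy direction of this lemma for the map $(f_\alpha, \id) \colon \alpha \to \univtwistBar[\operad P]$, and your sketched weight-by-weight induction for (a)~$\Rightarrow$~(c) is the hard direction. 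Having proved the lemma once, the paper applies it four times --- to the maps $(f_\alpha,\id)$, $(\id,g_\alpha)$, and their left-Koszul-complex variants --- to obtain (a)~$\Leftrightarrow$~(c), (b)~$\Leftrightarrow$~(c), (a)~$\Leftrightarrow$~(d), (b)~$\Leftrightarrow$~(d) directly, using only the acyclicity of the universal Koszul complexes (Lemma~\ref{lemma:aug_bar_acyclic}, which is your ``main technical input''). This avoids the detour through the (co)properad-level bar--cobar resolution and the preservation-of-quasi-isomorphisms facts, which, while true, require separate arguments you have not provided.
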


\begin{remark}
  When $\alpha$ is the natural twisting morphism $\nattwist \colon \KD{\operad P} \to \operad P$, this theorem specializes to \cite[Theorem 7.8]{Val}.
  Moreover, when $\alpha$ is a twisting morphism of (co)operads, it yields \cite[Theorem 6.6.1]{LV}.
\end{remark}

The theorem above allows us to generalize the notion of a ``Koszul properad'' of Vallette \cite[§7.2.1]{Val} to twisting morphisms (see also \cite[§6.6.1]{LV}):

\begin{definition} \label{def:Koszul}
  Let $\alpha \colon \operad C \to \operad P$ be a twisting morphism between connected weight-graded (co)properads which preserves the weight grading.
  It is \emph{Koszul} if it fulfills the equivalent conditions of Proposition~\ref{prop:Koszul_criterion}.
  
  A connected weight-graded properad $\operad P$ is \emph{Koszul} if the natural twisting morphism $\nattwist \colon \KD{\operad P} \to \operad P$ is Koszul.
\end{definition}

The main input for proving the Koszul criterion is the following (so-called ``comparison'') lemma (cf.\ \cite[Theorem 5.4]{Val}\footnote{Although our version of the comparison lemma is formulated in a more restricted setting than the one of \cite{Val}, it has the advantage of not requiring any ``analyticity'' condition.} and \cite[Lemma 6.4.8]{LV}).
Statements of this type go back to Fresse \cite[Theorems 2.1.15 and 2.1.16]{Fre}.

\begin{lemma}[Comparison lemma] \label{lemma:comparison}
  Let $\alpha \colon \operad C \to \operad P$ and $\alpha' \colon \operad C' \to \operad P'$ be twisting morphisms between connected weight-graded (co)properads that preserve the weight grading.
  Furthermore, let
  \[
  \begin{tikzcd}
    \operad C \rar{\alpha} \dar[swap]{f} & \operad P \dar{g} \\
    \operad C' \rar{\alpha'} & \operad P'
  \end{tikzcd}
  \]
  be a commutative diagram such that $f$ and $g$ are maps of weight-graded (co)properads (i.e.\ $(f, g)$ is a morphism $\alpha \to \alpha'$ in the weight-graded version of $\Twcat$).
  Then any two of the following conditions imply the third
  \begin{enumerate}[label=(\alph*)]
    \item $f$ is a quasi-isomorphism.
    \item $g$ is a quasi-isomorphism.
    \item the induced map $\Kcompl{}{f}{g} \colon \Kcompl{\alpha}{\operad C}{\operad P} \to \Kcompl{\alpha'}{\operad C'}{\operad P'}$ on the right Koszul complexes is a quasi-isomorphism.
  \end{enumerate}
  The same is true when we replace the right Koszul complexes in (c) by the respective left Koszul complexes.
\end{lemma}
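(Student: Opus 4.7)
The plan is to filter each Koszul complex by weight in $\operad C$ and use the resulting spectral sequence, working in each fixed total weight separately. The key observation is that $\alpha \colon \operad C \to \operad P$ preserves the weight grading and has homological degree $-1$; since by connectivity both $\weight{0}{\operad C}$ and $\weight{0}{\operad P}$ are isomorphic to $\unit[\ccprod]$ and therefore concentrated in homological degree $0$, the weight-$0$ component of $\alpha$ must vanish. Consequently, on the right Koszul complex $\Kcompl{\alpha}{\operad C}{\operad P} = \operad C \ccprod \operad P$ the twisting term $\twBar{\alpha}$ strictly decreases the weight in the $\operad C$-factor (it splits a piece of weight ${\geq 1}$ off of $\operad C$, converts it to $\operad P$ via $\alpha$, and multiplies it in), and the analogous fact holds for $\twCobar{\alpha}$ on the left Koszul complex.

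Fix a total weight $w$. Filtering the weight-$w$ part of either Koszul complex by $\operad C$-weight $\leq k$ gives a bounded increasing filtration (with $0 \leq k \leq w$) preserved by all parts of the differential; the twisting strictly decreases $k$ and so acts as zero on the associated graded. The resulting spectral sequence therefore has $E^0_k = \weight{k}{\operad C} \ccprod \weight{w-k}{\operad P}$ with differential $d_{\operad C} + d_{\operad P}$, and Lemma~\ref{lemma:Kuenneth} gives
\[ E^1_k = \Ho{*}(\weight{k}{\operad C}) \ccprod \Ho{*}(\weight{w-k}{\operad P}). \]
The map $\Kcompl{}{f}{g}$ respects the filtration, and on $E^1_k$ it is $\Ho{*}(f_k) \ccprod \Ho{*}(g_{w-k})$.

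The implication (a) $+$ (b) $\Rightarrow$ (c) is now immediate: both $f$ and $g$ become isomorphisms on every $E^1_k$ (applying Lemma~\ref{lemma:Kuenneth} once more for functoriality), hence on $E^\infty$. For (b) $+$ (c) $\Rightarrow$ (a) I would induct on $w$. The base $w = 0$ is trivial since by connectivity $f$ restricts to the identity of $\unit[\ccprod]$ in weight $0$. For the inductive step, consider the mapping cone of $\Kcompl{}{f}{g}$ in weight $w$, inheriting the bounded filtration from both sides. By (c) its $E^\infty$ vanishes; by the inductive hypothesis and (b), together with Lemma~\ref{lemma:Kuenneth}, each $E^1_k$ of the cone with $k < w$ is acyclic (being the cone of a tensor product of two quasi-isomorphisms). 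Hence the cone's spectral sequence is concentrated in the single column $k = w$, forcing $0 = E^\infty_w = E^1_w = \Ho{*}(\cone(f_w))$, so $f$ is a quasi-isomorphism in weight $w$. The implication (a) $+$ (c) $\Rightarrow$ (b) is symmetric: the same induction works, with the critical column of the cone's $E^1$ now being $k = 0$ rather than $k = w$, and the inductive hypothesis together with (a) killing the cone outside $k = 0$.

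The same argument carries over verbatim to the left Koszul complex $\Kcompl{\alpha}{\operad P}{\operad C}$, since $\twCobar{\alpha}$ also strictly decreases the $\operad C$-weight and the $E^1$ page decomposes by Künneth in exactly the same way. The main obstacle is the bookkeeping in the mapping cone comparison — keeping track of which pieces vanish on which page and verifying that the cone's $E^1$ is really the cone of the $E^1$ maps — but this is purely formal once the filtration is in place and the spectral sequence converges weight by weight.
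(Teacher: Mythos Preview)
Your proposal is correct and follows essentially the same strategy as the paper: filter the Koszul complex by total weight in $\operad C$, identify the $E^1$-page via the K\"unneth lemma as $\Ho{*}(\operad C) \ccprod \Ho{*}(\operad P)$, and for the two nontrivial implications induct on total weight using the mapping cone spectral sequence. The only difference is presentational: where you directly identify the critical column (at $k=w$ or $k=0$) with $\Ho{*}(\cone(f_w))$ resp.\ $\Ho{*}(\cone(g_w))$, the paper instead first deduces that $\Ho{*}(f)\ccprod\Ho{*}(g)$ is an isomorphism on the full weight-$w$ part and then extracts the desired piece via a retract argument through the unit/augmentation of $\operad P$.
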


\begin{proof}
  We will prove the version for the right Koszul complex; the other case is analogous.
  We define a grading $G$ on $A \defeq \Kcompl{\alpha}{\operad C}{\operad P}$ by setting $G_p A$ to be those elements of $A$ with total weight in $\operad C$ equal to $p$.
  This yields a filtration $F$ of $A$ by setting $F_p A \defeq \Dirsum_{p' \le p} G_p A$.
  It is exhaustive and bounded below (since $\operad C$ is non-negatively weight graded).
  Also note that the part $d_{\operad C \ccprod \operad P}$ of the differential of $A$ is homogeneous of degree $0$ with respect to $G$ and that we have $\twCobar{\alpha}(G_p A) \subseteq F_{p-1} A$ (since $\alpha$ vanishes on weight $0$ for degree reasons).
  In particular the filtration $F$ is compatible with the differentials which implies that it induces a spectral sequence $E$.
  It is concentrated in the right half-plane since $F_{-1} = 0$.
  Its zeroth page is given by $E^0_{p,q} = G_p A_{p + q}$ with differential $d_{\operad C \ccprod \operad P}$ (of bidegree $(0, -1)$).
  By Lemma~\ref{lemma:Kuenneth} its first page is thus given by $\Ho{*}(\operad C) \ccprod \Ho{*}(\operad P)$.
  Also note that $F$ is a filtration of weight-graded differential graded $\S$-bimodules.
  In particular the spectral sequence is also weight graded.
  Its weight $w$ part $\weight w E$ is concentrated in columns $0 \le p \le w$ (since $\operad P$ is non-negatively weight graded).
  
  In the same way we can define a grading $G'$ and a filtration $F'$ of $A' \defeq \Kcompl{\alpha'}{\operad C'}{\operad P'}$, as well as a spectral sequence $E'$, which enjoy the same properties.
  The induced map $f \ccprod g \colon A \to A'$ is homogeneous of degree $0$ with respect to $G$ and $G'$; in particular it sends $F_p A$ to $F'_p A'$ and hence induces a map of spectral sequences $h \colon E \to E'$.
  This map is given by $f \ccprod g$ on the zeroth page and hence by $\Ho{*}(f) \ccprod \Ho{*}(g)$ on the first.
  
  \paragraph{(a) and (b) together imply (c).}
  By assumption $h^1 = \Ho{*}(f) \ccprod \Ho{*}(g)$ is an isomorphism.
  Since both $F$ and $F'$ are exhaustive and bounded below, this implies that $f \ccprod g$ is a quasi-isomorphism (see e.g.\ Weibel \cite[Theorem 5.5.11]{Wei}).
  
  \paragraph{(a) and (c) together imply (b).}
  We will prove this by showing via induction that $\weight w g$ is a quasi-isomorphism for every weight $w \in \NN$.
  We start by noting that $\weight 0 g$ is in fact an isomorphism (since $\operad P$ and $\operad P'$ are connected weight graded and $g$ is compatible with their units).
  Now assume that, for some $w$, the map $\weight {w'} g$ is a quasi-isomorphism for all $w' < w$.
  Studying the spectral sequence $\weight w E$, we note that $\weight {w'} {\operad C}$ only contributes to it if $w' \le w$ and that all contributions of $\weight w {\operad C}$ lie in the column $p = w$ (and analogously for $E'$).
  
  We now consider the mapping cone $\cone(f \ccprod g)$ and the exhaustive, bounded-below filtration $\tilde F_p {\cone(f \ccprod g)_k} \defeq F_p A_{k-1} \dirsum F'_p A'_k$.
  It induces a (weight-graded) spectral sequence $\tilde E$ such that $\tilde E^0$ is the mapping cone of $h^0 \colon E^0 \to E'^0$ (cf.\ \cite[Exercise 5.4.4]{Wei}\footnote{Note that there is a typo in the exercise: the long exact sequence at the end should involve terms of the $(r+1)$-pages instead of the $r$-pages.}).
  Hence there is a (weight-graded) long exact sequence
  \begin{equation} \label{eq:cone_ss_les}
    \dots  \longto  \tilde E^1_{p,q+1}  \longto  E^1_{p,q}  \xlongto{h^1_{p,q}}  E'^1_{p,q}  \longto  \tilde E^1_{p,q}  \longto  \dots 
  \end{equation}
  for every $p$.
  As shown above, we have $h^1 = \Ho{*}(f) \ccprod \Ho{*}(g)$ and hence that $\weight w {(h^1_{p,q})}$ is an isomorphism when $p \neq w$ (since $\Ho{*}(f)$ is an isomorphism by assumption, $\Ho{*}(g)$ is an isomorphism in weights $< w$ by our induction hypothesis, and weights $\ge w$ only contribute to $\weight w E$ and $\weight w {E'}$ in the column $p = w$).
  In particular we have $\weight w {(\tilde E^1_{p,q})} \iso 0$ when $p \neq w$, so that $\weight w {\tilde E}$ collapses at the first page.
  Hence there are isomorphisms
  \[ \weight w {(\tilde E^1_{w,q})} \iso \weight w {(\tilde E^\infty_{w,q})} \iso \Ho{w + q} \big( \weight w {\cone(f \ccprod g)} \big) \iso 0 \]
  since $\weight w {\tilde E}$ is convergent and $f \ccprod g$ is a quasi-isomorphism by assumption.
  Thus $\weight w {(\tilde E^1)}$ is identically zero which implies, by the long exact sequence \eqref{eq:cone_ss_les}, that $\weight w {(h^1_{p,q})}$ is an isomorphism for all $p$ and $q$.
  
  Now we consider the commutative diagram
  \[
  \begin{tikzcd}
    \Ho{*}(\weight w {\operad C}) \ccprod \unit[\ccprod] \rar{\inc} \dar{\Ho{*}(\weight w f) \ccprod \id} & \weight w {\big( \Ho{*}(\operad C) \ccprod \Ho{*}(\operad P) \big)} \dar{\weight w {( \Ho{*}(f) \ccprod \Ho{*}(g) )}} \rar{\pr} & \Ho{*}(\weight w {\operad C}) \ccprod \unit[\ccprod] \dar{\Ho{*}(\weight w f) \ccprod \id} \\
    \Ho{*}(\weight w {\operad C}) \ccprod \unit[\ccprod] \rar{\inc} & \weight w {\big( \Ho{*}(\operad C) \ccprod \Ho{*}(\operad P) \big)} \rar{\pr} & \Ho{*}(\weight w {\operad C}) \ccprod \unit[\ccprod]
  \end{tikzcd}
  \]
  where the two morphisms labeled $\inc$ are induced by the inclusion $\weight w {\operad C} \to \operad C$ and the unit $\eta \colon \unit[\ccprod] \to \operad P$, and the two morphisms labeled $\pr$ are induced by the projection $\operad C \to \weight w {\operad C}$ and the augmentation $\epsilon \colon \operad P \to \unit[\ccprod]$.
  This exhibits $\Ho{*}(\weight w f) = \Ho{*}(\weight w f) \ccprod \id$ as a retract of $\weight w {(h^1)} = \weight w {( \Ho{*}(f) \ccprod \Ho{*}(g) )}$.
  Since the latter is an isomorphism, the former is as well, which is what we wanted to show.
  
  \paragraph{(b) and (c) together imply (a).}
  This case is analogous to the previous one.
  The only difference is that the contributions of $\weight w {\operad P}$ to $\weight w E$ and $\weight w {E'}$ lie in the column $p = 0$.
\end{proof}

We need one more result, due to Vallette \cite{Val}, to prove Proposition~\ref{prop:Koszul_criterion}.
It essentially states that the universal twisting morphisms $\univtwistBar[\operad P] \colon \Bar \operad P \to \operad P$ and $\univtwistCobar[\operad C] \colon \operad C \to \Cobar \operad C$ are Koszul.
(See also Getzler--Jones \cite[Theorem 2.19]{GJ} and Loday--Vallette \cite[Lemma 6.5.9]{LV}.)

\begin{lemma}[{\cite[§4.3]{Val}}] \label{lemma:aug_bar_acyclic}
  Let $\operad P$ be an augmented properad and $\operad C$ a coaugmented weight-graded coproperad.
  Then each of the four Koszul complexes
  \[ \Kcompl{\univtwistBar[\operad P]}{\operad P}{\Bar \operad P}  \qquad  \Kcompl{\univtwistBar[\operad P]}{\Bar \operad P}{\operad P}  \qquad  \Kcompl{\univtwistCobar[\operad C]}{\operad C}{\Cobar \operad C}  \qquad   \Kcompl{\univtwistCobar[\operad C]}{\Cobar \operad C}{\operad C} \]
  is acyclic.
\end{lemma}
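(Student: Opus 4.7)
The plan is to construct explicit contracting homotopies on each of the four Koszul complexes, adapting the classical ``extra degeneracy'' argument that exhibits the two-sided bar resolution of an augmented associative algebra as acyclic.

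First I would reduce the four cases to one. The involution on $\SBimod$ that swaps the two $\S$-actions and reverses inputs with outputs flips the composition product, sends properads to coproperads, and interchanges the left with the right Koszul complex of a given twisting morphism. Moreover, the datum $(\Bar \operad P, \operad P, \univtwistBar[\operad P])$ is formally dual to $(\operad C, \Cobar \operad C, \univtwistCobar[\operad C])$ under (co)properad duality. Combining these two symmetries collapses all four statements to the single assertion that $X \defeq \Kcompl{\univtwistBar[\operad P]}{\Bar \operad P}{\operad P} = \Bar \operad P \ccprod \operad P$ is acyclic.

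The next step is the construction of the contracting homotopy on $X$. Elements of $X$ are represented by connected 2-level graphs whose level-2 vertices are decorated by generators of $\Bar \operad P$ (themselves graphs-with-global-flow carrying $\bar{\operad P}$-labels), and whose level-1 vertices are labeled by elements of $\operad P$. Besides the internal differentials, the total differential of $X$ has two graph-combinatorial pieces: the bar-differential $\twBar{\theta}$ contracts two adjacent $\bar{\operad P}$-labels within a single level-2 decoration by the multiplication of $\operad P$; and the twisting term $\twBar{\univtwistBar}$ strips a bottom-most $\bar{\operad P}$-label off a level-2 decoration and multiplies it onto the adjacent level-1 label. I would define $h$ of degree $+1$ as an operation reversing this stripping: for each level-1 vertex $v$ labeled by $p \in \operad P$, decompose $p = \epsilon(p)\cdot \unit + \bar p$ via the augmentation, remove $v$, and insert $\bar p \in \bar{\operad P}$ as a new bottom-most internal vertex of the level-2 decoration directly above $v$; sum over choices of $v$ with appropriate alternating signs to obtain a natural map of $\S$-bimodules. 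Unravelling should then yield $dh + hd = \id - \operatorname{pr}$, where $\operatorname{pr}$ projects onto the empty graph (which represents the unit $\S$-bimodule $\unit[\ccprod]$); this is precisely acyclicity of $X$.

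The main obstacle is the sign verification. Three sources interact non-trivially: the shift $\shift$ built into $\Bar \operad P = \cofree(\shift \bar{\operad P})$, the Koszul signs from permuting tensor factors across the two levels, and the signs needed to symmetrize $h$ over the choice of distinguished level-1 vertex. I expect the key cancellations to come from the same sign-reversal involution used in the proof of Lemma~\ref{lemma:tw_square}: the ``non-adjacent'' contributions to $dh + hd$ will cancel in pairs, while the ``adjacent'' contributions will pair $\twBar{\univtwistBar}$ against $\twBar{\theta}$ via the associativity of the multiplication of $\operad P$, leaving exactly $\id - \operatorname{pr}$. This is essentially the computation of \cite[\S4.3]{Val}, which I would reproduce with the minor sign modifications needed to match the conventions of \cite{HM} adopted throughout this paper.
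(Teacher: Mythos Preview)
Your overall strategy matches the paper's proof, which simply cites \cite[Theorems 4.15 and 4.18]{Val} and notes the other two cases are analogous; Vallette's argument there is indeed via an explicit contracting homotopy. However, the specific homotopy you describe does \emph{not} work.

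The problem is the ``sum over choices of $v$'' step. Consider the operad case with $\operad P$ free on a single binary generator $\mu$, and take the element $X = [\mu] \circ (\mu, \mu)$ in $(\Bar \operad P \circ \operad P)(4)$. Your homotopy gives $h(X) = Y_1 + Y_2$ where $Y_i$ inserts the $i$-th level-$1$ label as a new leaf of the bar-tree. Computing $\twBar{\univtwistBar}(Y_i)$ gives back $X$ in each case, so the $X$-coefficient in $dh(X)$ is $\pm 2$ or $0$, never $1$. Worse, $\twBar{\theta}(Y_i)$ produces terms like $[\mu \circ_i \mu] \circ (\dots)$ and $h(dX)$ produces $[\mu(\mu,\mu)] \circ (1,1,1,1)$; these are linearly independent from $X$ and from each other, and nothing in your scheme cancels them. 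The sign-reversal involution of Lemma~\ref{lemma:tw_square} handles ``non-adjacent'' interactions of two applications of $\alpha$, but here the issue is that moving one vertex at a time interacts badly with the \emph{other} non-unit level-$1$ labels --- a phenomenon absent in the associative-algebra case (where there is only one level-$1$ vertex) that your extrapolation misses.

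Vallette's actual homotopy in \cite[\S4.3]{Val} is organized differently: one works with a presentation by \emph{level graphs} (where the $\Bar\operad P$-labels are themselves expanded into levels), and the homotopy shifts the boundary between the $\operad P$-part and the bar-part by an entire level rather than a single vertex. This avoids the over-counting above. Your reduction of the four cases to one is also imprecise: the passage from $\univtwistBar[\operad P]$ to $\univtwistCobar[\operad C]$ is not an honest duality (no finiteness is assumed), only a formal parallelism of constructions --- which is exactly how the paper phrases it (``analogous'').
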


\begin{proof}
  For $\Kcompl{\univtwistBar[\operad P]}{\operad P}{\Bar \operad P}$ this is \cite[Theorem 4.15]{Val} and for $\Kcompl{\univtwistCobar[\operad C]}{\Cobar \operad C}{\operad C}$ it is \cite[Theorem 4.18]{Val}.
  The other two cases are analogous, as is mentioned in remarks following the two theorems cited above.
\end{proof}

We are now ready to prove the Koszul criterion.

\begin{proof}[Proof of Proposition~\ref{prop:Koszul_criterion}]
  First we note that the universal twisting morphisms $\univtwistBar[\operad P] \colon \Bar \operad P \to \operad P$ and $\univtwistCobar[\operad C] \colon \operad C \to \Cobar \operad C$ are weight-preserving maps between connected weight-graded (co)properads by Remarks~\ref{rem:bar_weight_graded} and \ref{rem:univ_pres_weight}.
  This allows us to apply Lemma~\ref{lemma:comparison} to the map of twisting morphisms
  \[
  \begin{tikzcd}
    \operad C \rar{\alpha} \dar[swap]{f_\alpha} & \operad P \dar{\id} \\
    \Bar \operad P \rar{\univtwistBar[\operad P]} & \operad P
  \end{tikzcd}
  \]
  (note that this square commutes by Proposition~\ref{prop:bar-cobar_properads}).
  We obtain that $f_\alpha$ is a quasi-isomorphism if and only if $f_\alpha \ccprod {\id}  \colon  \Kcompl{\alpha}{\operad C}{\operad P}  \to  \Kcompl{\univtwistBar[\operad P]}{\Bar \operad P}{\operad P}$ is one.
  Since $\Kcompl{\univtwistBar[\operad P]}{\Bar \operad P}{\operad P}$ is acyclic by Lemma~\ref{lemma:aug_bar_acyclic}, this shows the equivalence (a) $\Leftrightarrow$ (c).
  The equivalences (a) $\Leftrightarrow$ (d), (b) $\Leftrightarrow$ (c), and (b) $\Leftrightarrow$ (d) follow analogously, using the other cases of Lemma~\ref{lemma:aug_bar_acyclic}.
\end{proof}

\newpage

\section{Modular operads as lax algebras over the Brauer properad} \label{sec:modular}

In this section, we restrict our attention to (co)properads freely generated by some space of operations in biarity $(0, 2)$, which we call ``Brauer (co)properads''.
It turns out that these, despite their simplicity, are already very useful: weight-graded left modules over them concentrated in biarities $(m, 0)$ are almost exactly modular $\mathfrak D$-operads in the sense of Getzler--Kapranov \cite{GK}.
Moreover, the (co)bar construction of left (co)modules over a Brauer (co)properad recovers the Feynman transform of \cite{GK}.

We will work in the symmetric monoidal category of differential graded vector spaces $\dgVect$ over some fixed field $k$ of characteristic $0$.

\subsection{Lax algebras}

As mentioned above, we will see later that modular operads appear as left modules concentrated in biarities without incoming edges.
In this subsection, as a preparation, we will introduce general terminology for this situation and look at the bar--cobar adjunction in that case.

\begin{definition}
  A $\S$-bimodule is \emph{purely outgoing} if it is concentrated in biarities $(m, 0)$ with $m \in \NN$.
  When speaking about a purely outgoing $\S$-bimodule $M$, we will often refer to the biarity $(m, 0)$ simply as \emph{arity} $m$ and write $M \cycar m \defeq M(m, 0)$.
\end{definition}

\begin{definition}
  A \emph{lax algebra} over a properad $\operad P$ is a purely outgoing left $\operad P$-module.
  Dually, a \emph{lax coalgebra} over a coproperad $\operad C$ is a purely outgoing left $\operad C$-comodule.
  We denote by $\LaxAlg{\operad P}$ the full subcategory of the category of left $\operad P$-modules spanned by the lax algebras, and by $\LaxCoalg{\operad C}$ the full subcategory of the category of left $\operad C$-comodules spanned by the lax coalgebras.
\end{definition}

\begin{remark} \label{rem:lax_algebra_PROP}
  The terminology ``lax algebra'' is motivated by the theory of PROPs, as we will now explain.
  Analogously to what we have done, one can define a ``lax algebra over a PROP $\operad Q$'', by using a non-connected version of the composition product (though one has to be somewhat careful since it does not yield a monoidal structure).
  This has an equivalent description: it is the same datum as a \emph{lax} symmetric monoidal functor from $\operad Q$ considered as a $\dgVect$-enriched symmetric monoidal category to $\dgVect$.
  Moreover, a \emph{strong} symmetric monoidal functor of this type is called an \emph{algebra} over $\operad Q$.
  This motivated our choice of terminology.
  
  Also note that the notion of a lax algebra generalizes the notion of an ``algebra'' over a properad $\operad P$, which by definition is a map of properads $\operad P \to \Endprop(X)$ for some $X$.
  By pulling back the left $\Endprop(X)$-module structure on $\T(X)$ of Definition~\ref{def:T}, such a map yields a left $\operad P$-module structure on $\T(X)$, so that $\T(X)$ becomes a lax algebra over $\operad P$.
  In general the underlying $\S$-bimodule of a lax algebra over $\operad P$ does not need to be isomorphic to $\T(X)$ for any $X$, though.
  Moreover, even if we are given a lax algebra structure on $\T(X)$, this does not necessarily yield an algebra structure on $X$.
  The problem is that the former potentially differentiates between e.g.\ $\T(X) \cycar 2$ and $\T(X) \cycar 1 \tensor \T(X) \cycar 1$.
  This can be fixed by also remembering the ``horizontal composition'' of $\T(X)$, see work of Hoffbeck--Leray--Vallette \cite[§3.1]{HLV}.
  (See also Remark~\ref{rem:modules_and_algebras}.)
\end{remark}

The following is a specialization of the results of Section~\ref{sec:adjunction} to lax (co)algebras.

\begin{corollary} \label{cor:adjunction_lax_alg}
  Let $\alpha \colon \operad C \to \operad P$ be a twisting morphism of (co)properads.
  Then the bar and cobar constructions relative to $\alpha$ restrict to functors between the categories of lax (co)algebras.
  Moreover, for $K$ a lax coalgebra over $\operad C$ and $M$ a lax algebra over $\operad P$, there are bijections
  \[ \Hom[\LaxAlg{\operad P}](\Cobar[\alpha] K, M)  \iso  \Tw[\alpha](K, M)  \iso  \Hom[\LaxCoalg{\operad C}](K, \Bar[\alpha] M) \]
  natural in both $K$ and $M$.
  In particular there is an adjunction
  \[
  \begin{tikzcd}[column sep = 60]
    \LaxCoalg{\operad C} \rar[bend left, start anchor = north east, end anchor = north west]{\Cobar[\alpha]}[name=U, swap]{} & \LaxAlg{\operad P} \lar[bend left, start anchor = south west, end anchor = south east]{\Bar[\alpha]}[name=D, swap]{}
    \ar[from=U, to=D, phantom, "\vertdashv"]
  \end{tikzcd}
  \]
  between the categories of lax (co)algebras.
\end{corollary}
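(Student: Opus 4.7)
The plan is to deduce this corollary entirely from Proposition~\ref{prop:adjunction} by verifying that the bar and cobar constructions restrict to functors between the full subcategories of lax (co)algebras. Once the restriction is established, the natural bijections and the adjunction are immediate consequences of taking the corresponding statements of Proposition~\ref{prop:adjunction} and observing that all the $\Hom$-sets and the set $\Tw[\alpha](K, M)$ only depend on the underlying (co)modules, which already lie in the smaller subcategories.

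The only thing requiring a small argument is the following claim: if $B$ is a purely outgoing $\S$-bimodule, then so is $A \ccprod B$ for any $\S$-bimodule $A$. I would prove this directly from the graphical description in Definition~\ref{def:ccprod}: a generator of $A \ccprod B$ is represented by a connected $2$-level graph $G$ whose level-$1$ vertices are labeled by elements of $B$ and whose level-$2$ vertices are labeled by $A$. Since $B \only{}{}$ is concentrated in biarities $(m, 0)$, every level-$1$ vertex has no incoming half-edges. But source vertices of $G$ can only be adjacent to level-$1$ vertices via incoming half-edges of the latter; hence $G$ has no source vertices, so $\biarity(G) = (\outedges(G), \emptyset)$ as required. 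Applying this observation with $(A, B) = (\operad C, M)$ shows that $\Bar[\alpha] M = \operad C \ccprod M$ is purely outgoing whenever $M$ is, and with $(A, B) = (\operad P, K)$ shows the analogous statement for $\Cobar[\alpha] K$.

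With the restriction established, the bijections of Corollary~\ref{cor:adjunction_lax_alg} are obtained directly by restricting those of Proposition~\ref{prop:adjunction}: since $\LaxAlg{\operad P} \subseteq \LMod{\operad P}$ and $\LaxCoalg{\operad C} \subseteq \LComod{\operad C}$ are full subcategories, the $\Hom$-sets agree with their unrestricted counterparts. The resulting pair of functors therefore form an adjunction between the categories of lax (co)algebras. There is no genuine obstacle here; the statement is essentially a bookkeeping specialization of the general theory, whose only substantive ingredient is the elementary preservation property of the composition product noted above.
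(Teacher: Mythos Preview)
Your proposal is correct and follows essentially the same approach as the paper: the paper's proof simply states that the restriction ``is clear from their definitions'' and that ``the other claims follow immediately from Proposition~\ref{prop:adjunction}.'' You have merely spelled out the one-line observation about $A \ccprod B$ being purely outgoing when $B$ is, which is exactly the content the paper leaves implicit.
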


\begin{proof}
  That the bar and cobar constructions restrict to functors as claimed is clear from their definitions.
  The other claims follow immediately from Proposition~\ref{prop:adjunction}.
\end{proof}

\begin{remark} \label{rem:wg_adjunction_lax_alg}
  In the weight-graded situation of Remark~\ref{rem:wg_bar}, the bijections and adjunction above lift to the categories of weight-graded lax (co)algebras (cf.\ Remark~\ref{rem:wg_adjunction}).
\end{remark}

Also note that the results of Sections~\ref{sec:quasi-isos} and \ref{sec:resolutions} can be applied directly to obtain criteria under which these functors preserve quasi-isomorphisms or under which the (co)unit of the adjunction is a quasi-isomorphism.

\subsection{Brauer properads}

In this subsection, we introduce what we call the $\twist t$-twisted Brauer (co)properad: the (co)properad freely generated by some space of operations $\twist t$ in biarity $(0, 2)$.
We begin by introducing some terminology for these spaces of operations.

\begin{definition}
  A \emph{twist} is a differential graded (right) $\Symm{2}$-module.
\end{definition}

\begin{definition}
  The \emph{trivial twist}, denoted $\twist 1$, is the twist given by the one-dimensional trivial $\Symm{2}$-representation concentrated in degree $0$.
  Moreover, we denote by $\twistk$ and $\twistk^{-1}$ the twists given by the one-dimensional trivial $\Symm{2}$-representation concentrated in degree $1$ and $-1$, respectively, and by $\twists$ the twist given by the sign representation of $\Symm 2$ concentrated in degree $0$.
\end{definition}

\begin{remark}
  Note that any finite-dimensional twist with trivial differential can be written as a finite direct sum of tensor products of $\twistk$ and $\twists$.
  (Here we allow negative tensor powers since both twists are invertible: $\twistk \tensor \twistk^{-1} \iso \twist 1$ and $\twists \tensor \twists \iso \twist 1$.)
\end{remark}

\begin{definition}
  Let $\twist t$ be a twist.
  The \emph{$\twist t$-twisted Brauer properad} $\Brauer[\twist t]$ is the free properad generated by the $\S$-bimodule that has the $\opcat{(\Symm{2})}$-module $\twist t$ in biarity $(0, 2)$ and is trivial otherwise.
  Similarly the \emph{$\twist t$-twisted Brauer coproperad} $\coBrauer[\twist t]$ is the cofree connected coproperad on the same $\S$-bimodule.
\end{definition}

\begin{remark} \label{rem:Brauer_properad_descr}
  Note that, up to isomorphism, there are only two graphs with global flow (see Section~\ref{sec:free_properads}) such that all their vertices have biarity $(0, 2)$: the graph with one source, one sink, and no internal vertices, and the graph with two sources, zero sinks, and one internal vertex.
  Hence the underlying $\S$-bimodules of $\Brauer[\twist t]$ and $\coBrauer[\twist t]$ both have the base field $k$ in biarity $(1, 1)$, $\twist t$ in biarity $(0, 2)$, and are trivial otherwise.
  
  The canonical (connected) weight grading of both $\Brauer[\twist t]$ and $\coBrauer[\twist t]$ has the biarity $(1, 1)$ part in weight $0$ and the biarity $(0, 2)$ part in weight $1$.
\end{remark}

\begin{remark} \label{rem:Brauer_name}
  The name ``Brauer properad'' is inspired by the following: taking the value at $\Brauer[\twist 1]$ of the left adjoint of the forgetful functor from PROPs to properads, we obtain a PROP which, when we consider it as a symmetric monoidal category, is isomorphic to the ``downwards Brauer category'' of Sam--Snowden \cite[4.2.5]{SS}.
  Applying the same procedure to $\Brauer[\twists]$, we obtain the ``downwards signed Brauer category'' of \cite[4.2.11]{SS}.
\end{remark}

\begin{definition} \label{def:Brauer_nattwist}
  Let $\twist t$ be a twist.
  By Lemma~\ref{lemma:free_Koszul_dual} we have $\KD{(\Brauer[\twist t])} \iso \coBrauer[\twistk \tensor \twist t]$.
  We will denote the associated natural twisting morphism by $\nattwist_{\twist t} \colon \coBrauer[\twistk \tensor \twist t] \to \Brauer[\twist t]$.
\end{definition}

\begin{remark} \label{rem:Brauer_Koszul}
  Using the same argument as in Remark~\ref{rem:Brauer_properad_descr} we see that the canonical map $\coBrauer[\twistk \tensor \twist t] \iso \KD{(\Brauer[\twist t])}  \to  \Bar \Brauer[\twist t]$ is actually an isomorphism.
  In particular $\Brauer[\twist t]$ is a Koszul properad.
  (The latter holds more generally for any properad freely generated by a $\S$-bimodule.)
\end{remark}

\subsection{Modular operads}

In this subsection, we define $\twist t$-twisted modular (co)operads as lax (co)algebras over the $\twist t$-twisted Brauer (co)properad.
Moreover, we use the results of Section~\ref{sec:(co)bar} to deduce properties of their (co)bar construction.
Later, in Section~\ref{sec:comparison}, we show that these notions recover the modular $\mathfrak D$-operads and the Feynman transform of Getzler--Kapranov \cite{GK}.

\begin{definition}
  A purely outgoing weight-graded $\S$-bimodule is \emph{prestable} if it is concentrated in weights $\ge -1$ and its weight $-1$ part is trivial in arities $\le 2$.
  A prestable purely outgoing $\S$-bimodule is \emph{reduced} if the weight $0$ part of arity $0$ is trivial as well.
\end{definition}

\begin{definition} \label{def:modop}
  Let $\twist t$ be a twist.
  A \emph{$\twist t$-twisted modular operad} is a prestable weight-graded lax algebra over $\Brauer[\twist t]$.
  Dually, a \emph{$\twist t$-twisted modular cooperad} is a prestable weight-graded lax coalgebra over $\coBrauer[\twist t]$.
  We write $\ModOp[\twist t]$ for the category of $\twist t$-twisted modular operads and $\ModCoop[\twist t]$ for the category of $\twist t$-twisted modular cooperads.
\end{definition}

\begin{remark}
  The weight grading of a $\twist t$-twisted modular operad corresponds to the genus grading used in \cite{GK}.
  Under this correspondence the condition of being prestable corresponds to the ``stability'' condition of \cite[2.1]{GK}.
  Note however that our condition is slightly weaker, as it does not require arity $0$ to vanish in weight $0$; an exact correspondence is achieved by adding our condition of being reduced.
  We need this prestability condition below to guarantee that the bar and cobar constructions (which correspond to the ``Feynman transform'' of \cite[§5]{GK}) behave nicely in full generality.
  See Section~\ref{sec:comparison} for a more thorough comparison of our framework to the one of \cite{GK}.
\end{remark}

We can also define an ``ungraded'' variant of modular operads by omitting the weight grading.
The resulting notion is simpler, but does not behave as nicely in full generality, as we will see later.

\begin{definition} \label{def:umodop}
  Let $\twist t$ be a twist.
  An \emph{ungraded $\twist t$-twisted modular operad} is a lax algebra over $\Brauer[\twist t]$ and an \emph{ungraded $\twist t$-twisted modular cooperad} is a lax coalgebra over $\coBrauer[\twist t]$.
  We write $\UModOp[\twist t]$ for the category of ungraded $\twist t$-twisted modular operads and $\UModCoop[\twist t]$ for the category of ungraded $\twist t$-twisted modular cooperads.
\end{definition}

\begin{remark} \label{rem:add_grading}
  There is a functor $\Grade[\twist t] \colon \UModOp[\twist t] \to \ModOp[\twist t]$ given by
  \[ \weight w {\Grade[\twist t](M) \cycar m}  \defeq  \begin{cases} 0, & \text{if } w = -1 \text{ and } m \le 2 \\ M \cycar m, & \text{otherwise} \end{cases} \]
  for $w \ge -1$.
  The structure map of $\Grade[\twist t](M)$ is induced by the one of $M$.
  (That this functor is actually well defined requires an argument similar to the one in the proof of Lemma~\ref{lemma:bar_mo}.)
  This functor is right adjoint to the functor that forgets the weight grading.
\end{remark}

\begin{example} \label{ex:end_modop}
  Let $V$ be a dg vector space.
  Denote by $\twist t_V$ the homomorphism dg vector space $\dgVect(V \tensor V, k)$.
  It has a canonical (right) $\Symm 2$-action by precomposing with the map $\tau$ that permutes the two tensor factors.
  For any dg subspace $\twist t \subseteq \twist t_V$ that is stable under the $\Symm 2$-action, the inclusion defines a canonical map of properads $\Brauer[\twist t] \to \Endprop(V)$.
  Pulling back the left $\Endprop(V)$-module structure on $\T(V)$ of Definition~\ref{def:T} along this inclusion makes $\T(V)$ into an ungraded $\twist t$-twisted modular operad.
  We call it the \emph{ungraded $\twist t$-twisted endomorphism modular operad} of $V$.
  
  There is no obvious way to equip the ungraded $\twist t$-twisted modular operad $\T(V)$ with a weight grading.
  However, we can apply the functor $\Grade[\twist t]$ of Remark~\ref{rem:add_grading} to it and obtain a $\twist t$-twisted modular operad.
  We call it the \emph{$\twist t$-twisted endomorphism modular operad} of $V$.
  Note however that, since it is constructed from the ungraded version, it does not contain more information (although it is larger).
  In particular the ungraded version appears to be the more fundamental object.
\end{example}

\begin{example} \label{ex:onedim_end_modop}
  The following is the most important special case of Example~\ref{ex:end_modop}.
  Let $f \colon V \tensor V \to k$ be a non-trivial homogeneous map of some degree $n \in \ZZ$ such that $f \after \tau = (-1)^\alpha f$ for some $\alpha \in \ZZ$.
  Then we can take $\twist t \subseteq \twist t_V$ to be the one-dimensional dg subspace spanned by $f$.
  Noting that in this case $\twist t \iso \twistk^{\tensor n} \tensor \twists^{\tensor \alpha}$, we obtain a structure (depending on $f$) of an ungraded $(\twistk^{\tensor n} \tensor \twists^{\tensor \alpha})$-twisted modular operad on $\T(V)$.
  This is a more general analogue of \cite[2.25 and Proposition 4.12]{GK}.
\end{example}

\begin{example} \label{ex:end_modop_dirsum}
  We will now describe a special case of Example~\ref{ex:end_modop} that is not of the form described in Example~\ref{ex:onedim_end_modop}, to illustrate how our framework is convenient in more general situations as well.
  Let $S$ be the set $\set{1,2} \times \set{1,2}$ equipped with the $\Symm 2$-action that swaps the two factors.
  Then $\twist d_n \defeq \shift[n] k \langle S \rangle$ is a graded $\Symm 2$-module.
  Now let $V = V_1 \oplus V_2$ be a graded vector space and $f \colon V \tensor V \to k$ a non-trivial homogeneous map of some degree $n \in \ZZ$ such that $f \after \tau = f$.
  Then $\twist d_n$ embeds into $\twist t_V$ by sending a basis element $(i, j) \in S$ to the composite $V \tensor V \to V_i \tensor V_j \to k$ of the projection and (the restriction of) $f$.
  In particular we obtain the structure of an ungraded $\twist d_n$-twisted modular operad on $\T(V)$.
  
  We can think of such an ungraded $\twist d_n$-twisted modular operad as a kind of ``edge colored'' modular operad with three types of edges: undirected edges colored by $(1,1)$, undirected edges colored by $(2,2)$, and directed edges colored by $(1,2)$ or $(2,1)$ depending on orientation.
  Note that if an edge is colored by $(1,2)$ or $(2,1)$ really makes a difference here, not just in sign.
  
  Restricting $\twist d_n$ to the subspace spanned by $(1,2)$ and $(2,1)$, we obtain the structure of a kind of ``directed modular operad'' which allows composition along connected directed graphs.
\end{example}

We now study the bar and cobar construction in the setting of (ungraded) $\twist t$-twisted modular operads.

\begin{definition} \label{def:bar_umodop}
  Let $\twist t$ be a twist.
  By Corollary~\ref{cor:adjunction_lax_alg} the bar and cobar construction relative to $\nattwist_{\twist t}$ yield an adjoint pair of functors between $\UModOp[\twist t]$ and $\UModCoop[\twistk \tensor \twist t]$.
  We denote them by $\Bar[\twist t]$ and $\Cobar[\twistk \tensor \twist t]$, respectively.
\end{definition}

\begin{lemma} \label{lemma:bar_mo}
  Let $\twist t$ be a twist.
  The bar and cobar constructions relative to $\nattwist_{\twist t}$ lift to an adjoint pair of functors between $\ModOp[\twist t]$ and $\ModCoop[\twistk \tensor \twist t]$.
  We will denote these functors by $\Bar[\twist t]$ and $\Cobar[\twistk \tensor \twist t]$, respectively, as well.
\end{lemma}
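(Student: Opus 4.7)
The plan is to reduce the claim to a combinatorial check that prestability is preserved by the two functors. First, I would observe that the natural twisting morphism $\nattwist_{\twist t}$ preserves weight gradings: it is a degree-$(-1)$ isomorphism between the weight-$1$ parts (both concentrated in biarity $(0,2)$) and vanishes on all other weights. Hence Corollary~\ref{cor:adjunction_lax_alg}, together with the weight-graded version recorded in Remark~\ref{rem:wg_adjunction_lax_alg}, yields an adjoint pair between weight-graded lax coalgebras over $\coBrauer[\twistk \tensor \twist t]$ and weight-graded lax algebras over $\Brauer[\twist t]$. Since $\ModOp[\twist t]$ and $\ModCoop[\twistk \tensor \twist t]$ are full subcategories cut out by the prestability condition on the underlying bigraded $\S$-bimodule, it would remain to verify that both functors land in the prestable locus; once this is done, the adjunction restricts automatically.

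Next, I would unpack the underlying bigraded $\S$-bimodule $\coBrauer[\twistk \tensor \twist t] \ccprod M$ of $\Bar[\twist t] M$. By Remark~\ref{rem:Brauer_properad_descr}, a generator in arity $m$ and weight $w$ is represented by a connected 2-level graph $G$ with no sources and $m$ sinks, in which each level-2 vertex has biarity either $(1,1)$ (the unit, weight $0$) or $(0,2)$ (weight $1$), and each level-1 vertex is labeled by a purely outgoing element of $M$. Writing $n$ for the number of level-1 vertices, $a$ for the number of biarity-$(1,1)$ level-2 vertices, and $b$ for the number of biarity-$(0,2)$ ones, one has $m = a$ and $w = b + \sum_v w(v)$, where the sum runs over the level-1 vertices of $G$.

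The crucial input is that connectivity of $G$ forces $b \ge n - 1$: a biarity-$(1,1)$ level-2 vertex touches only a single level-1 vertex and so cannot glue distinct level-1 vertices into the same component, while each biarity-$(0,2)$ level-2 vertex merges at most two level-1 components. Combined with prestability of $M$ (which gives $w(v) \ge -1$, with equality forcing the arity of $v$ to be $\ge 3$), a short arithmetic argument yields $w \ge -1$, and pushing through the equality case shows that $w = -1$ requires every level-1 vertex to have weight $-1$ and arity $\ge 3$, whence $m \ge n + 2 \ge 3$. This establishes prestability of $\Bar[\twist t] M$; the argument for $\Cobar[\twistk \tensor \twist t]$ is completely symmetric because $\Brauer[\twist t]$ has the same biarity profile as $\coBrauer[\twistk \tensor \twist t]$. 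The main subtlety, and the reason the statement would fail in the absence of the stability hypothesis, is identifying and exploiting this connectivity bound $b \ge n - 1$.
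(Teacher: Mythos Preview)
Your argument is correct and is essentially the same as the paper's. The paper phrases things in the ``connected graph with hairs'' picture (level-$1$ vertices become vertices, biarity-$(0,2)$ level-$2$ vertices become edges, biarity-$(1,1)$ level-$2$ vertices become hairs), writing the weight as $-\euler(G) + \sum_v (1 + \w(v))$; your inequality $b \ge n-1$ is exactly $\euler(G) \le 1$, and your arity count in the equality case is the statement that a tree with all vertices of valence $\ge 3$ has $\ge 3$ hairs.
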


\begin{proof}
  By Remark~\ref{rem:wg_adjunction_lax_alg} it is enough to prove that $\Bar[\twist t]$ and $\Cobar[\twistk \tensor \twist t]$ preserve the property of being prestable.
  As we will explain in the beginning of the proof of Theorem~\ref{thm:main}, we can think of a generator $\omega$ of $\Brauer[\twistk \tensor \twist t] \ccprod A$ (or $\coBrauer[\twist t] \ccprod A$) as being represented by a connected graph with hairs $G$ whose vertices are labeled by elements of $A$ of the correct arity.
  Moreover, the weight of $\omega$ is $-\euler(G) + \sum_{v \in \Vert(G)} (1 + \w(v))$ where $\euler(G)$ is the Euler characteristic of $G$ and $\w(v)$ denotes the weight of the label of $v$.
  Since $\euler(G) \le 1$ and $\w(v) \ge -1$, the weight of $\omega$ is $\ge -1$ as well.
  This is an equality if and only if $\euler(G) = 1$ (i.e.\ $G$ is a tree) and the labels of all vertices have weight $-1$.
  In particular, if $\omega$ is non-trivial, each vertex must have arity at least $3$.
  We finish by noting that a (non-empty) tree with hairs such that each vertex has arity at least $3$ has at least three hairs.
\end{proof}

\begin{remark}
  The bar and cobar constructions of modular (co)operads have also been defined, in different frameworks, by Kaufmann--Ward \cite[§7.4]{KW} and Dotsenko--Shadrin--Vaintrob--Vallette \cite[§1.2]{DSVV}.
  In Section~\ref{sec:comparison} we will see that these constructions also recover the original Feynman transform of Getzler--Kapranov \cite{GK}.
\end{remark}

\begin{remark} \label{rem:modop_twisting_mor}
  The adjunctions of  are realized by the bijections
  \begin{align}
    \Hom[{\UModOp[\twist t]}](\Cobar[\twist t] K, M)  &\iso  \Tw[\nattwist_{\twist t}](K, M)  \iso  \Hom[{\UModCoop[\twistk \tensor \twist t]}](K, \Bar[\twist t] M) \nonumber \\
    \Hom[{\ModOp[\twist t]}](\Cobar[\twist t] K, M)  &\iso  \Tw[\nattwist_{\twist t}](K, M)  \iso  \Hom[{\ModCoop[\twistk \tensor \twist t]}](K, \Bar[\twist t] M) \label{eq:adj_modop}
  \end{align}
  of  (which arise as a special case of Proposition~\ref{prop:adjunction}).
  Note that in the second line $\Tw[\nattwist_{\twist t}](K, M)$ denotes those twisting morphisms that preserve the weight grading.
  This is equivalent to the notion of twisting morphism of modular operads given in \cite[Definition 2.15]{DSVV}.
  In particular the left hand isomorphism of \eqref{eq:adj_modop} recovers \cite[Proposition 2.16]{DSVV} (which is a special case of \cite[Theorem 7.5.3]{KW}).
\end{remark}

\begin{remark}
  Note that these adjunctions are compatible with the functors that forget the weight grading.
  In particular all results in the rest of this subsection for ungraded $\twist t$-twisted modular (co)operads also hold for $\twist t$-twisted modular (co)operads.
\end{remark}

\begin{remark}
  Note that the bar and the cobar construction of $\twist t$-twisted modular (co)op\-er\-ads preserve the property of being reduced.
  In particular the adjunction of Lemma~\ref{lemma:bar_mo} restricts to an adjunction between the full subcategories of reduced $\twist t$-twisted modular (co)operads.
\end{remark}

We will now apply the results of Sections~\ref{sec:quasi-isos} and \ref{sec:resolutions} to the bar and cobar constructions of (ungraded) $\twist t$-twisted modular operads.

\begin{corollary} \label{cor:feyn_quasi-iso}
  Let $\twist t$ be a twist.
  Then $\Bar[\twist t]$ sends quasi-isomorphisms of ungraded $\twist t$-twisted modular operads to quasi-isomorphisms of ungraded $(\twistk \tensor \twist t)$-twisted modular cooperads.
  
  Furthermore, let $f \colon K \to K'$ be a map of ungraded $(\twistk \tensor \twist t)$-twisted modular cooperads and assume that one of the following holds:
  \begin{itemize}
    \item The twist $\twist t$ is concentrated in non-negative homological degrees and both $K$ and $K'$ are left connective.
    \item The twist $\twist t$ is concentrated in positive homological degrees and both $K$ and $K'$ are concentrated in non-negative homological degrees.
    \item The morphism $f$ lifts (up to isomorphism) to a map of $(\twistk \tensor \twist t)$-twisted modular cooperads.
  \end{itemize}
  Then $\Cobar[\twistk \tensor \twist t] f$ is a quasi-isomorphism.
\end{corollary}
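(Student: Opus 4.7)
The plan is to apply Propositions~\ref{prop:bar_quasi-iso} and~\ref{prop:cobar_quasi-iso}, reducing each claim to the verification that the given morphism is detachable with respect to the natural twisting morphism $\nattwist_{\twist t} \colon \coBrauer[\twistk \tensor \twist t] \to \Brauer[\twist t]$ in the sense of Definition~\ref{def:detachable}. The crucial point throughout is that $\nattwist_{\twist t}$ is supported entirely on weight $1$, where it is non-trivial only on biarity $(0, 2)$; since $0 < 2$, this means $\nattwist_{\twist t}$ vanishes on every biarity $(m, n)$ with $m \ge n$, which unlocks several of the criteria in Lemmas~\ref{lemma:detachable_modules} and~\ref{lemma:detachable_comodules}.

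For the first assertion, the preceding observation places every morphism of lax $\Brauer[\twist t]$-algebras into case~(b) of Lemma~\ref{lemma:detachable_modules}, yielding detachability; Proposition~\ref{prop:bar_quasi-iso} then delivers the conclusion.

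For the second assertion, I would handle the three hypotheses via three different cases of Lemma~\ref{lemma:detachable_comodules}, after which Proposition~\ref{prop:cobar_quasi-iso} applies. Under the first hypothesis, $\Brauer[\twist t]$ inherits non-negative homological-degree concentration from $\twist t$ (being freely generated by it), so case~(b) applies. Under the second hypothesis, $\Brauer[\twist t]$ is weight graded and $\weight w {\Brauer[\twist t]}$ is concentrated in homological degrees $\ge \epsilon w$, where $\epsilon > 0$ may be taken to be the minimum homological degree of $\twist t$; this puts us in case~(d). Under the third hypothesis, case~(e) applies with constants $\beta = -3$ and $\gamma = 2$: as in the analysis in the proof of Lemma~\ref{lemma:bar_mo}, the weight $w$ part of $\Brauer[\twist t](m, n)$ is supported by graphs whose $w$ internal vertices each have biarity $(0, 2)$, forcing $n - m = 2w$ exactly, so the condition $n - m \le \gamma w$ holds with $\gamma = 2$; and prestability of $K$ and $K'$ forces $\weight{-1} K \cycar m = 0 = \weight{-1} {K'} \cycar m$ for $m \le 2$, so taking $\beta = -3$ makes $\beta w \le m$ hold in all weights, yielding the required inequality $\beta + \gamma = -1 < 0$.

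The main technical point is the choice of constants under the third hypothesis, where the combinatorial shape of $\Brauer[\twist t]$ (each generating operation of biarity $(0, 2)$, giving the exact relation $n - m = 2w$) must be combined with the prestability baked into the definition of a modular cooperad in order to produce $\beta$ and $\gamma$ with $\beta + \gamma < 0$. Everything else is essentially bookkeeping once the appropriate case of Lemma~\ref{lemma:detachable_comodules} has been identified.
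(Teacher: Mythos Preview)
Your proposal is correct and follows essentially the same route as the paper's proof: apply Proposition~\ref{prop:bar_quasi-iso} with Lemma~\ref{lemma:detachable_modules}(b) for the bar statement, and Proposition~\ref{prop:cobar_quasi-iso} with Lemma~\ref{lemma:detachable_comodules}(b),(d),(e) for the three cobar hypotheses. The only cosmetic difference is the choice of constants in case~(e): the paper uses $\beta = -2.5$, $\gamma = 2$, while you use $\beta = -3$, $\gamma = 2$; both satisfy $\beta + \gamma < 0$ and the prestability bound, so either works.
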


\begin{proof}
  The first part follows from Proposition~\ref{prop:bar_quasi-iso} and Lemma~\ref{lemma:detachable_modules} part (b), and the second from Proposition~\ref{prop:cobar_quasi-iso} and Lemma~\ref{lemma:detachable_comodules} parts (b), (d), and (e).
  (For the application of (e) we use $\beta = - 2.5$ and $\gamma = 2$.)
\end{proof}

\begin{corollary} \label{cor:feyn_resolution}
  Let $\twist t$ be a twist.
  Then the counit $\Cobar[\twistk \tensor \twist t] \Bar[\twist t] M \to M$ is a quasi-isomorphism for any ungraded $\twist t$-twisted modular operad $M$.
  
  Furthermore, let $K$ be an ungraded $(\twistk \tensor \twist t)$-twisted modular cooperad and assume that one of the following holds:
  \begin{itemize}
    \item The twist $\twist t$ is concentrated in non-negative homological degrees and $K$ is left connective.
    \item The twist $\twist t$ is concentrated in positive homological degrees and $K$ is concentrated in non-negative homological degrees.
    \item We have that $K$ is a $(\twistk \tensor \twist t)$-twisted modular cooperad.
  \end{itemize}
  Then the unit $K \to \Bar[\twist t] \Cobar[\twistk \tensor \twist t] K$ is a quasi-isomorphism.
\end{corollary}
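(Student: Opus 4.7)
The strategy is to deduce both claims from Theorems~\ref{thm:cobar-bar} and~\ref{thm:bar-cobar} applied to the twisting morphism $\nattwist_{\twist t} \colon \coBrauer[\twistk \tensor \twist t] \to \Brauer[\twist t]$. The automatic existence of the auxiliary quasi-isomorphism $q$ in each theorem is in place, since the Brauer (co)properads are connected weight-graded by Remark~\ref{rem:Brauer_properad_descr}, the natural twisting morphism $\nattwist_{\twist t}$ preserves the weight grading, and Koszulness of $\Brauer[\twist t]$ (Remark~\ref{rem:Brauer_Koszul}) together with Proposition~\ref{prop:Koszul_criterion} implies that both Koszul complexes $\Kcompl{\nattwist_{\twist t}}{\Brauer[\twist t]}{\coBrauer[\twistk \tensor \twist t]}$ and $\Kcompl{\nattwist_{\twist t}}{\coBrauer[\twistk \tensor \twist t]}{\Brauer[\twist t]}$ are acyclic. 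Hence what remains is to verify the relevant detachability conditions using Lemmas~\ref{lemma:detachable_module} and~\ref{lemma:detachable_comodule}.

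For the counit assertion, I would invoke part~(a) of Lemma~\ref{lemma:detachable_module}. The key numerical input is a half-edge count: every weight-$w$ element of $\Brauer[\twist t]$ lies in a biarity $(m, n)$ with $n - m = 2w$, since each $(0,2)$-generator contributes $+2$ to $n - m$ and the identity does not contribute. In particular $\Brauer[\twist t]$ is concentrated in biarities with $m \le n$. Furthermore, $\nattwist_{\twist t}$ is supported in weight $1$ and biarity $(0, 2)$ by Lemma~\ref{lemma:free_Koszul_dual}, so it vanishes on biarities with $m = n$. Both hypotheses of (a) are satisfied without any assumption on $M$, so any ungraded $\twist t$-twisted modular operad $M$ is detachable and the counit is a quasi-isomorphism.

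For the unit assertion, the three cases are handled respectively by parts~(a), (c), and (d) of Lemma~\ref{lemma:detachable_comodule}. Case~1 falls under (a): the biarity restriction and the vanishing condition on $\nattwist_{\twist t}$ are as above, and non-negativity of $\twist t$ implies that both $\coBrauer[\twistk \tensor \twist t]$ and $\Brauer[\twist t]$ live in non-negative homological degrees (noting that $\twistk$ shifts degrees by $+1$). Case~2 falls under (c): when $\twist t$ lies in positive degrees, the generators of $\coBrauer[\twistk \tensor \twist t]$ sit in degrees $\ge 2$ and those of $\Brauer[\twist t]$ in degrees $\ge 1$, so each weight-$w$ element of $\coBrauer[\twistk \tensor \twist t] \ccprod \Brauer[\twist t]$ lies in homological degree at least $w$, and one may take $\epsilon = 1$. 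Case~3 falls under (d): combining prestability of $K$ (which guarantees that the weight $-1$ part of $K$ vanishes in arities $\le 2$) with the biarity-weight relation $n - m = 2w$ in the composition product, one finds that $\beta = -3$ and $\gamma = 2$ are admissible constants with $\beta + \gamma = -1 < 0$.

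The main obstacle I anticipate is the bookkeeping in case~3 of the unit: the constants $\beta$ and $\gamma$ must simultaneously respect the lower bound on arity forced by prestability of $K$ and the upper bound $n - m \le \gamma w$ coming from the composition product, while satisfying the strict inequality $\beta + \gamma < 0$. This balance works out cleanly here precisely because the Brauer (co)properads are generated in biarity $(0, 2)$, which contributes exactly $+2$ per unit of weight; nevertheless, the weight-, biarity-, and homological-degree bookkeeping must each be tracked consistently.
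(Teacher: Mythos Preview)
Your proof is correct and follows essentially the same approach as the paper: reduce to Theorems~\ref{thm:cobar-bar} and~\ref{thm:bar-cobar} via Remark~\ref{rem:Brauer_Koszul}, then verify detachability using Lemma~\ref{lemma:detachable_module}(a) for the counit and Lemma~\ref{lemma:detachable_comodule}(a), (c), (d) for the three unit cases. The only difference is cosmetic: for part~(d) the paper takes $\beta = -2.5$ and $\gamma = 2$, whereas you take $\beta = -3$ and $\gamma = 2$; both satisfy $\beta + \gamma < 0$ and the requisite inequalities (your choice is actually the sharp one for prestability at weight~$-1$).
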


\begin{proof}
  Using Remark~\ref{rem:Brauer_Koszul}, the first part follows from Theorem~\ref{thm:cobar-bar} and Lemma~\ref{lemma:detachable_module} part (a), and the second from Theorem~\ref{thm:bar-cobar} and Lemma~\ref{lemma:detachable_comodule} parts (a), (c), and (d).
  (For the application of (d) we use $\beta = - 2.5$ and $\gamma = 2$.)
\end{proof}

\subsection{Dualization}

The results of this subsection will be needed in Section~\ref{sec:comparison}.
They concern how dualization interacts with $\twist t$-twisted modular operads.

\begin{notation} \label{def:dual}
  We write $\dual{(\blank)}$ for linear dualization.
  For a (weight-graded) $\S$-bimodule the dualization is applied separately in each biarity (and weight).
  The left $(\Symm m \times \opcat{(\Symm n)})$-action on $\dual A (m, n) = \dual{A(m, n)}$ is given by the opposite of the induced right $(\Symm m \times \opcat{(\Symm n)})$-action
\end{notation}

As is often the case, we need a finiteness condition for dualization to behave nicely.
It takes the following form:

\begin{definition}
  A weight-graded $\S$-bimodule $A$ is \emph{of finite type} if $\weight w {A(m, n)}$ is finite-dimensional for all $m, n \in \NN$ and $w \in \ZZ$.
\end{definition}

\begin{lemma} \label{lemma:ccprod_dualize}
  Let $B$ be a weight-graded $\S$-bimodule of finite type that is concentrated in weight $0$ of biarity $(1,1)$ and weight $1$ of biarity $(0, 2)$ (for example $\unit[\ccprod]$, or one of $\Brauer[\twist t]$ and $\Brauer[\twist t] \ccprod \Brauer[\twist t]$ with $\twist t$ finite, or their ``co'' variants).
  Furthermore, let $A$ be a prestable purely outgoing weight-graded $\S$-bimodule of finite type.
  Then there is a canonical isomorphism of weight-graded $\S$-bimodules
  \[ \dual {(B \ccprod A)}  \iso  \dual B \ccprod \dual A \]
  that is natural in both $B$ and $A$.
\end{lemma}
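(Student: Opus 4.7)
The approach is to unfold both sides of the claimed isomorphism using the explicit graphical description of the composition product, reduce to a statement about finite-dimensional vector spaces with actions of finite groups, and then invoke the standard facts that linear dualization commutes with finite direct sums, finite tensor products, and (in characteristic~$0$) with coinvariants under finite groups. The content of the lemma is thus finiteness in each weight and biarity.

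First I would make the graphical description concrete. By the assumptions on $B$, a generator of $B \ccprod A$ in biarity $(M, 0)$ and weight $w$ is represented by a connected $2$-level graph $G$ whose level-$1$ vertices are labeled by $A$ and whose level-$2$ vertices are labeled by $B$ in biarity $(1,1)$ or $(0,2)$; since $A$ is purely outgoing, $G$ has no source vertices. Writing $v$ for the number of level-$1$ vertices, $e_{11}$ and $e_{02}$ for the numbers of level-$2$ vertices of each biarity, one reads off directly from the edge-counting identities that $e_{11} = \card{M}$, that the number of loops is $L = e_{02} - v + 1 \ge 0$, and that the total weight is
\[ w = \sum_{i=1}^{v} \bigl( \w(a_i) + 1 \bigr) + L - 1 \text{.} \]
Combining this with the prestability bounds $\w(a_i) \ge -1$ and ($\w(a_i) = -1 \Rightarrow m_i \ge 3$) and with $\sum m_i = \card{M} + 2 e_{02}$ gives uniform upper bounds on $v$, $e_{11}$, and $e_{02}$ in terms of $\card{M}$ and $w$ alone. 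In particular only finitely many isomorphism classes of such labeled graphs contribute to $(B \ccprod A)(M,0)$ in a given weight, and each contribution is finite-dimensional by the finite-type hypotheses.

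Next I would write $\weight w {(B \ccprod A)(M, 0)}$ as the finite direct sum
\[ \Dirsum_{[G]} \bigl( B \ccprod_G A \bigr)_{\Aut(G)} \]
over isomorphism classes of admissible labeled $2$-level graphs $G$ contributing to weight $w$ (as in Definition~\ref{def:ccprod}, rewritten via coinvariants as the coend over the groupoid $\cGraph[2]$), and likewise for $\dual B \ccprod \dual A$. Linear dualization commutes with finite direct sums and with tensor products of finite-dimensional spaces, so termwise we obtain $\dual{(B \ccprod_G A)} \iso \dual B \ccprod_G \dual A$ as representations of $\Aut(G)$. Over a field of characteristic~$0$, coinvariants and invariants for a finite group coincide via the averaging isomorphism, and both are interchanged with dualization. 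Assembling these canonical isomorphisms gives the desired isomorphism $\dual{(B \ccprod A)}(M, 0) \iso (\dual B \ccprod \dual A)(M, 0)$ in each weight, and naturality in $A$ and $B$ is automatic since every step is.

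The main obstacle is the finiteness bookkeeping in the first step: the composition product is a priori an infinite colimit, and without the prestability assumption there would be infinitely many graphs of given arity and weight (for instance, one could attach arbitrarily long chains of $(1,1)$- or weight-$0$ arity-$2$ pieces). Prestability, together with the purely-outgoing hypothesis, is exactly what turns the weight grading into a proper filtration whose graded pieces are finite direct sums of finite-dimensional spaces; everything else is formal.
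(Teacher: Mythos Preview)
Your proof is correct and follows essentially the same approach as the paper: both reduce the claim to showing that only finitely many labeled $2$-level graphs contribute in each fixed arity and weight, derive this from the identity $m + 2w = \sum_i (m_i + 2\w(a_i))$ combined with prestability, and then invoke the standard facts that dualization commutes with finite sums, finite tensor products, and finite coinvariants in characteristic~$0$. Your presentation of the combinatorics (via $e_{11}$, $e_{02}$, $v$, $L$) is a mild rephrasing of the paper's, and you are slightly more explicit about the coinvariants step, but there is no substantive difference.
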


\begin{proof}
  First note that (in characteristic $0$) under our finiteness assumption the tensor products and quotients involved in the definition of $B \ccprod A$ commute with dualization.
  To see that, in a given arity $m$ and weight $w$, this is also true for the occurring coproducts, we have to show that (up to isomorphism) only finitely many labeled $2$-level graphs contribute to $\weight w {(B \ccprod A) \cycar m}$ (cf.\ \cite[Lemma 2.16]{GK}).
  
  Let $G$ be such a labeled $2$-level graph and denote by $E(G)$ the set of its level-$2$ vertices of biarity $(0, 2)$.
  We note that
  \begin{align}
    w &= \card{E(G)} + \sum_{v \in \Vert[1](G)} \w(v) \label{eq:vertices_edges_hairs} \\
    m + 2 \card{E(G)} &= \sum_{v \in \Vert[1](G)} \outedges(v) \nonumber
  \end{align}
  where $\w(v)$ is the weight of the label of the vertex $v$.
  Hence
  \[ m + 2w = \sum_{v \in \Vert[1](G)} \big( {\outedges(v)} + 2 \w(v) \big) \]
  holds.
  
  Now note that, since $A$ is prestable, if $\outedges(v) \neq 0$ holds for a level-$1$ vertex $v$, then $\outedges(v) + 2 \w(v) \ge 1$.
  Hence, if $\outedges(v) \neq 0$ for all level-$1$ vertices $v$, then $m + 2w \ge \card{\Vert[1](G)}$.
  If there exists a level-$1$ vertex $v$ with $\outedges(v) = 0$, then $\card {\Vert[1](G)} = 1$ since $G$ could not be connected otherwise.
  Hence we have $\card{\Vert[1](G)} \le \max (m + 2w, 1)$.
  This implies that
  \[ \sum_{v \in \Vert[1](G)} \w(v)  \ge  - \max (m + 2w, 1) \]
  as $\w(v) \ge -1$ for all $v \in \Vert[1](G)$ since $A$ is prestable.
  
  By \eqref{eq:vertices_edges_hairs} this implies that $\card{E(G)}$ too is bounded from above by some function of $m$ and $w$.
  Since (up to isomorphism) there are only finitely many $2$-level graphs $G$ for some fixed $m$ and $\card{E(G)}$, this finishes the argument.
\end{proof}

\begin{lemma} \label{lemma:modop_dualize}
  Let $\twist t$ be a finite-dimensional twist and $M$ a $\twist t$-twisted modular operad of finite type.
  Then the linear dual $\dual M$, together with the duals of the structure maps of $M$, forms a $\dual{\twist t}$-twisted modular cooperad.
  Similarly the dual of a $\twist t$-twisted modular cooperad of finite type is a $\dual{\twist t}$-twisted modular operad.
\end{lemma}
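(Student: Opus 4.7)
The plan is to dualize the structure maps directly and use Lemma~\ref{lemma:ccprod_dualize} to identify the target of the dualized structure map with the composition product of duals. I will focus on the first claim; the reverse direction is entirely analogous.

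First, I would identify $\dual{\Brauer[\twist t]}$ with $\coBrauer[\dual{\twist t}]$ as coproperads. By Remark~\ref{rem:Brauer_properad_descr}, both $\Brauer[\twist t]$ and $\Brauer[\twist t] \ccprod \Brauer[\twist t]$ are concentrated in biarity $(1,1)$ in weight $0$ and biarity $(0,2)$ in weight $1$, and are of finite type since $\twist t$ is finite-dimensional. Hence Lemma~\ref{lemma:ccprod_dualize} (taking $A = \unit[\ccprod]$ if desired, or simply reading the lemma with the appropriate specialization) yields a natural isomorphism $\dual{(\Brauer[\twist t] \ccprod \Brauer[\twist t])} \iso \dual{\Brauer[\twist t]} \ccprod \dual{\Brauer[\twist t]}$. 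Transposing the multiplication and unit of $\Brauer[\twist t]$ along this isomorphism gives a comultiplication and counit on $\dual{\Brauer[\twist t]}$; the comonoid axioms are the transposes of the monoid axioms, so $\dual{\Brauer[\twist t]}$ is a coproperad. Its underlying $\S$-bimodule is $k$ in biarity $(1,1)$ and $\dual{\twist t}$ in biarity $(0,2)$, and it is straightforward to check that the resulting coproperad structure agrees with $\coBrauer[\dual{\twist t}]$ (both being freely cogenerated by $\dual{\twist t}$ placed in biarity $(0,2)$).

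Second, given a $\twist t$-twisted modular operad $M$ of finite type with structure map $\lambda \colon \Brauer[\twist t] \ccprod M \to M$, I would apply Lemma~\ref{lemma:ccprod_dualize} with $B = \Brauer[\twist t]$ and $A = M$ (whose hypotheses are met, using that $M$ is prestable purely outgoing of finite type) to obtain a natural isomorphism $\dual{(\Brauer[\twist t] \ccprod M)} \iso \coBrauer[\dual{\twist t}] \ccprod \dual M$. Composing $\dual \lambda \colon \dual M \to \dual{(\Brauer[\twist t] \ccprod M)}$ with this isomorphism produces a candidate coaction $\dual M \to \coBrauer[\dual{\twist t}] \ccprod \dual M$.

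Third, I would verify the coaction axioms. Coassociativity and counitality for $\dual M$ are obtained by dualizing the associativity and unitality squares for $\lambda$ and then invoking the naturality part of Lemma~\ref{lemma:ccprod_dualize} (applied to $B = \Brauer[\twist t] \ccprod \Brauer[\twist t]$, $B = \Brauer[\twist t]$, and $B = \unit[\ccprod]$ respectively) to identify the dualized composition products with the non-dualized ones. This is a diagram chase with no surprises. Finally, $\dual M$ is prestable, purely outgoing, weight graded and of finite type because these conditions are preserved by dualization biarity-by-biarity and weight-by-weight.

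The only real subtlety, and hence the main obstacle, is in step one — ensuring that the iterated dualization identifies the comonoid structure on $\dual{\Brauer[\twist t]}$ with $\coBrauer[\dual{\twist t}]$ in a way that is strictly compatible with the isomorphism of Lemma~\ref{lemma:ccprod_dualize} used in step two. Once this coherence is fixed, the rest of the argument is essentially a formal diagram chase transporting the $\Brauer[\twist t]$-module axioms of $M$ to $\coBrauer[\dual{\twist t}]$-comodule axioms on $\dual M$.
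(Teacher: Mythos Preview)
Your proposal is correct and follows essentially the same approach as the paper: dualize the structure map using Lemma~\ref{lemma:ccprod_dualize}, identify $\dual{(\Brauer[\twist t])}$ with $\coBrauer[\dual{\twist t}]$, and verify the comodule axioms via the naturality of that lemma. Your write-up is in fact more careful than the paper's, which dispatches the identification $\dual{(\Brauer[\twist t])} \iso \coBrauer[\dual{\twist t}]$ and the coherence you flag in a single sentence.
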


\begin{proof}
  We prove the first statement; the second is proven dually.
  Lemma~\ref{lemma:ccprod_dualize} shows that the structure map $\lambda \colon \Brauer[\twist t] \ccprod M \to M$ dualizes to a map $\rho \colon \dual M \to \dual{(\Brauer[\twist t])} \ccprod \dual M$.
  Also note that $\dual {(\Brauer[\twist t])}$ is a coproperad isomorphic to $\coBrauer[\dual{\twist t}]$.
  The claim then follows from the naturality of the isomorphism of Lemma~\ref{lemma:ccprod_dualize} since the conditions required of $\rho$ arise exactly as the dual of the conditions fulfilled by $\lambda$.
\end{proof}

\begin{lemma} \label{lemma:bar_dualize}
  Let $\twist t$ be a finite-dimensional twist, $M$ a $\twist t$-twisted modular operad of finite type, and $K$ a $\twist t$-twisted modular cooperad of finite type.
  Then there are canonical isomorphisms $\dual{\Cobar[\twist t](K)} \iso \Bar[\dual{\twist t}](\dual K)$ and $\dual{\Bar[\twist t](M)} \iso \Cobar[\dual{\twist t}](\dual M)$.
\end{lemma}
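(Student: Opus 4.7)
The plan is to verify the two claimed isomorphisms by first matching the underlying $\S$-bimodules and then checking that the twisted differentials and (co)module structure maps correspond. I would focus on the first isomorphism; the second is completely analogous (and, since everything is of finite type, can alternatively be deduced from the first by dualizing and using $\dual{\dual M} \iso M$).

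First, unfold the definitions. By Definition~\ref{def:bar_umodop}, $\Cobar[\twist t]$ denotes the cobar construction relative to the natural twisting morphism $\nattwist_{\twistk^{-1} \tensor \twist t} \colon \coBrauer[\twist t] \to \Brauer[\twistk^{-1} \tensor \twist t]$, so the underlying $\S$-bimodule of $\Cobar[\twist t](K)$ is $\Brauer[\twistk^{-1} \tensor \twist t] \ccprod K$. Since $K$ is prestable of finite type and the Brauer properad is of the form allowed in Lemma~\ref{lemma:ccprod_dualize}, that lemma yields $\dual{(\Brauer[\twistk^{-1} \tensor \twist t] \ccprod K)} \iso \dual{\Brauer[\twistk^{-1} \tensor \twist t]} \ccprod \dual K$. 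A direct comparison using the explicit descriptions in Remark~\ref{rem:Brauer_properad_descr} gives a canonical isomorphism $\dual{\Brauer[\twist s]} \iso \coBrauer[\dual{\twist s}]$ for any finite-dimensional twist $\twist s$, which combined with $\dual{\twistk^{-1}} \iso \twistk$ identifies the result with $\coBrauer[\twistk \tensor \dual{\twist t}] \ccprod \dual K$, the underlying $\S$-bimodule of $\Bar[\dual{\twist t}](\dual K)$.

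Next, I would check that under these identifications the dual of the natural twisting morphism $\nattwist_{\twistk^{-1} \tensor \twist t}$ corresponds to $\nattwist_{\dual{\twist t}}$. By Lemma~\ref{lemma:free_Koszul_dual}, both natural twisting morphisms are degree $-1$ isomorphisms between the weight $1$ parts and vanish elsewhere, so this matching is forced by naturality of dualization on those one-vertex pieces. Finally, the differentials and (co)module structures on both sides are assembled from the same structural ingredients: the (co)unit and (co)multiplication of the Brauer (co)properads, the (co)module structure maps on $K$ and $\dual K$, and the natural twisting morphism. Each of these ingredients dualizes to its counterpart, and the isomorphism of Lemma~\ref{lemma:ccprod_dualize} is natural in both arguments, so inspecting the definitions of $\twBar{\alpha}$ and $\twCobar{\alpha}$ term by term shows that the identification from the previous paragraph intertwines the twisted differentials and the (co)module structures on both sides.

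The main technical obstacle will be sign bookkeeping. The differential in Definition~\ref{def:bar} uses $+\twBar{\alpha}$ while the one in Definition~\ref{def:cobar} uses $-\twCobar{\alpha}$, and dualization of a differential graded vector space introduces further Koszul signs of the form $(-1)^{\deg f + 1}$ on homogeneous elements; moreover, the identifications $\dual{\twistk^{-1}} \iso \twistk$ and $\dual{\Brauer[\twist s]} \iso \coBrauer[\dual{\twist s}]$ each involve a sign choice. I expect these to conspire so that the identification in the previous paragraph is a strict isomorphism of differential graded comodules, but verifying this requires a careful sign-tracking computation (much in the spirit of the proof of Lemma~\ref{lemma:tw_square}, where a similar combinatorial sign analysis was carried out).
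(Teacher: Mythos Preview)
Your proposal is correct and follows essentially the same route as the paper. The paper's proof is a terse three-line argument: it invokes Lemma~\ref{lemma:ccprod_dualize} for the underlying $\S$-bimodules, Lemma~\ref{lemma:modop_dualize} (and its proof) for the identification $\dual{(\Brauer[\twist s])} \iso \coBrauer[\dual{\twist s}]$ and the compatibility of structure maps, and then records the key observation as a commutative square stating that $\dual{(\nattwist_{\twist t})}$ corresponds to $\nattwist_{\dual\twistk \tensor \dual{\twist t}}$ under those identifications --- exactly the check you describe in your second paragraph. The only cosmetic difference is that the paper focuses on the second isomorphism $\dual{\Bar[\twist t](M)} \iso \Cobar[\dual{\twist t}](\dual M)$ while you focus on the first, and the paper does not comment on sign bookkeeping at all (so your caution there is reasonable but not something the paper treats as an obstacle).
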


\begin{proof}
  This follows from Lemma~\ref{lemma:ccprod_dualize}, Lemma~\ref{lemma:modop_dualize} and its proof, and the observation that the diagram
  \[
  \begin{tikzcd}
    \dual{(\Brauer[\twist t])} \rar{\dual{(\nattwist_{\twist t})}} \dar[swap]{\iso} &[10] \dual{(\coBrauer[\twistk \tensor \twist t])} \dar{\iso} \\
    \coBrauer[\dual{\twist t}] \rar{\nattwist_{\dual \twistk \tensor \dual{\twist t}}} & \Brauer[\dual \twistk \tensor \dual{\twist t}] \\
  \end{tikzcd}
  \]
  commutes.
\end{proof}

\subsection{Comparison to the classical approach} \label{sec:comparison}

In this subsection, we will explain how our notion of (reduced) $\twist t$-twisted modular operads relates to the modular $\mathfrak D$-operads of Getzler--Kapranov \cite{GK}.
First we must specify the relation between our twists $\twist t$ and the ``hyperoperads'' $\mathfrak D$ of \cite[4.1]{GK}.
This is achieved by the following construction, which takes a twist and produces a hyperoperad:

\begin{definition}
  Let $\twist t$ be a twist.
  We define $\hyperop(\twist t)$ to be the hyperoperad given by sending a graph $G$ to the tensor product ${\twist t}^{\tensor {\Edge(G)}}$.
  An automorphism of $G$ acts on this tensor product by permuting the tensor factors according to the permutation of the set of edges and by using the $\Symm 2$-action of $\twist t$ whenever the orientation of an edge is swapped.
  The structure maps associated to contractions of graphs (cf.\ \cite[4.1.1]{GK}) are given by reordering the tensor factors (in particular they are isomorphisms).
\end{definition}

\begin{remark} \label{rem:hyperop_tensors}
  Note that if $\twist t$ is one-dimensional, then $\hyperop(\twist t)$ is a ``cocycle'' in the sense of \cite[4.3]{GK}.
  Moreover, it follows directly from the definition that $\hyperop$ preserves tensor products.
\end{remark}

It turns out that this construction recovers many of the hyperoperads defined in \cite{GK}.
In fact it recovers all that are named in \cite[§4]{GK} up to tensoring with the coboundary $\mathfrak D_\Sigma$ of \cite[4.4]{GK}, as we prove below.
Since, for any hyperoperad $\mathfrak D$, equipping $M$ with the structure of a modular ($\mathfrak D \tensor \mathfrak D_\Sigma$)-operad is equivalent to equipping $\shift[-1] M$ with the structure of a modular $\mathfrak D$-operad, this is enough to model all hyperoperads named in \cite[§4]{GK}, although some of them are not actually in the image of $\hyperop$.

\begin{lemma} \label{lemma:hyperops_from_twists}
  There are isomorphisms of hyperoperads
  \[ \hyperop(\twist 1) \iso \mathbbm 1  \qquad  \hyperop(\twistk) \iso \inv{\mathfrak K}  \qquad  \hyperop(\twistk \tensor \twists) \iso \inv {\mathfrak T}  \qquad  \hyperop(\twists) \iso \mathfrak D_{\mathfrak s}  \qquad  \hyperop(\twistk^{\tensor 2}) \iso \inv {\mathfrak D_{\mathfrak p}} \]
  where the notation on each of the right hand sides is according to \cite[4.3 -- 4.10]{GK}.
\end{lemma}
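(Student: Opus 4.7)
The plan is to unpack the definition of $\hyperop$ and compare it term by term with the hyperoperads named in \cite[§4]{GK}. The key simplification is Remark~\ref{rem:hyperop_tensors}: since $\hyperop$ preserves tensor products, it suffices to verify the first four isomorphisms, and the fifth will follow automatically from $\twistk^{\tensor 2} \iso \twistk \tensor \twistk$ combined with $\inv{\mathfrak K} \tensor \inv{\mathfrak K} \iso \inv{\mathfrak D_{\mathfrak p}}$ (which is a restatement of the definition of $\mathfrak D_{\mathfrak p}$ in \cite[4.10]{GK}). Similarly, the case $\twistk \tensor \twists$ will follow from the two simpler cases and the definition of $\mathfrak T$ in \cite[4.9]{GK} as $\mathfrak K \tensor \mathfrak D_{\mathfrak s}$.

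First I would dispose of the trivial case: for $\twist t = \twist 1$, each space $\hyperop(\twist 1)(G) = \twist 1^{\tensor \Edge(G)}$ is canonically the ground field $k$ concentrated in degree $0$; the automorphism action is trivial (no Koszul signs since everything is in degree $0$, and the $\Symm 2$-action on $\twist 1$ is trivial), and all structure maps are the canonical identifications. This is precisely the unit hyperoperad $\mathbbm 1$.

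Next I would handle the two remaining building blocks. For $\twist t = \twistk$, the space $\hyperop(\twistk)(G)$ is one-dimensional and concentrated in homological degree $\card{\Edge(G)}$; the $\Symm 2$-action on $\twistk$ is trivial, so an automorphism of $G$ acts by the Koszul sign of the induced permutation of $\Edge(G)$ applied to a degree $\card{\Edge(G)}$ generator. Up to the sign of that permutation, this is the determinant line of $k^{\Edge(G)}$ shifted by $\card{\Edge(G)}$, which matches the description of $\inv{\mathfrak K}$ in \cite[4.8]{GK} once one unpacks the sign convention (the inverse appearing because $\mathfrak K$ itself is given by the desuspension rather than the suspension). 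For $\twist t = \twists$, the space $\hyperop(\twists)(G)$ is one-dimensional in degree $0$; there are no Koszul signs, but an automorphism acts by the sign of its permutation of $\Edge(G)$ composed with a sign for each reversed orientation (coming from the $\Symm 2$-action on $\twists$). This is exactly the description of $\mathfrak D_{\mathfrak s}$ in \cite[4.9]{GK}. In both cases one checks that the contraction structure maps (which for $\hyperop$ are just the canonical reordering of tensor factors) agree with those of the target hyperoperad, which is essentially formal.

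The main obstacle is not conceptual but bookkeeping: getting all the sign conventions, degree shifts, and inversions of \cite{GK} to line up exactly. In particular, whether one obtains $\mathfrak K$ or $\inv{\mathfrak K}$ depends on the placement of the shift in the definition, and whether the $\Symm 2$-action on $\twists$ truly produces the same sign as the orientation reversal in $\mathfrak D_{\mathfrak s}$ requires checking the convention in \cite[4.9]{GK}. Once these conventions are pinned down, the rest of the proof is a direct verification on generators.
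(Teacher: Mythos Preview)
Your approach is essentially the same as the paper's: verify a few cases directly from the definitions and derive the remaining ones using the fact that $\hyperop$ preserves tensor products (Remark~\ref{rem:hyperop_tensors}). The only difference is in which cases are treated as primitive. The paper verifies $\hyperop(\twist 1)$, $\hyperop(\twistk)$, and $\hyperop(\twistk \tensor \twists)$ directly from the definitions, and then \emph{derives} $\hyperop(\twists) \iso \mathfrak D_{\mathfrak s}$ via \cite[Proposition~4.11]{GK} and $\hyperop(\twistk^{\tensor 2}) \iso \inv{\mathfrak D_{\mathfrak p}}$ via \cite[Proposition~4.9]{GK}. You instead verify $\hyperop(\twists)$ directly and derive $\hyperop(\twistk \tensor \twists)$. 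Either order works; the paper's choice has the minor advantage that $\inv{\mathfrak T}$ is literally \emph{defined} in \cite{GK} as the determinant of half-edges, which is exactly what $\hyperop(\twistk \tensor \twists)$ produces, so no sign-matching against a derived description is needed.

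One small slip in your direct verification of $\hyperop(\twists)$: since $\twists$ sits in homological degree $0$, permuting the tensor factors of $\twists^{\tensor \Edge(G)}$ contributes \emph{no} Koszul sign. The only sign an automorphism picks up is from the $\Symm 2$-action on $\twists$ at each edge whose orientation is reversed; there is no additional ``sign of the permutation of $\Edge(G)$''. This does not affect the validity of your overall strategy, but you would need to make sure the resulting description actually matches $\mathfrak D_{\mathfrak s}$ (and to double-check your section references against \cite{GK}, which differ from the paper's).
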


\begin{proof}
  The first three isomorphisms follow directly from the definitions.
  Using Remark~\ref{rem:hyperop_tensors}, the fourth isomorphism then follows from \cite[Proposition 4.11]{GK} and the fifth from \cite[Proposition 4.9]{GK}.
\end{proof}

We are now ready to state and prove the main theorem of this paper.
We write $\dual{(\blank)}$ for linear dualization as explained in Definition~\ref{def:dual}.

\begin{theorem} \label{thm:main}
  Let $\twist t$ be a twist.
  Then there is an equivalence of categories
  \[ \Psi_{\twist t} \colon \multilineset{ \text{reduced $\twist t$-twisted} \\ \text{modular operads}\; }  \xlongto{\eq}  \set{ \text{modular $\hyperop(\twist t)$-operads} } \]
  where the right hand side is in the sense of \cite[4.2]{GK}.
  
  Furthermore, if $\twist t$ is one-dimensional and $M$ is a reduced $\twist t$-twisted modular operad of finite type, then there is a natural (in $M$) isomorphism
  \[ \mathsf{F}_{\hyperop(\twist t)} \big( \Psi_{\twist t}(M) \big)  \iso  \Psi_{\inv \twistk \tensor \dual {\twist t}} \big( \Cobar[\dual{\twist t}](\dual M) \big) \]
  where $\mathsf{F}$ denotes the Feynman transform of \cite[§5]{GK}, and $\dual M$ is the $\dual{\twist t}$-twisted modular cooperad of Lemma~\ref{lemma:modop_dualize}.
\end{theorem}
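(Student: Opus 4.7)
The plan is to prove the two parts of the theorem by unpacking both sides explicitly in terms of graphs, and then comparing. For the first part (construction of the equivalence $\Psi_{\twist t}$), I would begin by analyzing the underlying $\S$-bimodule of $\Brauer[\twist t] \ccprod M$ for a purely outgoing weight-graded $M$. Using the description of a free properad as graphs with global flow, a $2$-level graph contributing to $\Brauer[\twist t] \ccprod M$ has level-$1$ vertices labeled by elements of $M$ and level-$2$ vertices that are either $(1,1)$-units or $\twist t$-labeled $(0,2)$-generators. Collapsing level $2$ turns such a datum into a connected graph $H$ with hairs, internal vertices labeled by $M$-elements, and edges labeled by $\twist t$. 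The $\Symm 2$-action on $\twist t$ exactly corresponds to the action on edge orientations in $\hyperop(\twist t)$, so there is a canonical decomposition
\[
\Brauer[\twist t] \ccprod M \cycar{m} \iso \Dirsum_{[H]} \Big(\hyperop(\twist t)(H) \tensor \Tensor_{v \in \Vert(H)} M\cycar{\outedges(v)}\Big)_{\Aut(H)}
\]
indexed by iso-classes of connected graphs $H$ with $m$ hairs. Under the translation $g = w + 1$, the weight of such a term is $\card{\Edge(H)} + \sum_v \w(v) = b_1(H) + \sum_v g_v - 1$, matching the modular-operadic genus formula up to the global shift absorbed by ``$-1$''.

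Second, I would check that left $\Brauer[\twist t]$-module structures on $M$ are in bijection with modular $\hyperop(\twist t)$-operad structures on the corresponding $\S$-module. Both are monad structures, and the two monads agree under the decomposition above because both associativity axioms reduce to grafting of connected graphs. The prestability condition on $M$ together with reducedness translates precisely to GK's stability $2g + n \ge 3$ (arity $0$ in weight $0$ corresponding to $(g, n) = (1, 0)$, and arity $\le 2$ in weight $-1$ corresponding to $(g, n) \in \{(0,0), (0,1), (0,2)\}$). Functoriality and essential surjectivity are then immediate.

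For the second part, I would compare the underlying $\S$-bimodules and the differentials of both sides term by term. Applying the description above to $\Cobar[\dual \twist t](\dual M) = \Brauer[\dual \twistk \tensor \dual\twist t] \ccprod \dual M$ and using $\hyperop(\dual \twistk \tensor \dual\twist t) \iso \hyperop(\dual \twistk) \tensor \hyperop(\dual\twist t)$ together with the identifications of Lemma~\ref{lemma:hyperops_from_twists}, the underlying $\S$-bimodule of $\Psi_{\dual \twistk \tensor \dual \twist t}(\Cobar[\dual\twist t](\dual M))$ matches that of $\mathsf F_{\hyperop(\twist t)}(\Psi_{\twist t}(M))$: in arity $m$ and genus $g$, both are sums over stable connected graphs $H$ of genus $g$ with $m$ hairs, of the coinvariants $(\mathfrak K(H) \tensor \hyperop(\twist t)(H)^{-1} \tensor \bigotimes_v \dual{M(g_v, n_v)})_{\Aut(H)}$ (the finite-type hypothesis and Lemma~\ref{lemma:ccprod_dualize} control the finite-dimensional duals). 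The differential on $\Cobar[\dual \twist t](\dual M)$ splits into the internal differentials on $\dual M$ and $\Brauer$ (which correspond to the internal Feynman differential inherited from $\dual M$) and the twisting term $\twCobar{\nattwist_{\dual \twist t}}$, which by Definition~\ref{def:cobar} is built from the $\coBrauer[\dual\twist t]$-coaction on $\dual M$ dual to contraction in $M$, followed by the natural twisting morphism that sends the weight-$1$ generator to the corresponding $(0,2)$-generator of $\Brauer[\dual \twistk \tensor \dual \twist t]$. Graphically this is exactly the ``sum over edge contractions'' defining the Feynman differential. Naturality in $M$ is inherited from the naturality of all constructions involved.

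The main obstacle is the signs and the $\Symm 2$-actions on edges: the $\Symm 2$-action of $\hyperop(\twist t)$ under orientation-reversal must be matched with the combined effect of the $\Symm 2$-action on $\twist t$ and the $(0,2)$-biarity symmetry; and the shift by $\twistk$ inside the dualizing cocycle $\mathfrak K$ must be recognized as the same degree shift already present in the natural twisting morphism $\nattwist_{\twist t}$ (which is implemented via $\cofree(\shift[-1](\blank))$ in the cobar construction). Both of these are a careful bookkeeping exercise, and once aligned, the term-by-term identification of differentials and the equivalence of part~(1) combine to give the desired natural isomorphism.
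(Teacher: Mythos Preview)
Your proposal is correct and follows essentially the same route as the paper: both arguments identify $\Brauer[\twist t] \ccprod M$ with the space of connected graphs with hairs whose vertices are labeled by $M$ and whose edges carry $\twist t$ (so that the monad $\Brauer[\twist t] \ccprod -$ agrees with Getzler--Kapranov's $\mathbb M_{\hyperop(\twist t)}$), translate the weight grading into the genus via $g = w+1$, and then match the cobar differential with the Feynman edge-contraction differential after dualizing. Your explicit decomposition formula and the remark about signs and $\Symm 2$-actions on edges spell out a bit more than the paper does, but the underlying strategy is identical.
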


\begin{proof}
  First we note that for a purely outgoing $\S$-bimodule $A$ the composition product $\Brauer[\twist 1] \ccprod A$ admits an alternative description: we can think of it as being generated by (undirected) connected multigraphs with hairs (i.e.\ edges that are only incident to one vertex) whose vertices are labeled by an element of $A \cycar m$ where $m$ is the arity of the vertex; this is then quotiented by the action of those automorphisms of the multigraph that fix the hairs.
  (More precisely we are performing a left Kan extension construction similar to the one in Definition~\ref{def:ccprod}.)
  An edge of this multigraph corresponds to a level-2 vertex of biarity $(0, 2)$ (labeled by $\twist 1$) in the composition product $\Brauer[\twist 1] \ccprod A$, a hair corresponds to a level-2 vertex of biarity $(1, 1)$ (labeled by the unit $k$), and a vertex corresponds to a level-1 vertex (labeled by an element of $A$).
  The incidence of the edges of the multigraph is represented by the edges of the (connected) $2$-level graph (which connect the ``vertices'' to the ``edges'' and ``hairs'').
  
  In particular we can think of $\Brauer[\twist 1] \ccprod \blank$ as the endofunctor of the category of $\S$-modules (which is equivalent to the category of purely outgoing $\S$-bimodules) that sends a $\S$-module $A$ to the space of connected multigraphs with hairs (on which $\S$ acts by permuting the hairs) whose vertices are labeled by elements of $A$ of the correct arity.
  The properad (i.e.\ monoid) structure of $\Brauer[\twist 1]$ induces the structure of a monad on $\Brauer[\twist 1] \ccprod \blank$.
  Its multiplication is given by ``grafting'' the multigraphs (similar to the operation mentioned in Section~\ref{sec:free_properads}).
  This monad is (up to the ``genus grading'' and ``stability'' conditions we will discuss in the next paragraph) the same as the monad $\mathbb M$ of \cite[2.17]{GK}.
  In particular our (reduced $\twist 1$-twisted) modular operads are equivalently algebras over $\mathbb M$, i.e.\ modular operads in the sense of \cite[2.20]{GK}.
  
  In the weight-graded setting, the weight of the element represented by a multigraph with hairs $G$ is the sum of the weights of the labels of its vertices plus the number of edges (but not hairs).
  In particular, if all vertices are labeled by an element of weight $-1$, the result will have weight $-\chi(G)$ where $\chi(G)$ denotes the Euler characteristic of $G$.
  More generally the weight of the result is $-\chi(G) + \sum_{v \in \Vert(G)} (1 + \w(v))$.
  In particular shifting our weight grading up by one corresponds exactly to the ``genus grading'' of \cite{GK} (since the genus $\genus(G)$ of a connected graph $G$ is $1 - \chi(G)$).
  Translating the ``stability'' condition of \cite[2.1]{GK}, which requires triviality in cases $2g + n - 2 \le 0$, into our grading corresponds to triviality of weight $-1$ in arities $\le 2$ and of weight $0$ in arity $0$.
  This is exactly the condition imposed on a reduced modular operad.
  
  It is straightforward to see, using a similar argument as above, that the monad $\Brauer[\twist t] \ccprod \blank$ is isomorphic to the monad $\mathbb M_{\hyperop(\twist t)}$ of \cite[4.2]{GK}.
  In particular our $\twist t$-twisted modular operads are equivalent to the modular $\hyperop(\twist t)$-operads of \cite[4.2]{GK}.
  This proves the first part of the theorem.
  
  We now turn to the second part.
  In our language, the Feynman transform of a reduced $\twist t$-twisted modular operad $M$ (with $\twist t$ one-dimensional and $M$ of finite type) is given by first taking the linear dual, yielding a reduced $\dual{\twist t}$-twisted modular cooperad by Lemma~\ref{lemma:modop_dualize}, and then applying the cobar construction, yielding a $(\inv{\twistk} \tensor \dual{\twist t})$-twisted modular operad.\footnote{Note that, confusingly from a notation standpoint, we have $\hyperop(\inv{\twistk} \tensor \dual{\twist t}) \iso \mathfrak K \tensor \inv{\hyperop(\twist t)} = \hyperop(\twist t)^\vee$ where the last equation is simply the definition (from \cite[4.8]{GK}) of the notation $\mathfrak D^\vee$ for a hyperoperad $\mathfrak D$; in particular there it does \emph{not} simply denote linear dualization.}
  That this agrees with the classical construction follows from chasing through the respective definitions.
  Note in particular that both the Feynman transform and our cobar construction are defined by a free construction on the (dual of the) input with differential twisted by a contraction of edges.
\end{proof}

Our splitting of the Feynman transform into the bar and cobar constructions is conceptually more similar to classical approaches (e.g.\ for algebras or operads) and has the advantage of not requiring any finiteness assumptions (which are otherwise needed for the dualization to work).
Also note that, in our framework, the main properties of the Feynman transform (its preservation of quasi-isomorphisms and invertibility up to quasi-isomorphism) are direct corollaries of the more general theory we set up in Section~\ref{sec:(co)bar}.

\begin{corollary}[{\cite[Theorem 5.2 and Theorem 5.4]{GK}}] \label{cor:Feynman_properties}
  Let $\twist t$ be a twist.
  Then the Feynman transform $\mathsf{F}_{\hyperop(\twist t)}$ preserves quasi-isomorphisms.
  Moreover, for $\mathcal M$ a modular $\hyperop(\twist t)$-operad, there is a canonical quasi-isomorphism $\mathsf{F}_{ \mathfrak K \tensor \inv{\hyperop(\twist t)}} ( \mathsf{F}_{\hyperop(\twist t)} (\mathcal M) ) \to \mathcal M$.
\end{corollary}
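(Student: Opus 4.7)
The plan is to reduce both claims to the bar--cobar theory developed in Section~\ref{sec:(co)bar}, via the translation provided by Theorem~\ref{thm:main} (assuming throughout that $\twist t$ is one-dimensional and that all modular operads are of finite type, as is implicit whenever the Feynman transform is invoked).

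For the first claim, let $f \colon \mathcal M \to \mathcal M'$ be a quasi-isomorphism of modular $\hyperop(\twist t)$-operads. Via the equivalence $\Psi_{\twist t}$, write $\mathcal M = \Psi_{\twist t}(M)$ and $\mathcal M' = \Psi_{\twist t}(M')$ for a quasi-isomorphism $M \to M'$ of reduced $\twist t$-twisted modular operads. Linear dualization preserves quasi-isomorphisms between finite-type complexes in characteristic $0$, and by Lemma~\ref{lemma:modop_dualize} produces a quasi-isomorphism $\dual{M'} \to \dual M$ of reduced $\dual{\twist t}$-twisted modular cooperads. The functor $\Cobar[\dual{\twist t}]$ sends this to a quasi-isomorphism by the third bullet of Corollary~\ref{cor:feyn_quasi-iso} (which applies precisely because the map lifts, by construction, to a map of weight-graded modular cooperads). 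By the second part of Theorem~\ref{thm:main} together with the naturality of the isomorphism produced there, this composite agrees with $\mathsf{F}_{\hyperop(\twist t)}(f)$, which is therefore a quasi-isomorphism.

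For the second claim, apply Theorem~\ref{thm:main} twice. Writing $M \defeq \Psi_{\twist t}^{-1}(\mathcal M)$, the first application gives $\mathsf{F}_{\hyperop(\twist t)}(\mathcal M) \iso \Psi_{\inv \twistk \tensor \dual{\twist t}}(\Cobar[\dual{\twist t}](\dual M))$. Applying the theorem a second time with twist $\inv \twistk \tensor \dual{\twist t}$, and using the natural identifications $\dual{(\inv \twistk \tensor \dual{\twist t})} \iso \twistk \tensor \twist t$ and (via Lemma~\ref{lemma:bar_dualize}) $\dual{\Cobar[\dual{\twist t}](\dual M)} \iso \Bar[\twist t] M$ together with $\inv \twistk \tensor \dual{(\inv \twistk \tensor \dual{\twist t})} \iso \twist t$, one rewrites the iterated Feynman transform as $\Psi_{\twist t}(\Cobar[\twistk \tensor \twist t] \Bar[\twist t] M)$. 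Here the target hyperoperad matches the statement by Lemma~\ref{lemma:hyperops_from_twists} and the multiplicativity of $\hyperop$ noted in Remark~\ref{rem:hyperop_tensors}, which together yield $\hyperop(\inv \twistk \tensor \dual{\twist t}) \iso \mathfrak K \tensor \inv{\hyperop(\twist t)}$. The counit $\Cobar[\twistk \tensor \twist t] \Bar[\twist t] M \to M$ of the bar--cobar adjunction of Lemma~\ref{lemma:bar_mo} is a quasi-isomorphism by the first (unconditional) part of Corollary~\ref{cor:feyn_resolution}; transporting it along $\Psi_{\twist t}$ yields the desired canonical, natural quasi-isomorphism.

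The substantive homological content is already encapsulated in Corollaries~\ref{cor:feyn_quasi-iso} and~\ref{cor:feyn_resolution}, so no genuinely new computation is needed. The only real bookkeeping lies in tracking the various twist identifications, checking the finite-type hypotheses required to invoke Lemma~\ref{lemma:bar_dualize}, and verifying that the naturality of the isomorphism in Theorem~\ref{thm:main} transports $\mathsf{F}(f)$ (respectively the double Feynman counit) to the expected morphism on the cobar side; none of these should present serious obstacles.
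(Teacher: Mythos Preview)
Your proposal is correct and follows essentially the same approach as the paper: the paper's proof simply cites Theorem~\ref{thm:main}, Corollaries~\ref{cor:feyn_quasi-iso} and~\ref{cor:feyn_resolution}, and Lemmas~\ref{lemma:modop_dualize} and~\ref{lemma:bar_dualize}, and you have spelled out exactly how these ingredients fit together, including the twist bookkeeping and the finite-type hypotheses.
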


\begin{proof}
  This follows from Theorem~\ref{thm:main} and Corollaries~\ref{cor:feyn_quasi-iso} and \ref{cor:feyn_resolution} by using Lemmas~\ref{lemma:modop_dualize} and \ref{lemma:bar_dualize}.
\end{proof}

\subsection{Final remarks}

\begin{remark}
  One can do everything we have done in this paper and use dioperads (as defined by Gan \cite{Gan}) instead of properads.
  They only allow composition along directed \emph{trees}, i.e.\ connected directed graphs without any cycles, instead of all connected directed graphs without \emph{directed} cycles.
  The corresponding version of Theorem~\ref{thm:main} then gives an alternative definition for (a non-unital version of) the cyclic operads of Getzler--Kapranov \cite{GK95} and their anti/odd variant \cite[2.10]{GK95} (see also Kaufmann--Ward--Zúñiga \cite[§2]{KWZ} and Kaufmann--Ward \cite[Table 2]{KW}) as well as their (co)bar construction (see \cite[5.7]{GK95} as well as Getzler \cite[§4.3]{Get}, \cite[5.9]{GK}, and \cite[§7.4]{KW}).
\end{remark}

\begin{remark}
  A version of the first half of Theorem~\ref{thm:main} for a kind of ``ungraded non-connected modular operads'' (which first appeared in work of Schwarz \cite[§2]{Sch} and are mentioned in \cite[§2.3.2]{KW}) is implicitly contained in work of Raynor \cite[Corollary 4.12 and Proposition 4.6]{Ray}.
  They are described as lax symmetric monoidal functors out of a category of ``downward Brauer diagrams'' (cf.\ Remarks~\ref{rem:lax_algebra_PROP} and \ref{rem:Brauer_name}).
\end{remark}

\begin{remark}
  Note that interestingly the Brauer properad $\Brauer[\twist t]$ is a ``model'' of itself in the sense of Merkulov--Vallette \cite[§5.1]{MV} (it is even ``minimal'' if $\twist t$ has trivial differential).
  This is due to the fact that the Koszul dual $\KD{(\Brauer[\twist t])}$ of a Brauer properad is isomorphic to its bar construction $\Bar \Brauer[\twist t]$ (see Remark~\ref{rem:Brauer_Koszul}).
  
  This suggests to define a ``homotopy $\twist t$-twisted modular operad'' (cf.\ \cite[§6]{MV}) simply as a $\twist t$-twisted modular operad and an $\infty$-morphism as a map between their bar constructions.
  However, since there are (twisted) modular operad that are not formal (see e.g.\ Alm--Petersen \cite[Proposition 1.11]{AP}), there cannot be a general homotopy transfer theorem in this setting.
  
  This is different to some other approaches where a difference between the Koszul dual and the bar construction does yield a different notion of homotopy (or $\infty$-) modular operads, see work of Ward \cite[§3]{War} (which includes a homotopy transfer theorem in \cite[Theorem 2.58]{War}) and Batanin--Markl \cite{BM21}.
  Also note that a definition of $\infty$-modular operads more along the lines of $\infty$-category theory is given by Hackney--Robertson--Yau in \cite[§3.2]{HRYa}.
\end{remark}

\begin{appendices}
  \newpage

\section{Sketches of Koszul duality} \label{app:KD}

In Section~\ref{sec:KD_modules} we develop a Koszul duality theory for (co)modules over a (co)properad.
In Section~\ref{sec:KD_modop} this is specialized, using the results of Section~\ref{sec:modular}, to a Koszul duality theory for modular (co)operads.

In each of these two cases, this provides an analogue of classical Koszul duality for the respective setting (in the case of modular operads the previous lack of this had been noted by, for example, Dotsenko--Shadrin--Vaintrob--Vallette \cite[Remark 1.23]{DSVV}).
This is useful for identifying situations in which there exists a small and easy to describe model of the (co)bar construction.

We moreover provide, in Section~\ref{sec:monomial_modop}, a certain class of ``monomial'' modular operads to which the theory can be applied.
On the other hand, we explain in Section~\ref{sec:KD_modop_ex} that for many well-studied modular operads this is unfortunately not the case.

Throughout this appendix, we work, as before, in the symmetric monoidal category of differential graded vector spaces $\dgVect$ over some fixed field $k$ of characteristic $0$.

\subsection{Koszul duality for modules over a properad} \label{sec:KD_modules}

In this section, we generalize the Koszul duality theory for algebras over operads due to Ginzburg--Kapranov \cite[§2.3]{GK94} and Millès \cite[§3f.]{Mil} (which in turn generalizes the case of associative algebras due to Priddy \cite{Pri}; see also Loday--Vallette \cite[§3]{LV}) to modules over properads.
Our approach adapts elements both from \cite{Mil} as well as from Berglund's \cite[§2]{Ber} slightly different viewpoint on the same material.

Throughout this section, we fix a twisting morphism $\alpha \colon \operad C \to \operad P$ of (co)properads.

\subsubsection{Koszul twisting morphisms}

We begin by introducing a notion of Koszulity for twisting morphisms relative to $\alpha$. It generalizes \cite[Definition 2.1]{Ber} (see also \cite[§3.4]{Mil}).

\begin{definition} \label{def:Koszul_relative}
  Let $K$ be a left $\operad C$-comodule and $M$ a left $\operad P$-module.
  A twisting morphism $\varphi \in \Tw[\alpha](K, M)$ is \emph{Koszul (relative to $\alpha$)} if both of the morphisms
  \[ f_\varphi \colon K \longto \Bar[\alpha] M  \qquad \text{ and } \qquad  g_\varphi \colon \Cobar[\alpha] K \longto M \]
  associated to $\varphi$ under Proposition~\ref{prop:adjunction} are quasi-isomorphisms.
\end{definition}

Note that the universal twisting morphism $\Bar[\alpha] M \to M$ relative to $\alpha$ (i.e.\ the one associated to the identity of $\Bar[\alpha] M$) is Koszul if the counit $\Cobar[\alpha] \Bar[\alpha] M \to M$ is a quasi-isomorphism.
Dually, the universal twisting morphism $K \to \Cobar[\alpha] K$ is Koszul if the unit $\eta_K \colon K \to \Bar[\alpha] \Cobar[\alpha] K$ is a quasi-isomorphism.
We will now prove a generalization of these statements for general twisting morphisms $\varphi$ relative to $\alpha$.
To be able to state it we need the following technical definition.
Its clunkiness is owed to the fact that we would like it to be applicable in as many situations as possible; however, in most cases of interest, the stated conditions will be fulfilled for simple reasons.

\begin{definition}
  Let $\varphi \colon K \to M$ be a twisting morphism relative to $\alpha$.
  We say that $\alpha$ is \emph{Koszul with respect to $\varphi$} if the following conditions are fulfilled:
  \begin{itemize}
    \item The unit $\eta_K \colon K \to \Bar[\alpha] \Cobar[\alpha] K$ and the counit $\epsilon_M \colon \Cobar[\alpha] \Bar[\alpha] M \to M$ are quasi-iso\-mor\-phisms.
    \item If the map $f_\varphi \colon K \to \Bar[\alpha] M$ associated to $\varphi$ is a quasi-isomorphism, then $\Cobar[\alpha] f_\varphi$ is a quasi-isomorphism.
    \item If the map $g_\varphi \colon \Cobar[\alpha] K \to M$ associated to $\varphi$ is a quasi-isomorphism, then $\Bar[\alpha] g_\varphi$ is a quasi-isomorphism.
  \end{itemize}
\end{definition}

\begin{remark}
  In most cases of interest, if $\alpha$ is Koszul, then it is Koszul with respect to any $\varphi \in \Tw[\alpha](K, M)$.
  Section~\ref{sec:(co)bar} provides precise criteria under which this is the case.
\end{remark}

The following lemma generalizes the statements \cite[Proposition 2.4]{Ber} and \cite[Theorem 3.16]{Mil} of the operadic setting (see also \cite[Theorem 2.3.1]{LV} for the classical case of associative algebras).

\begin{lemma} \label{prop:Koszul_mod_eq}
  Let $K$ be a left $\operad C$-comodule, $M$ a left $\operad P$-module, and $\varphi \in \Tw[\alpha](K, M)$.
  If $\alpha$ is Koszul with respect to $\varphi$, then the following statements are equivalent:
  \begin{itemize}
    \item The map $f_\varphi \colon K \to \Bar[\alpha] M$ associated to $\varphi$ is a quasi-isomorphism.
    \item The map $g_\varphi \colon \Cobar[\alpha] K \to M$ associated to $\varphi$ is a quasi-isomorphism.
    \item The twisting morphism $\varphi$ is Koszul.
  \end{itemize}
\end{lemma}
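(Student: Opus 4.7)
The plan is to reduce the equivalence of the three conditions to a standard consequence of the bar--cobar adjunction together with the ``2-out-of-3'' property of quasi-isomorphisms.

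First I would observe that Koszulity of $\varphi$ is by definition the conjunction of the first two conditions, so it suffices to show that the first two conditions are equivalent to each other. The key input is the triangular identities of the bar--cobar adjunction of Proposition~\ref{prop:adjunction}: for any twisting morphism $\varphi \in \Tw[\alpha](K, M)$, the associated morphisms $f_\varphi$ and $g_\varphi$ fit into commutative diagrams
\[
\begin{tikzcd}[column sep = large]
  K \rar{\eta_K} \drar[swap]{f_\varphi} & \Bar[\alpha] \Cobar[\alpha] K \dar{\Bar[\alpha] g_\varphi} \\
   & \Bar[\alpha] M
\end{tikzcd}
\qquad
\begin{tikzcd}[column sep = large]
  \Cobar[\alpha] K \rar{\Cobar[\alpha] f_\varphi} \drar[swap]{g_\varphi} & \Cobar[\alpha] \Bar[\alpha] M \dar{\epsilon_M} \\
   & M
\end{tikzcd}
\]
which express $f_\varphi$ and $g_\varphi$ as composites involving the (co)unit of the adjunction. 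These identities can be verified by unwinding the definition of the bijections in Proposition~\ref{prop:adjunction} (or alternatively by invoking general category theory).

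Now assume $f_\varphi$ is a quasi-isomorphism. By the second bullet of the definition of $\alpha$ being Koszul with respect to $\varphi$, the map $\Cobar[\alpha] f_\varphi$ is then also a quasi-isomorphism. Since the counit $\epsilon_M \colon \Cobar[\alpha] \Bar[\alpha] M \to M$ is a quasi-isomorphism by the first bullet, the right-hand triangle above and 2-out-of-3 imply that $g_\varphi$ is a quasi-isomorphism. The converse implication is completely symmetric: if $g_\varphi$ is a quasi-isomorphism, then $\Bar[\alpha] g_\varphi$ is by the third bullet of the hypothesis, and combining with the quasi-isomorphism $\eta_K$ via the left-hand triangle yields that $f_\varphi$ is a quasi-isomorphism.

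The only genuine work is the verification of the triangle identities in our setting; once these are in place the rest is a purely formal chase. I do not expect this to present any real obstacle, since the bijections of Proposition~\ref{prop:adjunction} were constructed explicitly enough that one can directly check, for instance, that $g_\varphi$ evaluated on a generator $k \in K \subseteq \Cobar[\alpha] K$ equals $\lambda_M \after (\id \ccprod \varphi) \after \eta \after k$, which matches the composite $\epsilon_M \after \Cobar[\alpha] f_\varphi$ restricted to generators; the dual identity for $f_\varphi$ follows by the same argument. This essentially mirrors the operadic case treated in \cite{LV, Mil, Ber}.
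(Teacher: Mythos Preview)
Your proposal is correct and follows essentially the same approach as the paper: both arguments use the adjunction triangle identity $f_\varphi = \Bar[\alpha] g_\varphi \circ \eta_K$ (and its dual) together with the hypotheses on $\alpha$ to deduce each of the first two conditions from the other. The paper's proof is terser, only spelling out one direction and leaving the other as dual, while you write out both triangles explicitly; but the substance is identical.
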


\begin{proof}
  We prove that the second statement implies the first; the other direction follows dually, and together this also implies equivalence with the third statement.
  Since $f_\varphi$ and $g_\varphi$ are adjoint to each other, the composite
  \[ K  \xlongto{\eta_K}  \Bar[\alpha] \Cobar[\alpha] K  \xlongto{\Bar[\alpha] g_\varphi}  \Bar[\alpha] M \]
  is equal to $f_\varphi$.
  By our assumption that $\alpha$ is Koszul with respect to $\varphi$, this implies that $f_\varphi$ is a quasi-isomorphism.
\end{proof}

\begin{remark}
  In \cite[§2.3f.]{Mil} a ``Koszul complex'' (or ``cotangent complex'' in the terminology there) of a twisting morphism of (co)algebras over (co)operads is defined, and some of its properties are linked to Koszulity of the twisting morphism.
  However, it is not true that the twisting morphism is Koszul if and only if its Koszul complex is acyclic.
  Hence the usefulness of the Koszul complex for detecting Koszulity is limited.
  For this reason, we do not generalize it to our setting, although it would likely be possible.
\end{remark}

\subsubsection{The Koszul dual}

From now on we assume that $\alpha \colon \operad C \to \operad P$ is a twisting morphism between connected weight-graded (co)properads that preserves the weight grading.
(Note that this implies that $\alpha$ is supported in positive weights.)

We want to introduce the Koszul dual relative to $\alpha$ of a left $\operad P$-module and of a left $\operad C$-comodule.
To this end, we first need to equip the bar construction with an additional weight grading.

\begin{definition} \label{def:syzygy_grading}
  Let $M$ be a non-negatively weight-graded left module over $\operad P$.
  Then we can grade the underlying homologically graded $\S$-bimodule of $\Bar[\alpha] M = \operad C \ccprod M$ by the total weight in $M$.
  We call this the \emph{syzygy grading} and denote its degree $s$ part by $\syzBar[\alpha]{s} M$.
  
  Dually, for $K$ a non-negatively weight-graded left comodule over $\operad C$ we obtain, in the same way, a \emph{syzygy grading} on $\Cobar[\alpha] K = \operad P \ccprod K$ whose degree $s$ part we denote by $\syzCobar[\alpha]{s} K$.
\end{definition}

\begin{remark}
  Note that the syzygy grading in general neither equips $\Bar[\alpha] M$ with the structure of a weight-graded left $\operad C$-comodule nor $\Cobar[\alpha] K$ with the structure of a weight-graded left $\operad P$-module.
\end{remark}

The differential $d_{\operad C \ccprod M}$ preserves the syzygy grading of $\Bar[\alpha] M = (\operad C \ccprod M, d_{\operad C \ccprod M} + \twBar{\alpha})$.
However, since the image of $\alpha$ is concentrated in positive weights, the twisting term $\twBar{\alpha}$ increases the syzygy degree.
Similarly the differential $d_{\operad P \ccprod K}$ preserves the syzygy grading of $\Cobar[\alpha] K = (\operad P \ccprod K, d_{\operad P \ccprod K} - \twCobar{\alpha})$ and the twisting term $\twCobar{\alpha}$ decreases the syzygy degree.
This allows us to make the following definitions, generalizing \cite[§2.2]{Ber} (see also \cite[p.\ 636]{Mil}).

\begin{definition}
  Let $M$ be a non-negatively weight-graded left $\operad P$-module.
  We set
  \[ \KD{M}  \defeq  \ker(\twBar{\alpha}) \intersect \syzBar[\alpha]{0} M \]
  with differential the restriction of $d_{\operad C \ccprod M}$.
  We call $\KD{M}$ the \emph{Koszul dual (relative to $\alpha$)} of $M$.
  Dually, let $K$ be a non-negatively weight-graded left $\operad C$-comodule.
  We set
  \[ \KD{K}  \defeq  \syzCobar[\alpha]{0} K / (\im(\twCobar{\alpha}) \intersect \syzCobar[\alpha]{0} K) \]
  with differential the one induced by $d_{\operad P \ccprod K}$.
  We call $\KD{K}$ the \emph{Koszul dual (relative to $\alpha$)} of $K$.
  
  Note that both $\KD{M}$ and $\KD{K}$ depend on $\alpha$, although this is not reflected in the notation.
\end{definition}

\begin{remark}
  The weight-graded (by total weight in $\operad C$ and $M$) left $\operad C$-comodule structure of $\Bar[\alpha] M$ restricts to a weight-graded left $\operad C$-comodule structure on $\KD{M}$ such that the inclusion $\KD{M} \to \Bar[\alpha] M$ is a map of weight-graded left $\operad C$-comodules.
  Similarly the weight-graded (by total weight in $\operad P$ and $K$) left $\operad P$-module structure of $\Cobar[\alpha] K$ induces a weight-graded left $\operad P$-module structure on $\KD{K}$ such that the projection $\Cobar[\alpha] K \to \KD{K}$ is a map of weight-graded left $\operad P$-modules.
\end{remark}

\begin{remark} \label{rem:KD_trivial_diffs}
  Note that if both $M$ and $\operad C$ have trivial differentials, then the differential of $\KD{M}$ is trivial as well.
  Moreover, in this case, the canonical inclusion $\KD{M} \to \Bar[\alpha] M$ induces an injection on homology.
  Similarly, if both $K$ and $\operad P$ have trivial differentials, then the differential of $\KD{K}$ is trivial and the canonical projection $\Cobar[\alpha] K \to \KD{K}$ induces a surjection on homology.
\end{remark}

\begin{remark} \label{rem:KD_is_H0}
  Assume $\alpha$ is supported in weight $1$ (this is, for example, the case when $\alpha$ is the natural twisting morphism $\KD{\operad P} \to \operad P$).
  Then $\twBar{\alpha}$ (resp.\ $\twCobar{\alpha}$) increase (resp.\ decrease) the syzygy degree by exactly $1$.
  In particular, in this case, we have $\KD{M} = \Coho{0}(\syzBar[\alpha]{*} M, \twBar{\alpha})$ and $\KD{K} = \Ho{0}(\syzCobar[\alpha]{*} K, \twCobar{\alpha})$, equipped with the differentials induced by $d_{\operad C \ccprod M}$ and $d_{\operad P \ccprod K}$, respectively.
  If additionally the differentials of $\operad P$ and $M$ are trivial, then the syzygy grading of $\Bar[\alpha] M$ induces a grading on its homology, and $M$ being Koszul is equivalent to this grading being concentrated in degree $0$ (and similarly for $K$).
\end{remark}

The Koszul dual of a left $\operad P$-module is a sub-comodule of its bar construction and often much smaller than the latter.
If the inclusion is a quasi-isomorphism, then we can, in many contexts, replace the full bar construction by the easier-to-handle Koszul dual.
We now introduce terminology for this situation.

\begin{definition}
  Let $M$ be a non-negatively weight-graded left $\operad P$-module.
  We say that $M$ is \emph{Koszul relative to $\alpha$} if the inclusion $\inc[\KD{M}] \colon \KD{M} \to \Bar[\alpha] M$ is a quasi-isomorphism.
  Dually, let $K$ a non-negatively weight-graded left $\operad C$-comodule.
  We say that $K$ is \emph{Koszul relative to $\alpha$} if the projection $\pr[\KD{K}] \colon \Cobar[\alpha] K \to \KD{K}$ is a quasi-isomorphism.
  
  Moreover, we denote the twisting morphism associated to $\inc[\KD{M}]$ under Proposition~\ref{prop:adjunction} by $\nattwist_{M} \colon \KD{M} \to M$ and the twisting morphism associated to $\pr[\KD{K}]$ by $\nattwist_{K} \colon K \to \KD{K}$.
  We call them \emph{natural twisting morphisms (relative to $\alpha$)}.
\end{definition}

\begin{remark}
  By definition, if the natural twisting morphism $\nattwist_{M}$ (respectively $\nattwist_{K}$) is Koszul, then $M$ (respectively $K$) is Koszul relative to $\alpha$.
  By Proposition~\ref{prop:Koszul_mod_eq} the reverse implication is true if $\alpha$ is Koszul with respect to $\nattwist_{M}$ (respectively $\nattwist_{K}$).
\end{remark}

\begin{example} \label{ex:KD_free_module}
  If $\alpha$ is Koszul and the differentials of $\operad P$ and $\operad C$ are trivial, then any free left $\operad P$-module $\operad P \ccprod V$ is Koszul relative to $\alpha$, and its Koszul dual is $V$ with the trivial left $\operad C$-comodule structure (and vice versa).
  To see this, note that $\Bar[\alpha] (\operad P \ccprod V) \iso (\Kcompl{\alpha}{\operad C}{\operad P}) \ccprod V$ and that the canonical map $\unit \ccprod \unit \to \Kcompl{\alpha}{\operad C}{\operad P}$ is a quasi-isomorphism when $\alpha$ is Koszul.
  (The dual statement holds for cofree left $\operad C$-comodules.)
\end{example}

The following lemma states that, in many situations, any Koszul twisting morphism relative to $\alpha$ is of the form $\nattwist_{M}$ for some $M$ (and/or $\nattwist_{K}$ for some $K$).
(See \cite[Theorem 2.8]{Ber} for a similar statement in the operadic case.)

\begin{lemma} \label{lemma:Koszul_twist}
  Let $\varphi \colon K \to M$ be a twisting morphism relative to $\alpha$ between non-negatively weight-graded (co)modules with trivial differentials.
  Assume that $\varphi$ preserves the weight grading and is supported in weight $0$.
  \begin{itemize}
    \item There is a unique map $a_\varphi \colon K \to \KD{M}$ of weight-graded left $\operad C$-comodules such that the following diagram commutes
    \[
    \begin{tikzcd}
    K \rar{f_\varphi} \drar[dashed][swap]{a_\varphi} & \Bar[\alpha] M \\
    & \KD{M} \uar[hook][swap]{\inc}
    \end{tikzcd}
    \]
    where $f_\varphi$ is the map associated to $\varphi$.
    Moreover, if $\varphi$ is Koszul and the differential of $\operad C$ is trivial, then $M$ is Koszul relative to $\alpha$, the map $a_\varphi$ is an isomorphism, and we have $\varphi \iso \nattwist_{M}$.
    
    \item There is a unique map $b_\varphi \colon \KD{K} \to M$ of weight-graded left $\operad P$-modules such that the following diagram commutes
    \[
    \begin{tikzcd}
    \Cobar[\alpha] K \rar{g_\varphi} \dar[two heads][swap]{\pr} & M \\
    \KD{K} \urar[dashed][swap]{b_\varphi}
    \end{tikzcd}
    \]
    where $g_\varphi$ is the map associated to $\varphi$.
    Moreover, if $\varphi$ is Koszul and the differential of $\operad P$ is trivial, then $K$ is Koszul relative to $\alpha$, the map $b_\varphi$ is an isomorphism, and we have $\varphi \iso \nattwist_{K}$.
  \end{itemize}
\end{lemma}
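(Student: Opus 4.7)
The argument focuses on the first bullet; the second follows by dualizing (replacing $\Bar[\alpha]$ with $\Cobar[\alpha]$, kernels with cokernels, and injectivity arguments with surjectivity ones). The strategy has two parts: first, construct $a_\varphi$ by verifying that $f_\varphi$ lands inside $\KD M \subseteq \Bar[\alpha] M$; second, under the additional hypothesis that $d_{\operad C} = 0$, promote $\inc[\KD M]$ to a quasi-isomorphism and $a_\varphi$ to an actual isomorphism.

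For the construction of $a_\varphi$, recall from the proof of Proposition~\ref{prop:adjunction} that $f_\varphi = (\id \ccprod \varphi) \after \Delta_K$. Since $\varphi$ preserves weight and is supported in weight $0$, only terms with all level-$1$ labels in $\weight 0 M$ survive, placing $f_\varphi(K) \subseteq \syzBar[\alpha]{0} M$. Further, $f_\varphi$ is a chain map, and $d_K = 0 = d_M$ imply that $((d_{\operad C} \ccprod \id) + \twBar{\alpha}) \after f_\varphi = 0$; here $d_{\operad C} \ccprod \id$ preserves syzygy degree while $\twBar{\alpha}$ strictly increases it (as $\alpha$ has image in positive weights), so the two terms land in disjoint syzygy degrees and must vanish separately. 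This gives $f_\varphi(K) \subseteq \ker \twBar{\alpha}$, so $f_\varphi$ factors through $\KD M$, yielding $a_\varphi$ uniquely (by injectivity of $\inc[\KD M]$) and automatically as a weight-preserving $\operad C$-comodule map.

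For the ``Moreover'' claim, the assumption $d_{\operad C} = 0$ (combined with $d_M = 0$) reduces the differential on $\Bar[\alpha] M$ to $\twBar{\alpha}$, and $\KD M$ inherits a trivial differential. The crucial step is to show that $\Ho{*}(\inc[\KD M]) \colon \KD M \to \Ho{*}(\Bar[\alpha] M)$ is injective: if some $x \in \KD M$ were a boundary $x = \twBar{\alpha}(y)$, then decomposing $y$ by syzygy degree would force the image to lie in $\bigoplus_{s \geq 1} \syzBar[\alpha]{s} M$, contradicting $x \in \syzBar[\alpha]{0} M$ unless $x = 0$. Combined with the factorization $\Ho{*}(f_\varphi) = \Ho{*}(\inc[\KD M]) \after a_\varphi$, where $\Ho{*}(f_\varphi)$ is an isomorphism by Koszulity of $\varphi$, this promotes $\Ho{*}(\inc[\KD M])$ to an isomorphism (so $M$ is Koszul relative to $\alpha$) and forces $a_\varphi$ to be an isomorphism, both source and target carrying trivial differentials. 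The identification $\varphi \iso \nattwist_M$ then follows by naturality of the bijection of Proposition~\ref{prop:adjunction} in $K$: the composite $\nattwist_M \after a_\varphi$ corresponds to $\inc[\KD M] \after a_\varphi = f_\varphi$, which in turn corresponds to $\varphi$.

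The main obstacle is the injectivity argument for $\Ho{*}(\inc[\KD M])$, which genuinely requires $d_{\operad C} = 0$: without it, the $d_{\operad C} \ccprod \id$ term in the differential could contribute boundaries within $\syzBar[\alpha]{0} M$, destroying the clean syzygy-degree separation that makes the argument work. The dual argument for the second bullet is analogous, with the corresponding step being the surjectivity of $\Ho{*}(\pr[\KD K])$, which uses $d_{\operad P} = 0$ to ensure that every representative in $\syzCobar[\alpha]{0} K$ is automatically a cycle (since $\twCobar{\alpha}$ strictly decreases syzygy and cannot escape below $0$).
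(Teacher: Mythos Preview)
Your proof is correct and follows the same approach as the paper's. The paper's proof is extremely terse: it says the factorization ``follows from chasing through the definitions'' and then, for the Koszul part, observes that $f_\varphi$ being a quasi-isomorphism makes $\inc[\KD M]$ surjective on homology, while injectivity on homology is delegated to Remark~\ref{rem:KD_trivial_diffs}. Your syzygy-degree separation argument (both for the factorization and for the injectivity of $\Ho{*}(\inc[\KD M])$) is exactly what that remark encodes, just spelled out explicitly.
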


\begin{proof}
  We prove the first half of the statement; the second follows dually.
  The existence of the factorization follows from chasing through the definitions.
  For the second part, note that $\varphi$ being Koszul implies that $f_\varphi$ is a quasi-isomorphism.
  Hence $\inc$ is surjective on homology.
  Since $\inc$ is also injective on homology by Remark~\ref{rem:KD_trivial_diffs}, this implies the claim.
\end{proof}

Applying Lemma~\ref{lemma:Koszul_twist} to $\nattwist_{M}$, we obtain the following corollary that relates $M$ to its double Koszul dual $\KD{(\KD{M})}$ and $\nattwist_{M}$ to $\nattwist_{\KD{M}}$ (and similarly in the dual situation).

\begin{corollary} \label{cor:double_KD}
  Let $K$ be a non-negatively weight-graded left $\operad C$-comodule with trivial differential.
  Then there is a canonical map $a_K \colon K \to \KD{(\KD{K})}$ of weight-graded left $\operad C$-comodules.
  Moreover, if $\nattwist_{K}$ is Koszul relative to $\alpha$ and the differentials of $\operad P$ and $\operad C$ are trivial, then $\KD{K}$ is Koszul relative to $\alpha$, the map $a_K$ is an isomorphism, and we have $\nattwist_{K} \iso \nattwist_{\KD{K}}$.
  
  Dually, let $M$ be a non-negatively weight-graded left $\operad P$-module with trivial differential.
  Then there is a canonical map $b_M \colon \KD{(\KD{M})} \to M$ of weight-graded left $\operad P$-modules.
  Moreover, if $\nattwist_{M}$ is Koszul relative to $\alpha$ and the differentials of $\operad P$ and $\operad C$ are trivial, then $\KD{M}$ is Koszul relative to $\alpha$, the map $b_M$ is an isomorphism, and we have $\nattwist_{M} \iso \nattwist_{\KD{M}}$.
\end{corollary}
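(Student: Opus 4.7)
The plan is to deduce both halves of the corollary from Lemma~\ref{lemma:Koszul_twist}, applied respectively to the natural twisting morphisms $\nattwist_K \colon K \to \KD{K}$ and $\nattwist_M \colon \KD{M} \to M$. I will describe the argument for the first half; the second is entirely dual (and uses the second half of Lemma~\ref{lemma:Koszul_twist} instead of the first).

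Viewing $\nattwist_K$ as a twisting morphism relative to $\alpha$ between the $\operad C$-comodule $K$ and the $\operad P$-module $\KD{K}$, I would first verify the hypotheses of Lemma~\ref{lemma:Koszul_twist}. The weight grading on $\KD{K}$, induced from the total weight on $\Cobar[\alpha] K$ restricted to $\syzCobar[\alpha]{0} K$, is non-negative because $\operad P$ is connected. Since $d_{\operad P}$ and $d_K$ both vanish, so does $d_{\operad P \ccprod K}$, and hence the induced differential on $\KD{K}$ vanishes as well (cf.\ Remark~\ref{rem:KD_trivial_diffs}). By unwinding the adjunction of Proposition~\ref{prop:adjunction} that defines $\nattwist_K$, one identifies it with the composite of the inclusion of generators $K \to \Cobar[\alpha] K$ with the projection to $\KD{K}$; in particular, $\nattwist_K$ vanishes on elements of positive weight in $K$ and sends $\weight{0}{K}$ to $\weight{0}{\KD{K}}$, so it preserves the weight grading and is supported in weight $0$.

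Applying the first half of Lemma~\ref{lemma:Koszul_twist} to $\nattwist_K$ then produces a unique map $a_K \defeq a_{\nattwist_K} \colon K \to \KD{(\KD{K})}$ of weight-graded left $\operad C$-comodules, giving the canonical map asserted in the first part. For the ``moreover'' part, the hypothesis that $\nattwist_K$ is Koszul relative to $\alpha$ together with the triviality of the differential on $\operad C$ allow us to invoke the second part of the lemma. This yields that $\KD{K}$ is Koszul relative to $\alpha$, that $a_K$ is an isomorphism, and that under this identification $\nattwist_K$ agrees with $\nattwist_{\KD{K}}$.

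The only delicate step is the bookkeeping needed to verify that $\nattwist_K$ preserves the weight grading and is supported in weight $0$, which amounts to tracing through the interplay between the weight grading, the syzygy grading, and the bar--cobar adjunction. Once this is in place, the corollary is an immediate application of Lemma~\ref{lemma:Koszul_twist}.
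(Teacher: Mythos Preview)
Your proposal is correct and follows exactly the paper's approach: the paper's proof simply reads ``This follows from Lemma~\ref{lemma:Koszul_twist} since $\nattwist_{K}$ and $\nattwist_{M}$ are supported in weight $0$.'' You have supplied the details the paper omits, namely the verification that $\nattwist_K$ (and dually $\nattwist_M$) satisfies the hypotheses of Lemma~\ref{lemma:Koszul_twist}. One small remark: you invoke $d_{\operad P}=0$ already when constructing $a_K$, whereas the corollary only assumes this in the ``moreover'' clause; but since Lemma~\ref{lemma:Koszul_twist} as stated requires $M=\KD{K}$ to have trivial differential, this is the same implicit assumption the paper's own one-line proof makes.
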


\begin{proof}
  This follows from Lemma~\ref{lemma:Koszul_twist} since $\nattwist_{K}$ and $\nattwist_{M}$ are supported in weight $0$.
\end{proof}

\subsubsection{Monogene modules}

We again assume that $\alpha \colon \operad C \to \operad P$ is a twisting morphism between connected weight-graded (co)properads that preserves the weight grading.
In this subsection, we show that a Koszul (co)module often has a presentation with relations that only involve a single element (or \emph{op}eration) of the properad.

\begin{definition}
  A \emph{mono-opic data relative to $\operad P$} is a tuple $(V, r)$ of a $\S$-bimodule $V$ and a map $r \colon R \to \onlyone{\operad P} \ccprod V$ of $\S$-bimodules.
  Now consider the full subcategory of the category of left $\operad P$-modules under $\operad P \ccprod V$ consisting of those $f \colon \operad P \ccprod V \to M$ such that the composite
  \[ R  \xlongto{r}  \onlyone{\operad P} \ccprod V  \xlongto{\eta}  \operad P \ccprod V  \xlongto{f}  M \]
  is trivial.
  We denote by $\operad P(V, r)$ its initial object, and call it the \emph{mono-opic left module} associated to $(V, r)$.
  
  Dually, a \emph{mono-opic codata relative to $\operad C$} is a tuple $(V, s)$ of a $\S$-bimodule $V$ and a map $s \colon \onlyone{\operad C} \ccprod V \to S$ of $\S$-bimodules.
  Now consider the full subcategory of the category of left $\operad C$-comodule over $\operad C \ccprod V$ consisting of those $g \colon K \to \operad C \ccprod V$ such that the composite
  \[ K  \xlongto{g}  \operad C \ccprod V  \xlongto{\epsilon}  \onlyone{\operad C} \ccprod V  \xlongto{s}  S \]
  is trivial.
  We denote by $\operad C(V, s)$ its terminal object, and call it the \emph{mono-opic left comodule} associated to $(V, s)$.
\end{definition}

\begin{remark} \label{rem:mono-opic_explicit}
  Note that $\operad P(V, r)$ only depends on the image (or cokernel) of $r$ and that $\operad C(V, s)$ only depends on the kernel of $s$.
  
  Also note that $\operad P(V) \defeq \operad P(V, 0) \iso \operad P \ccprod V$ is the free left $\operad P$-module generated by $V$.
  More generally, we have $\operad P(V, r) \iso \operad P / (\im r)$, where $(\im r)$ is the ideal generated by the image of $r$.
  This quotient is explicitly given as the cokernel of the composite
  \[ \operad P \ccprod (V; R)  \xlongto{r}  \operad P \ccprod \only{V}{\onlyone{\operad P} \ccprod V}  \iso  \operad P \ccprod \onlyone{\operad P} \ccprod V  \xlongto{\eta}  \operad P \ccprod \operad P \ccprod V  \xlongto{\mu}  \operad P \ccprod V \]
  where $\eta$ and $\mu$ are the structure maps of $\operad P$.
  
  Dually, we have that $\operad C(V, s)$ is isomorphic to the kernel of the composite
  \[ \operad C \ccprod V  \xlongto{\Delta}  \operad C \ccprod \operad C \ccprod V  \xlongto{\epsilon}  \operad C \ccprod \onlyone{\operad C} \ccprod V  \iso  \operad C \ccprod \only{V}{\onlyone{\operad C} \ccprod V}  \xlongto{s}  \operad C \ccprod \only{V}{S} \]
  where $\epsilon$ and $\Delta$ are the structure maps of $\operad C$.
  In particular $\operad C(V) \defeq \operad C(V, 0) \iso \operad C \ccprod V$ is the cofree left $\operad C$-comodule cogenerated by $V$.
  
  This also shows that $\operad P(V, r)$ and $\operad C(V, s)$ actually exist.
\end{remark}

\begin{remark} \label{rem:mono-opic_operad}
  Note that any (co)module (in particular any (co)algebra) over a (co)operad is mono-opic.
\end{remark}

Of particular interest to us will be the case where the relations of a mono-opic left $\operad P$-module only involve weight $1$ generators of $\operad P$.
This situation is captured by the following definition.

\begin{definition}
  A mono-opic data $(V, r)$ relative to $\operad P$ is \emph{monogene} if $r$ factors through the inclusion $\onlyone{\weight 1 {\operad P}} \ccprod V \to \onlyone{\operad P} \ccprod V$.
  Dually, a mono-opic codata $(V, s)$ relative to $\operad C$ is \emph{monogene} if $s$ factors through the projection $\onlyone{\operad C} \ccprod V \to \onlyone{\weight 1 {\operad C}} \ccprod V$.
  In these cases the left (co)modules $\operad P(V, r)$ and $\operad C(V, s)$, respectively, are also called \emph{monogene}.
\end{definition}

\begin{remark}
  When we specialize to the case where $\operad P$ is a quadratic operad and $V$ is concentrated in biarity $(1, 0)$, our notion of a monogene left $\operad P$-module recovers the notion of a ``monogene $\operad P$-algebra'' of \cite[§3.1]{Mil}.
  In particular, when $\operad P$ is additionally binary (i.e.\ generated in arity $2$), we recover the notion of a ``quadratic $\operad P$-algebra'' of Ginzburg--Kapranov \cite[§2.3]{GK94}.
  Restricting further to the case of the associative operad $\operad P = \operad{A}\mathrm{ss}$, we recover the classical notion of a quadratic algebra (see e.g.\ \cite[§3.1.2]{LV}) originally introduced (under a different name) by Priddy \cite[§2]{Pri}.
\end{remark}

\begin{remark}
  Let $(V, r)$ be a mono-opic data and equip $V$ with the weight grading that is concentrated in weight $0$.
  Assume that $r$ is equipped with the structure of a map of weight-graded $\S$-bimodules (if $(V, r)$ is monogene, this can always be achieved by putting $R$ into weight $1$).
  Then this induces a weight grading on $\operad P(V, r)$.
  (The dual statement holds for a mono-opic codata.)
\end{remark}

We now illustrate the notions of mono-opic and monogene modules.
To this end, let $\operad P = \free(E)/(R)$ be a quadratic properad and $V$ a $\S$-bimodule.
The following pictures present forms that a relation of a $\operad P$-module with generators $V$ could take (i.e.\ they present forms of elements of $\operad P \ccprod V$).
\vspace{-\baselineskip}
\begin{figure}[H]
  \centering
  \begin{subfigure}{0.3\textwidth}
    \centering
    \begin{tikzcd}[column sep = -10, row sep = 10]
      & {} \dar & & {} \dar & \\
      & \boxed{V} \dlar \drar & & \boxed{V} \dlar \drar & \\
      \boxed{\operad P} \dar & & \boxed{\operad P} \dlar \drar & & \boxed{\operad P} \dar \\
      {} & {} & & {} & {}
    \end{tikzcd}
    \caption{A general relation.} \label{subfig:rel_general}
  \end{subfigure}
  \begin{subfigure}{0.3\textwidth}
    \centering
    \begin{tikzcd}[column sep = -10, row sep = 10]
      & {} \dar & & {} \dar & \\
      & \boxed{V} \dlar \drar & & \boxed{V} \dlar \drar & \\
      \boxed{\unit} \dar & & \boxed{\operad P} \dlar \drar & & \boxed{\unit} \dar \\
      {} & {} & & {} & {}
    \end{tikzcd}
    \caption{A mono-opic relation.} \label{subfig:rel_mono-opic}
  \end{subfigure}
  \begin{subfigure}{0.3\textwidth}
    \centering
    \begin{tikzcd}[column sep = -10, row sep = 10]
    & {} \dar & & {} \dar & \\
    & \boxed{V} \dlar \drar & & \boxed{V} \dlar \drar & \\
    \boxed{\unit} \dar & & \boxed{E} \dlar \drar & & \boxed{\unit} \dar \\
    {} & {} & & {} & {}
    \end{tikzcd}
    \caption{A monogene relation.} \label{subfig:rel_monogene}
  \end{subfigure}
  \vspace{-0.5em}
\end{figure} \noindent
Note that in the operadic case, i.e.\ when everything is concentrated in biarities $(1, n)$, there is no difference between form \subref{subfig:rel_general} and form \subref{subfig:rel_mono-opic} (cf.\ Remark~\ref{rem:mono-opic_operad}).

We now give a mono-opic presentation for the Koszul dual of any non-negatively weight-graded left (co)module.

\begin{lemma} \label{lemma:KD_is_mono-opic}
  Let $M$ be a non-negatively weight-graded left $\operad P$-module.
  Then $\KD{M}$ is isomorphic to the mono-opic left comodule $\operad C(\weight 0 M, s)$ where $s$ is the composite
  \[ \onlyone{\operad C} \ccprod \weight 0 M  \xlongto{\alpha}  \onlyone{\operad P} \ccprod \weight 0 M  \xlongto{\inc}  \operad P \ccprod M  \xlongto{\lambda}  M \]
  where $\lambda$ is the structure map of $M$.
  
  Dually, let $K$ be a non-negatively weight-graded left $\operad C$-comodule.
  Then $\KD{K}$ is isomorphic to the mono-opic left module $\operad P(\weight 0 K, r)$ where $r$ is the composite
  \[ K  \xlongto{\rho}  \operad C \ccprod K  \xlongto{\pr}  \onlyone{\operad C} \ccprod \weight 0 K  \xlongto{\alpha}  \onlyone{\operad P} \ccprod \weight 0 K \]
  where $\rho$ is the structure map of $K$.
  
  In particular, if $\alpha$ is supported in weight $1$, then $\KD{M}$ and $\KD{K}$ are monogene.
\end{lemma}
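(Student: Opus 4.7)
The plan is to identify $\KD{M}$ and $\operad C(\weight 0 M, s)$ as kernels of essentially the same map out of $\operad C \ccprod \weight 0 M$; the statement about $\KD{K}$ follows dually, and the monogene observation is then a direct consequence.

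First I would unpack the definitions. Since $M$ is non-negatively weight graded, the syzygy grading of $\Bar[\alpha] M$ satisfies $\syzBar[\alpha]{0} M = \operad C \ccprod \weight 0 M$. Because $\operad C$ is connected weight graded and $\alpha$ preserves the weight grading, $\alpha$ vanishes in weight $0$ for degree reasons; consequently $s = \lambda \after \inc \after \alpha$ has image in $M_{>0}$. By Remark~\ref{rem:mono-opic_explicit}, $\operad C(\weight 0 M, s)$ is the kernel of the composite
\[
\operad C \ccprod \weight 0 M \xlongto{\Delta} \operad C \ccprod \operad C \ccprod \weight 0 M \xlongto{\epsilon} \operad C \ccprod \onlyone{\operad C} \ccprod \weight 0 M \iso \operad C \ccprod \only{\weight 0 M}{\onlyone{\operad C} \ccprod \weight 0 M} \xlongto{s} \operad C \ccprod \only{\weight 0 M}{M}.
\]

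Next I would show that this composite agrees with $\twBar{\alpha}|_{\operad C \ccprod \weight 0 M}$ up to the natural forget-distinguishment map $\operad C \ccprod \only{\weight 0 M}{M} \to \operad C \ccprod M$. Inspecting the defining formula of $\twBar{\alpha}$, the maps $\Delta$ and $\epsilon$ pick out a distinguished middle $\operad C$-vertex; then $\alpha$ converts it to a $\operad P$-vertex, and the subsequent $\eta$ and $\lambda$ combine this $\operad P$-vertex with the $\weight 0 M$-vertices directly below it into a single $M$-label lying in $M_{>0}$, while the remaining level-$1$ vertices retain their $\weight 0 M$-labels. Bundling the distinguished middle $\operad C$ together with the $\weight 0 M$-vertices below it into one distinguished substructure is precisely the content of the isomorphism $\operad C \ccprod \onlyone{\operad C} \ccprod \weight 0 M \iso \operad C \ccprod \only{\weight 0 M}{\onlyone{\operad C} \ccprod \weight 0 M}$, after which $s$ performs exactly this $\alpha, \eta, \lambda$ conversion at the distinguished spot. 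Because $s$ has image in $M_{>0}$ while the non-distinguished labels lie in $\weight 0 M$, the forget-distinguishment map is injective on the relevant image, and so the two maps have the same kernel. This identifies $\KD{M}$ with $\operad C(\weight 0 M, s)$ as sub-$\S$-bimodules of $\operad C \ccprod \weight 0 M$, and hence as left $\operad C$-comodules since both inherit the cofree comodule structure. The statement for $\KD{K}$ is obtained by reversing arrows throughout, using $\im(\twCobar{\alpha})$ in place of $\ker(\twBar{\alpha})$.

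For the final claim: if $\alpha$ is supported in weight $1$, then $s = \lambda \after \inc \after \alpha$ vanishes on components of $\onlyone{\operad C}$ of weight $\ne 1$, so it factors through the projection $\onlyone{\operad C} \ccprod \weight 0 M \to \onlyone{\weight 1 {\operad C}} \ccprod \weight 0 M$, making $\operad C(\weight 0 M, s)$ monogene; the analogous argument, using that the image of $\alpha$ lies in $\weight 1 {\operad P}$, shows that $\operad P(\weight 0 K, r)$ is monogene. The main obstacle I anticipate is the careful verification that the two composites in question really do agree via the isomorphism of Remark~\ref{rem:mono-opic_explicit}, in particular the bookkeeping when the distinguished middle $\operad C$-vertex has several $\weight 0 M$-vertices below it which are collapsed by $\lambda$ into a single $M$-label in the distinguished position.
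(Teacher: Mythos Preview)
Your proposal is correct and follows essentially the same approach as the paper. You work out the $\KD{M}$ case directly, showing that $\twBar{\alpha}|_{\operad C \ccprod \weight 0 M}$ and the kernel-defining map for $\operad C(\weight 0 M, s)$ from Remark~\ref{rem:mono-opic_explicit} differ only by the forget-distinguishment map, which is injective on the image since the distinguished label lies in $M_{>0}$ and the rest in $\weight 0 M$. The paper states the dual observation for the $\KD{K}$ case: the relevant composite $\operad P \ccprod K \to \operad P \ccprod \onlyone{\operad C} \ccprod \weight 0 K$ factors through the surjection $\operad P \ccprod K \to \operad P \ccprod \only{\weight 0 K}{\weight{\ge 1} K}$, which is precisely the dual of your injectivity statement.
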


\begin{proof}
  This follows from the definitions, Remark~\ref{rem:mono-opic_explicit}, and the observation (and its dual) that the composite
  \[ \operad P \ccprod K  \xlongto{\rho}  \operad P \ccprod \operad C \ccprod K  \xlongto{\pr}  \operad P \ccprod \onlyone{\operad C} \ccprod \weight 0 K \]
  factors through the composite
  \[ \operad P \ccprod K  \longto  \operad P \ccprod \only{K}{K}  \xlongto{\pr}  \operad P \ccprod \only{\weight 0 K}{\weight {\ge 1} K} \]
  which is surjective (here $\weight {\ge 1} K \defeq \Dirsum_{w \ge 1} \weight w K$).
\end{proof}

\begin{remark}
  Note that $r$ and $s$ do not necessarily commute with the respective differentials, since $\alpha$ does not do so in general (though it does if it is supported in weight $1$).
  However, by construction of $\KD{M}$ and $\KD{K}$, the left (co)modules $\operad C(\weight 0 M, s)$ and $\operad P(\weight 0 K, r)$ will still have differentials induced by $d_{\operad C \ccprod \weight 0 M}$ and $d_{\operad P \ccprod \weight 0 K}$, respectively.
\end{remark}

As a consequence of the preceding lemma we obtain the statement, promised in the beginning of the subsection, that in most cases a Koszul left (co)module is mono-opic or even monogene (see \cite[Theorem 2.11]{Ber} for a special case of this in the operadic setting).

\begin{corollary} \label{cor:Koszul_is_mono-opic}
  Assume that the differentials of $\operad P$ and $\operad C$ are trivial.
  \begin{itemize}
    \item Let $M$ be a non-negatively weight-graded left $\operad P$-module with trivial differential.
    If $\nattwist_{M}$ is Koszul relative to $\alpha$, then $M$ is mono-opic.
    If $\alpha$ is additionally supported in weight $1$, then $M$ is monogene.
    \item Let $K$ be a non-negatively weight-graded left $\operad C$-comodule with trivial differential.
    If $\nattwist_{K}$ is Koszul relative to $\alpha$, then $K$ is mono-opic.
    If $\alpha$ is additionally supported in weight $1$, then $K$ is monogene.
  \end{itemize}
\end{corollary}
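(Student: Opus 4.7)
The plan is to obtain both statements by combining Corollary~\ref{cor:double_KD} with the mono-opic presentation of Koszul duals provided by Lemma~\ref{lemma:KD_is_mono-opic}. I will treat only the first bullet; the second is entirely dual.

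First I would apply Corollary~\ref{cor:double_KD} to $M$. The hypotheses match: $M$ is non-negatively weight graded with trivial differential, the differentials of $\operad P$ and $\operad C$ are trivial, and $\nattwist_M$ is Koszul relative to $\alpha$. The corollary then produces a canonical isomorphism of weight-graded left $\operad P$-modules
\[ b_M \colon \KD{(\KD{M})}  \xlongto{\iso}  M\text{.} \]
So it suffices to exhibit $\KD{(\KD{M})}$ as a mono-opic (respectively monogene) left $\operad P$-module.

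Next I would apply Lemma~\ref{lemma:KD_is_mono-opic} to the non-negatively weight-graded left $\operad C$-comodule $\KD{M}$. The dual half of that lemma gives an isomorphism
\[ \KD{(\KD{M})}  \iso  \operad P\big( \weight 0 {\KD{M}}, r \big) \]
for the specific map $r$ constructed there, which by definition exhibits the right-hand side as mono-opic. Moreover, the final clause of the same lemma states that when $\alpha$ is supported in weight $1$, the mono-opic data $(\weight 0 {\KD{M}}, r)$ is in fact monogene, since $r$ then factors through $\onlyone{\weight 1 {\operad P}} \ccprod \weight 0 {\KD{M}}$.

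Chaining the two isomorphisms $M \iso \KD{(\KD{M})} \iso \operad P(\weight 0 {\KD{M}}, r)$ realizes $M$ as a mono-opic left $\operad P$-module, and as a monogene one in the weight-$1$ case. The dual bullet follows by the same argument applied to $K$, using the other halves of Corollary~\ref{cor:double_KD} and Lemma~\ref{lemma:KD_is_mono-opic}. No real obstacle arises here: once Corollary~\ref{cor:double_KD} is available, the result is a formal consequence of the fact that any Koszul dual is automatically mono-opic by construction.
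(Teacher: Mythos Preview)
Your argument is correct and is exactly the approach the paper takes: the paper's proof is a one-line appeal to Corollary~\ref{cor:double_KD} and Lemma~\ref{lemma:KD_is_mono-opic}, and you have simply unpacked that citation into the obvious two-step argument (first $M \iso \KD{(\KD{M})}$, then any Koszul dual is mono-opic, respectively monogene when $\alpha$ is supported in weight~$1$).
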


\begin{proof}
  This follows from .
\end{proof}

Another consequence of Lemma~\ref{lemma:KD_is_mono-opic} is the following explicit monogene presentation of the Koszul dual of a monogene left (co)module.
It generalizes \cite[Proposition 4.7]{Mil} (see also \cite[Theorem 2.11]{Ber}).

\begin{corollary} \label{cor:KD_of_monogene}
  Assume that $\alpha$ is supported in weight $1$.
  \begin{itemize}
    \item Let $(V, r \colon R \to \onlyone{\weight 1 {\operad P}} \ccprod V)$ be a monogene data relative to $\operad P$.
    Then $\KD{\operad P(V, r)}$ is isomorphic to the monogene left comodule $\operad C(V, s)$ where $s$ is the composite
    \[ \onlyone{\weight 1 {\operad C}} \ccprod V  \xlongto{\alpha}  \onlyone{\weight 1 {\operad P}} \ccprod V  \longto  \coker r \]
    of $\alpha$ and the canonical projection.
    
    \item Let $(V, s \colon \onlyone{\weight 1 {\operad C}} \ccprod V \to S)$ be a monogene codata relative to $\operad C$.
    Then $\KD{\operad C(V, s)}$ is isomorphic to the monogene left comodule $\operad P(V, r)$ where $r$ is the composite
    \[ \ker s  \longto  \onlyone{\weight 1 {\operad C}} \ccprod V  \xlongto{\alpha}  \onlyone{\weight 1 {\operad P}} \ccprod V \]
    of $\alpha$ and the canonical inclusion.
  \end{itemize}
\end{corollary}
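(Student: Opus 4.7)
The plan is to specialize Lemma~\ref{lemma:KD_is_mono-opic} to the monogene setting. I prove the first statement; the second is entirely dual (swapping cokernels with kernels, projections with inclusions, and quotients with sub-objects). Set $M \defeq \operad P(V, r)$, with $V$ placed in weight $0$ and $R$ in weight $1$.

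First I would identify $\weight 0 M$. Using the explicit cokernel description of $\operad P(V, r)$ in Remark~\ref{rem:mono-opic_explicit} and the fact that $\operad P$ is connected (so $\weight 0 \operad P \iso \unit$), no relations act in weight $0$, so the canonical map $V \iso \unit \ccprod V \to \operad P \ccprod V \to M$ identifies $\weight 0 M$ with $V$. By Lemma~\ref{lemma:KD_is_mono-opic}, $\KD M$ is then the mono-opic left $\operad C$-comodule $\operad C(V, \widetilde s)$, where $\widetilde s$ is the composite
\[ \onlyone{\operad C} \ccprod V  \xlongto{\alpha}  \onlyone{\operad P} \ccprod V  \xlongto{\inc}  \operad P \ccprod V  \xlongto{\lambda_M}  M \text{.} \]
Because $\alpha$ is supported in weight $1$, this $\widetilde s$ factors on the domain as $\onlyone{\weight 1 \operad C} \ccprod V \xlongto{\alpha} \onlyone{\weight 1 \operad P} \ccprod V \to \operad P \ccprod V \to M$, and its image lies in $\weight 1 M$.

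Next I would identify $\weight 1 M$ with $\coker r$ under an isomorphism making the above composite correspond to the projection to $\coker r$. For this, I first note the canonical isomorphism
\[ \onlyone{\weight 1 \operad P} \ccprod V  \xlongto{\iso}  \weight 1 (\operad P \ccprod V) \]
which is a consequence of $\operad P$ being connected: in any $2$-level graph representing a weight $1$ element of $\operad P \ccprod V$, exactly one level-$2$ vertex is labeled by an element of $\weight 1 \operad P$ and all others are labeled by the unit, so the distinguished vertex is forced. Under this identification, the cokernel presentation of $M$ from Remark~\ref{rem:mono-opic_explicit}, restricted to weight $1$, becomes precisely the cokernel of $r \colon R \to \onlyone{\weight 1 \operad P} \ccprod V$. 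Hence the restriction of $\lambda_M \after \inc$ to $\onlyone{\weight 1 \operad P} \ccprod V$ is exactly the canonical projection $\onlyone{\weight 1 \operad P} \ccprod V \to \coker r$. Composing with $\alpha$, the map $\widetilde s$ agrees with the map $s$ from the statement.

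Finally, since $\operad C(V, \blank)$ depends only on the kernel of its second argument (Remark~\ref{rem:mono-opic_explicit}) and the factorization above leaves the kernel unchanged, we get $\KD M \iso \operad C(V, \widetilde s) \iso \operad C(V, s)$ as left $\operad C$-comodules, with differentials inherited from $\Bar[\alpha] M$ on both sides. The main obstacle I expect is the identification $\onlyone{\weight 1 \operad P} \ccprod V \iso \weight 1 (\operad P \ccprod V)$ and verifying compatibility of the ensuing presentation of $\weight 1 M$ with $r$; this requires a careful use of connectedness of $\operad P$ and of the unitality of the composition product, but is essentially bookkeeping about which level-$2$ vertex carries the unique ``non-unit'' label.
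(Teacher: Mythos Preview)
Your proposal is correct and follows the same approach as the paper, which simply cites Lemma~\ref{lemma:KD_is_mono-opic} (together with Remark~\ref{rem:mono-opic_explicit}) and leaves the identifications implicit. You have spelled out precisely the two bookkeeping steps the paper elides: the identification $\weight 0 M \iso V$ and $\weight 1 M \iso \coker r$ via $\onlyone{\weight 1 \operad P} \ccprod V \iso \weight 1(\operad P \ccprod V)$, both of which rest on connectedness of $\operad P$ exactly as you indicate.
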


\begin{proof}
  This follows from .
\end{proof}

Applying the preceding corollary twice we obtain a description of the double Koszul dual of a monogene left (co)module.
In nice situations, this recovers the original left (co)module.
This is a form of Corollary~\ref{cor:double_KD} in the monogene case.

\begin{corollary} \label{cor:double_KD_of_monogene}
  Assume that $\alpha$ is supported in weight $1$ and surjective in weight $1$, and let $M$ be a monogene left $\operad P$-module.
  Then the canonical map $\KD{(\KD{M})} \to M$ is an isomorphism and $\nattwist_{\KD M} \iso \nattwist_{M}$.
  In particular $\nattwist_{M}$ is Koszul relative to $\alpha$ if and only if $\nattwist_{\KD M}$ is.
  
  Dually, assume that $\alpha$ is supported in weight $1$ and injective in weight $1$, and let $K$ be a monogene left $\operad C$-comodule.
  Then the canonical map $K \to \KD{(\KD{K})}$ is an isomorphism and $\nattwist_{\KD K} \iso \nattwist_{K}$.
  In particular $\nattwist_{K}$ is Koszul relative to $\alpha$ if and only if $\nattwist_{\KD K}$ is.
\end{corollary}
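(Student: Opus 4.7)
The plan is to iteratively apply Corollary~\ref{cor:KD_of_monogene}, which gives an explicit monogene presentation of the Koszul dual of a monogene (co)module, together with the fact (Remark~\ref{rem:mono-opic_explicit}) that a mono-opic left module $\operad P(V, r)$ depends on $r$ only through its image. I will write out the argument for modules; the comodule case is obtained dually, swapping the roles of ``image'' and ``kernel'' and of ``surjective'' and ``injective''.

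Write $M \iso \operad P(V, r)$ with $r \colon R \to \onlyone{\weight 1 {\operad P}} \ccprod V$ monogene. By the first part of Corollary~\ref{cor:KD_of_monogene}, the Koszul dual $\KD{M}$ is isomorphic to the monogene left comodule $\operad C(V, s)$, where $s$ is the composite of $\alpha_1 \colon \onlyone{\weight 1 {\operad C}} \ccprod V \to \onlyone{\weight 1 {\operad P}} \ccprod V$ with the projection $\pi \colon \onlyone{\weight 1 {\operad P}} \ccprod V \to \coker r$. Applying the dual part of Corollary~\ref{cor:KD_of_monogene} to this monogene codata, the double dual $\KD{(\KD{M})}$ is isomorphic to $\operad P(V, r')$ where $r'$ is the composite $\ker s \to \onlyone{\weight 1 {\operad C}} \ccprod V \xto{\alpha_1} \onlyone{\weight 1 {\operad P}} \ccprod V$. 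The key computation is then $\ker s = \alpha_1^{-1}(\im r)$, so $\im r' = \alpha_1(\alpha_1^{-1}(\im r))$. Since $\alpha$ is surjective in weight $1$ by assumption, $\alpha_1$ is surjective, which gives $\im r' = \im r$; hence $\operad P(V, r') \iso \operad P(V, r) \iso M$ by Remark~\ref{rem:mono-opic_explicit}.

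Next I would verify that, under this identification, the natural twisting morphism $\nattwist_{\KD{M}} \colon \KD{M} \to \KD{(\KD{M})}$ matches $\nattwist_{M} \colon \KD{M} \to M$. By Lemma~\ref{lemma:Koszul_twist} applied to $\nattwist_{M}$, it suffices to observe that $\nattwist_{M}$ is itself a twisting morphism $\KD{M} \to M$ that is concentrated in weight~$0$ and preserves the weight grading, and to check that the unique factorization $a_{\nattwist_{M}} \colon \KD{M} \to \KD{(\KD{M})}$ produced by that lemma coincides with the inverse of the isomorphism $\KD{(\KD{M})} \iso M$ built above. This is a direct unwinding of the universal properties used to define $\nattwist_{M}$ and $\nattwist_{\KD{M}}$, together with Lemma~\ref{lemma:KD_is_mono-opic}, which identifies the defining projections/inclusions on both sides.

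Finally, the ``in particular'' clause is immediate from the equality $\nattwist_{\KD{M}} \iso \nattwist_{M}$: one of these twisting morphisms is Koszul relative to $\alpha$ if and only if the other is, by the very definition of Koszulity relative to $\alpha$. The main obstacle I expect is the verification of the second step, namely the compatibility of the two natural twisting morphisms under the isomorphism $\KD{(\KD{M})} \iso M$; this is purely formal but requires carefully tracking through the adjunction of Proposition~\ref{prop:adjunction} and the explicit descriptions of $\operad P(V, r)$, $\operad C(V, s)$, $\KD{M}$, and $\KD{K}$ given in Remark~\ref{rem:mono-opic_explicit} and Lemma~\ref{lemma:KD_is_mono-opic}. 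The computation of $\im r'$ is the only place where the surjectivity (respectively injectivity) hypothesis on $\alpha$ in weight $1$ is used, and it is clearly necessary: without it, $\alpha_1(\alpha_1^{-1}(\im r))$ is only a subobject of $\im r$.
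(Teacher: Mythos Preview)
Your proposal is correct and follows essentially the same approach as the paper: the paper's proof is a single sentence deferring to Corollary~\ref{cor:KD_of_monogene}, and your argument is precisely the unwinding of that deferral---applying the corollary twice, computing $\im r' = \alpha_1(\alpha_1^{-1}(\im r)) = \im r$ using surjectivity of $\alpha$ in weight~$1$, and invoking Remark~\ref{rem:mono-opic_explicit}. Your handling of the identification $\nattwist_{\KD M} \iso \nattwist_{M}$ via Lemma~\ref{lemma:Koszul_twist} is also the intended mechanism, though the paper leaves this entirely implicit.
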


\begin{proof}
  This follows from .
\end{proof}

\subsection{Koszul duality for modular operads} \label{sec:KD_modop}

In this section, we specialize the results of Section~\ref{sec:KD_modules} to (twisted) modular operads as introduced in Section~\ref{sec:modular}.
This yields a Koszul duality theory for modular operads, which had previously been missing (as noted for example by Dotsenko--Shadrin--Vaintrob--Vallette \cite[Remark 2.3]{DSVV}).
Moreover, we prove, in Section~\ref{sec:monomial_modop}, that a certain class of ``monomial'' modular operads is always Koszul.
On the other hand, in Section~\ref{sec:KD_modop_ex}, we list a number of results which imply that many well-studied modular operads are \emph{not} Koszul.

Throughout this section we fix some twist $\twist t$.
Since the weight grading of a ($\twist t$-twisted) modular operad is not necessarily concentrated in non-negative degrees, it cannot generally be used to obtain a Koszul dual.
To solve this, we need to consider \emph{weight-graded} modular operads.
Since we then have two potentially different weight gradings on the same object, we will, in this section, call the weight grading intrinsic to a modular operad the \emph{Euler grading} to distinguish it from the weight grading used for Koszul duality.
(The name ``Euler grading'' refers to the fact that it is closely related to (the negative of) the Euler characteristic of a connected graph.)
The weight grading of $\Brauer[\twist t]$ (and $\coBrauer[\twist t]$) that we will use throughout this section has $\unit$ in weight $0$ and $\twist t$ in weight $1$, i.e.\ it agrees with the Euler grading.

Also note that, as written, the results of Section~\ref{sec:KD_modules} only apply if the base category is dg vector spaces.
However, they can be generalized verbatim to (Euler-)graded dg vector spaces.

\subsubsection{Twisting morphisms and the Koszul dual}

We begin by studying twisting morphisms of modular (co)operads and the associated notion of Koszulity.

\begin{definition}
  Let $M$ be a $\twist t$-twisted modular operad and $K$ a $(\twistk \tensor \twist t)$-twisted modular cooperad.
  A \emph{twisting morphism relative to $\twist t$} is a twisting morphism $\varphi \colon K \to M$ relative to $\nattwist_{\twist t}$ that preserves the Euler grading (cf.\ ).
  
  The twisting morphism $\varphi$ is \emph{Koszul} if both of the maps $K \to \Bar[\twist t] M$ and $\Cobar[\twist t] K \to M$ associated to $\varphi$ (under the bijections of Remark~\ref{rem:modop_twisting_mor}) are quasi-isomorphisms.
\end{definition}

\begin{lemma} \label{lemma:modop_Koszul_criterion}
  Let $\varphi \colon K \to M$ a twisting morphism relative to $\twist t$.
  Then the following statements are equivalent:
  \begin{itemize}
    \item The map $K \to \Bar[\twist t] M$ associated to $\varphi$ is a quasi-isomorphism.
    \item The map $\Cobar[\twist t] K \to M$ associated to $\varphi$ is a quasi-isomorphism.
    \item The twisting morphism $\varphi$ is Koszul.
  \end{itemize}
\end{lemma}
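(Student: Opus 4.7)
The plan is to reduce the claim to the general Lemma~\ref{prop:Koszul_mod_eq} applied to the twisting morphism $\alpha = \nattwist_{\twist t} \colon \coBrauer[\twistk \tensor \twist t] \to \Brauer[\twist t]$ from Definition~\ref{def:Brauer_nattwist}, viewed as a weight-preserving twisting morphism of connected Euler-graded (co)properads. Since a twisting morphism relative to $\twist t$ is, by definition, precisely a weight-preserving twisting morphism relative to $\nattwist_{\twist t}$ (between prestable purely outgoing left (co)modules), the statement of Lemma~\ref{lemma:modop_Koszul_criterion} matches the conclusion of Lemma~\ref{prop:Koszul_mod_eq} once we verify its hypothesis, namely that $\nattwist_{\twist t}$ is Koszul with respect to $\varphi$.

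To verify this hypothesis, I need to check four things: that the unit $\eta_K \colon K \to \Bar[\twist t] \Cobar[\twist t] K$ and the counit $\epsilon_M \colon \Cobar[\twist t] \Bar[\twist t] M \to M$ are quasi-isomorphisms, and that applying $\Cobar[\twist t]$ (respectively $\Bar[\twist t]$) to a quasi-isomorphism $f_\varphi$ (respectively $g_\varphi$) again yields a quasi-isomorphism. The first two follow directly from Corollary~\ref{cor:feyn_resolution}: the counit statement is immediate from its general assertion, and the unit statement applies via the third bullet, since $K$ is a genuine (Euler-graded) $(\twistk \tensor \twist t)$-twisted modular cooperad. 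All of these results also apply to Euler-graded (co)operads, as noted in the remark following Lemma~\ref{lemma:bar_mo}.

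For the other two conditions I will invoke Corollary~\ref{cor:feyn_quasi-iso}. The preservation of quasi-isomorphisms by $\Bar[\twist t]$ is the first assertion of that corollary and handles $g_\varphi$. For $f_\varphi$, the key observation is that $\varphi$ preserves the Euler grading, so the associated map $f_\varphi \colon K \to \Bar[\twist t] M$ is a map of (Euler-graded) $(\twistk \tensor \twist t)$-twisted modular cooperads; hence the third bullet of Corollary~\ref{cor:feyn_quasi-iso} applies and $\Cobar[\twist t] f_\varphi$ is a quasi-isomorphism.

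The main (and essentially only) subtle point is bookkeeping: one must ensure that the various statements from Section~\ref{sec:(co)bar}, originally formulated for (ungraded) left (co)modules, are being applied to Euler-graded modular (co)operads in the correct way, and in particular that the condition of being a \emph{modular} cooperad (rather than merely an ungraded one) is what licences the use of the third bullets in Corollaries~\ref{cor:feyn_quasi-iso} and \ref{cor:feyn_resolution}, which in turn rely on the rather technical conditions (e) of Lemma~\ref{lemma:detachable_comodules} and (d) of Lemma~\ref{lemma:detachable_comodule}. Once these applicability checks are made, the equivalence of the three conditions is then a direct citation of Lemma~\ref{prop:Koszul_mod_eq}.
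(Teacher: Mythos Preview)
Your proof is correct and follows essentially the same approach as the paper: the paper's proof is a one-sentence citation of Proposition~\ref{prop:Koszul_mod_eq}, with the hypothesis that $\nattwist_{\twist t}$ is Koszul with respect to $\varphi$ verified via Corollaries~\ref{cor:feyn_quasi-iso} and~\ref{cor:feyn_resolution}. Your version spells out in more detail which bullets of those corollaries are being invoked (in particular the Euler-graded third bullets for the cooperad side), but the argument is the same.
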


\begin{proof}
  This follows from Proposition~\ref{prop:Koszul_mod_eq} since, by Corollaries~\ref{cor:feyn_quasi-iso} and \ref{cor:feyn_resolution}, the twisting morphism $\nattwist_{\twist t}$ is Koszul with respect to any twisting morphism $\varphi$ relative to $\twist t$.
\end{proof}

The syzygy grading of Definition~\ref{def:syzygy_grading} specializes to a grading on the (co)bar construction of a non-negatively weight-graded modular (co)operad.
Together with Remark~\ref{rem:KD_is_H0}, this allows us to make the following definition.

\begin{definition}
  Let $M$ be a non-negatively weight-graded $\twist t$-twisted modular operad.
  We set
  \[ \KD{M} \defeq \Coho{0}(\syzBar[\twist t]{*} M, \twBar{\twist t}) \]
  and call it the \emph{Koszul dual} of $M$.
  Dually, let $K$ be a non-negatively weight-graded $(\twistk \tensor \twist t)$-twisted modular cooperad.
  We set
  \[ \KD{K}  \defeq  \Ho{0}(\syzCobar[\twist t]{*} K, \twCobar{\twist t}) \]
  and call it the \emph{Koszul dual} of $K$.
  
  We say that $M$ is \emph{Koszul} if the canonical inclusion $\inc[\KD{M}] \colon \KD{M} \to \Bar[\twist t] M$ is a quasi-isomorphism.
  Dually, we say that $K$ is \emph{Koszul} if the canonical projection $\pr[\KD{K}] \colon \Cobar[\twist t] K \to \KD{K}$ is a quasi-isomorphism.
  
  Moreover, we denote the twisting morphism associated to $\inc[\KD{M}]$ by $\nattwist_{M} \colon \KD{M} \to M$ and the one associated to $\pr[\KD{K}]$ by $\nattwist_{K} \colon K \to \KD{K}$.
  We call them \emph{natural twisting morphisms}.
\end{definition}

\begin{remark}
  By Lemma~\ref{lemma:modop_Koszul_criterion}, we have that $M$ (respectively $K$) is Koszul if and only if $\nattwist_{M}$ (respectively $\nattwist_{K}$) is Koszul.
\end{remark}

\begin{remark} \label{rem:modop_KD_if_in_0}
  If the differentials of $\twist t$ and $M$ are trivial, then the syzygy grading of $\Bar[\twist t] M$ induces a grading on its homology, and $M$ being Koszul is equivalent to this grading being concentrated in degree $0$ (and similarly for $K$).
\end{remark}

\begin{example} \label{ex:KD_free_modop}
  If the differential of $\twist t$ is trivial, then any free $\twist t$-twisted modular operad $\Brauer[\twist t] \ccprod V$ is Koszul and its Koszul dual is $V$ with the trivial $(\twistk \tensor \twist t)$-twisted modular cooperad structure (and vice versa).
  The dual statement holds for cofree $(\twistk \tensor \twist t)$-twisted modular cooperads.
  (This is a special case of Example~\ref{ex:KD_free_module}.)
\end{example}

If $\varphi \colon K \to M$ is a Koszul twisting morphism and the differentials of everything involved are trivial, then both $K$ and $M$ are Koszul and $\varphi$ is isomorphic to both of the corresponding natural twisting morphisms.
This is the content of the following lemma.

\begin{lemma}
  Let $\varphi \colon K \to M$ be a twisting morphism relative to $\twist t$.
  Assume that $M$ and $K$ are non-negatively weight graded and have trivial differentials, and that $\varphi$ preserves the weight grading and is supported in weight $0$.
  \begin{itemize}
    \item There is a unique map $a_\varphi \colon K \to \KD{M}$ of weight-graded $(\twistk \tensor \twist t)$-twisted modular cooperads such that ${\inc[\KD{M}]} \after a_\varphi \colon K \to \Bar[\twist t] M$ is the map associated to $\varphi$.
    Moreover, if $\varphi$ is Koszul and the differential of $\twist t$ is trivial, then $M$ is Koszul, the map $a_\varphi$ is an isomorphism, and we have $\varphi \iso \nattwist_{M}$.
    
    \item There is a unique map $b_\varphi \colon \KD{K} \to M$ of weight-graded $\twist t$-twisted modular operads  such that $b_\varphi \after {\pr[\KD{K}]} \colon \Cobar[\twist t] K \to M$ is the map associated to $\varphi$.
    Moreover, if $\varphi$ is Koszul and the differential of $\twist t$ is trivial, then $K$ is Koszul, the map $b_\varphi$ is an isomorphism, and we have $\varphi \iso \nattwist_{K}$.
  \end{itemize}
\end{lemma}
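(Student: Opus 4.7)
This lemma is the modular specialization of Lemma~\ref{lemma:Koszul_twist} applied to the twisting morphism $\nattwist_{\twist t} \colon \coBrauer[\twistk \tensor \twist t] \to \Brauer[\twist t]$. My plan is to translate the general argument into the modular setting via the equivalence of Theorem~\ref{thm:main}, using that $\nattwist_{\twist t}$ is supported in weight~$1$ (Remark~\ref{rem:Brauer_Koszul}) so that Remark~\ref{rem:KD_is_H0} identifies the Koszul dual with the description $\KD{M} = \Coho{0}(\syzBar[\twist t]{*} M, \twBar{\twist t})$ used in the statement.

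For the first bullet, I would first unpack the bar--cobar adjunction (Remark~\ref{rem:modop_twisting_mor}) to write the associated map as $f_\varphi = ({\id} \ccprod \varphi) \after \rho_K \colon K \to \coBrauer[\twistk \tensor \twist t] \ccprod M$. Because $\varphi$ preserves the weight grading and is supported in weight~$0$, its image lies in $\weight{0}{M}$, so $f_\varphi$ factors through $\syzBar[\twist t]{0} M$. Next, I would use that $f_\varphi$ commutes with the total differentials together with $d_K = 0 = d_M$ to obtain the identity $d_{\coBrauer \ccprod M} \after f_\varphi + \twBar{\twist t} \after f_\varphi = 0$; the two summands live in syzygy degrees $0$ and~$1$ respectively, since $\twBar{\twist t}$ strictly raises the syzygy degree, and so each vanishes separately. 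This lands $f_\varphi$ in $\ker(\twBar{\twist t}) \intersect \syzBar[\twist t]{0} M = \KD{M}$, yielding the desired factorization through $a_\varphi$; uniqueness is immediate from injectivity of $\inc[\KD{M}]$, and compatibility with comodule structures is inherited from~$f_\varphi$.

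For the ``moreover'' assertion, the plan is to additionally assume Koszulity of $\varphi$ and triviality of $d_{\twist t}$, so that $\coBrauer[\twistk \tensor \twist t]$ carries a trivial differential as well. Then $\KD{M}$ also has trivial differential, and the inclusion $\inc[\KD{M}]$ is injective on homology since $\KD{M} \subseteq \syzBar[\twist t]{0} M$ while $\im(\twBar{\twist t}) \subseteq \Dirsum_{s \ge 1} \syzBar[\twist t]{s} M$. I would then chase the factorization $f_\varphi = \inc[\KD{M}] \after a_\varphi$: Koszulity of $\varphi$ makes $f_\varphi$ a quasi-isomorphism, and comparing images in $\Ho{*}(\Bar[\twist t] M)$ forces $a_\varphi$ to be both injective (from injectivity of $f_\varphi$) and surjective onto $\KD{M}$. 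Because source and target both have trivial differentials, this quasi-isomorphism is an actual isomorphism, whence $\inc[\KD{M}] = f_\varphi \after a_\varphi^{-1}$ is a quasi-isomorphism and $M$ is Koszul. The identification $\varphi \iso \nattwist_{M}$ is then read off from naturality of the adjunction bijection.

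The second bullet is formally dual: one replaces $\Bar[\twist t]$ by $\Cobar[\twistk \tensor \twist t]$, inclusion into a kernel by projection onto a cokernel, and reverses all directions of factorization, while the syzygy-degree arguments dualize verbatim. The only step deserving real care---and effectively the only nontrivial input---is the landing of $f_\varphi$ in $\KD{M}$ (and dually of $g_\varphi$ factoring through $\KD{K}$); once that is secured, the homological endgame is routine because triviality of the differentials collapses quasi-isomorphisms to isomorphisms.
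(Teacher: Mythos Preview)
Your proposal is correct and follows the same approach as the paper: the lemma is precisely the specialization of Lemma~\ref{lemma:Koszul_twist} to $\alpha = \nattwist_{\twist t}$, and the paper's proof consists of exactly that one sentence. You additionally unpack the argument of Lemma~\ref{lemma:Koszul_twist} in the modular setting, and your syzygy-degree factorization argument and the homological endgame (using that $\inc[\KD{M}]$ is injective on homology while $f_\varphi$ is a quasi-isomorphism, forcing $\inc[\KD{M}]$ to be surjective on homology as well) match the proof of that general lemma essentially verbatim.
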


\begin{proof}
  This is a special case of Lemma~\ref{lemma:Koszul_twist}.
\end{proof}

Applying the preceding lemma to a natural twisting morphism, we obtain the following corollary that relates $M$ to its double Koszul dual $\KD{(\KD{M})}$ and $\nattwist_{M}$ to $\nattwist_{\KD{M}}$ (and similarly in the dual situation).

\begin{corollary}
  Let $K$ be a non-negatively weight-graded $\twist t$-twisted modular cooperad with trivial differential.
  Then there is a canonical map $a_K \colon K \to \KD{(\KD{K})}$ of weight-graded $\twist t$-twisted modular cooperads.
  Moreover, if $K$ is Koszul and the differential of $\twist t$ is trivial, then $\KD{K}$ is Koszul, $a_K$ is an isomorphism, and we have $\nattwist_{K} \iso \nattwist_{\KD{K}}$.
  
  Dually, let $M$ be a non-negatively weight-graded $\twist t$-twisted modular operad with trivial differential.
  Then there is a canonical map $b_M \colon \KD{(\KD{M})} \to M$ of weight-graded $\twist t$-twisted modular operads.
  Moreover, if $M$ is Koszul and the differential of $\twist t$ is trivial, then $\KD{M}$ is Koszul, $b_M$ is an isomorphism, and we have $\nattwist_{M} \iso \nattwist_{\KD{M}}$.
\end{corollary}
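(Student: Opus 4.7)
The plan is to obtain this corollary as a direct specialization of the preceding lemma to the natural twisting morphisms, exactly mirroring how Corollary~\ref{cor:double_KD} was deduced from Lemma~\ref{lemma:Koszul_twist} in the properadic setting. First I would apply the first bullet of the previous lemma to $\varphi = \nattwist_K \colon K \to \KD K$. To do so, I must verify that $\nattwist_K$ preserves the weight grading and is supported in (Koszul) weight $0$: the former holds by construction of the bar--cobar adjunction, while the latter follows because $\pr[\KD K] \colon \Cobar[\twist t] K \to \KD K$ factors through the syzygy-degree-$0$ part $\syzCobar[\twist t]{0} K$, which under the adjunction of Remark~\ref{rem:modop_twisting_mor} translates into $\nattwist_K$ being concentrated in Koszul weight $0$. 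The lemma then yields a unique map $a_{\nattwist_K} \colon K \to \KD{(\KD K)}$ of weight-graded modular cooperads; setting $a_K \defeq a_{\nattwist_K}$ provides the desired canonical map. The construction of $b_M$ is obtained dually from the second bullet applied to $\nattwist_M$.

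For the second half, suppose $K$ is Koszul, so that $\nattwist_K$ is a Koszul twisting morphism by definition. Under the additional hypothesis that $\twist t$ has trivial differential, I would check that $\KD K$ also carries trivial differential: indeed, its differential is the one induced by $d_{\Brauer[\twist t] \ccprod K}$, which vanishes because both $\Brauer[\twist t]$ and $K$ have trivial differentials. This places us precisely in the setting of the second statement of the preceding lemma, which then immediately delivers the three desired conclusions: that $\KD K$ is Koszul, that $a_K$ is an isomorphism, and that $\nattwist_K \iso \nattwist_{\KD K}$. The statement for $b_M$ follows by an entirely dual argument.

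The main subtlety — though not a genuine obstacle — is the interplay between the Euler grading intrinsic to $\twist t$-twisted modular (co)operads and the Koszul weight grading used to define $\KD{(\blank)}$. The previous lemma treats the ambient category as singly graded, so I must ensure that all relevant structure maps are homogeneous with respect to both gradings simultaneously. Since the natural twisting morphisms preserve the Euler grading by construction and are supported in Koszul weight $0$ as explained above, this bookkeeping is routine; one simply has to be vigilant that ``supported in weight $0$'' in the hypothesis of the preceding lemma refers to the Koszul grading rather than the Euler grading.
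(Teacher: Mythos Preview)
Your proposal is correct and essentially matches the paper's approach. The paper's proof is the single line ``This is a special case of Corollary~\ref{cor:double_KD},'' and that corollary was itself proved by applying Lemma~\ref{lemma:Koszul_twist} to the natural twisting morphisms $\nattwist_K$ and $\nattwist_M$---precisely the deduction you carry out here in the modular-operad setting, with the same verification that the natural twisting morphisms are supported in weight~$0$.
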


\begin{proof}
  This is a special case of Corollary~\ref{cor:double_KD}.
\end{proof}

\subsubsection{Monogene modular operads}

In this subsection, we show that a Koszul modular (co)operad can be presented in a particular form (at least when its differential is trivial).
We begin by introducing this kind of presentation.
It is analogous to a quadratic presentation of a (cyclic) operad.

\begin{definition}
  A \emph{$\twist t$-twisted monogene data} is a tuple $(V, r)$ of a prestable purely outgoing Euler-graded $\S$-bimodule $V$ and a map $r \colon R \to \onlyone{\twist t} \ccprod V$ of Euler-graded $\S$-bimodules.
  Now consider the full subcategory of the category of $\twist t$-twisted modular operads under $\Brauer[\twist t] \ccprod V$ consisting of those $f \colon \Brauer[\twist t] \ccprod V \to M$ such that the composite
  \[ R  \xlongto{r}  \onlyone{\twist t} \ccprod V  \xlongto{\eta}  \Brauer[\twist t] \ccprod V  \xlongto{f}  M \]
  is trivial.
  We denote by $\Brauer[\twist t](V, r)$ its initial object, and call it the \emph{monogene $\twist t$-twisted modular operad} associated to $(V, r)$.
  
  Dually, a \emph{$\twist t$-twisted monogene codata} is a tuple $(V, s)$ of a prestable purely outgoing Euler-graded $\S$-bimodule $V$ and a map $s \colon \onlyone{\twist t} \ccprod V \to S$ of Euler-graded $\S$-bimodules.
  Now consider the full subcategory of the category of $\twist t$-twisted modular cooperads over $\coBrauer[\twist t] \ccprod V$ consisting of those $g \colon K \to \coBrauer[\twist t] \ccprod V$ such that the composite
  \[ K  \xlongto{g}  \coBrauer[\twist t] \ccprod V  \xlongto{\epsilon}  \onlyone{\twist t} \ccprod V  \xlongto{s}  S \]
  is trivial.
  We denote by $\coBrauer[\twist t](V, s)$ its terminal object, and call it the \emph{monogene $\twist t$-twisted modular cooperad} associated to $(V, s)$.
\end{definition}

\begin{remark}
  After translating this to the classical description of modular operads (see the proof of Theorem~\ref{thm:main}), monogene modular operads are those that are obtained as the quotient of a free modular operad by relations that only involve connected graphs with exactly one edge.
\end{remark}

\begin{remark}
  Remark~\ref{rem:mono-opic_explicit} provides, by specialization, explicit descriptions of $\Brauer[\twist t](V, r)$ and $\coBrauer[\twist t](V, s)$.
  In particular this shows that they exist.
  Also note that a monogene $\twist t$-twisted modular (co)operad comes equipped with a weight grading by letting $V$ have weight $0$ (and $\twist t$ weight $1$).
\end{remark}

\begin{example} \label{ex:quad_cyclic_is_monogene}
  There is a forgetful functor from modular operads to (non-unital) cyclic operads that projects onto Euler degree $-1$ (cf.\ \cite[2.1 and 3.1]{GK}).
  It has both a left adjoint $L$ and a right adjoint $R$.
  The former freely adjoins contractions (i.e.\ loops) and the latter lets all contractions be trivial.
  The image of a quadratic cyclic operad (in the sense of \cite[3.2]{GK95}) under either of $L$ or $R$ is a monogene modular operad.
  Its generators are the quadratic generators of the cyclic operad and its relations the cyclic relations plus, in the case of the right adjoint $R$, those making any loop trivial.
\end{example}

We will now derive various consequences from the theory we set up for modules over a properad.
They give a number of relations between monogene modular (co)operads and the Koszul dual of the preceding subsection.

\begin{lemma}
  The Koszul dual of a non-negatively weight-graded $\twist t$-twisted modular (co)operad is monogene.
\end{lemma}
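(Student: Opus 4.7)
The plan is to specialize Lemma~\ref{lemma:KD_is_mono-opic} to the twisting morphism $\nattwist_{\twist t} \colon \coBrauer[\twistk \tensor \twist t] \to \Brauer[\twist t]$ of Definition~\ref{def:Brauer_nattwist}. The key point is that $\nattwist_{\twist t}$ is supported in weight $1$: since $\Brauer[\twist t]$ is by definition the free properad on the $\S$-bimodule concentrated in biarity $(0,2)$ with value $\twist t$, Lemma~\ref{lemma:free_Koszul_dual} identifies $\KD{(\Brauer[\twist t])}$ with $\coBrauer[\twistk \tensor \twist t]$ and exhibits $\nattwist_{\twist t}$ as a degree $-1$ isomorphism $\weight{1}{\coBrauer[\twistk \tensor \twist t]} \iso \weight{1}{\Brauer[\twist t]}$ that vanishes on all other weights. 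Hence the final hypothesis of Lemma~\ref{lemma:KD_is_mono-opic} is fulfilled for $\alpha = \nattwist_{\twist t}$.

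For the operad case, I would take a non-negatively weight-graded $\twist t$-twisted modular operad $M$, view it as a non-negatively weight-graded lax algebra over $\Brauer[\twist t]$, and apply Lemma~\ref{lemma:KD_is_mono-opic}. This yields $\KD{M} \iso \coBrauer[\twistk \tensor \twist t](\weight{0}{M}, s)$ with $s$ the composite described there, and monogene as a lax coalgebra since $\nattwist_{\twist t}$ is supported in weight $1$. The one step that still requires a word is the identification of this monogene lax coalgebra with a monogene modular cooperad in the sense of the preceding definition; since $\weight{1}{\coBrauer[\twistk \tensor \twist t]} \iso \twistk \tensor \twist t$ is concentrated in biarity $(0,2)$, the composite $s$ takes values in $\onlyone{\twistk \tensor \twist t} \ccprod \weight{0}{M}$, so it is literally a $(\twistk \tensor \twist t)$-twisted monogene codata, and the two universal properties (as a lax coalgebra subobject of $\coBrauer[\twistk \tensor \twist t] \ccprod \weight{0}{M}$ on which $s$ vanishes, and as a modular cooperad subobject with the same property) single out the same object.

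The cooperad case is completely dual. Starting from a non-negatively weight-graded $(\twistk \tensor \twist t)$-twisted modular cooperad $K$, the same lemma gives $\KD{K} \iso \Brauer[\twist t](\weight{0}{K}, r)$ with $r$ factoring through $\onlyone{\weight{1}{\Brauer[\twist t]}} \ccprod \weight{0}{K} \iso \onlyone{\twist t} \ccprod \weight{0}{K}$, which is precisely a $\twist t$-twisted monogene data; the same matching of universal properties identifies $\KD{K}$ as a monogene $\twist t$-twisted modular operad.

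No serious obstacle arises since Lemmas~\ref{lemma:KD_is_mono-opic} and~\ref{lemma:free_Koszul_dual} do the heavy lifting. The only non-trivial verification is the comparison of the two monogene universal properties, and this is a straightforward consequence of the identifications $\weight{1}{\Brauer[\twist t]} \iso \twist t$ and $\weight{1}{\coBrauer[\twistk \tensor \twist t]} \iso \twistk \tensor \twist t$ in biarity $(0,2)$, which match the $\onlyone{\weight{1}{\operad P}} \ccprod V$ appearing in the general definition of monogene data with the $\onlyone{\twist t} \ccprod V$ appearing in the modular-operadic definition.
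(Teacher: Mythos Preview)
Your proposal is correct and follows the same approach as the paper: both reduce the claim to Lemma~\ref{lemma:KD_is_mono-opic}, using that $\nattwist_{\twist t}$ is supported in weight~$1$. The paper's proof is a single sentence (``This is a special case of Lemma~\ref{lemma:KD_is_mono-opic}''), whereas you spell out why the weight-$1$ support holds via Lemma~\ref{lemma:free_Koszul_dual} and why the general notion of monogene left (co)module matches the modular-operadic one via $\weight{1}{\Brauer[\twist t]} \iso \twist t$; these elaborations are correct but not strictly required.
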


\begin{proof}
  This is a special case of Lemma~\ref{lemma:KD_is_mono-opic}.
\end{proof}

\begin{corollary}
  Assume that the differential of $\twist t$ is trivial.
  Then any non-negatively weight-graded $\twist t$-twisted modular (co)operad that is Koszul and has trivial differential is monogene.
\end{corollary}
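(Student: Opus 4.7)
The plan is to deduce this corollary directly from the two results that precede it in the section, namely the immediately preceding lemma (which asserts that the Koszul dual of any non-negatively weight-graded $\twist t$-twisted modular (co)operad is monogene) together with the double-Koszul-dual corollary for modular (co)operads stated just before the monogene subsection. I will handle the modular operad case; the case of a modular cooperad is completely symmetric, swapping the roles of $\Bar[\twist t]$ and $\Cobar[\twist t]$ and of the twists $\twist t$ and $\twistk \tensor \twist t$ throughout.

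First I would let $M$ be a non-negatively weight-graded $\twist t$-twisted modular operad that is Koszul and has trivial differential, and form its Koszul dual $\KD M$, which is a non-negatively weight-graded $(\twistk \tensor \twist t)$-twisted modular cooperad. By the preceding lemma (applied to $\KD M$ in the cooperad case of that lemma) the double Koszul dual $\KD{(\KD M)}$ is a monogene $\twist t$-twisted modular operad. Next I would invoke the double-Koszul-dual corollary: since $M$ has trivial differential, $\twist t$ has trivial differential (by hypothesis), and $M$ is Koszul, the canonical comparison map $b_M \colon \KD{(\KD M)} \longto M$ is an isomorphism of weight-graded $\twist t$-twisted modular operads. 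Combining these two facts yields that $M$ is monogene, which is the statement of the corollary.

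There is no real obstacle here; the work has been done in the two cited results, and the corollary is simply an assembly of them. The only thing that needs a moment's thought is to check that the previous lemma applies to $\KD M$, but this is immediate since $\KD M$ is by construction a non-negatively weight-graded modular cooperad (with Euler grading inherited from the Euler grading on $M$ and the one on $\coBrauer[\twistk \tensor \twist t]$). The dual case, with a Koszul modular cooperad $K$ replacing $M$, runs identically using the dual clause of the double-Koszul-dual corollary and the map $a_K \colon K \longto \KD{(\KD K)}$.
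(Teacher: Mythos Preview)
Your argument is correct and is essentially the same as the paper's: the paper's one-line proof cites the general properad statement (Corollary~\ref{cor:Koszul_is_mono-opic}), whose own proof combines exactly the two ingredients you use---that the Koszul dual is always monogene, and that for a Koszul object with trivial differentials the double Koszul dual recovers the object. You have simply unpacked that argument directly in the modular-operad setting using the specialized versions of those two results already stated in this subsection.
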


\begin{proof}
  This is a special case of Corollary~\ref{cor:Koszul_is_mono-opic}.
\end{proof}

\begin{lemma}
  Let $(V, r \colon R \to \onlyone{\twist t} \ccprod V)$ be a $\twist t$-twisted monogene data.
  Then $\KD{\Brauer[\twist t](V, r)}$ is isomorphic to the monogene $(\twistk \tensor \twist t)$-twisted modular cooperad $\coBrauer[\twistk \tensor \twist t](V, s)$ where $s$ is the composite
  \[ \onlyone{\twistk \tensor \twist t} \ccprod V  \xlongto{\iso}  \onlyone{\twist t} \ccprod V  \longto  \coker r \]
  of the canonical projection and the canonical (degree $-1$) isomorphism.
  
  Dually, let $(V, s \colon \onlyone{\twist t} \ccprod V \to S)$ be a $\twist t$-twisted monogene codata.
  Then $\KD{\coBrauer[\twist t](V, s)}$ is isomorphic to the monogene $(\twistk^{-1} \tensor \twist t)$-twisted modular operad $\Brauer[\twistk^{-1} \tensor \twist t](V, r)$ where $r$ is the composite
  \[ \ker s  \longto  \onlyone{\twist t} \ccprod V  \xlongto{\iso}  \onlyone{\twistk^{-1} \tensor \twist t} \ccprod V \]
  of the canonical inclusion and the canonical (degree $-1$) isomorphism.
\end{lemma}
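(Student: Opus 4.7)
The plan is to deduce this lemma from Corollary~\ref{cor:KD_of_monogene} applied to the natural twisting morphism $\nattwist_{\twist t} \colon \coBrauer[\twistk \tensor \twist t] \to \Brauer[\twist t]$ of Definition~\ref{def:Brauer_nattwist}. First I would record the key structural fact that $\nattwist_{\twist t}$ is supported in weight $1$, where it is the canonical degree $-1$ isomorphism $\twistk \tensor \twist t \iso \twist t$. This is immediate from Lemma~\ref{lemma:free_Koszul_dual} applied to $\Brauer[\twist t] = \free(\twist t)$: under the identification $\KD{(\Brauer[\twist t])} \iso \coBrauer[\twistk \tensor \twist t]$ the natural twisting morphism is exactly the canonical shift, and we have $\weight 1 {\Brauer[\twist t]} \iso \twist t$ and $\weight 1 {\coBrauer[\twistk \tensor \twist t]} \iso \twistk \tensor \twist t$ concentrated in biarity $(0,2)$ by Remark~\ref{rem:Brauer_properad_descr}. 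In particular the hypothesis ``$\alpha$ supported in weight $1$'' of Corollary~\ref{cor:KD_of_monogene} is satisfied.

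Next I would verify that the monogene $\twist t$-twisted modular operad $\Brauer[\twist t](V, r)$ introduced in this subsection coincides, as a left $\Brauer[\twist t]$-module, with the monogene left module $\Brauer[\twist t](V, r)$ of Section~\ref{sec:KD_modules}. Both are defined by the same universal property, so the only thing to check is that the monogene modular operad construction agrees with the mono-opic module construction in the sense of Remark~\ref{rem:mono-opic_explicit}: since $V$ is prestable and purely outgoing and the composition product preserves these properties when the first factor is $\Brauer[\twist t]$, the quotient $\Brauer[\twist t] \ccprod V / (\im r)$ is automatically a lax algebra, hence a modular operad. The same argument, dualized, identifies $\coBrauer[\twistk \tensor \twist t](V, s)$ with the corresponding monogene left comodule.

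With these identifications in place, the first half of the lemma is simply Corollary~\ref{cor:KD_of_monogene} applied to $\alpha = \nattwist_{\twist t}$: the formula for $s$ given there reads $\onlyone{\weight 1 {\coBrauer[\twistk \tensor \twist t]}} \ccprod V \xlongto{\nattwist_{\twist t}} \onlyone{\weight 1 {\Brauer[\twist t]}} \ccprod V \longto \coker r$, which, after substituting the descriptions of the weight-$1$ parts and the fact that $\nattwist_{\twist t}$ restricts there to the canonical shift isomorphism, is exactly the composite in the statement. The second half follows dually, by applying the second bullet of Corollary~\ref{cor:KD_of_monogene} to the natural twisting morphism $\nattwist_{\twistk^{-1} \tensor \twist t} \colon \coBrauer[\twist t] \to \Brauer[\twistk^{-1} \tensor \twist t]$, whose weight-$1$ restriction is the canonical degree $-1$ isomorphism $\twist t \iso \twistk^{-1} \tensor \twist t$.

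The only mild obstacle is the translation step between monogene modular (co)operads and mono-opic (co)modules over the Brauer (co)properad, and in particular checking that the Koszul dual stays inside the subcategory of lax (co)algebras (i.e.\ remains purely outgoing and prestable). Purely outgoing is preserved since $\KD{\Brauer[\twist t](V,r)}$ embeds into $\coBrauer[\twistk \tensor \twist t] \ccprod V$, and prestability of the latter was already verified in the proof of Lemma~\ref{lemma:bar_mo}. Once this is in place, no further computation is needed and the lemma reduces entirely to Corollary~\ref{cor:KD_of_monogene}.
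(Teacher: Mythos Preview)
Your proposal is correct and takes essentially the same approach as the paper: the paper's proof is the single sentence ``This is a special case of Corollary~\ref{cor:KD_of_monogene}.'' You have simply unpacked the verifications (that $\nattwist_{\twist t}$ is supported in weight~$1$, that the monogene modular (co)operad notions match the monogene (co)module notions, and that the result remains prestable and purely outgoing) which the paper leaves implicit since the whole subsection is explicitly framed as a specialization of Section~\ref{sec:KD_modules}.
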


\begin{proof}
  This is a special case of Corollary~\ref{cor:KD_of_monogene}.
\end{proof}

\begin{corollary}
  Let $M$ be a monogene $\twist t$-twisted modular operad.
  Then the canonical map $\KD{(\KD{M})} \to M$ is an isomorphism and $\nattwist_{\KD M} \iso \nattwist_{M}$.
  In particular $M$ is Koszul if and only if $\KD M$ is.
  
  Dually, let $K$ be a monogene $\twist t$-twisted modular cooperad.
  Then the canonical map $K \to \KD{(\KD{K})}$ is an isomorphism and $\nattwist_{\KD K} \iso \nattwist_{K}$.
  In particular $K$ is Koszul if and only if $\KD K$ is.
\end{corollary}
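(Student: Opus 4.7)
The plan is to recognize this corollary as a direct specialization of Corollary~\ref{cor:double_KD_of_monogene} from the general properadic setting to that of (co)modules over Brauer (co)properads, as developed in Section~\ref{sec:modular}. For the first half I would take the twisting morphism $\alpha \defeq \nattwist_{\twist t} \colon \coBrauer[\twistk \tensor \twist t] \to \Brauer[\twist t]$ of Definition~\ref{def:Brauer_nattwist}; under this choice, left $\Brauer[\twist t]$-modules are $\twist t$-twisted modular operads and left $\coBrauer[\twistk \tensor \twist t]$-comodules are $(\twistk \tensor \twist t)$-twisted modular cooperads, and the Koszul dual constructions of Section~A.1 relative to $\alpha$ agree with the ones of Section~A.2. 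For the second half I would instead take $\alpha \defeq \nattwist_{\twistk^{-1} \tensor \twist t} \colon \coBrauer[\twist t] \to \Brauer[\twistk^{-1} \tensor \twist t]$, so that left $\coBrauer[\twist t]$-comodules are $\twist t$-twisted modular cooperads.

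The only thing to check is that these twisting morphisms satisfy the hypotheses of Corollary~\ref{cor:double_KD_of_monogene}: that they are supported in weight~$1$ and are surjective (respectively injective) in weight~$1$. Both facts follow immediately from Lemma~\ref{lemma:free_Koszul_dual}: the Brauer properad $\Brauer[\twist t]$ is free on the $\S$-bimodule $A$ concentrated in biarity $(0,2)$ with value $\twist t$, so its Koszul dual is $\unit[\ccprod] \dirsum \shift A \iso \coBrauer[\twistk \tensor \twist t]$ equipped with the trivial coproperad structure, and the associated natural twisting morphism is the canonical degree $-1$ isomorphism $\shift A \to A$ in weight~$1$ and vanishes elsewhere. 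In particular $\nattwist_{\twist t}$ is an isomorphism in weight~$1$, which is simultaneously surjective and injective; replacing $\twist t$ by $\twistk^{-1} \tensor \twist t$ yields the analogous statement needed for the second half.

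With these hypotheses verified, the isomorphisms $\KD{(\KD M)} \iso M$ and $K \iso \KD{(\KD K)}$ together with the identifications $\nattwist_{\KD M} \iso \nattwist_M$ and $\nattwist_{\KD K} \iso \nattwist_K$ are immediate consequences of Corollary~\ref{cor:double_KD_of_monogene}. The final ``in particular'' assertions, that $M$ is Koszul if and only if $\KD M$ is (and analogously for $K$), then follow formally: by definition $M$ is Koszul exactly when $\nattwist_M$ is a Koszul twisting morphism, and the isomorphism $\nattwist_M \iso \nattwist_{\KD M}$ transfers this property between $M$ and $\KD M$ in either direction.

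There is no substantial obstacle to this argument; the entire proof amounts to checking that the Brauer (co)properadic setting fits the general framework of Section~A.1, and the crucial weight-$1$ isomorphism property of $\nattwist_{\twist t}$ is essentially built into the construction of $\Brauer[\twist t]$ as a free properad on generators placed in weight~$1$.
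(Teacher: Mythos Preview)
Your proposal is correct and follows exactly the paper's approach: the paper's proof is the single sentence ``This is a special case of Corollary~\ref{cor:double_KD_of_monogene}.'' You have simply spelled out why the hypotheses of that corollary (that $\nattwist_{\twist t}$ is supported in weight $1$ and an isomorphism there) are met, which is indeed immediate from Lemma~\ref{lemma:free_Koszul_dual}. One very minor quibble: in the modular operad setting, ``$M$ is Koszul'' is equivalent to ``$\nattwist_M$ is Koszul'' not by definition but via Lemma~\ref{lemma:modop_Koszul_criterion}; this does not affect the validity of your argument.
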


\begin{proof}
  This is a special case of Corollary~\ref{cor:double_KD_of_monogene}.
\end{proof}

\subsubsection{Monomial monogene modular operads} \label{sec:monomial_modop}

In this subsection, we prove that a certain class of ``monomial'' monogene modular operads is Koszul.
This is a generalization of the fact that free (and trivial) modular operads are Koszul (see Example~\ref{ex:KD_free_modop}), and an analogue of similar statements for associative algebras (see e.g.\ Loday--Vallette \cite[§4.3.2]{LV}) and operads (see e.g.\ \cite[§8.5.2]{LV}).
We begin with some simple preliminaries about $\S$-bimodules and composition products.

A $\S$-bimodule $A$ can be equivalently described as a sequence $(A(m, n))_{m,n \in \NN}$ of dg vector spaces such that $A(m, n)$ is equipped with an action of $\Symm m \times \opcat{(\Symm n)}$.
Forgetting the group actions, we obtain an underlying dg vector space $\Dirsum_{m,n} A(m,n)$.

The underlying dg vector space of a composition product $A \ccprod B$ is, by construction (cf.\ Definition~\ref{def:ccprod}), isomorphic to $\Dirsum_{\eqcl G} \coinv {(A \ccprod_G B)} {\Aut(G)}$ where $\eqcl G$ runs over all isomorphism classes of $\cGraph[2] \slice (\S \times \opcat{\S})$.
This is a quotient of $\Dirsum_{\eqcl G} A \ccprod_G B$.
The latter has the advantage that a choice of bases of $A$ and $B$ yields a distinguished basis of $A \ccprod_G B$ by taking the corresponding elementary tensors.

\begin{definition} \label{def:monomial_modop}
  A monogene $\twist t$-twisted modular operad $\Brauer[\twist t](V, r)$ is \emph{monomial} if there exists a basis of $\twist t$ and a basis of $V$ such that, for any connected $2$-level graph $G$ equipped with a distinguished vertex of level $2$, the preimage $\widehat R_G$ of $\im(r)$ under the map $\onlyone{\twist t} \ccprod_G V \to \onlyone{\twist t} \ccprod V$ is spanned by a subset of the basis elements of $\onlyone{\twist t} \ccprod_G V$.
\end{definition}

\begin{theorem} \label{thm:monomial_modop}
  Assume that $\twist t$ is one-dimensional and let $\Brauer[\twist t](V, r)$ be a monomial monogene $\twist t$-twisted modular operad such that the differential of $V$ is trivial.
  Then $\Brauer[\twist t](V, r)$ is Koszul.
\end{theorem}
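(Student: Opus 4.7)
The plan is to apply Remark~\ref{rem:modop_KD_if_in_0}: since $\twist t$ (being one-dimensional, hence concentrated in a single homological degree) and $V$ both have trivial differentials, so does the induced differential on $M$, and it suffices to show that the syzygy complex $(\syzBar[\twist t]{\bullet} M, \twBar{\twist t})$ has cohomology concentrated in syzygy degree $0$. I will adapt the classical ``monomial Koszulity'' argument familiar from quadratic algebras and operads. Fix the monomial bases of $\twist t$ and $V$ given by Definition~\ref{def:monomial_modop}. Since $\im r \subseteq \onlyone{\twist t} \ccprod V$ consists of single-edge patterns (a pair of $V$-labeled vertices joined by a $\twist t$-edge, or a $V$-labeled vertex with a $\twist t$-loop), monomiality implies that $M$ has a basis of \emph{reduced} connected graphs---those in which no edge together with its endpoint labels represents a basis element of $\im r$. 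Crucially, reducedness is a \emph{local} condition: a graph is reduced iff each of its edges is individually ``allowed'', and in particular any subgraph of a reduced graph is reduced.

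Next, I would decompose the bar complex along ``big graphs''. A basis element of $\coBrauer[\twistk \tensor \twist t] \ccprod M$ is specified by a bar graph $G$ with reduced graphs $H_v$ at each vertex. Gluing the $H_v$ along the bar edges of $G$ yields a connected big graph $\widetilde G$ together with a distinguished subset $S \subseteq E(\widetilde G)$ of bar edges; conversely, such a pair $(\widetilde G, S)$ arises from a basis element iff each connected component of $\widetilde G \setminus S$ is reduced. The twisting differential $\twBar{\twist t}$ acts by moving one bar edge into the inner graphs, so it preserves $\widetilde G$; consequently the syzygy complex decomposes as a direct sum of sub-complexes indexed by isomorphism classes of big graphs.

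For a fixed $\widetilde G$, let $F \subseteq E(\widetilde G)$ denote the set of allowed edges. By locality of reducedness, a subset $S$ is valid iff $E(\widetilde G) \setminus S \subseteq F$, so the valid inner-edge sets $T = E(\widetilde G) \setminus S$ range over the full power set of $F$, with $|T|$ equal to the syzygy degree. After tracking the Koszul signs, the $\widetilde G$-sub-complex is identified---up to the action of $\Aut(\widetilde G)$, which is harmless in characteristic $0$---with the exterior Koszul complex $(\Lambda^\bullet k^{F}, \wedge \sum_{e \in F} e)$. The latter is acyclic when $F \neq \emptyset$ (a contracting homotopy is given by contraction against $e^*$ for any fixed $e \in F$) and equals $k$ concentrated in degree $0$ otherwise. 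Summing over all $\widetilde G$, the cohomology of $(\syzBar[\twist t]{\bullet} M, \twBar{\twist t})$ is concentrated in syzygy degree $0$, which proves Koszulity. As a bonus, $\KD M$ acquires an explicit basis indexed by big graphs all of whose edges are forbidden.

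The main obstacle is the bookkeeping in the identification of the $\widetilde G$-sub-complex with the exterior Koszul complex: rigorously setting up the direct-sum decomposition by $(\widetilde G, S)$ (handling $\Aut$-orbits in the groupoid of graphs carefully) and matching the signs of the bar differential with those of the wedge differential. The conceptual content of each step is routine, but tracking the shifts, signs, and symmetry actions in full rigor is where most of the actual work lies.
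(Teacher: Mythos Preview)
Your proposal is correct and follows essentially the same approach as the paper. Both arguments decompose the bar complex by the underlying ``big graph'' $\widetilde G$ together with its $V$-labeling, observe that the valid configurations for a fixed $\widetilde G$ are indexed by subsets of the set $F$ of allowed edges, and identify the resulting subcomplex with an acyclic Koszul-type complex unless $F=\emptyset$; the paper writes this complex as $(\twistk\tensor\twist t)^{\tensor n_l}\tensor T^{\tensor(v_2-n_l)}$ with $T=\twist t\oplus(\twistk\tensor\twist t)$ (working before taking $\Aut$-coinvariants), while you phrase it as the exterior Koszul complex $\Lambda^\bullet k^F$ after coinvariants.
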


\begin{proof}
  Fix bases of $\twist t$ and $V$ (and hence $\widehat R_G$) as in Definition~\ref{def:monomial_modop}.
  We first note that $\Bar[\twist t] \Brauer[\twist t](V, r)$ is a quotient of $\Bar[\twist t] \Brauer[\twist t](V)$ by some sub--$\S$-bimodule $S$.
  Moreover, we have that $\Bar[\twist t] \Brauer[\twist t](V) \iso (\Kcompl {\nattwist_{\twist t}} {\coBrauer[\twistk \tensor \twist t]} {\Brauer[\twist t]}) \ccprod V$ is, as a vector space, a quotient of
  \begin{equation} \label{eq:bar_free_covering}
    \widehat A \defeq \Dirsum\nolimits_{\eqcl G} \widehat A_G  \qquad \text{where} \qquad  \widehat A_G \defeq (\Kcompl {\nattwist_{\twist t}} {\coBrauer[\twistk \tensor \twist t]} {\Brauer[\twist t]}) \ccprod_G V
  \end{equation}
  and $\eqcl G$ runs over all isomorphism classes of $\cGraph[2] \slice (\S \times \opcat{\S})$.
  Also note that $\Kcompl {\nattwist_{\twist t}} {\coBrauer[\twistk \tensor \twist t]} {\Brauer[\twist t]} \iso \unit \dirsum \twist t \dirsum (\twistk \tensor \twist t)$ equipped with the differential that is trivial on $\unit$ and $\twist t$ and given by $\nattwist_{\twist t}$ on $\twistk \tensor \twist t$; we denote by $T$ the acyclic subspace spanned by $\twist t$ and $\twistk \tensor \twist t$.
  The syzygy degree on $\Bar[\twist t] \Brauer[\twist t](V)$, which lifts to $\widehat A$, is obtained by putting $V$ and $\twistk \tensor \twist t$ in degree $0$ and $\twist t$ in degree $1$.
  By Remark~\ref{rem:mono-opic_explicit}, we can identify the preimage $\widehat S$ of $S$ in $\widehat A$ as the subspace spanned by those basis elements that, for some connected subgraph $H \subseteq G$, contain a basis element of $\widehat R_H \subseteq \onlyone{\twist t} \ccprod_H V$ as a sub--elementary tensor.
  
  By Remark~\ref{rem:modop_KD_if_in_0}, we have to show that the homology of $\Brauer[\twist t](V, r) \iso \widehat A / \widehat S$ is concentrated in syzygy degree $0$.
  Since $\widehat S$ is compatible with the decomposition \eqref{eq:bar_free_covering}, it is enough to show that $A_G / (\widehat S \intersect \widehat A_G)$ has homology concentrated in syzygy degree $0$.
  This space decomposes as a direct sum over all labelings of the level-$1$ vertices of $G$ by basis elements of $V$ of the correct biarity.
  The summand corresponding to such a labeling $l$ is isomorphic to $(\twistk \tensor \twist t)^{\tensor n_l} \tensor T^{\tensor (v_2 - n_l)}$ for some $0 \le n_l \le v_2$, where $v_2$ is the number of level-$2$ vertices of $G$.
  (Explicitly $n_l$ is the number of level-$2$ vertices that, if labeled by $\twist t$, would result in a basis element of some $\widehat R_H$.)
  Since $T$ is acyclic, the only summands with non-trivial homology are those with $n_l = v_2$, which are concentrated in syzygy degree $0$.
\end{proof}

\begin{remark}
  The assumption in Theorem~\ref{thm:monomial_modop} that $\twist t$ is one-dimensional can be weakened to only requiring its differential to be trivial.
\end{remark}

\begin{remark} \label{rem:monomial_modules}
  It seems likely to the author that Theorem~\ref{thm:monomial_modop} can be generalized to modules over arbitrary Koszul properads (or at least a large class of them), as long as they have trivial differentials.
  In the case of algebras over the associative operad, this should then specialize to the classical result that quadratic monomial associative algebras are Koszul, see e.g.\ \cite[Theorem 4.3.4]{LV}.
\end{remark}

\begin{remark}
  It appears plausible that Theorem~\ref{thm:monomial_modop} (or a generalization as suggested in Remark~\ref{rem:monomial_modules}) could be used as a starting point for developing a Koszulity criterion for modular operads (or general modules over properads) using a type of Poincaré--Birkhoff--Witt bases.
\end{remark}

To finish this subsection, we provide two example applications of Theorem~\ref{thm:monomial_modop}.

\begin{example}
  Assume $\twist t = \unit$ and let $V$ be the prestable purely outgoing Euler-graded $\S$-bimodule (concentrated in homological degree $0$ and Euler degree $-1$) that is given in biarity $(3, 0)$ by the two-dimensional vector space $\QQ \langle \circ, \bullet \rangle$ with trivial $\Symm 3$-action and is trivial otherwise.
  Furthermore, let $R \subseteq \onlyone{\twist t} \ccprod V$ be the subspace spanned by those elementary tensors that do not contain both $\circ$ and $\bullet$.
  Then $\Brauer[\unit](V, R)$, which is the modular operad of connected bipartite trivalent graphs, is monomial monogene and thus Koszul.
\end{example}

\begin{example}
  Assume $\twist t = \unit$ and let $V$ be the prestable purely outgoing Euler-graded $\S$-bimodule (concentrated in homological degree $0$ and Euler degree $-1$) such that $V \cycar M$ is the $2^{\card M}$-dimensional vector space spanned by the set of isomorphism classes of corollas (i.e.\ one-vertex graphs without loops) equipped with a bijection from the set of hairs to $M$ and an orientation (i.e.\ either outgoing or incoming) of each hair.
  Furthermore, let $R \subseteq \onlyone{\twist t} \ccprod V$ be the subspace spanned by those basis elements where two hairs of the same orientation are connected by an edge.
  Then $\Brauer[\unit](V, R)$, which is the modular operad of connected directed graphs, is monomial monogene and thus Koszul.
\end{example}

\subsubsection{Non-examples} \label{sec:KD_modop_ex}

In Example~\ref{ex:KD_free_modop} we saw that free (and trivial) modular operads are Koszul and in Section~\ref{sec:monomial_modop} we generalized this to monomial monogene modular operads.
On the other hand, many well-studied modular operads have been shown to have (non-trivial) properties that imply that they are \emph{not} Koszul.
We list some of them here.
The first examples we give are variations of considering a cyclic operad as a (twisted) modular operad.
Interestingly, even if a cyclic operad is Koszul, it often yields a (twisted) modular operad that is not Koszul.
In the following we work over $\QQ$.

\begin{example}
  We can equip a (non-unital) cyclic operad $\operad P$ with the structure of a modular operad by letting contractions act trivially.
  If $\operad P$ has a quadratic presentation as a cyclic operad, then $\operad P$ is monogene as a modular operad (see Example~\ref{ex:quad_cyclic_is_monogene}).
  If $\operad P$ is additionally binary, i.e.\ generated in cyclic arity $3$, then the induced weight grading on $\operad P \cycar s$ is $s - 3$.
  In this case, the syzygy degree of $\weight w {\Ho{p}(\Bar[\unit] \operad P) \cycar s}$ is $3w + s - p$.
  If the differential of $\operad P$ is trivial, then $\operad P$ is Koszul as a modular operad if and only if this homology is concentrated in syzygy degree $0$.
  
  This is not the case for the cyclic commutative operad $\operad{C}\mathrm{om}$: Bar-Natan--McKay \cite[Table 1]{BM01} have computed, for example, the homology $\weight 4 {\Ho{10}(\Bar[\unit] \operad{C}\mathrm{om}) \cycar 0}$ to be non-trivial.\footnote{To be precise, they actually consider a version of the bar construction (or ``graph complex'' in their terminology) that does not allow loops/tadpoles. A proof that this does not change the homology can be found in work of Willwacher \cite[Proof of Proposition 3.4]{Wil}.}
  
  It seems unlikely that the modular operads resulting from taking $\operad P$ to be the cyclic Lie operad or the cyclic associative operad are Koszul.
  However, the author is not aware of any computations in these cases.
\end{example}

\begin{example}
  The (naive) suspension $\Sigma \operad P$ of a (non-unital) cyclic operad can be equipped with the structure of a $(\inv \twistk \tensor \twists)$-twisted modular operad by letting contractions act trivially (cf.\ \cite[4.13]{GK}).
  If $\operad P$ has a quadratic presentation as a cyclic operad, then $\Sigma \operad P$ is monogene (see Example~\ref{ex:quad_cyclic_is_monogene}).
  If $\operad P$ is additionally binary, i.e.\ generated in cyclic arity $3$, then the induced weight grading on $(\Sigma \operad P) \cycar s$ is $s - 3$.
  In this case,
  \[ \weight w {\Ho{p}(\Bar[\inv \twistk \tensor \twists] (\Sigma \operad P)) \cycar s} \]
  has syzygy degree $2w + s - p$.
  If the differential of $\operad P$ is trivial, then $\Sigma \operad P$ is Koszul if and only if this homology is concentrated in syzygy degree $0$.
  We will now see important cases where this is not the case.
  
  \textbf{The cyclic commutative operad $\operad{C}\mathrm{om}$.}
  Bar-Natan--McKay \cite[Table 3]{BM01} have computed, for example, the homology
  \[ \weight 5 {\Ho{7}(\Bar[\inv \twistk \tensor \twists] (\Sigma \operad{C}\mathrm{om})) \cycar 0} \]
  to be non-trivial.
  
  \textbf{The cyclic associative operad $\operad{A}\mathrm{ss}$.}
  By a theorem of Conant--Vogtmann \cite[Theorem 4]{CV} there is an isomorphism
  \[ \weight w {\Ho{p}(\Bar[\inv \twistk \tensor \twists] (\Sigma \operad{A}\mathrm{ss})) \cycar 0}  \iso  \Dirsum\nolimits_{\substack{g \ge 0,\; n \ge 1 \\ 2g + n - 2 = w}} \Coho{2w - p}(\Gamma_g^n; \QQ) \]
  where $\Gamma_g^n$ is the mapping class group of a genus $g$ surface with $n$ punctures (which are allowed to be permuted).
  Hence $\Sigma \operad{A}\mathrm{ss}$ is Koszul if and only if $\Coho{*}(\Gamma_g^n; \QQ)$ is concentrated in degree $0$ for all $g$ and $n \ge 1$.
  But by work of Harer--Zagier \cite[p.\ 484]{HZ} the Euler characteristic of, for example, the group $\Gamma_8^1$ is negative, so that it has rational cohomology in at least one odd degree.
  
  \textbf{The cyclic Lie operad $\operad{L}\mathrm{ie}$.}
  By work of Conant--Kassabov--Vogtmann \cite[(Proof of) Theorem 11.1]{CKV12} (see also \cite[§4.3]{CKV14}) there are isomorphisms
  \[ \weight w {\Ho{p}(\Bar[\inv \twistk \tensor \twists] (\Sigma \operad{L}\mathrm{ie})) \cycar s}  \iso  \Coho{2w+s-p} (\Gamma_{1+w,s}; \QQ) \]
  where $\Gamma_{n,s}$ is a certain family of groups that fulfills $\Gamma_{1,s} \iso \Aut(\mathrm{F}_s)$.
  Hence $\Sigma \operad{L}\mathrm{ie}$ is Koszul if and only if $\Coho{*} (\Gamma_{n,s}; \QQ)$ is concentrated in degree $0$ for all $n$ and $s$.
  But for example $\Coho{4} (\Aut(\mathrm{F}_4); \QQ)$ is non-trivial by work of Hatcher--Vogtmann \cite[Theorem 1.1]{HV}.
\end{example}

Lastly, we give an example that is of a quite different flavor than the preceding ones.
In a sense it is the motivating example for the theory of modular operads as a whole.
Here we work over $\CC$.

\begin{example}
  The modular operad $\Ho{*}(\overline{\mathcal M}_{g,n})$ assembled from the homologies of the Deligne--Mumford compactifications of the moduli spaces of stable curves of genus $g$ with $n$ marked points (cf.\ \cite[6.2]{GK}), which is sometimes called the ``hypercommutative'' modular operad, is not Koszul.
  To see this, we note that, if it were Koszul, then its bar construction would be formal.
  But this is not the case by work of Alm--Petersen \cite[Proposition 1.11]{AP} combined with results of Getzler--Kapranov \cite[6.11]{GK} and Guillén~Santos--Navarro--Pascual--Roig \cite[Corollary 8.9.1]{GNPR}.
\end{example}

  \newpage

\section{The (connected) composition product is monoidal} \label{app:ccprod_monoidal}

Here we give a proof of Lemma~\ref{lemma:ccprod_monoidal} in our framework and with a bit more details than in existing sources.
We restate the lemma for the reader's convenience.

\lemmaMonoidal*

\begin{proof}
  Since $\unit[\ccprod]$ is concentrated in biarity $(1, 1)$, the $G$-composition product $A \ccprod_G \unit[\ccprod]$ will only be non-trivial for graphs $G$ such that all vertices of level 1 have biarity $(1, 1)$.
  The connected graphs for which this is the case are precisely those with exactly one vertex $v$ of level 2 and $\card{\inedges(v)}$ level-1 vertices of biarity $(1, 1)$.
  In particular we have that $A \ccprod_G \unit[\ccprod] \iso A(\biarity(v)) \iso A(\biarity(G))$.
  Noting that $\biarity$ restricts to an equivalence of categories from the subgroupoid spanned by graphs of this kind to $\S \times \opcat{\S}$, we obtain a canonical isomorphism $A \ccprod \unit[\ccprod] \iso A$ of $\S$-bimodules.
  It is natural in $A$.
  Analogously we also obtain a canonical natural isomorphism $\unit[\ccprod] \ccprod A \iso A$.
  
  Now for the associativity.
  Let $G$ be a $2$-level graph.
  We set $\cat G_1(G)$ to be the product of comma categories $\prod_{v \in \Vert[1](G)} \biarity \slice \biarity(v)$.
  Then there is an isomorphism as follows
  \[ A \ccprod_G (B \ccprod C)  \iso  \colim{(H_v)_v \in \cat G_1(G)} \Tensor_{v_2 \in \Vert[2](G)} A(\biarity(v_2)) \tensor \Tensor_{v_1 \in \Vert[1](G)} (B \ccprod_{H_v} C)(\biarity(v_1)) \]
  since the tensor product $\tensor$ of $\cat S$ preserves colimits in each variable.
  Now we note that the Grothendieck construction of the functor $\cat G_1 \colon \cGraph[2] \to \Cat$ is equivalent to the category $\cGraph[3]$.
  (More precisely it is equivalent to the category of connected $3$-level graphs equipped with a partition of the union of levels $1$ and $2$ into connected $2$-level graphs.
  Noting that such a partition is unique we obtain the claimed statement.
  For this, it is important that we work with connected graphs; the analogue for not necessarily connected graphs is false.)
  Hence $A \ccprod (B \ccprod C)$ is (the currying of) a left Kan extension along $\biarity \colon \cGraph[3] \to \S \times \opcat{\S}$.
  Analogously we obtain the same statement for $(A \ccprod B) \ccprod C$.
  The functors of which we take left Kan extensions are both canonically isomorphic to
  \[ \Tensor_{v_3 \in \Vert[3](G)} A(\biarity(v_3))  \tensor  \Tensor_{v_2 \in \Vert[2](G)} B(\biarity(v_2))  \tensor  \Tensor_{v_1 \in \Vert[1](G)} C(\biarity(v_1)) \]
  (note that these isomorphisms involve the symmetry of $\cat S$).
  Hence we obtain an isomorphism $A \ccprod (B \ccprod C) \iso (A \ccprod B) \ccprod C$.
  
  It is straightforward (though tedious) to check using similar methods that the unitors and associator as above fulfill the necessary axioms.
  It boils down to keeping track of how the tensor factors are being shuffled around.
\end{proof}

\end{appendices}

\newpage
\printbibliography

%\contact{Department of Mathematics, Stockholm University, 106 91 Stockholm, Sweden}{robin.stoll@math.su.se}

\end{document}